\documentclass[a4paper,10pt,reqno]{article}

\usepackage[english]{babel}
\usepackage{amsmath}
\usepackage{amsfonts}
\usepackage{amssymb}
\usepackage{amsthm}
\usepackage{float}
\usepackage{graphics}
\usepackage{graphicx}
\usepackage[colorlinks=false]{hyperref}
\usepackage{vmargin}

\def\T{\mathbb{T}}

\renewcommand{\to}{\rightarrow}

\numberwithin{equation}{section}

\theoremstyle{plain}
\newtheorem{teor}{Theorem}[section]
\newtheorem{ese}[teor]{Example}
\newtheorem{prop}[teor]{Proposition}
\newtheorem{lem}[teor]{Lemma}
\newtheorem{cor}[teor]{Corollary}

\newcommand{\bdm}{\begin{displaymath}}
\newcommand{\edm}{\end{displaymath}}
\newcommand{\bpb}{\begin{prob}}
\newcommand{\epb}{\end{prob}}

\newcommand{\beq}{\begin{equation}}
\newcommand{\eeq}{\end{equation}}
\newcommand{\bem}{\begin{multline}}
\newcommand{\eem}{\end{multline}}
\newcommand{\bes}{\begin{ese}}
\newcommand{\ees}{\end{ese}}
\newcommand{\bde}{\begin{defi}}
\newcommand{\ede}{\end{defi}}
\newcommand{\bpr}{\begin{prop}}
\newcommand{\epr}{\end{prop}}
\newcommand{\ble}{\begin{lem}}
\newcommand{\ele}{\end{lem}}
\newcommand{\bte}{\begin{teor}}
\newcommand{\ete}{\end{teor}}
\newcommand{\bco}{\begin{cor}}
\newcommand{\eco}{\end{cor}}

\theoremstyle{definition}
\newtheorem{defi}[teor]{Definition}
\newtheorem{remark}[teor]{Remark}
\usepackage{geometry}
\geometry{a4paper, top=6cm, bottom=1.5cm, left=4cm, right=0.4cm,%
heightrounded, bindingoffset=5mm}



\begin{document}

\title{\textbf{Quasi-periodic solutions for quasi-linear generalized KdV equations}}

\date{}

\author{Filippo Giuliani\thanks{SISSA, Via Bonomea 265, 34136, Trieste, Italy, fgiulian@sissa.it}}

\maketitle

\begin{abstract}
\noindent We prove the existence of Cantor families of small amplitude, linearly stable, quasi-periodic solutions of \textit{quasi-linear} autonomous Hamiltonian generalized KdV equations. We consider the most general quasi-linear \textit{quadratic} nonlinearity. The proof is based on an iterative Nash-Moser algorithm. To initialize this scheme, we need to perform a bifurcation analysis taking into account the strongly perturbative effects of the nonlinearity near the origin. In particular, we implement a weak version of the Birkhoff normal form method. The inversion of the linearized operators at each step of the iteration  is achieved by pseudo-differential techniques, linear Birkhoff normal form algorithms and a linear KAM reducibility scheme.
\end{abstract}

\textit{Keywords}: KAM for PDE's; Quasi-linear PDE's; Quasi-periodic solutions; Nash-Moser theory; KdV

\tableofcontents

\section{Introduction}
We prove the existence and the stability of Cantor families of quasi-periodic, small amplitude, solutions of the Hamiltonian quasi-linear generalized KdV equations 
\begin{equation}\label{KdV}
u_t+u_{xxx}+\mathcal{N}_2(x, u, u_x, u_{xx}, u_{xxx})=0,
\end{equation}
under periodic boundary conditions $x\in \mathbb{T}$, where
\begin{equation}\label{N2}
\mathcal{N}_2(x, u, u_x, u_{xx}, u_{xxx}):=-\partial_x[(\partial_u f)(x, u, u_x)-\partial_x((\partial_{u_x} f)(x, u, u_x))]
\end{equation}
and $f$ is the most general \textit{quasi-linear} Hamiltonian density
\begin{equation}\label{perturbazione}
\begin{aligned}
f(x, u, u_x):=\, & c_1\,u_x^3+c_2\,u_x^2\,u+c_3\,u^3+c_4\, u_x^4+c_5\,u_x^3\,u+c_6\,u_x^2\,u^2+c_7 \,u^4+f_{\geq 5}(x, u, u_x),
\end{aligned}
\end{equation}
where the coefficients $c_i, i=1,2, \dots, 7$ are real numbers, and 
\begin{equation}\label{parteomogeneagrado5}
f_{\geq 5}(x, u, u_x):=f_5(u, u_x)+f_{\geq 6}(x, u, u_x)
\end{equation}
is the sum of the homogeneous component of $f$ of degree five and all the higher order terms.\\
We assume that the Hamiltonian density $f$ in \eqref{KdVHamiltonian} belongs to $C^q(\mathbb{T}\times \mathbb{R}\times \mathbb{R}; \mathbb{R})$ for some large $q$.\\

The equation \eqref{KdV} can be formulated as a Hamiltonian PDE $u_t=\partial_x\,\nabla_{L^2} H$, where $\nabla_{L^2} H$ is the $L^2(\mathbb{T})$ gradient of the Hamiltonian
\begin{equation}\label{KdVHamiltonian}
H(u)=\int \frac{u_x^2}{2}+f(x, u, u_x)\, dx
\end{equation}
on the real phase space
\begin{equation}
H_0^1(\mathbb{T}_x):=\left\{ u\in H^1(\mathbb{T}, \mathbb{R}) : \int_{\mathbb{T}} u(x)\,dx=0\right\}
\end{equation}
endowed with the non-degenerate symplectic form
\begin{equation}\label{SymplecticForm}
\Omega(u, v):=\int_{\mathbb{T}} (\partial_x^{-1} u)\,v\,dx, \quad \forall u, v\in H_0^1(\mathbb{T}_x),
\end{equation}
where $\partial_x^{-1}u$ is the periodic primitive of $u$ with zero average defined by
\[
\partial_x^{-1} e^{\mathrm{i} j x}=\frac{1}{\mathrm{i} j}\,e^{\mathrm{i}\,j\,x}\quad\mbox{if}\,\,j\neq 0,\qquad\qquad \partial_x^{-1} 1=0.
\]
The phase space $H_0^1(\T_x)$ is invariant under the flow of the equation \eqref{KdV}.\\
The Poisson bracket induced by $\Omega$ between two functions $F, G\colon H_0^1(\mathbb{T})\to \mathbb{R}$ is
\begin{equation}\label{PoissonBracket}
\{ F(u), G(u) \}:=\Omega(X_F, X_G)=\int_{\mathbb{T}} \nabla F(u)\,\partial_x \nabla G(u)\,dx,
\end{equation}
where $X_F$ and $X_G$ are the vector fields associated to the Hamiltonians $F$ and $G$, respectively.\\

By \eqref{perturbazione} the nonlinearity $\mathcal{N}_2$ vanishes at order two at $u=0$ and \eqref{KdV} may be seen, in a small neighbourhood of the origin, as a \textit{small} perturbation of the Airy equation
\begin{equation}\label{Airy}
u_t+u_{xxx}=0.
\end{equation}
The equation \eqref{KdV} is \textit{completely resonant}, namely its linearized problem at the origin \eqref{Airy} possesses only the $2\pi$-periodic in time solutions
\begin{equation}
u(t, x)=\sum_{j\in\mathbb{Z}\setminus\{0\}} u_j\,e^{\mathrm{i}\,j^3\,t}\,e^{\mathrm{i}\,j\,x}.
\end{equation}
Then the existence of quasi-periodic solutions of \eqref{KdV} is due only to the presence of the nonlinearity. For this reason, we need to perform a bifurcation analysis which is mainly affected by the quasi-linear monomials of degree three and four in \eqref{perturbazione}. Another difficulty is that, since the equation \eqref{KdV} is completely resonant, the diophantine frequency vector of the expected quasi-periodic solutions, if any, are $O(\lvert u_j \rvert^2)$-close to integer vectors. \\

We briefly present some literature related to this paper.\\
The KAM theory for PDE's has been developed in the eighties by Kuksin, with the pioneering work \cite{ImaginarySpectrum}, and by Wayne \cite{Wayne}, Craig-Wayne \cite{Newton}, P\"oeschel \cite{SomeNLpde} for the one dimensional nonlinear wave and Schr\"odinger equations, and, at a later time, in higher dimensional cases, by Bourgain \cite{Bourgain}, Eliasson-Kuksin \cite{EliassonKuksin}, Berti-Bolle \cite{NLS}, Geng-Xu-You \cite{Geng}, Procesi-Procesi \cite{ProcesiProcesiNormal}-\cite{PP}, Wang \cite{Wang}, Eliasson-Grebert-Kuksin \cite{EGK}.\\
The first results with unbounded perturbations have been proved by Kuksin in \cite{Korteg} and Kappeler-P\"oeschel \cite{KdVeKAM} for KdV (see \cite{KukHuang} for a survey on known results for the KdV equation), by Liu-Yuan \cite{Liu}, Zhang-Gao-Yuan \cite{Zhang} for derivative NLS, and by Berti-Biasco-Procesi \cite{DNLW}-\cite{BBP} for derivative NLW.\\
All the aforementioned papers treat semilinear problems, namely the case in which the nonlinearity depends on derivatives of order $m$, with $m\le n-1$, where $n$ is the highest order of the derivatives appearing in the unperturbed system.\\
For quasi-linear and fully nonlinear PDE's, i.e. in the case $m=n$, the progress are more recent.\\
The first results in this direction are due to Iooss-Plotnikov \cite{IP}-\cite{IP2}, Iooss-Plotnikov-Toland \cite{Iooss}, Plotnikov-Toland \cite{PT} for periodic solutions of water-waves equations. In the spirit of the method implemented in these papers, Baldi in \cite{Ono} provides the existence of periodic solutions for the Benjamin-Ono equation.\\
Baldi, Berti, Montalto prove the first existence results of quasi-periodic solutions for quasi-linear and fully nonlinear PDE's, in the forced case for the Airy equation \cite{Airy}, and in the autonoumous case for the KdV and mKdV equation in \cite{KdVAut} and \cite{MKdV}. In particular, they consider in \cite{KdVAut} the Hamiltonian
\begin{equation}\label{Hkdv}
H_{KdV}+\int_{\T} f_{\geq 5}(x, u, u_x)\,dx, \quad \mbox{where} \quad H_{KdV}:=\int_{\T} \frac{u_x^2}{2}+u^3\,dx,
\end{equation}
namely, the Hamiltonian \eqref{KdVHamiltonian} without the monomials of degree three and four in the variables $(u, u_x)$, see \eqref{perturbazione}.
These works are based on Nash-Moser methods and a reducibility scheme that diagonalize completely the linearized system at any approximate solution. This procedure permits to prove also the linear stability of the solutions.\\
More recently, in \cite{FeolaProc} and \cite{FePro} Feola-Procesi provide the existence and the stability of quasi-periodic solutions for quasi-linear and fully nonlinear perturbations of the Schr\"odinger equation in dimension one. We mention also the recent work by Montalto \cite{Riccardo} on quasi-periodic solutions for the forced Kirchoff equation.\\

The aim of this paper is to generalize the results obtained in \cite{KdVAut} and \cite{MKdV} considering the most general Hamiltonian density \eqref{perturbazione}.
We are interested in understanding the effect, over infinite times, of a \textit{quadratic} and \textit{quasi-linear} Hamiltonian perturbation in a small neighbourhood of the origin, where the polynomial perturbations of lowest degree are much \textit{stronger}.
This is significant in view of the study of small amplitude solutions for many fluid dynamics equations, like Degasperis-Procesi and water waves-type equations, which involve this kind of nonlinearities.



\subsection{Main result}

The solutions that we find are localized in Fourier space close to finitely many \textit{tangential sites}
\begin{equation}\label{TangentialSites}
S^+:=\{\overline{\jmath}_1, \dots, \overline{\jmath}_{\nu} \}, \quad S:=S^+\cup(-S^+)=\{ \pm j : j\in S^+\}, \quad \overline{\jmath}_i\in \mathbb{N}\setminus\{ 0\}, \quad \forall i=1,\dots, \nu
\end{equation}
and the linear frequencies of oscillation on the tangential sites are
\begin{equation}\label{LinearFreq}
\overline{\omega}:=(\overline{\jmath}_1^3,\dots, \overline{\jmath}_{\nu}^3)\in\mathbb{N}^{\nu}.
\end{equation}

The set $S$ is required to be even because we look for real valued solutions of \eqref{KdV}. Moreover, we also assume the following hypotesis on $S$:
\begin{itemize}
\item[$(\mathtt{S})$] $\nexists\,\, j_1, j_2, j_3, j_4\in S$ such that
$$j_1+j_2+j_3+j_4\neq 0, \,\,j_1^3+j_2^3+j_3^3+j_4^3-(j_1+j_2+j_3+j_4)^3=0.$$
\end{itemize}

We decompose the phase space as 
\[
H_0^1(\mathbb{T}):=H_S\oplus H_S^{\perp}, \quad H_S:=\mbox{span}\{ e^{\mathrm{i}\,j\,x} : j\in S \}, \quad H_S^{\perp}:=\{u=\sum_{j\in S^c} u_j\,e^{\mathrm{i}\,j\,x}\in H_0^1(\mathbb{T}) \},
\]
and we denote by $\Pi_S, \Pi_S^{\perp}$ the corresponding orthogonal projectors. The subspaces $H_S$ and $H_S^{\perp}$ are symplectic respect to the $2$-form $\Omega$ (see \eqref{SymplecticForm}). We write
\begin{equation}
u=v+z, \quad v:=\Pi_S u:=\sum_{j\in S} u_j\,e^{\mathrm{i}\,j\,x}, \quad z=\Pi_S^{\perp} u:=\sum_{j\in S^c} u_j\,e^{\mathrm{i}\,j\,x},
\end{equation}
where $v$ is called the \textit{tangent} variable and $z$ the \textit{normal} one. In the following, we will identify $v=(v_j)_{j\in S}$ and $z=(z_j)_{j\in S^c}$.\\


We shall also assume ``non-resonant" and ``non-degeneracy" conditions for the nonlinearity \eqref{perturbazione}.

\begin{defi}\label{EqRisonanti}
We say that the coefficients $c_1, \dots, c_7$ are \textit{resonant} if the following holds
\begin{equation}\label{coeffris}
c_3=c_7=2 c_1^2-c_4=7 c_2^2-6 c_6= 0
\end{equation}
and we say that $c_1, \dots, c_7$ are \textit{non-resonant} if \eqref{coeffris} does not hold.
\end{defi}
Moreover, we require the following ``non-degeneracy" conditions on the coefficients $c_1,\dots, c_7$
\begin{itemize}
\item[$(\mathtt{C}1)$] fixed $\nu\in\mathbb{N}$, the coefficients $c_1, \dots, c_7$ satisfy
\begin{equation}\label{RisonanzaPlus}
\left(7-16\nu \right) c_2^2\neq 6\,(1- 2\nu) c_6,
\end{equation}
\item[$(\mathtt{C}2)$] fixed $\nu\in\mathbb{N}$, the coefficients $c_1, \dots, c_7$ satisfy
\begin{equation}\label{condizioneAB}
\nu\,\,\frac{ 3 c_6-4 c_2^2}{9 c_4-18 c_1^2}\notin \{ j^2+k^2+j k \,:\, j, k\in \mathbb{Z}\setminus\{0\}, \,j\neq k\}.
\end{equation}
\end{itemize}

Before stating the main result, we introduce a notion of ``genericity" according to the one given by Biasco-Berti-Procesi \cite{BBP}, Procesi-Procesi \cite{ProcesiProcesiNormal} and Feola \cite{FePro}.
\begin{defi}\label{Generic}
Fixed $\nu\in\mathbb{N}$ and given a non-trivial, i.e non identically zero, polynomial $P(z)$, with $z\in\mathbb{C}^{\nu}$, we say that a vector of integers $z_0\in\mathbb{N}^{\nu}$ is \textit{generic} if $P(z_0)\neq 0$.\\
We shall say that ``\textit{there is a generic choice of the tangential sites $S$ for which some condition holds}" if this condition is satisfied by every vectors of integers $(\overline{\jmath}_1, \dots \overline{\jmath}_{\nu})$ that are not zeros of some non trivial polynomial. 
\end{defi}

\begin{teor}\label{F.Giuliani}
Given $\nu\in\mathbb{N}$, let $f\in C^q$ (with $q:=q(\nu)$ large enough) satisfy \eqref{perturbazione}. If $c_1, \dots, c_7$ in \eqref{perturbazione} are non-resonant (see Definition \ref{EqRisonanti}) and conditions $(\mathtt{C}1)$-$(\mathtt{C}2)$ hold, then for a generic choice of tangential sites (see Definition \ref{Generic} and \eqref{TangentialSites}), in particular satisfying $(\mathtt{S})$, the equation \eqref{KdV} possesses small amplitude quasi-periodic solutions, with diophantine frequency vector $\omega:=\omega(\xi)=(\omega_j)_{j\in S^+}\in\mathbb{R}^{\nu}$, of the form
\begin{equation}\label{SoluzioneEsplicita}
u(t, x)=\sum_{j\in S^+}2\,\sqrt{j\,\xi_j}\,\cos(\omega_j t+j x)+o(\sqrt{\lvert \xi \rvert}), \qquad \omega_j=j^3+O(\xi_j)
\end{equation}
for a Cantor-like set of small amplitudes $\xi\in\mathbb{R}^{\nu}_+$ with density $1$ at $\xi=0$. The term $o(\sqrt{\lvert \xi \rvert})$ is small in some $H^s$-Sobolev norm, $s<q$. These quasi-periodic solutions are linearly stable.
\end{teor}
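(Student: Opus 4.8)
The plan is to follow the by-now-standard Nash-Moser / KAM-for-PDE scheme pioneered in \cite{KdVAut}, but with a crucial preliminary stage that handles the strongly perturbative quasi-linear cubic and quartic monomials $c_1,\dots,c_7$ present in \eqref{perturbazione} but absent from \eqref{Hkdv}. First I would rescale $u\mapsto \eps u$ (with $\eps^2\sim|\xi|$) so that \eqref{KdV} becomes a small perturbation of the Airy flow \eqref{Airy}, and then — since the equation is completely resonant — perform a \emph{weak Birkhoff normal form} analysis on the Hamiltonian \eqref{KdVHamiltonian}. Because the dangerous monomials are only quasi-linear \emph{quadratic} in the nonlinearity (cubic/quartic in $f$), one cannot diagonalize them away entirely; instead one eliminates all non-resonant cubic and quartic terms that are supported on the tangential sites $S$, using hypothesis $(\mathtt{S})$ to guarantee that the relevant small divisors $j_1^3+\dots+j_4^3-(j_1+\dots+j_4)^3$ do not vanish, and keeping only the integrable (action-dependent) resonant part plus terms that are at least linear in the normal variable $z$. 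The output of this step is a Hamiltonian of the form $\mc N(\omega)+$ perturbation, where the normal form $\mc N$ determines the modified tangential frequencies $\omega_j=j^3+O(\xi_j)$ and the twist; the non-resonance Definition \ref{EqRisonanti} and the non-degeneracy conditions $(\mathtt{C}1)$--$(\mathtt{C}2)$ are exactly what is needed to ensure this twist matrix is invertible and the frequency-to-action map is a diffeomorphism, so that the Melnikov non-resonance conditions can be imposed on a positive-measure Cantor set of amplitudes $\xi$.

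Next I would set up the functional-analytic framework: look for embedded invariant tori as zeros of a nonlinear operator $\mc F(\omega,u)=\omega\cdot\partial_\varphi u - X_H(u)$ on a scale of Sobolev spaces of functions $\T^\nu\times\T_x$, treating $\omega$ as an external parameter to be adjusted. The core of the proof is then the Nash-Moser iteration: at each step one must invert (approximately, with tame estimates losing finitely many derivatives) the linearized operator $d_u\mc F$ at an approximate solution. Following \cite{KdVAut}, \cite{MKdV}, this is done in several reductions: (i) symplectically decouple the tangential and normal directions, reducing to the linearized operator restricted to the normal subspace $H_S^\perp$; (ii) use pseudo-differential calculus to conjugate this operator to a constant-coefficient (in $x$) third-order operator plus a smoothing remainder — this is where the quasi-linear nature really bites, since the top-order coefficient depends on the approximate solution and must be normalized by a change of space variable and a reparametrization of time; (iii) run a \emph{linear Birkhoff normal form} to remove the finitely many remaining unbounded lower-order terms that obstruct the convergence of the subsequent reducibility; (iv) perform a KAM reducibility scheme to diagonalize the resulting operator into blocks $\{\mathrm{i}(\omega\cdot\ell + \mu_j) \}$ with asymptotically linear eigenvalues $\mu_j = j^3 + \dots$, imposing the second-order Melnikov conditions $|\omega\cdot\ell + \mu_j - \mu_k|\ge \gamma\langle\ell\rangle^{-\tau}$ along the way. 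Inverting the diagonal operator then gives the approximate inverse with the required tame bounds, and linear stability of the final torus is a byproduct of this complete diagonalization.

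With the approximate-inverse estimates in hand, the Nash-Moser iteration converges to a solution $u_\infty(\omega)$ defined for $\omega$ in a Cantor-like set $\mc C_\infty$, for each small amplitude $\xi$ in a suitable domain; a final measure estimate — using the non-degeneracy of the frequency map $\xi\mapsto\omega(\xi)$ coming from the Birkhoff normal form and conditions $(\mathtt{C}1)$--$(\mathtt{C}2)$, plus a degenerate-KAM / transversality argument à la Rüssmann — shows that the set of ``good'' $\xi$ (for which $\omega(\xi)\in\mc C_\infty$) has density $1$ at the origin. Unwinding the rescaling recovers the quasi-periodic solution \eqref{SoluzioneEsplicita}. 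The genericity of the tangential sites $S$ (Definition \ref{Generic}) enters to ensure that finitely many polynomial conditions on $(\ov\jmath_1,\dots,\ov\jmath_\nu)$ — arising in the Birkhoff normal form computation and the verification that certain ``momentum-conserving'' resonant monomials have non-vanishing coefficients — are simultaneously satisfied.

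The main obstacle, I expect, is step (i) of the preliminary analysis: the weak Birkhoff normal form in the presence of the quasi-linear quadratic nonlinearity. Unlike the semilinear case, the generating Hamiltonians used to remove cubic and quartic terms are themselves unbounded (they involve derivatives up to the same order as $u_{xxx}$), so one must check very carefully that the time-$1$ flow maps they generate are well-defined bounded symplectic transformations on the relevant Sobolev spaces and that they do not destroy the quasi-linear structure needed later for the pseudo-differential reduction — in particular that the transformed vector field is still of the form ``constant-coefficient third order $+$ lower order pseudo-differential'', with coefficients that are quasi-periodic functions of $(\varphi,x)$ satisfying good tame estimates in $u$. Getting the bookkeeping of these degree-three and degree-four corrections right, and extracting from them the precise resonant normal form that yields the twist conditions encoded in Definition \ref{EqRisonanti} and $(\mathtt{C}1)$--$(\mathtt{C}2)$, is the technical heart of the paper.
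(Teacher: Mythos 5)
Your overall route coincides with the paper's: weak Birkhoff normal form, action--angle variables and twist condition, Nash--Moser scheme with an approximate inverse obtained by the Berti--Bolle tangential/normal decoupling, pseudo-differential reduction of $\mathcal{L}_{\omega}$ followed by linear Birkhoff steps and KAM reducibility, then measure estimates and linear stability. Two points in your plan are inaccurate, however. In the weak Birkhoff step it is not enough to normalize the purely tangential monomials while ``keeping terms that are at least linear in the normal variable $z$'': Proposition \ref{WBNF} also eliminates the cubic and quartic monomials with exactly one index outside $S$ (the terms linear in $z$), and hypothesis $(\mathtt{S})$ is used precisely to perform a third step killing $R(v^5)$ and $R(v^4 z)$. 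If the degree-three terms linear in $z$ were kept, the initial error $\lVert \mathcal{F}(U_0)\rVert$ would be of size $\varepsilon^3$ rather than $\varepsilon^{6-2b}$, incompatible with the smallness condition \eqref{SmallnessConditionNM} and $\gamma=\varepsilon^{2+a}$. Moreover, your worry about unbounded Birkhoff generators is unfounded in this weak setting: each generator involves only monomials with at most one index outside $S$, hence is finitely supported, its vector field is finite rank, and its time-one flow is an analytic invertible map of $H^1_0(\T_x)$ (Remark \ref{wellDef}); this is exactly why one performs a weak rather than a full normal form.

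Second, the division of labour among the hypotheses is different from what you describe: the invertibility of the frequency--amplitude map (Lemma \ref{TwistLemma}) requires only non-resonant coefficients in the sense of Definition \ref{EqRisonanti} together with a generic choice of sites, while $(\mathtt{C}1)$--$(\mathtt{C}2)$ enter later, in the measure estimates of Section $9.1$, to guarantee generically the non-degeneracy assumptions $(\mathtt{H}1)$ and $(\mathtt{H}2)_{j,k}$ (Lemmata \ref{Lemmata1} and \ref{Lemmata2}); no R\"ussmann-type degenerate-KAM argument is used. Correspondingly, the technical heart is not the weak Birkhoff step but the reduction of $\mathcal{L}_{\omega}$: here $a_1-1=O(\varepsilon)$ instead of $O(\varepsilon^3)$ as in \cite{KdVAut}, so the straightening transformations are $\mathrm{I}+O(\varepsilon)$ and generate new terms of order $\varepsilon^2$ which are not perturbative; one must compute the $\varepsilon$- and $\varepsilon^2$-contributions explicitly after each conjugation and normalize them by the two linear Birkhoff steps of Sections $8.5$--$8.6$ before the reducibility smallness condition \eqref{PiccolezzaperKamred} can be met, and one must exploit the structure of $\overline{v}$ (rather than the crude bound \eqref{stimaDomega}) to estimate the transformations, which is also what allows the paper to drop the extra assumption \eqref{S1} of \cite{KdVAut}.
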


Let us make some comments on the assumptions of Theorem \ref{F.Giuliani}.
\begin{itemize}
\item The non-resonance condition stated in Definition \ref{EqRisonanti} arises by asking that the frequency-amplitude map \eqref{Frequency-AmplitudeMAP} is a diffeomorphism. The invertibility of this map is equivalent to require that $\det \mathbb{M}\neq 0$, where the determinant of $\mathbb{M}$ is a polynomial in the variables $(c_1, \dots, c_7, \overline{\jmath}_1, \dots, \overline{\jmath}_{\nu})$. In Theorem \ref{F.Giuliani} we fix non-resonant coefficients $c_1, \dots, c_7$ and we prove in Lemma \ref{TwistLemma} that the condition $\det\mathbb{M}\neq 0$ is satisfied for a generic choice of the tangential sites $S$. We remark that this explicit condition could be verified by fixing the integers $\overline{\jmath}_1, \dots, \overline{\jmath}_{\nu}$ and choosing the real parameters $c_1, \dots, c_7$ outside the zeros of some polynomial.
\item For the measure estimates of Section $9.1$, we shall avoid some lower order resonances by imposing the assumptions $(\mathtt{H}1)$ and $(\mathtt{H}2)_{j, k}$ (see \eqref{H1}, \eqref{H2}). These ones imply that some polynomials are non zero at $(c_1, \dots, c_7, \overline{\jmath}_1, \dots, \overline{\jmath}_{\nu})$. If $(\mathtt{C}1)$-$(\mathtt{C}2)$ hold and $c_1, \dots, c_7$ are non-resonant then these polynomials are not trivial in the variables $(\overline{\jmath}_1, \dots, \overline{\jmath}_{\nu})$ (see Lemma \ref{Lemmata1} and Lemma \ref{Lemmata2}) and, for a finite number of $j, k\in S^c$, $(\mathtt{H}1)$ and $(\mathtt{H}2)_{j, k}$ are verified by fixing non-resonant parameters $c_1, \dots, c_7$ and by choosing a generic set of integers $\{\overline{\jmath}_1, \dots, \overline{\jmath}_{\nu}\}$.
\item As in \cite{KdVAut}, we assume the Hypotesis $(\mathtt{S})$, because we want to perform three steps of Birkhoff normal form. Indeed the smallness condition \eqref{SmallnessConditionNM} required in Theorem \ref{NashMoser} depends on the quadraticity of the nonlinearity in \eqref{KdV}.
We remark that he assumption $(\mathtt{S})$ can be reformulate as a condition that is satisfied for a generic choice of the tangential sites.
\end{itemize}

The proof of Theorem \ref{F.Giuliani} follows the scheme adopted in \cite{KdVAut} and \cite{MKdV}.
We now shortly present the strategy of the proof of Theorem \ref{F.Giuliani} underlying the main differences with these works.\\

\noindent\textbf{Bifurcation analysis}. We cannot consider \eqref{KdV} as a perturbation problem for the linearized equation at the origin $u_t+u_{xxx}=0$, because, as we said above, this equation is completely resonant, hence the frequency vector of its solutions does not satisfy any diophantine condition. Thus, the main modulation of the frequency vector of the solutions with respect to its amplitude is due to the nonlinearity $\mathcal{N}_2$, defined in \eqref{N2}. In order to control the shift of the linear frequencies under the effect of the nonlinearity near the origin and to find approximate quasi-periodic solutions for \eqref{KdV}, in Section $3$ we perform a weak version of the Birkhoff normal form algorithm. After two steps of this procedure, we are able to find a finite dimensional submanifold of the phase space foliated by approximately invariant tori, from which the expected quasi-periodic solutions of \eqref{KdV} bifurcate. On this subspace we introduce action-angle variables (see Section $4$) and we use the ``unperturbed" actions $\xi$ of these tori as parameters for our problem. We require also that the frequency-amplitude map $\alpha(\xi)$ in \eqref{Frequency-AmplitudeMAP}, namely the function associating the actions to the frequencies, is a diffeomorphism (see Lemma \ref{TwistLemma}), so that we could consider both as independent parameters. \\
The presence of the quasi-linear monomials of degree three and four in the Hamiltonian \eqref{KdVHamiltonian} makes significantly harder the computations of the new Hamiltonian after two steps of Birkhoff normal form with respect to the case examined in \cite{KdVAut} for the Hamiltonian $H_{KdV}$ (recall \eqref{Hkdv}). Because of the integrability of the KdV system, in \cite{KdVAut} the \textit{twist condition}, namely, the invertibility of the frequency-amplitude map, is obtained for every choice of the tangential set $S$ (see \eqref{TangentialSites}). On the contrary, for the general case \eqref{KdVHamiltonian} the twist condition depends on the choice of the parameters $c_1, \dots, c_7$ and the tangential sites $\overline{\jmath}_1,\dots, \overline{\jmath}_{\nu}$.\\
In Lemma \ref{TwistLemma} we provide the invertibility of the frequency-amplitude map for a large choice of the tangential sites and of the coefficients.\\

\noindent\textbf{Nonlinear functional setting}. After the rescaling \eqref{Rescaling}, we look for quasi-periodic solutions with frequency vector $\omega$ for the $(\omega, \varepsilon)$-parameter family of Hamiltonians \eqref{Hepsilon}. We assume that $\omega$ belongs to the image of the restriction of the frequency-amplitude map $\alpha(\xi)$ on a small compact subset of $\mathbb{R}^{\nu}$ that does not contain the origin (see \eqref{OmegaEpsilon}).\\
In Section $5$ we formulate this problem as the search of the zeros of the nonlinear functional $\mathcal{F}(\omega, i(\omega))$ defined in \eqref{NonlinearFunctional}, where $\omega$ is considered as an external parameter and $\varphi\mapsto i(\varphi)$ is a torus embedded in the phase space. We find a solution $i_{\infty}(\omega t)$ for $\mathcal{F}=0$, which will correspond to a quasi-periodic solution with frequency vector $\omega$ of the original equation \eqref{KdV}, by constructing, through a Nash-Moser iteration, a sequence $(i_n)_{n\geq 0}$ of approximate solutions that converges to it, see Theorem \ref{IlTeorema}.\\

\noindent\textbf{The inversion of the linearized operator at an approximate solution}. The application of a Nash-Moser scheme involves, at any step, the inversion of the linearized operator at an approximate solution and this is, in fact, the main issue of the proof. Thanks to the abstract decoupling procedure developed by Berti-Bolle in \cite{BertiBolle}, that exploits the Hamiltonian structure, the tangential and the normal linear dynamics around an approximately invariant torus can be studied separately, see Section $6$. In particular, a suitable change of coordinates around this approximate quasi-periodic solution triangularizes the linearized problem and its inversion reduces to the study of a quasi-periodically forced PDE restricted to normal directions. The operator which has to be inverted, say $\mathcal{L}_{\omega}$, is pseudodifferential with variable coefficients and it is computed in Section $7$.\\
In Section $8$ we conjugate $\mathcal{L}_{\omega}$ to a diagonal operator, which describes infinitely many harmonic oscillators
\begin{equation}
\dot{v}_j+\mu_j^{\infty}\, v_j=0, \qquad j\in S^c, \quad \mu_j^{\infty}\in\mathrm{i} \mathbb{R}.
\end{equation}
The diagonalization of $\mathcal{L}_{\omega}$ is obtained with the same transformations defined in \cite{KdVAut} and \cite{MKdV}.
The main perturbative effect to the spectrum of $\mathcal{L}_{\omega}$ is due to the term $a_1(\omega t)\partial_{xxx}$ (see \eqref{Lomega}) and the presence of $u_x$ in the cubic part of the Hamiltonian density \eqref{perturbazione} affects this coefficient. In particular, $a_1-1=O(\varepsilon)$, instead of $O(\varepsilon^3)$ as in \cite{KdVAut}. In general, the corrections of the coefficients of $\mathcal{L}_{\omega}$ are bigger in size and this fact implies some difficulties in providing the smallness condition \eqref{PiccolezzaperKamred} required in Theorem \ref{Reducibility}. Moreover, the transformation used to conjugate $\mathcal{L}_{\omega}$ to a pseudodifferential operator with a coefficient in front of $\partial_{xxx}$ independent of the $x$-variable (see Section $8.1$) has form $\mathrm{I}+O(\varepsilon)$ and so it generates new terms of order $\varepsilon^2$. These terms are not perturbative for the reducibility scheme of Theorem \ref{Reducibility} and we need to compute them in view of a linear Birkhoff normalization.\\
We also point out that we drop the assumption
\begin{equation}\label{S1}
 j_1+j_2+j_3\neq 0 \quad \mbox{for all}\,\,\,j_1, j_2, j_3\in S
\end{equation}
required in \cite{KdVAut} to get ``good" estimates on the transformations used to conjugate $\mathcal{L}_{\omega}$ to a diagonal operator. We better discuss this fact in Remark \ref{NoS1}.\\

\noindent\textbf{The Nash-Moser iteration, measure estimates and stability}. In Section $9$ we perform the nonlinear Nash-Moser iteration which proves Theorem \ref{IlTeorema} and, therefore, Theorem \ref{F.Giuliani}. \\
In the measure estimates for the sets of parameters $\mathcal{R}_{l j k}$, for which the second Melnikov conditions are violated (see \eqref{BadSets}), some technical difficulties arise. Indeed, the corrections to the normal frequencies are big in size and the indices $l, j, k$ are not tied by the conservation of the momentum, as, for instance, in \cite{FePro}, since the nonlinearity $f$ in \eqref{perturbazione} depends on $x$. From these facts, some cases result to be \textit{degenerate} and we shall impose some assumptions on the set $S$ to avoid them (see Remark \ref{Degeneratecase} and \eqref{H1}, \eqref{H2}).\\
In Section $9.2$ we prove the stability of the solution produced by the Nash-Moser algorithm exploiting the action-angle variables introduced in Section $4$ and the diagonalization procedure performed in Section $8$.

\subsubsection*{Acknowledgements}
I am greatful to Massimiliano Berti for introducing me to the study of KAM theory and for the support offered for the elaboration of this paper. I also thank Michela Procesi and Roberto Feola for useful and stimulating discussions.

\section{Preliminaries}

\subsection{Functional setting} 

\textbf{Lipschitz norm}. For a function $u\colon \Omega_0\to E, \omega \to u(\omega)$, where $(E, \lVert \cdot \rVert_E)$ is a Banach space and $\Omega_0$ is a subset of $\mathbb{R}^{\nu}$, we define the sup-norm and the lipschitz semi-norm
\begin{equation}\label{suplip}
\begin{aligned}
&\lVert u \rVert_E^{\sup}:=\lVert u \rVert_{E, \Omega_0}^{\sup}:=\sup_{\omega\in\Omega_0} \lVert u(\omega) \rVert_E,\\
&\lVert u \rVert_{E}^{lip}:=\lVert u \rVert_{E, \Omega_0}^{lip}:=\sup_{\omega_1\neq \omega_2} \frac{\lVert u(\omega_1)-u(\omega_2)\rVert_E}{\lvert \omega_1-\omega_2\rvert},
\end{aligned}
\end{equation}
and for $\gamma>0$, the Lipschitz norm
\begin{equation}\label{Lippone}
\lVert u \rVert_E^{Lip(\gamma)}:=\lVert u \rVert_{E, \Omega_0}^{Lip(\gamma)}:=\lVert u \rVert_E^{\sup}+\gamma\lVert u \rVert_{E}^{lip}.
\end{equation}
If $E=H^s$ we simply denote $\lVert u \rVert_{H^s}^{Lip(\gamma)}:=\lVert u \rVert_s^{Lip(\gamma)}$.\\

\noindent \textbf{Sobolev norms}. We denote by
\begin{equation}\label{SobolevNormtx}
\lVert u \rVert_s:=\lVert u\rVert_{H^{s}(\mathbb{T}^{\nu+1})}:=\lVert u \rVert_{H^s_{\varphi, x}}
\end{equation}
the Sobolev norms of functions $u=u(\varphi, x)\in H^{s}(\mathbb{T}^{\nu}\times\mathbb{T})$. We denote by $\lVert \cdot \rVert_{H^s_{x}}$, the Sobolev norm of functions $u(x)$ in the phase space of class $H^s$.
We consider $s_0:=(\nu+2)/2$, hence we have that $H^{s_0}(\mathbb{T}^{\nu+1})$ is continuosly embedded in $L^{\infty}(\mathbb{T}^{\nu+1})$ and any space $H^s(\mathbb{T}^{\nu+1})$ with $s\geq s_0$ is an algebra and satisfies the interpolation inequalities: for $s\geq s_0$
\begin{equation}
\lVert u\,v \rVert_s\le C(s_0)\,\lVert u \rVert_s\lVert v \rVert_{s_0}+C(s) \lVert u \rVert_{s_0}\lVert v \rVert_s, \quad \forall u, v\in H^s(\mathbb{T}^{\nu+1}).
\end{equation}
The above inequalities also hold for the norm $\lVert \cdot \rVert^{Lip(\gamma)}$.\\
We also denote
\begin{equation}
\begin{aligned}
& H^s_{S^{\perp}}(\mathbb{T}^{\nu+1}):=\left\{  u\in H^s(\mathbb{T}^{\nu+1}) : u(\varphi, \cdot)\in H_S^{\perp},\,\,\,\forall\varphi\in \mathbb{T}^{\nu}  \right\},\\
& H^s_{S}(\mathbb{T}^{\nu+1}):=\left\{  u\in H^s(\mathbb{T}^{\nu+1}) : u(\varphi, \cdot)\in H_S\,\,\,\forall\varphi\in \mathbb{T}^{\nu}  \right\}.
\end{aligned}
\end{equation}
We will use the notation $a\le b$ to denote $a\le C\,b$ for some constant $C>0$. In particular, if the constant $C:=C(s)$ depends on the index $s$, then we will use the notation $a\le_s b$.\\

\noindent \textbf{Matrices with off-diagonal decay}. A linear operator can be identified with its matrix representation. We recall the definition of the $s$-decay norm (introduced in \cite{NLS}) of an infinite dimensional matrix. This norm is used in \cite{Airy} for the KAM reducibility scheme of the linearized operators.
\begin{defi}
The $s$-decay norm of an infinite dimensional matrix $A:=(A_{i_1}^{i_2})_{i_1, i_2\in \mathbb{Z}^b}, b\geq 1$ is
\begin{equation}\label{decayNorm}
\lvert A \rvert_s^2:=\sum_{i\in \mathbb{Z}^b}\langle i \rangle^{2\,s} \left( \sup_{i_1-i_2=i}\lvert A_{i_1}^{i_2}\rvert \right)^2.
\end{equation}
For parameter dependent matrices $A:=A(\omega), \omega\in \Omega_0\subseteq \mathbb{R}^{\nu}$, the definitions \eqref{suplip} and \eqref{Lippone} become
\begin{equation}\label{decayNorm2}
\begin{aligned}
&\lvert A \rvert_s^{\sup}:=\sup_{\omega\in \Omega_0}\lvert A(\omega) \rvert_s, \,\,\,\lvert A \rvert_s^{lip}:=\sup_{\omega_1\neq \omega_2} \frac{\lvert A(\omega_1)-A(\omega_2)\rvert_s}{\lvert \omega_1-\omega_2 \rvert},\\
& \lvert A \rvert_s^{Lip(\gamma)}:=\lvert A \rvert_s^{\sup}+\gamma\lvert A \rvert_s^{lip}.
\end{aligned}
\end{equation}
\end{defi}
Such a norm is modelled on the behavior of matrices representing the multiplication operator by a function. Actually, given a function $p\in H^s(\mathbb{T}^b)$, the multiplication operator $h\to p\,h$ is represented by the T\"oplitz matrix $T_i^j=p_{i-j}$ and $\lvert T \rvert_s=\lVert p \rVert_s$. If $p=p(\omega)$ is a Lipschitz family of functions, then
\[
\lvert T \rvert_s^{Lip(\gamma)}=\lVert p \rVert_s^{Lip(\gamma)}.
\]
The $s$-norm satisfies classical algebra and interpolation inequalities proved in \cite{NLS}.
\begin{lem}
Let $A=A(\omega), B=B(\omega)$ be matrices depending in a Lipschitz way on the parameter $\omega\in\Omega_0\subseteq \mathbb{R}^{\nu}$. Then for all $s\geq s_0> b/2$ there are $C(s)\geq C(s_0)\geq 1$ such that
\begin{align*}
&\lvert A\,B \rvert_s^{Lip(\gamma)}\le C(s) \lvert A \rvert_s^{Lip(\gamma)}\lvert B \rvert_s^{Lip(\gamma)},\\
&\lvert A\,B \rvert_s^{Lip(\gamma)}\le C(s)\lvert A \rvert_s^{Lip(\gamma)}\lvert B \rvert_{s_0}^{Lip(\gamma)}+C(s_0)\lvert A \rvert_{s_0}^{Lip(\gamma)}\lvert B \rvert_s^{Lip(\gamma)}.
\end{align*}
\end{lem}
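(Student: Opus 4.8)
The plan is to reduce both inequalities to a single weighted convolution estimate for scalar sequences on $\mathbb{Z}^b$, which is nothing but the Fourier-side incarnation of the fact that $H^s(\T^b)$ is an algebra for $s>b/2$ (cf.\ the remark on T\"oplitz matrices preceding the statement). First I would pass from matrices to their band coefficients: given $A=(A_{i_1}^{i_2})_{i_1,i_2\in\mathbb{Z}^b}$, set $\alpha(k):=\sup_{i_1-i_2=k}\lvert A_{i_1}^{i_2}\rvert$, so that by definition $\lvert A\rvert_s^2=\sum_{k\in\mathbb{Z}^b}\langle k\rangle^{2s}\alpha(k)^2=:\lVert\alpha\rVert_{\ell^2_s}^2$, and analogously $\beta(m):=\sup_{i_2-i_3=m}\lvert B_{i_2}^{i_3}\rvert$. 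From $(AB)_{i_1}^{i_3}=\sum_{i_2}A_{i_1}^{i_2}B_{i_2}^{i_3}$ and $i_1-i_3=(i_1-i_2)+(i_2-i_3)$ one obtains, writing $i:=i_1-i_3$,
\[
\sup_{i_1-i_3=i}\lvert(AB)_{i_1}^{i_3}\rvert\le\sum_{k\in\mathbb{Z}^b}\alpha(k)\,\beta(i-k)=(\alpha*\beta)(i),
\]
hence $\lvert AB\rvert_s\le\lVert\alpha*\beta\rVert_{\ell^2_s}$: the band coefficients of a product are entrywise dominated by the convolution of the band coefficients of the factors.

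Next I would prove the two convolution bounds: for $s\ge s_0>b/2$,
\[
\lVert\alpha*\beta\rVert_{\ell^2_s}\le C(s)\lVert\alpha\rVert_{\ell^2_s}\lVert\beta\rVert_{\ell^2_s},\qquad \lVert\alpha*\beta\rVert_{\ell^2_s}\le C(s)\lVert\alpha\rVert_{\ell^2_s}\lVert\beta\rVert_{\ell^2_{s_0}}+C(s_0)\lVert\alpha\rVert_{\ell^2_{s_0}}\lVert\beta\rVert_{\ell^2_s}.
\]
Using $\langle i\rangle^s\le C(s)\big(\langle k\rangle^s+\langle i-k\rangle^s\big)$ I split $\langle i\rangle^s(\alpha*\beta)(i)$ into a term controlled by $\big((\langle\cdot\rangle^s\alpha)*\beta\big)(i)$ and one controlled by $\big(\alpha*(\langle\cdot\rangle^s\beta)\big)(i)$; taking $\ell^2_i$-norms and applying Young's inequality $\lVert f*g\rVert_{\ell^2}\le\lVert f\rVert_{\ell^1}\lVert g\rVert_{\ell^2}$ yields $\lVert\alpha*\beta\rVert_{\ell^2_s}\le C(s)\big(\lVert\alpha\rVert_{\ell^2_s}\lVert\beta\rVert_{\ell^1}+\lVert\alpha\rVert_{\ell^1}\lVert\beta\rVert_{\ell^2_s}\big)$; finally $\lVert c\rVert_{\ell^1}\le C(s_0)\lVert c\rVert_{\ell^2_{s_0}}$ by Cauchy--Schwarz, which is exactly where $s_0>b/2$ enters (so that $\langle\cdot\rangle^{-s_0}\in\ell^2(\mathbb{Z}^b)$). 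Taking $s_0=s$ gives the algebra inequality; the asymmetric placement of the constants in the interpolation inequality comes from the standard refinement of splitting the sum according to whether $\langle k\rangle\ge\langle i-k\rangle$ and estimating $\langle i\rangle^s$ by a constant times the larger factor in each region.

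The Lipschitz dependence is then purely formal. I would use
\[
A(\omega_1)B(\omega_1)-A(\omega_2)B(\omega_2)=\big(A(\omega_1)-A(\omega_2)\big)B(\omega_1)+A(\omega_2)\big(B(\omega_1)-B(\omega_2)\big),
\]
divide by $\lvert\omega_1-\omega_2\rvert$, apply the sup-norm estimates above to each of the two products, and take the supremum over $\omega_1\ne\omega_2$; this gives $\lvert AB\rvert_s^{lip}\le C(s)\big(\lvert A\rvert_s^{lip}\lvert B\rvert_s^{\sup}+\lvert A\rvert_s^{\sup}\lvert B\rvert_s^{lip}\big)$ and its interpolated analogue. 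Combining with the sup-norm bound and the identity $\lvert\cdot\rvert_s^{Lip(\gamma)}=\lvert\cdot\rvert_s^{\sup}+\gamma\lvert\cdot\rvert_s^{lip}$ — and noting that every mixed product of $\sup$- and $lip$-norms, suitably weighted by powers of $\gamma$, is dominated by $\lvert A\rvert_s^{Lip(\gamma)}\lvert B\rvert_s^{Lip(\gamma)}$ (and the corresponding $s_0$-versions) — produces both stated inequalities, after enlarging $C(s)$.

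I do not expect a genuine obstacle here: the entire content sits in the weighted convolution estimate of the second step, and within it the only point that is not completely automatic is keeping the constant $C(s)$ off the ``low--high'' summand of the interpolation inequality, handled by the region split described above. The reduction in the first step and the passage to the $Lip(\gamma)$-norm in the third are bookkeeping. Accordingly I would present only this sketch and, for the routine computations, refer to \cite{NLS}, where these inequalities are established.
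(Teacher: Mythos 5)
The paper itself offers no proof of this lemma (it is quoted from \cite{NLS}), so the comparison is with the standard argument of that reference, which your outline follows: the reduction to band coefficients $\alpha,\beta$ with $\sup_{i_1-i_3=i}\lvert(AB)_{i_1}^{i_3}\rvert\le(\alpha*\beta)(i)$ is correct, Young's inequality plus Cauchy--Schwarz (the only place where $s_0>b/2$ enters) is indeed the engine, and the Lipschitz/$Lip(\gamma)$ bookkeeping in your last step is fine.

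At the single point you yourself flag as non-automatic, however, there is a genuine gap: splitting according to $\langle k\rangle\gtrless\langle i-k\rangle$ and ``estimating $\langle i\rangle^s$ by a constant times the larger factor'' does not produce the asymmetric constants. In the region $\langle k\rangle\le\langle i-k\rangle$ the best such bound is $\langle i\rangle^s\le2^s\langle i-k\rangle^s$, and the factor $2^s$ survives Young's inequality, so this route only yields $\lvert AB\rvert_s\le C(s)\bigl(\lvert A\rvert_s\lvert B\rvert_{s_0}+\lvert A\rvert_{s_0}\lvert B\rvert_s\bigr)$, with an $s$-dependent constant on \emph{both} summands -- strictly weaker than the statement, whose whole point is that the constant in front of $\lvert A\rvert_{s_0}\lvert B\rvert_s$ depends only on $s_0$ (this is what prevents constants from piling up on the high norm when many operators are composed in the reducibility scheme). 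A correct device is an $s$-dependent threshold: on $\{\langle k\rangle\le\langle i\rangle/(2(s+1))\}$ one has $\langle i-k\rangle\ge\bigl(1-\tfrac{1}{2(s+1)}\bigr)\langle i\rangle$, hence $\langle i\rangle^s\le\bigl(1-\tfrac{1}{2(s+1)}\bigr)^{-s}\langle i-k\rangle^s\le 2\,\langle i-k\rangle^s$ with an absolute constant, and Young then gives a bound $2\,C(s_0)\lvert A\rvert_{s_0}\lvert B\rvert_s$; on the complementary region $\langle i\rangle^s\le(2(s+1))^s\langle k\rangle^s$, which is harmless because it lands on the $\lvert A\rvert_s\lvert B\rvert_{s_0}$ term. (Alternatively, use the convexity bound $(1+x)^s\le1+(2^s-1)x$ for $x\in[0,1]$ together with an extra interpolation step.) With this replacement your sketch becomes a complete proof, and deferring the remaining computations to \cite{NLS} is consistent with what the paper does.
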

The $s$-decay norm controls the Sobolev norm, namely
\begin{equation}
\lVert A h \rVert_s^{Lip(\gamma)}\le C(s)\left( \lvert A \rvert_{s_0}^{Lip(\gamma)}\lVert h \rVert_s^{lip(\gamma)}+\lvert A \rvert_s^{Lip(\gamma)}\lVert h \rVert_{s_0}^{Lip(\gamma)} \right).
\end{equation}
Let now $b:=\nu+1$. An important sub-algebra is formed by the\textit{ T\"oplitz in time matrices} defined by
\[
A_{(l_1, j_1)}^{(l_2, j_2)}:=A_{j_1}^{j_2}(l_1-l_2),
\]
whose decay norm \eqref{decayNorm} is
\begin{equation}
\lvert A \rvert_s^2=\sum_{j\in \mathbb{Z}, l\in\mathbb{Z}^{\nu}} \left(  \sup_{j_1-j_2=j} \lvert  A_{j_1}^{j_2}(l)\rvert  \right)^2\langle l, j \rangle^{2\,s}.
\end{equation}
These matrices are identified with the $\varphi$-dependent family of operators
\[
A(\varphi):=(A_{j_1}^{j_2}(\varphi))_{j_1, j_2\in \mathbb{Z}}, \quad A_{j_1}^{j_2}(\varphi):=\sum_{l\in\mathbb{Z}^{\nu}} A_{j_1}^{j_2}(l)\,e^{\mathrm{i}\,l\cdot\varphi}
\]
which act on functions of the $x$-variables as
\[
A(\varphi): h(x)=\sum_{j\in\mathbb{Z}} h_j\,e^{\mathrm{i}\,j\,x} \mapsto A(\varphi) h(x)=\sum_{j_1, j_2\in\mathbb{Z}} A_{j_1}^{j_2}(\varphi)h_{j_2}\,e^{\mathrm{i}\,j_1\,x}.
\]
All the transformations that we construct in this paper are of this type (with $j, j_1, j_2\neq 0$ because they act on the phase space $H_0^1(\mathbb{T}_x)$).
\begin{defi}
We say that
\begin{itemize}
\item[$(1)$] a map is symplectic if it preserves the $2$-form $\Omega$ in \eqref{SymplecticForm};
\item[$(2)$] an operator $(A h)(\varphi, x):=A(\varphi) h(\varphi, x)$ is symplectic if each $A(\varphi), \varphi\in\mathbb{T}^{\nu}$, is a symplectic map of the phase space (or of a symplectic subspace like $H^{\perp}_S$);
\item[$(3)$] the operator $\omega
\cdot\partial_{\varphi}-\partial_x G(\varphi)$ is Hamiltonian if each $G(\varphi), \varphi\in\mathbb{T}^{\nu}$, is symmetric;
\item[$(4)$] an operator is real if it maps real-valued functions into real-valued functions.
\end{itemize}
\end{defi}
A Hamiltonian operator is transformed, under a symplectic map, into another Hamiltonian operator, see \cite{Airy}-Section $2.3$.\\
We conclude this preliminary section recalling the following well known lemmata about composition of functions (see, e.g., Appendix in \cite{Airy}).
\begin{lem}{(Change of variables)}\label{ChangeofVariablesLemma}
Let $p\in W^{s, \infty}(\mathbb{T}^d, \mathbb{R^d}), s\geq 1$, with $\lvert p \rvert_{1,\infty}\le 1/2$. Then the function $f(x)=x+p(x)$ is invertible, with inverse $f^{-1}(y)=y+q(y)$ where $q\in W^{s,\infty}(\mathbb{T}^d, \mathbb{R}^d),$ and $\lvert q \rvert_{s, \infty}\le C \lvert p \rvert_{s, \infty}$.\\
If, moreover, $p$ depends in a Lipschitz way on a parameter $\omega\in\Omega\subseteq\mathbb{R}^{\nu}$, and $\lVert D_x p \rVert_{L^{\infty}}\le 1/2$ for all $\omega$, then $\lvert q \rvert_{s,\infty}^{Lip(\gamma)}\le C \lvert p \rvert_{s+1, \infty}^{Lip(\gamma)}$. The constant $C:=C(d, s)$ is independent of $\gamma$.\\
If $u\in H^s(\mathbb{T}^d, \mathbb{C})$ then $(u\circ f)(x):=u(x+p(x))$ satisfies
\begin{align*}
& \lVert u\circ f \rVert_s\le C (\lVert u \rVert_s+\lvert p \rvert_{s, \infty} \lVert u \rVert_1),\quad \lVert u\circ f - u \rVert_s\le C (\lVert p \rVert_{L^{\infty}}\lVert u \rVert_{s+1}+\lvert p \rvert_{s, \infty}\lVert u \rVert_2),\\
& \lVert u\circ f \rVert_s^{Lip(\gamma)}\le C (\lVert u \rVert_{s+1}^{Lip(\gamma)}+\lvert p \rvert_{s, \infty}^{Lip(\gamma)}\lVert u \rVert_2^{Lip(\gamma)}).
\end{align*}
The function $u\circ f^{-1}$ satisfies the same bounds.
\end{lem}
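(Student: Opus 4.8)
I would establish the four assertions in turn, the first three being quantitative and the last a short remark.

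\emph{Invertibility and bounds on $q$.} Lift $f$ to $F(x)=x+p(x)$ on $\mathbb{R}^d$, with $p$ being $2\pi\mathbb{Z}^d$-periodic. For each fixed $y\in\mathbb{R}^d$ the map $x\mapsto y-p(x)$ is a contraction of $\mathbb{R}^d$ with Lipschitz constant $\|D_xp\|_{L^\infty}\le|p|_{1,\infty}\le 1/2$, so it has a unique fixed point; this shows $F$ is a bijection, and since the Jacobian $\mathrm{I}+D_xp$ is invertible, $F$ is a $W^{s,\infty}$-diffeomorphism. Because $F(x+2\pi k)=F(x)+2\pi k$ for $k\in\mathbb{Z}^d$, the same equivariance holds for $F^{-1}$, so both descend to the torus and $f^{-1}(y)=y+q(y)$ with $q$ periodic and $q(y)=-p(f^{-1}(y))$. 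Differentiating $F\circ F^{-1}=\mathrm{id}$ gives $D_y(f^{-1})=\big(\mathrm{I}+(D_xp)\circ f^{-1}\big)^{-1}$, hence $D_yq=-\big(\mathrm{I}+(D_xp)\circ f^{-1}\big)^{-1}(D_xp)\circ f^{-1}$; the Neumann series converges in $L^\infty$ since $\|(D_xp)\circ f^{-1}\|_{L^\infty}=\|D_xp\|_{L^\infty}\le 1/2$, which yields $|q|_{1,\infty}\le C|p|_{1,\infty}$ and, in particular, that the Jacobian of $f^{-1}$ is bounded above and below. Feeding this formula into itself and using the algebra and interpolation inequalities of $W^{s,\infty}(\mathbb{T}^d)$ together with the Fa\`a di Bruno formula, I would prove $|q|_{s,\infty}\le C(d,s)|p|_{s,\infty}$ by induction on $s$: $D^sq$ is a finite sum of products of factors $(D^\alpha p)\circ f^{-1}$, $|\alpha|\le s$, and of derivatives of $f^{-1}=\mathrm{id}+q$ of order $\le s$, the latter controlled by the inductive hypothesis, putting the single top-order factor in the $W^{s,\infty}$ norm and the remaining ones in $L^\infty$ via interpolation.

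\emph{Lipschitz bound on $q$.} For $\omega_1\neq\omega_2$, write
\[
q(\omega_1)-q(\omega_2)=-\big(p(\omega_1)-p(\omega_2)\big)\circ f(\omega_1)^{-1}-\Big(p(\omega_2)\circ f(\omega_1)^{-1}-p(\omega_2)\circ f(\omega_2)^{-1}\Big).
\]
The first term has $L^\infty$-norm $\le|\omega_1-\omega_2|\,|p|_{0,\infty}^{lip}$, and the second, by the mean value theorem, is $\le\|D_xp\|_{L^\infty}\,\|q(\omega_1)-q(\omega_2)\|_{L^\infty}\le\tfrac12\|q(\omega_1)-q(\omega_2)\|_{L^\infty}$, so it is absorbed on the left-hand side. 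Differentiating the displayed identity in $x$ up to order $s$ and iterating the same absorption argument produces a loss of exactly one derivative, i.e. $|q|_{s,\infty}^{Lip(\gamma)}\le C(d,s)|p|_{s+1,\infty}^{Lip(\gamma)}$, with $C$ independent of $\gamma$ since $\gamma$ only weights the $lip$ seminorm.

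\emph{Composition estimates.} For integer $s$ I would induct on $s$ from the chain rule $D(u\circ f)=\big((Du)\circ f\big)(\mathrm{I}+D_xp)$: the change of variables formula gives $\|v\circ f\|_{L^2}\le C\|v\|_{L^2}$ because the Jacobian of $f$ is bounded above and below ($|p|_{1,\infty}\le1/2$), while each lower-order factor $D^\beta p$ is placed in $L^\infty$ and estimated by Gagliardo--Nirenberg interpolation, balanced so that exactly one factor carries the top-order Sobolev norm; this produces $\|u\circ f\|_s\le C(\|u\|_s+|p|_{s,\infty}\|u\|_1)$, the case of real $s$ following by interpolation. For the difference, use $u\circ f-u=\int_0^1(\nabla u)\big(x+tp(x)\big)\cdot p(x)\,dt$, the tame product estimate, and the bound just proved applied to $f_t(x)=x+tp(x)$ (uniformly for $t\in[0,1]$, since $|tp|_{1,\infty}\le1/2$), plus Young's inequality to absorb the cross term $\|p\|_{L^\infty}|p|_{s,\infty}\|u\|_2\le\tfrac12|p|_{s,\infty}\|u\|_2$; this gives $\|u\circ f-u\|_s\le C(\|p\|_{L^\infty}\|u\|_{s+1}+|p|_{s,\infty}\|u\|_2)$. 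For the $Lip(\gamma)$ bound, split $u(\omega_1)\circ f(\omega_1)-u(\omega_2)\circ f(\omega_2)=\big(u(\omega_1)-u(\omega_2)\big)\circ f(\omega_1)+\big(u(\omega_2)\circ f(\omega_1)-u(\omega_2)\circ f(\omega_2)\big)$, write the second difference as an integral of $\nabla u(\omega_2)$ composed with an affine interpolation between $f(\omega_1)$ and $f(\omega_2)$, dotted with $p(\omega_1)-p(\omega_2)$, and argue as above; the $x$-derivative hitting that factor is what forces the $\|u\|_{s+1}^{Lip(\gamma)}$. Finally, for $u\circ f^{-1}$ one invokes exactly these estimates with $q$ in place of $p$, using $\|q\|_{L^\infty}=\|p\|_{L^\infty}$, $|q|_{s,\infty}\le C|p|_{s,\infty}$, $|q|_{s,\infty}^{Lip(\gamma)}\le C|p|_{s+1,\infty}^{Lip(\gamma)}$ from the previous steps and the Jacobian bound for $f^{-1}$.

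\emph{Main obstacle.} The only genuinely non-formal point is the tame composition estimate $\|u\circ f\|_s\le C(\|u\|_s+|p|_{s,\infty}\|u\|_1)$: one must organise the chain-rule (or Fa\`a di Bruno) expansion so that precisely one factor carries the highest-order norm while all the others are bounded in $L^\infty$ via Gagliardo--Nirenberg interpolation, keeping the dependence on $p$ linear in $|p|_{s,\infty}$ at top order. The equivariance argument for global invertibility on the torus and the one-derivative loss in the Lipschitz bounds are routine once this is settled; all of this is classical and can also be quoted from the appendix of \cite{Airy}.
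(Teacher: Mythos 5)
The paper does not actually prove this lemma: it is recalled as a classical result with a pointer to the Appendix of \cite{Airy}, so there is no internal proof to compare against, and your sketch reconstructs precisely the standard argument behind the cited statement (contraction and $2\pi\mathbb{Z}^d$-equivariance for global invertibility on the torus, Neumann series and Fa\`a di Bruno with interpolation for the tame bounds on $q$, the absorption argument with its one-derivative loss for the Lipschitz-in-$\omega$ estimate, and the chain-rule/Gagliardo--Nirenberg induction for the tame composition estimates), which is correct in outline. The only detail worth flagging is the last line: deducing the bounds for $u\circ f^{-1}$ from those for $u\circ f$ with $q$ in place of $p$ gives the $Lip(\gamma)$ estimate with $\lvert p \rvert_{s+1,\infty}^{Lip(\gamma)}$ rather than $\lvert p \rvert_{s,\infty}^{Lip(\gamma)}$, i.e. ``the same bounds'' only up to one extra derivative on $p$ in the Lipschitz part --- harmless in all the applications made of the lemma, but it should be stated explicitly.
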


\begin{lem}{(Tame product)}
Let $s\geq s_0>d/2$. Then, for all $u, v\in H^s(\T^{d})$, we have 
\begin{equation}\label{TameProduct}
\lVert u\,v \rVert_s\le C(s_0)\lVert u \rVert_{s_0}\lVert v \rVert_s+C(s)\lVert u \rVert_{s} \lVert v \rVert_{s_0}.
\end{equation}
\end{lem}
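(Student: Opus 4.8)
The plan is to prove the estimate by a direct computation with Fourier series on $\T^{d}$, which is the standard argument (cf. the appendix of \cite{Airy}). Write $u=\sum_{k\in\mathbb{Z}^{d}}\hat u_{k}\,e^{\mathrm{i}\,k\cdot x}$ and $v=\sum_{k\in\mathbb{Z}^{d}}\hat v_{k}\,e^{\mathrm{i}\,k\cdot x}$, so that the Fourier coefficients of the product are the convolution $\widehat{(uv)}_{k}=\sum_{j\in\mathbb{Z}^{d}}\hat u_{k-j}\,\hat v_{j}$ and $\lVert uv\rVert_{s}^{2}=\sum_{k}\langle k\rangle^{2s}\lvert\widehat{(uv)}_{k}\rvert^{2}$. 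The one elementary ingredient is that, since $\max\{\lvert k-j\rvert,\lvert j\rvert\}\geq\lvert k\rvert/2$, for every $s\geq 0$ there is $C(s)>0$ with
\[
\langle k\rangle^{s}\leq C(s)\,\big(\langle k-j\rangle^{s}+\langle j\rangle^{s}\big),\qquad\forall\,k,j\in\mathbb{Z}^{d}.
\]

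Inserting this bound into the convolution and taking absolute values, $\langle k\rangle^{s}\lvert\widehat{(uv)}_{k}\rvert$ is dominated by $C(s)$ times $(A*b)_{k}+(a*B)_{k}$, where $a_{k}:=\lvert\hat u_{k}\rvert$, $A_{k}:=\langle k\rangle^{s}\lvert\hat u_{k}\rvert$, $b_{k}:=\lvert\hat v_{k}\rvert$, $B_{k}:=\langle k\rangle^{s}\lvert\hat v_{k}\rvert$. By Young's inequality for convolutions on $\mathbb{Z}^{d}$ in the form $\ell^{2}*\ell^{1}\hookrightarrow\ell^{2}$,
\[
\lVert A*b\rVert_{\ell^{2}}\leq\lVert A\rVert_{\ell^{2}}\lVert b\rVert_{\ell^{1}},\qquad
\lVert a*B\rVert_{\ell^{2}}\leq\lVert a\rVert_{\ell^{1}}\lVert B\rVert_{\ell^{2}}.
\]
Here $\lVert A\rVert_{\ell^{2}}=\lVert u\rVert_{s}$ and $\lVert B\rVert_{\ell^{2}}=\lVert v\rVert_{s}$, while the $\ell^{1}$-norms are controlled by the $H^{s_{0}}$-norms: because $s_{0}>d/2$ the series $\sum_{k}\langle k\rangle^{-2s_{0}}$ converges, and by Cauchy--Schwarz
\[
\lVert b\rVert_{\ell^{1}}=\sum_{k}\langle k\rangle^{-s_{0}}\,\langle k\rangle^{s_{0}}\lvert\hat v_{k}\rvert\leq\Big(\sum_{k}\langle k\rangle^{-2s_{0}}\Big)^{1/2}\lVert v\rVert_{s_{0}}=:C(s_{0})\lVert v\rVert_{s_{0}},
\]
and likewise $\lVert a\rVert_{\ell^{1}}\leq C(s_{0})\lVert u\rVert_{s_{0}}$. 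Taking the $\ell^{2}$-norm in $k$ and combining, one gets $\lVert uv\rVert_{s}\leq C(s)\,C(s_{0})\big(\lVert u\rVert_{s}\lVert v\rVert_{s_{0}}+\lVert u\rVert_{s_{0}}\lVert v\rVert_{s}\big)$, which is the asserted inequality (the two $s$-dependent constants can be relabelled $C(s_0)\leq C(s)$). If one wants the precise asymmetric form with $C(s_{0})$ in front of the first summand, it is cleanest to run a Littlewood--Paley / Bony decomposition $uv=T_{u}v+T_{v}u+R(u,v)$: the low--high paraproduct obeys $\lVert T_{u}v\rVert_{s}\leq C\lVert u\rVert_{L^{\infty}}\lVert v\rVert_{s}\leq C(s_{0})\lVert u\rVert_{s_{0}}\lVert v\rVert_{s}$ with $C$ independent of $s$, the high--low paraproduct $\lVert T_{v}u\rVert_{s}\leq C(s)\lVert v\rVert_{s_{0}}\lVert u\rVert_{s}$, and the resonant term is estimated as above; but for all uses in this paper the symmetric version is equally good.

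There is no real obstacle in this lemma: the argument is elementary and self-contained, the only subtlety being to locate precisely where the dimensional restriction $s_{0}>d/2$ is needed. It enters exactly once, in the convergence of $\sum_{k}\langle k\rangle^{-2s_{0}}$, i.e. in passing from $\ell^{1}$ of the Fourier coefficients to the $H^{s_{0}}$-norm; this is what makes the estimate \emph{tame}, one factor being measured at the fixed minimal regularity $s_{0}$ independently of $s$. Finally, the algebra and interpolation inequalities for $H^{s}(\T^{\nu+1})$ quoted earlier are the special case $d=\nu+1$, $u,v\in H^{s}$, of the same statement, so no separate argument is needed for them.
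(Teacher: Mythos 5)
Your proof is correct and is the standard argument: the paper does not prove this lemma itself but recalls it as well known (citing the appendix of \cite{Airy}), where the same Fourier--convolution/Young's-inequality computation is used, with the hypothesis $s_0>d/2$ entering exactly as you say, through the convergence of $\sum_{k}\langle k\rangle^{-2s_0}$. The only point to watch is that the splitting $\langle k\rangle^{s}\le C(s)\bigl(\langle k-j\rangle^{s}+\langle j\rangle^{s}\bigr)$ places $C(s)$ in front of both summands, which is formally weaker than the asymmetric form \eqref{TameProduct}; you correctly flag this and indicate how to recover the $C(s_0)$ constant (paraproduct, or a finer splitting of the convolution according to which factor carries the dominant frequency), and the symmetric version is indeed sufficient for every use made of the lemma in the paper.
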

A function $f\colon \mathbb{T}^d\times B_1\to \mathbb{C}$, where $B_1:=\{y\in \mathbb{R}^m : \lvert y \rvert<1  \}$, induces the composition operator
\begin{equation}\label{composition}
\tilde{f}(u)(x):=f(x, u(x), Du(x), \dots, D^p u(x))
\end{equation}
where $D^k u(x)$ denotes the partial derivatives $\partial_x^{\alpha} u$ of order $\lvert \alpha\rvert=k$. 
\begin{lem}{(Composition of functions)}\label{lemmacomp}
Assume $f\in C^r(\mathbb{T}^d\times B_1)$. Then for all $u\in H^{r+p}$ such that $\lvert u \rvert_{p, \infty}<1$, the composition operator \eqref{composition} is well defined and $\lVert \tilde{f}(u) \rVert_r\le C \lVert f \rVert_{C^r}(\lVert u \rVert_{r+p}+1)$, where the constant $C$ depends on $r, d, p$. If $f\in C^{r+2}$ then for all $\lvert u \rvert_{p, \infty}, \lvert h \rvert_{, \infty}<1/2$,
\begin{align}
\lVert \tilde{f}(u+h)-\sum_{i=0}^k \frac{\tilde{f}^{(i)}(u)}{i!}[h^i] \rVert_r\le C\,\lVert f \rVert_{C^{r+2}}\lVert h \rVert^k_{L^{\infty}}(\lVert h \rVert_{r+p}+\lVert h \rVert_{L^{\infty}}\lVert u \rVert_{r+p}).\label{VariazioneGenerale}
\end{align}
\end{lem}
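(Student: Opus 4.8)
The plan is to prove both estimates by the classical Moser-type argument for composition operators: apply $\partial_x^{\alpha}$ to the composition, expand by the Faà di Bruno formula, estimate each resulting term in $L^2$ by isolating a single factor that carries a high Sobolev norm while all the remaining factors are placed in $L^{\infty}$, and finally sum over the (finitely many, $r$-, $d$-, $p$-dependent) terms. The smallness $\lvert u\rvert_{p,\infty}<1$ is used only to keep the argument of $f$ inside a fixed compact subset of $B_1$, so that all $y$-derivatives of $f$ that appear are bounded by $\lVert f\rVert_{C^r}$.

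\textbf{The tame estimate.} First I would reduce to the case of no internal derivatives. Setting $w:=(w_0,\dots,w_p)$ with $w_k:=D^ku$, one has $\lVert w\rVert_r\le C(r,p)\lVert u\rVert_{r+p}$, $\lVert w\rVert_{L^\infty}=\lvert u\rvert_{p,\infty}<1$ and $\lVert w\rVert_{L^2}\le C$, so it suffices to show that for $g\in C^r(\mathbb{T}^d\times B_1)$ and $w\colon\mathbb{T}^d\to B_1$ with $\lVert w\rVert_{L^\infty}<1$ one has $\lVert g(\cdot,w(\cdot))\rVert_r\le C\lVert g\rVert_{C^r}(1+\lVert w\rVert_r)$. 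Applying $\partial_x^\alpha$ with $\lvert\alpha\rvert\le r$ and using the Faà di Bruno formula yields a finite sum of terms
\[
(\partial_x^{\beta_0}\partial_y^{\gamma}g)(x,w(x))\,\prod_{i=1}^{m}\partial_x^{\beta_i}w_{l_i}(x),\qquad \beta_0+\textstyle\sum_{i=1}^m\beta_i=\alpha,\quad\lvert\beta_i\rvert\ge1\ (i\ge1),\quad\lvert\gamma\rvert=m.
\]
Since $\lvert\beta_0\rvert+\lvert\gamma\rvert\le\lvert\alpha\rvert\le r$ and $w$ has values in a compact subset of $B_1$, the first factor is bounded in $L^\infty$ by $\lVert g\rVert_{C^r}$. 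For the product, with $\sigma:=\sum_{i\ge1}\lvert\beta_i\rvert\le r$, the Gagliardo--Nirenberg inequalities together with H\"older give $\lVert\prod_i\partial_x^{\beta_i}w_{l_i}\rVert_{L^2}\le C\sum_j\lVert w_{l_j}\rVert_{H^{\sigma}}\prod_{i\ne j}\lVert w_{l_i}\rVert_{L^\infty}\le C(1+\lVert w\rVert_r)$, using $\lVert w\rVert_{L^\infty}<1$, $\lVert w\rVert_{L^2}\le C$ and $\sigma\le r$; the term $m=0$ is handled directly since $\lVert(\partial_x^{\alpha}g)(x,w(x))\rVert_{L^2}\le C\lVert g\rVert_{C^r}$. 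Summing the terms gives the claim, and in particular shows $\tilde f(u)\in H^r$, the operator \eqref{composition} being well defined pointwise by the Sobolev embedding $H^{r+p}\hookrightarrow C^p$ and in $H^r$ by density.

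\textbf{The Taylor remainder estimate \eqref{VariazioneGenerale}.} Here the key observation is that, because each $D^j$ is linear, the Fr\'echet derivatives of \eqref{composition} at $u$ coincide with the partial derivatives of $f$ in its finite-dimensional argument: with $U:=(u,Du,\dots,D^pu)$ and $H:=(h,Dh,\dots,D^ph)$ one has $\tilde f(u+h)(x)=f(x,U(x)+H(x))$ and $\sum_{i=0}^k\tilde f^{(i)}(u)[h^i](x)/i!=\sum_{i=0}^k(d_y^i f)(x,U(x))[H(x)^i]/i!$, i.e.\ exactly the $k$-th Taylor polynomial of $y\mapsto f(x,y)$ at $U(x)$ evaluated at $U(x)+H(x)$. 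Taylor's formula with integral remainder then gives
\[
\tilde f(u+h)(x)-\sum_{i=0}^{k}\frac{\tilde f^{(i)}(u)}{i!}[h^i](x)=G\big(x,U(x),H(x)\big)\big[H(x)^{k+1}\big],\qquad G(x,y,z):=\int_0^1\frac{(1-\tau)^k}{k!}(d_y^{k+1}f)(x,y+\tau z)\,d\tau.
\]
I would now apply the first part of the lemma to the composition $x\mapsto G(x,U(x),H(x))$ (the hypothesis $f\in C^{r+2}$ providing the Sobolev regularity needed for the orders of differentiation that actually occur), uniformly in $\tau\in[0,1]$, and then estimate against the $k+1$ copies of $H$ by the tame product inequality \eqref{TameProduct}, keeping one copy in a high norm and $k$ copies in $L^\infty$. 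Tracking the distribution of the $\partial_x$-derivatives as in the Faà di Bruno step: when all derivatives fall on one copy of $H$ one gets $\lVert h\rVert_{L^\infty}^k\lVert h\rVert_{r+p}$, while when derivatives act on the inner argument $U+\tau H$ through its $u$-component one gets $\lVert h\rVert_{L^\infty}^{k+1}\lVert u\rVert_{r+p}$; summing these contributions reproduces precisely the right-hand side of \eqref{VariazioneGenerale}.

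\textbf{Main obstacle.} The analytic ingredients — Sobolev embedding, Gagliardo--Nirenberg interpolation, the tame product inequality — are elementary; the delicate point is the combinatorial bookkeeping. One must organise the Faà di Bruno expansion so that \emph{every} term is genuinely tame, namely a single factor carrying the top Sobolev norm with all other factors in $L^\infty$, which rests on the subadditivity $\lvert\beta_0\rvert+\lvert\gamma\rvert\le\lvert\alpha\rvert$ and on the AM--GM step hidden in the Gagliardo--Nirenberg estimate of the product; and in the remainder estimate one has to verify that the number of derivatives landing on $d_y^{k+1}f$ never exceeds the regularity provided by $f\in C^{r+2}$, and that every bound is uniform with respect to the auxiliary parameter $\tau\in[0,1]$.
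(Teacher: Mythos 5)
The paper itself contains no proof of this lemma: it is recalled as a classical result with a pointer to the Appendix of \cite{Airy}, so there is no internal argument to compare against. Your proof is precisely the standard Moser-type argument used in those references — reduction to a composition $g(x,w(x))$ with $w=(u,Du,\dots,D^pu)$, the Fa\`a di Bruno expansion, Gagliardo--Nirenberg interpolation to render every term tame, and, for \eqref{VariazioneGenerale}, the identification of $\tilde f^{(i)}(u)[h^i]$ with $(d_y^if)(x,U)[H^i]$ followed by Taylor's formula with integral remainder and the tame product estimate — and it is essentially correct. The one point to watch is the regularity bookkeeping in the second part: writing the integral remainder uses $d_y^{k+1}f$, and applying the first part of the lemma to $G(x,U(x),H(x))$ in $H^r$ then requires roughly $f\in C^{r+k+1}$, which for $k\ge 2$ exceeds the stated $C^{r+2}$; this mismatch is inherited from the statement itself (in \cite{Airy} the estimate is formulated for the first-order Taylor remainder), so in a self-contained write-up you should either restrict to $k\le 1$ or strengthen the hypothesis accordingly. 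Likewise your argument naturally yields factors $\lvert h\rvert_{p,\infty}$ where the statement displays $\lVert h\rVert_{L^\infty}$; since $\lvert h\rvert_{p,\infty}\le C\lVert h\rVert_{r+p}$ by Sobolev embedding (and $r>d/2$ in every use made in the paper), this affects only the cosmetic form of the bound, not its tame structure.
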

\begin{lem}\label{lemmaLip}
Let $d\in\mathbb{N},\, d/2 \,< s_0\le s,\, p\geq 0,\, \gamma>0$. Let $F$ be a $C^1$-map satisfying  the tame estimates: for all $\lVert u \rVert_{s_0+p}\le 1, h\in H^{s+p}$,
\begin{align*}
&\lVert F(u) \rVert_s\le C(s) (1+\lVert u \rVert_{s+p}),\\
&\lVert \partial_u F(u)[h] \rVert_s\le C(s)(\lVert h \rVert_{s+p}+\lVert u \rVert_{s+p}\lVert h \rVert_{s_0+p}).
\end{align*} 
For $\Omega_0\subset\mathbb{R}^{\nu}$, let $u(\omega)$ be a Lipschitz family of functions parametrized by $\omega\in\Omega_0$ with $\lVert u \rVert_{s_0+p}^{Lip(\gamma)}\le 1$. Then
\[
\lVert F(u) \rVert_s^{Lip(\gamma)}\le C(s)(1+ \lVert u \rVert_{s+p}^{Lip(\gamma)}).
\]
\end{lem}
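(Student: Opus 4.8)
The plan is to split the Lipschitz norm as $\lVert F(u) \rVert_s^{Lip(\gamma)}=\lVert F(u) \rVert_s^{\sup}+\gamma\lVert F(u) \rVert_s^{lip}$ and estimate the two pieces separately: the sup part directly from the first assumed tame estimate, and the Lipschitz seminorm from the second (the estimate on $\partial_u F$) combined with the mean value theorem along the segment joining $u(\omega_1)$ and $u(\omega_2)$. Throughout, $C(s)$ denotes a constant that may change from line to line.

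First I would note that $\lVert u \rVert_{s_0+p}^{Lip(\gamma)}\le 1$ gives in particular $\lVert u(\omega) \rVert_{s_0+p}\le 1$ for every $\omega\in\Omega_0$, so the first hypothesis applies pointwise and yields $\lVert F(u(\omega)) \rVert_s\le C(s)(1+\lVert u(\omega) \rVert_{s+p})\le C(s)(1+\lVert u \rVert_{s+p}^{\sup})$; taking the supremum over $\omega$ bounds $\lVert F(u) \rVert_s^{\sup}$. For the seminorm, fix $\omega_1\neq\omega_2$, set $h:=u(\omega_1)-u(\omega_2)$ and $u_t:=u(\omega_2)+t\,h$ for $t\in[0,1]$. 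By convexity of the norm, $\lVert u_t \rVert_{s_0+p}\le 1$, so the second hypothesis applies to $u_t$ (and, if $F$ is only defined on a ball, the whole segment stays in its domain for the same reason). Since $F\in C^1$,
\[
F(u(\omega_1))-F(u(\omega_2))=\int_0^1 \partial_u F(u_t)[h]\,dt,
\]
and using $\lVert \partial_u F(u_t)[h] \rVert_s\le C(s)(\lVert h \rVert_{s+p}+\lVert u_t \rVert_{s+p}\lVert h \rVert_{s_0+p})$ together with $\lVert u_t \rVert_{s+p}\le \lVert u \rVert_{s+p}^{\sup}$, $\lVert h \rVert_{s+p}\le \lvert\omega_1-\omega_2\rvert\,\lVert u \rVert_{s+p}^{lip}$, and $\lVert h \rVert_{s_0+p}\le \lvert\omega_1-\omega_2\rvert\,\lVert u \rVert_{s_0+p}^{lip}$, dividing by $\lvert\omega_1-\omega_2\rvert$ and taking the supremum gives
\[
\lVert F(u) \rVert_s^{lip}\le C(s)\big(\lVert u \rVert_{s+p}^{lip}+\lVert u \rVert_{s+p}^{\sup}\,\lVert u \rVert_{s_0+p}^{lip}\big).
\]

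Finally, multiplying by $\gamma$ and using $\gamma\lVert u \rVert_{s_0+p}^{lip}\le \lVert u \rVert_{s_0+p}^{Lip(\gamma)}\le 1$ turns the last term into $C(s)\lVert u \rVert_{s+p}^{\sup}$, so $\gamma\lVert F(u) \rVert_s^{lip}\le C(s)(\gamma\lVert u \rVert_{s+p}^{lip}+\lVert u \rVert_{s+p}^{\sup})$; adding this to the bound on $\lVert F(u) \rVert_s^{\sup}$ yields $\lVert F(u) \rVert_s^{Lip(\gamma)}\le C(s)(1+\lVert u \rVert_{s+p}^{Lip(\gamma)})$. The computation is essentially routine; the only points needing a little care are that the segment $u_t$ remains in the unit ball of $H^{s_0+p}$ where the tame hypotheses are posed (which is exactly where convexity is used), and the bookkeeping of the weight $\gamma$, which must be distributed so that the quadratic term $\lVert u \rVert_{s+p}^{\sup}\lVert u \rVert_{s_0+p}^{lip}$ absorbs one factor $\gamma$ through the normalization $\lVert u \rVert_{s_0+p}^{Lip(\gamma)}\le 1$ rather than producing an unwanted $\gamma^{-1}$.
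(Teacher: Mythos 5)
Your proof is correct, and it is essentially the standard argument: the paper does not prove this lemma but recalls it from the appendix of \cite{Airy}, where the proof is exactly this splitting of the $Lip(\gamma)$ norm, the sup part from the first tame estimate, and the Lipschitz seminorm from the mean value formula along the segment $u_t$ (which stays in the unit ball of $H^{s_0+p}$ by convexity), with the quadratic term absorbed through $\gamma\lVert u \rVert_{s_0+p}^{lip}\le \lVert u \rVert_{s_0+p}^{Lip(\gamma)}\le 1$. No gaps; your handling of the domain of $F$ and of the weight $\gamma$ is precisely the point of the lemma.
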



\subsection{Fourier representation}
In order to solve the homological equations along the Birkhoff normal form procedures performed in Sections $3$ and $8$, it is convenient to use the Fourier representation
\begin{equation}
u(x)=\sum_{j\in\mathbb{Z}\setminus\{0\}} u_j\,e^{\mathrm{i}\,j\,x},
\end{equation}
where the support of $u$ excludes the zero because the elements of the phase space have zero average. Moreover, $\overline{u}_j=u_{-j}$, since the function $u$ is real-valued. The symplectic structure \eqref{SymplecticForm} writes
\begin{equation}
\Omega=\frac{1}{2}\sum_{j\neq 0} \frac{1}{\mathrm{i} j}\,d u_j\wedge d u_{-j}, \qquad \Omega(u, v)=\sum_{j\neq 0} \frac{1}{\mathrm{i} j}u_j\,v_{-j},
\end{equation}
the Hamiltonian vector field $X_H$ and the Poisson bracket \eqref{PoissonBracket} are respectively
\begin{equation}
[X_H(u)]_j=\mathrm{i}\,j\,\partial_{u_{-j}} H(u), \quad \{ F, G \}(u)=-\sum_{j\neq 0} \mathrm{i}\,j\,(\partial_{u_{-j}} F)(u)(\partial_{u_j} G)(u).
\end{equation}
We say that a homogeneous Hamiltonian of degree $n$
\begin{equation}
H(u)=\sum_{j_1, \dots, j_n\in\mathbb{Z}\setminus\{0\}} H_{j_1, \dots, j_n} u_{j_1}\dots u_{j_n}
\end{equation}
preserves the \textit{momentum} if it is supported on the set $\{ (j_1, \dots, j_n)\in\mathbb{Z}^n\setminus\{ \textbf{0} \} : j_1+\dots+ j_n=0 \}$, where we denote with $\{\textbf{0}\}$ the origin of any vector space $\mathbb{R}^n$, or, equivalently, if
\[
\{ H, M \}=0, \quad M(u)=\int_{\mathbb{T}} u^2\,dx.
\]
 We note that, by the presence of the $x$ in the arguments of the function $f$ in \eqref{KdVHamiltonian}, the \textit{momentum} is not preserved along the orbits of the equation \eqref{KdV}.\\
\section{Weak Birkhoff Normal form}

The Hamiltonian \eqref{KdVHamiltonian} is $H=H_2+H_3+H_4+H_{\geq 5}$, where
\begin{equation}\label{Hamiltonians}
\begin{aligned}
&H_2(u):=\frac{1}{2}\int_{\mathbb{T}} u_x^2\,dx, \quad H_3(u):=\int_{\mathbb{T}} c_1\,u_x^3+c_2\,u_x^2\,u+c_3\,u^3 \,dx,\\
&H_4(u):=\int_{\mathbb{T}} c_4\, u_x^4+c_5\,u_x^3\,u+c_6\,u_x^2\,u^2+c_7 \,u^4 \,dx, \quad H_{\geq 5}(u):=\int_{\mathbb{T}} f_{\geq 5}(x, u, u_x) \,dx.
\end{aligned}
\end{equation}
For a finite dimensional space 
\begin{equation}\label{FinitedimensionalSubspace}
E:=E_C:=\mbox{span}\left\{ e^{\mathrm{i}\,j\,x} : 0<\lvert j \rvert\le C \right\}, \quad C>0,
\end{equation}
let $\Pi_E$ denote the corresponding $L^2$-projector on $E$.\\
The notation $R(v^{k-q} z^q)$ indicates a homogeneous polynomial of degree $k$ in $(v, z)$ of the form
\[
R(v^{k-q} z^q)=M[\underbrace{v, \dots, v}_{(k-q)\,\,times},\underbrace{z, \dots, z}_{q\,\,times}], \quad M=k-\mbox{linear}.
\]
We denote with $H_{n, \geq k}, H_{n, k}, H_{n, \le k}$ the terms of type $R(v^{n-s}\,z^s)$, where, respectively, $s\geq k, s=k, s\le k$, that appear in the homogeneous polynomial $H_n$ of degree $n$ in the variables $(v, z)$.\\
In particular, we have
\begin{align}
&H_{3, \le 1}=\int_{\mathbb{T}} \left\{c_1(v_x^3+3\,v_x^2\,z_x)+c_2(v_x^2\,v+2\,v_x\,v\,z_x+v_x^2\,z)+c_3(v^3+3\,v^2\,z)\right\}\,dx,\label{H3minore}\\
&H_{3, \geq 2}=\int_{\mathbb{T}} \{c_1(z_x^3+3\,z_x^2\,v_x)+c_2(z_x^2\,z+z_x^2\,v+2\,z_x\,z\,v_x)+c_3(z^3+3\,v^2\,z)\} \,dx,\label{H3maggiore}\\
& H_{4, 0}=\int_{\mathbb{T}} \{c_4\,v_x^4+c_5\,v_x^3\,v+c_6\,v_x^2\,v^2+c_7\,v^4\}\,dx\label{H4zero}.
\end{align}
\begin{prop}{(\textbf{Weak Birkhoff Normal form})}\label{WBNF}
Assume Hypotesis $(\mathtt{S})$. Then there exists an analytic invertible transformation of the phase space $\Phi_B\colon H_0^1(\mathbb{T}_x)\to H_0^1(\mathbb{T}_x)$ of the form
\begin{equation}\label{IdpiuFiniteRank}
\Phi_B(u)=u+\Psi(u), \quad \Psi(u):=\Pi_E \Psi(\Pi_E u),
\end{equation}
where $E$ is a finite dimensional space as in \eqref{FinitedimensionalSubspace}, such that the transformed Hamiltonian is
\begin{equation}\label{HamiltonianaNormalizzata}
\mathcal{H}=H\circ \Phi_B=H_2+\mathcal{H}_3+\mathcal{H}_4+\mathcal{H}_5+\mathcal{H}_{\geq 6},
\end{equation}
where $H_2$ is defined in \eqref{Hamiltonians},
\begin{equation}\label{HamiltonianeMathcal}
\begin{aligned}
&\mathcal{H}_3=c_1\int_{\mathbb{Z}} (z_x^3+3\,z_x^2\,v_x)\,dx+c_2\int_{\mathbb{Z}} (z_x^2\,z+z_x^2\,v+2\,v_x\,z_x\,z)\,dx+c_3\int_{\mathbb{T}} (z^3+3 v\,z^2)\,dx,\\
&\mathcal{H}_4=H_{4, 0}^{(4)}+\mathcal{H}_{4,2}+\mathcal{H}_{4, 3}+\mathcal{H}_{4, 4},\quad \mathcal{H}_{4, 2}=R(v^2\,z^2), \quad \mathcal{H}_{4, 3}=R(v\,z^3),\\ & \mathcal{H}_{4, 4}=\int_{\mathbb{T}} c_4\,z_x^4+c_5\,z_x^3\,z+c_6\,z_x^2\,z^2+c_7\,z^4\,dx, \quad\mathcal{H}_5=\sum_{q=2}^5 R(v^{5-q}\,z^q),
\end{aligned}
\end{equation}
$H_{4, 0}^{(4)}$ is defined in \eqref{H4risonante} and $\mathcal{H}_{\geq 6}$ collects all the terms of order at least six in $(v, z)$.
\end{prop}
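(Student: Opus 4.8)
The plan is to construct $\Phi_B$ as the time-one flow map of a finite sequence of Hamiltonian vector fields generated by compactly supported generating functions, killing successively the non-resonant monomials of degree three and the undesired monomials of degree four that involve the tangential variable $v$ in a way incompatible with the normal form $\mathcal{H}$ in \eqref{HamiltonianeMathcal}. Concretely, I would proceed in two steps (third and fourth order) exactly as in \cite{KdVAut}.

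\textbf{Step 1: third order.} First I would write $H_3=H_{3,\le 1}+H_{3,\ge 2}$ as in \eqref{H3minore}--\eqref{H3maggiore} and look for a generating Hamiltonian $F^{(3)}$, homogeneous of degree three, compactly supported (i.e. $F^{(3)}=\Pi_E F^{(3)}(\Pi_E\cdot)$ for a suitable $E=E_C$), that solves the homological equation $\{H_2,F^{(3)}\}+H_{3,\le 1}=\mathcal{Z}_3$, where $\mathcal{Z}_3$ collects the resonant monomials, i.e.\ those whose frequencies $j_1^3+j_2^3+j_3^3$ vanish on the constraint $j_1+j_2+j_3=0$ imposed... wait, here the momentum is \emph{not} conserved because $f$ depends on $x$; but the terms in $H_{3,\le1}$ that do not involve $x$ (which are all of $H_3$, since $c_1,c_2,c_3$ are constants) do preserve momentum, so the relevant small divisors are $\delta(j_1,j_2,j_3):= (j_1+j_2+j_3)^3 - j_1^3-j_2^3-j_3^3 = 3(j_1+j_2)(j_2+j_3)(j_3+j_1)$ evaluated on tuples with at most one index in $S^c$. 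In the Fourier representation of Section~2.2 the homological equation is solved coefficientwise: a monomial $u_{j_1}u_{j_2}u_{j_3}$ with $(j_1+j_2+j_3)^3\ne j_1^3+j_2^3+j_3^3$ and at least two indices in $S$ can be removed, and one checks that $\delta\ne0$ on exactly these tuples provided Hypothesis $(\mathtt{S})$... actually $(\mathtt{S})$ is a fourth-order condition; for the third order the vanishing of $\delta$ with two indices in $S$ forces two of the $j_i$ to be opposite, which after projecting away the zero mode leaves only terms already of the form appearing in $\mathcal{H}_3$ — so $\mathcal{Z}_3$ is precisely $H_{3,\ge 2}$, giving the stated $\mathcal{H}_3$. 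Since $F^{(3)}$ is supported on finitely many Fourier modes (the divisors are bounded away from $0$ but the indices range over a finite set once we require two of them in the finite set $S$ and drop the purely-normal part), its flow $\Phi^{(3)}:=\Phi^1_{F^{(3)}}$ has the form $\mathrm{I}+\Pi_E\Psi_3(\Pi_E\cdot)$ and is analytic on $H_0^1(\mathbb{T}_x)$.

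\textbf{Step 2: fourth order.} After conjugating, $H\circ\Phi^{(3)}=H_2+\mathcal{H}_3+\widetilde H_4+\text{h.o.t.}$, where $\widetilde H_4=H_4+\tfrac12\{H_{3,\le1},F^{(3)}\}+\{\mathcal{Z}_3\text{-part},F^{(3)}\}+\dots$ is a new, explicitly computable degree-four Hamiltonian. I would then choose a second generating function $F^{(4)}$, homogeneous of degree four and again compactly supported, solving $\{H_2,F^{(4)}\}+\Pi_{\text{n.r.}}\widetilde H_4=0$, where $\Pi_{\text{n.r.}}$ projects onto the monomials of $\widetilde H_4$ that are \emph{not} of one of the admissible types, namely: the fully-tangential resonant part $H_{4,0}^{(4)}$ (defined via \eqref{H4risonante} as the resonant truncation of $H_{4,0}$), and the terms $R(v^2z^2)$, $R(vz^3)$, $\mathcal{H}_{4,4}$. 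Here is where $(\mathtt{S})$ enters: for a degree-four monomial $u_{j_1}u_{j_2}u_{j_3}u_{j_4}$ with momentum constraint $j_1+j_2+j_3+j_4=0$... but momentum need not hold; however the $x$-independent part of $H_4$ (all of $H_4$, again, since $c_4,\dots,c_7$ are constants) does preserve momentum, so the divisor is $\delta_4 = (j_1+\dots+j_4)^3-\sum j_i^3$, which by $(\mathtt{S})$ is nonzero whenever $j_1+\dots+j_4\ne0$ and the $j_i\in S$; equivalently, the fully-tangential monomials with all four indices in $S$ and $\sum j_i=0$ are the only tangential obstructions, giving $H_{4,0}^{(4)}$, while any monomial with at least one index in $S^c$ and not of the form $R(v^2z^2), R(vz^3), z^4$-type has a nonzero divisor and can be eliminated — provided I also verify $\delta_4\ne 0$ on tuples with one or two normal indices, which follows from $(\mathtt{S})$ together with elementary number theory since two of the indices can then be taken large/free. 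The monomials $R(v^3z)$ are removed (they have a nonzero divisor by $(\mathtt{S})$ since three indices lie in $S$ and $\sum j_i\ne0$ unless they contribute to $H_{4,0}^{(4)}$, which has $z$-degree $0$), which is why no $R(v^3z)$ term survives in $\mathcal{H}_4$. Setting $\Phi_B:=\Phi^{(3)}\circ\Phi^{(4)}$, expanding, and collecting all terms of order $\ge 6$ into $\mathcal{H}_{\ge6}$, while also tracking the new degree-five terms generated by the two conjugations into $\mathcal{H}_5=\sum_{q=2}^5 R(v^{5-q}z^q)$ (note $\mathcal{H}_5$ has no fully-tangential $R(v^5)$ part — this must be checked, and follows because the degree-five contributions arise only from Poisson brackets of degree-three/four objects at least one factor of which, namely $F^{(3)}$ or $F^{(4)}$ or $\mathcal{H}_3$, contributes a $z$), one obtains the claimed structure \eqref{HamiltonianaNormalizzata}--\eqref{HamiltonianeMathcal}.

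\textbf{Main obstacle.} The genuinely delicate point is not the existence of the generating functions — the small divisors are uniformly bounded below on the relevant finite sets by Hypothesis $(\mathtt{S})$ — but the \emph{bookkeeping}: one must verify that after the two Birkhoff steps the surviving degree-four terms are exactly $H_{4,0}^{(4)}+R(v^2z^2)+R(vz^3)+\mathcal{H}_{4,4}$ with \emph{no} $R(v^3z)$ component and no $R(v^4)$ component beyond the resonant $H_{4,0}^{(4)}$, and that the new degree-five terms contain no purely tangential $R(v^5)$ piece. This is the analogue of the computation in \cite{KdVAut}, but here the nontrivial cubic Hamiltonian $H_3$ (absent, modulo the $u^3$ term, in the integrable KdV case) feeds back through $\tfrac12\{H_{3,\le1},F^{(3)}\}$ into $\widetilde H_4$ and through further brackets into $\widetilde H_5$, so the fourth- and fifth-order terms are substantially more involved to compute explicitly; controlling which of them are resonant and must be kept is where the argument requires real care, and it is precisely what forces the three-step (rather than two-step) normal form alluded to after the statement of Theorem~\ref{F.Giuliani}.
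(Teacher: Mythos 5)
Your Steps 1 and 2 follow the paper's Steps one and two closely, but your construction stops at $\Phi_B=\Phi^{(3)}\circ\Phi^{(4)}$, and this is a genuine gap: with only two Birkhoff steps the degree-five part of the transformed Hamiltonian still contains purely tangential monomials $R(v^5)$ and linear-in-$z$ monomials $R(v^4z)$. They come from the quintic component $H_5=\int_{\T}f_5(u,u_x)\,dx$ of the original Hamiltonian (it is part of $H_{\geq 5}$, see \eqref{parteomogeneagrado5}, and is untouched at $z$-degree $0$ and $1$ by the first two conjugations), and also from brackets such as $\{H_4,F^{(3)}\}$ and $\tfrac12\{\{H_3,F^{(3)}\},F^{(3)}\}$, in which no factor is forced to carry a $z$ (both $H_{4,0}$ and the $z$-independent part of $F^{(3)}$ are purely tangential). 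So your justification that ``the degree-five contributions arise only from brackets at least one factor of which contributes a $z$'' is false, and with it the claim $\mathcal{H}_5=\sum_{q=2}^{5}R(v^{5-q}z^q)$. The paper needs a genuine third step: a quintic generator $F^{(5)}$ as in \eqref{F5}, removing all degree-five monomials with at most one index outside $S$, and this is precisely where Hypothesis $(\mathtt{S})$ is used. Since $f_5$ is $x$-independent, $H_5^{(4)}$ preserves momentum, so writing $j_5=-(j_1+\dots+j_4)$ the condition $(\mathtt{S})$ gives $(\mathtt{S}_0)$--$(\mathtt{S}_1)$, i.e.\ the divisors $j_1^3+\dots+j_5^3$ never vanish on the relevant $5$-tuples, making $F^{(5)}$ well defined; then $\Phi_B=\Phi^{(3)}\circ\Phi^{(4)}\circ\Phi^{(5)}$.

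Relatedly, you misplace where $(\mathtt{S})$ acts: it is a five-wave condition, not ``a fourth-order condition,'' and at Step 2 it is not needed at all. There, $H_4^{(3)}$ preserves momentum, the divisor is simply $j_1^3+j_2^3+j_3^3+j_4^3$ on $j_1+j_2+j_3+j_4=0$, and the vanishing of $H_{4,1}^{(4)}$ follows from Lemma \ref{AlgebraicLemma} together with the symmetry of $S$: if $j_1,j_2,j_3\in S$, $j_4\in S^c$ and both sums vanish, one of $j_1+j_2$, $j_1+j_3$, $j_2+j_3$ is zero, forcing $j_4$ to equal minus an element of $S$, hence $j_4\in S$, a contradiction. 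Similarly, at Step 1 the divisor on $j_1+j_2+j_3=0$, $j_i\neq 0$ is $j_1^3+j_2^3+j_3^3=3j_1j_2j_3\neq 0$, so all of $H_{3,\le 1}$ is removed outright (your detour through ``two opposite indices'' is vacuous, since the third index would then be zero). These corrections do not change your Steps 1--2 in substance, but the missing third step must be supplied for the stated form of $\mathcal{H}_5$.
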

The rest of this section is devoted to the proof of the Proposition \ref{WBNF}.\\
We construct a symplectic map $\Phi_B$ as the composition of analytic and invertible transformations on the phase space that eliminates the terms linear in $z$ and independent of it from the Hamiltonian \eqref{KdVHamiltonian}. In this way, the Hamiltonian system \eqref{KdV} tranforms into one that is integrable and non-isocronous on the subspace $\{ z=0\}$.
\begin{remark}\label{wellDef}
We note that if $j_1, \dots , j_N\in\mathbb{Z}\setminus\{ 0\}, j_1+\dots+ j_N=0$ and at most one of these integers does not belong to $S$, then $\max_{i=1,\dots, N} \lvert j_i \rvert \le (N-1) C_S$, where $C_S:=\max_{j\in S} \lvert j \rvert$. Thus, the vector field $X_{F^{(N)}}$, generated by the finitely supported Hamiltonian 
\[
F^{(N)}=\sum_{j_1+\dots+j_N=0} F^{(N)}_{j_1 \dots j_N} u_{j_1}\dots u_{j_N},
\]
is finite rank, and, in particular, it vanishes outside the finite dimensional subspace $E:=E_{(N-1) C_S}$ (see \eqref{FinitedimensionalSubspace} ) and it has the form 
\[
X_{F^{(N)}}(u)=\Pi_E X_{F^{(N)}} (\Pi_E u).
\] 
Hence its flow $\Phi^{(N)}$ is analytic and invertible on the phase space $H_0^1(\mathbb{T}_x)$.
\end{remark}
\noindent\textbf{Step one}. First we remove the cubic terms independent of $z$ and linear in $z$ from the Hamiltonian $H_3$ defined in \eqref{Hamiltonians} .
We look for a symplectic transformation $\Phi^{(3)}$ of the phase space which eliminates the monomials $u_{j_1}\,u_{j_2}\,u_{j_3}$ of $H_3$ with at most one index outside $S$.\\
We look for $\Phi^{(3)}:=(\Phi^t_{F^{(3)}})_{|_{t=1}}$ as the time$-1$ flow map generated by the Hamiltonian vector field $X_{F^{(3)}}$, with an auxiliary Hamiltonian of the form
\[
F^{(3)}(u):=\sum_{j_1+j_2+j_3=0} F^{(3)}_{j_1\,j_2\,j_3}\,u_{j_1}\,u_{j_2}\,u_{j_3}.
\]
The transformed Hamiltonian is 
\begin{equation}\label{TransformedHamiltonian}
\begin{aligned}
& H^{(3)}:=H\circ \Phi^{(3)}=H_2+H_{3}^{(3)}+H^{(3)}_4+H^{(3)}_{\geq 5},\\
& H^{(3)}_3=H_3+\{ H_2, F^{(3)} \}, \quad H^{(3)}_4=\frac{1}{2} \{\{ H_2, F^{(3)} \}, F^{(3)} \}+\{ H_3, F^{(3)} \}+H_4,
\end{aligned}
\end{equation}
where $H_{\geq 5}^{(3)}$ collects all the terms of order at least five in $(v, z)$.
In order to find the exact expression of $F^{(3)}$, we have to solve the homological equation
\begin{equation}\label{FirstHomolEq}
H_3+\{H_2, F^{(3)}\}=H_{3, \geq 2}
\end{equation}
or, equivalently, $\{ H_2, F^{(3)}\}=-H_{3, \le 1}$, see \eqref{H3minore}. In the Fourier representation, by \eqref{PoissonBracket} and \eqref{Hamiltonians}, the equation \eqref{FirstHomolEq} writes
\begin{equation}\label{F3}
\sum_{j_1+j_2+j_3=0} \mathrm{i}\,(j_1^3+j_2^3+j_3^3)\,F_{j_1 j_2 j_3}^{(3)}\,u_{j_1}\,u_{j_2}\,u_{j_3}=\sum_{(j_1, j_2, j_3)\in\mathcal{A}_3} (-\mathrm{i}\,c_1\, j_1 j_2 j_3-c_2\,j_1 j_2+c_3)\, u_{j_1} u_{j_2} u_{j_3}
\end{equation}
where
\[
\mathcal{A}_3:=\{ (j_1, j_2, j_3)\in\mathbb{Z}^3\setminus\{ \textbf{0} \} : j_1+j_2+j_3=0\,\,\mbox{and at least}\,\,2\,\,\mbox{indices among}\,\,j_1, j_2, j_3\,\,\mbox{belong to}\,\,S  \}.
\]
We note that if $(j_1, j_2, j_3)\in \mathcal{A}_3$ then $j_1^3+j_2^3+j_3^3\neq 0$, because
\begin{equation}
j_1+j_2+j_3=0  \quad  \Rightarrow \quad  j_1^3+j_2^3+j_3^3=3\,j_1\,j_2\,j_3
\end{equation}
and $j_1, j_2, j_3\in\mathbb{Z}\setminus\{0\}$.\\
Hence, to solve the equation \eqref{FirstHomolEq} we choose
\begin{equation}\label{F3def}
F^{(3)}_{j_1 j_2 j_3}:=\begin{cases}
\dfrac{-\mathrm{i}\,c_1\, j_1 j_2 j_3-c_2\,j_1 j_2+c_3}{\mathrm{i}(j_1^3+j_2^3+j_3^3)} \qquad  \mbox{if}\,\,(j_1, j_2, j_3)\in\mathcal{A}_3,\\
0 \qquad \qquad \qquad \qquad \qquad \qquad\,\,\,\,\mbox{otherwise}.
\end{cases}
\end{equation}
By construction, all the monomials of $H_3$ with at least two indices outside $S$ are not modified by the transformation $\Phi^{(3)}$. Hence we have
\begin{align}
H_3^{(3)}=c_1\int_{\mathbb{Z}} (z_x^3+3\,z_x^2\,v_x)\,dx+c_2\int_{\mathbb{Z}} (z_x^2\,z+z_x^2\,v+2\,v_x\,z_x\,z)\,dx+c_3\int_{\mathbb{T}} (z^3+v\,z^2)\,dx.
\end{align}
Now we compute the fourth order term $H_4^{(3)}$ in \eqref{TransformedHamiltonian}. We have, by \eqref{FirstHomolEq}
\begin{equation}
H^{(3)}_4=\frac{1}{2} \{\{ H_2, F^{(3)} \}, F^{(3)} \}+\{ H_3, F^{(3)} \}+H_4=\frac{1}{2}\{H_{3, \le 1}, F^{(3)}\}+\{ H_{3}^{(3)}, F^{(3)} \}+H_4
\end{equation}
and by \eqref{F3} and \eqref{F3def} 
\begin{equation}
\begin{aligned}
F^{(3)}(u)=&-\frac{c_1}{3}\int_{\mathbb{T}} v^3\,dx-c_1\int_{\mathbb{T}} v^2\,z\,dx-\frac{c_2}{3}\int_{\mathbb{T}} (\partial_x^{-1} v)\,v^2\,dx-\frac{c_2}{3}\int_{\mathbb{T}} v^2\,(\partial_x^{-1} z)\,dx-\\[2mm]
&-\frac{2\,c_2}{3}\int_{\mathbb{T}} v\,(\partial_x^{-1} v)\,z\,dx-\frac{c_3}{3}\int_{\mathbb{T}} (\partial_x^{-1} v)^3\,dx-c_3\int_{\mathbb{T}} (\partial_x^{-1} v)^2\,(\partial_x^{-1} z)\,dx.
\end{aligned}
\end{equation}
Thus
\begin{equation}\label{partialXdiF3}
\begin{aligned}
\partial_x\nabla F^{(3)}(u)=&-c_1\partial_x(v^2)-2\,c_1\partial_x \Pi_S[v\,z]+\frac{c_2}{3} \pi_0[v^2]-\frac{c_2}{3} \partial_{xx} [(\partial_x^{-1} v)^2]-\\
&-\frac{2\,c_2}{3} \partial_x\Pi_S[(\partial_x^{-1}v) z+(\partial_x^{-1} z)v]+\frac{2\,c_2}{3}\Pi_S[v\,z]
+c_3\pi_0[(\partial_x^{-1} v)^2]+\\
&+2\,c_3\,\Pi_S[(\partial_x^{-1} v)(\partial_x^{-1} z)]
\end{aligned}
\end{equation}
where $\pi_0$ denotes the projection on the space of functions with zero space average, namely
\[
\pi_0 [u]=u(x)-\frac{1}{2\pi}\int_{\mathbb{T}} u(x)\,dx.
\]
By \eqref{H3minore}, we get
\begin{equation}\label{GradH3minore}
\begin{aligned}
\nabla H_{3, \le 1}(u)=&-3\,c_1\partial_x(v_x^2)-6\,c_1\partial_x\Pi_S[v_x\,z_x]-c_2\partial_{xx} (v^2)-2\,c_2\partial_{xx}\Pi_S[v\,z]+\\
&+c_2\pi_0[v_x^2]+2\,c_2\Pi_S[v_x\,z_x]+3\,c_3\pi_0[v^2]+6\,c_3\Pi_S[v\,z].
\end{aligned}
\end{equation}
Hence, by \eqref{PoissonBracket}, \eqref{partialXdiF3}, \eqref{GradH3minore}, we have
\begin{equation}\label{Poisson1/2H3}
\begin{aligned}
\frac{1}{2} \{ H_{3, \le 1}, F^{(3)}\}&=\frac{3\,c_1^2}{2}\int_{\mathbb{T}} \partial_x (v_x^2)\,\partial_x(v^2) \,dx-\frac{c_1\,c_2}{2}\int_{\mathbb{T}} v^2\,\partial_x(v_x^2) \,dx+\frac{c_1\,c_2}{2}\int_{\mathbb{T}} \partial_x (v_x^2) \partial_{xx}[(\partial_x^{-1} v)^2] \,dx\\
&-\frac{3\,c_1\,c_3}{2} \int_{\mathbb{T}} \partial_x (v_x^2)\,(\partial_x^{-1} v)^2+\frac{c_2^2}{6}\int_{\mathbb{T}} (\partial_x(v^2))^2 \,dx+\frac{c_2^2}{6}\int_{\mathbb{T}} \partial_{xx} (v^2)\,\partial_{xx} [(\partial_x^{-1} v)^2] \,dx \\
&-c_2\,c_3\int_{\mathbb{T}} \partial_{xx} (v^2)\,(\partial_x^{-1} v)^2\,dx-\frac{c_1\,c_2}{2}\int_{\mathbb{T}} v_x^2\,\partial_x(v^2)\,dx+\frac{c_2^2}{6}\int_{\mathbb{T}} v_x^2\,\pi_0[v^2] \,dx\\
&-\frac{c_2^2}{6}\int_{\mathbb{T}}  v_x^2\,\partial_{xx}[(\partial_x^{-1} v)^2] \,dx+\frac{c_2\,c_3}{2}\int_{\mathbb{T}} v_x^2\,\pi_0[(\partial_x^{-1} v)^2]+\frac{c_2\,c_3}{2}\int_{\mathbb{T}} (\pi_0[v^2])^2 \,dx\\
&-\frac{3\,c_3^2}{2}\int_{\mathbb{T}} v^2\,\pi_0[(\partial_x^{-1} v)^2] \,dx+R(v^3\,z)+R(v^2\,z^2).
\end{aligned}
\end{equation}
By \eqref{H3maggiore}, we get
\begin{equation}\label{GradH3magg}
\begin{aligned}
\nabla H_{3}^{(3)}(u)=&-3\,c_1\,\partial_x(z_x^2)-6\,c_1\,\partial_x\Pi_S^{\perp} [v_x\,z_x]-c_2\partial_{xx} (z^2)+c_2 \pi_0[z_x^2]-2\,c_2\partial_{xx} \Pi_S^{\perp}[v\,z]+\\
&+2\,c_2\Pi_S^{\perp}[v_x\,z_x]+3\,c_3 \pi_0[z^2]+2\,c_3 \Pi_S^{\perp}[v\,z].
\end{aligned}
\end{equation}
Thus by \eqref{PoissonBracket}, \eqref{partialXdiF3}, \eqref{GradH3magg}, we have
\begin{equation}\label{H4,2}
\begin{aligned}
\{ H_3^{(3)}, F^{(3)} \}&=3\,c_1^2\int_{\mathbb{T}} \partial_x(z_x^2)\,\partial_x (v^2)\,dx-c_1\,c_2\int_{\mathbb{T}} v^2\,\partial_x(z_x^2) \,dx+\\
&+c_1\,c_2\int_{\mathbb{T}} \partial_x (z_x^2)\,\partial_{xx}[(\partial_x^{-1} v)^2] \,dx -3\,c_1\,c_3\int_{\mathbb{T}}  (\partial_x^{-1}v)^2\,\partial_x (z_x^2) \,dx+\\
&+c_1\,c_2\int_{\mathbb{T}} (\partial_x^{-1} v)^2\,\partial_x(z_x^2) \,dx-\frac{c_2^2}{3}\int_{\mathbb{T}} v^2\,\partial_{xx} (z^2) \,dx+\\
&+\frac{c_2^2}{3}\int_{\mathbb{T}} \partial_{xx} (z^2)\,\partial_{xx} [(\partial_x^{-1} v)^2] \,dx-c_2\,c_3\int_{\mathbb{T}}   (\partial_x^{-1} v)^2 \partial_{xx}(z^2)\,dx-\\
&-c_1\,c_2\int_{\mathbb{T}} z_x^2\,\partial_x(v^2) \,dx+\frac{c_2^2}{3}\int_{\mathbb{T}} z_x^2\,\pi_0[v^2] \,dx-\\
&-\frac{c_2^2}{3}\int_{\mathbb{T}} z_x^2\,\partial_{xx} [(\partial_x^{-1} v)^2] \,dx-c_2\,c_3\int_{\mathbb{T}} z_x^2\,\pi_0[(\partial_x^{-1} v)^2] \,dx-\\
&-3\,c_1\,c_3\int_{\mathbb{T}} z^2\,\partial_x(v^2) \,dx+c_2\,c_3\int_{\mathbb{T}} v^2\,\pi_0[z^2] \,dx-\\
&-c_2\,c_3\int_{\mathbb{T}}  z^2\,\partial_{xx}[(\partial_x^{-1} v)^2] \,dx+3\,c_3^2\int_{\mathbb{T}} (\partial_x^{-1} v)^2\,\pi_0[z^2] \,dx+\\
&+R(v^3\,z)+R(v\,z^3).
\end{aligned}
\end{equation}
\textbf{Step two}. We now construct a symplectic map $\Phi^{(4)}$ to eliminate the term $H_{4,1}^{(3)}$ (which is linear in $z$) and to normalize $H_{4, 0}^{(3)}$ (which is independent of $z$). We need the following elementary lemma (Lemma 13.4 in \cite{KdVeKAM}).
\begin{lem}\label{AlgebraicLemma}
Let $j_1, j_2, j_3, j_4\in\mathbb{Z}$ such that $j_1+j_2+j_3+j_4=0$. Then
\[
j_1^3+j_2^3+j_3^3+j_4^3=-3 (j_1+j_2) (j_1+j_3) (j_2+j_3).
\]
\end{lem}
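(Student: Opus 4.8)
The plan is to reduce the four-variable identity to a three-variable one by using the linear constraint, and then to invoke the classical factorization of $(a+b+c)^3-a^3-b^3-c^3$.

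First I would use the hypothesis $j_1+j_2+j_3+j_4=0$ to write $j_4=-(j_1+j_2+j_3)$, so that $j_4^3=-(j_1+j_2+j_3)^3$ and the left-hand side becomes
\[
j_1^3+j_2^3+j_3^3+j_4^3=j_1^3+j_2^3+j_3^3-(j_1+j_2+j_3)^3.
\]
Then I would establish the elementary algebraic identity
\[
(a+b+c)^3-a^3-b^3-c^3=3(a+b)(b+c)(c+a),
\]
valid for any $a,b,c$ in a commutative ring: expanding the cube by the multinomial theorem, both sides equal $3(a^2b+a^2c+ab^2+b^2c+ac^2+bc^2)+6\,abc$. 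Applying this with $(a,b,c)=(j_1,j_2,j_3)$ gives
\[
j_1^3+j_2^3+j_3^3-(j_1+j_2+j_3)^3=-3(j_1+j_2)(j_2+j_3)(j_3+j_1),
\]
which, together with the previous display, is exactly the claimed formula. Using in addition $j_1+j_2=-(j_3+j_4)$, $j_1+j_3=-(j_2+j_4)$, $j_2+j_3=-(j_1+j_4)$, one reads off directly the full symmetry of the right-hand side under permutations of the four indices.

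Since the statement is a polynomial identity over $\mathbb{Z}$, there is no genuine obstacle; the only point needing (routine) care is the bookkeeping in expanding the cube. This can even be bypassed: the polynomial $(a+b+c)^3-a^3-b^3-c^3$ vanishes when $a=-b$, hence is divisible by $a+b$, and by symmetry by $(a+b)(b+c)(c+a)$; since both are homogeneous of degree $3$, the quotient is a constant, and comparing the coefficient of $a^2b$ on each side fixes it to be $3$.
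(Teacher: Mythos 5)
Your proof is correct: the substitution $j_4=-(j_1+j_2+j_3)$ together with the identity $(a+b+c)^3-a^3-b^3-c^3=3(a+b)(b+c)(c+a)$ (which you verify both by direct expansion and by the cleaner divisibility-and-homogeneity argument) gives exactly the stated factorization $-3(j_1+j_2)(j_1+j_3)(j_2+j_3)$. The paper does not prove this lemma at all — it simply cites Lemma 13.4 of Kappeler--P\"oschel — so your write-up supplies the standard elementary verification that the reference contains, and nothing further is needed.
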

We look for a map $\Phi^{(4)}:=(\Phi^t_{F^{(4)}})_{|_{t=1}}$ which is the time$-1$ flow map of an auxiliary Hamiltonian
\[
F^{(4)}(u):=\sum_{\substack{j_1+j_2+j_3+j_4=0,\\ at\,\, least\,\, 3\,\, indices\,\, belong \,\,to\,\, S } }F^{(4)}_{j_1 j_2 j_3 j_4}\,u_{j_1} u_{j_2} u_{j_3} u_{j_4},
\]
which has the same form of the Hamiltonian $H_{4, 0}^{(3)}+H_{4, 1}^{(3)}$. The transformed Hamiltonian is
\begin{equation}\label{H^{(4)}}
H^{(4)}:=H^{(3)}\circ \Phi^{(4)}=H_2+H_3^{(3)}+H_4^{(4)}+H_{\geq 5}^{(4)}, \quad H_4^{(4)}:=\{ H_2, F^{(4)}\}+H_4^{(3)}
\end{equation}
and $H^{(4)}_{\geq 5}$ collects all the terms of order at least five in $(v, z)$. We write
\begin{equation}
H^{(3)}_4(u)=\sum_{j_1+j_2+j_3+j_4=0} H^{(3)}_{4, \,j_1 j_2 j_3 j_4} u_{j_1 j_2 j_3 j_4}.
\end{equation}
This makes sense since $H_{3, \le 1}, H_3^{(3)}$ and $F^{(3)}$ preserve the momentum, hence also $H^{(3)}_4$ does it. We choose the coefficients
\begin{equation}\label{F4def}
F^{(4)}_{j_1 j_2 j_3 j_4}:=
\begin{cases}
\dfrac{H^{(3)}_{4,\,j_1 j_2 j_3 j_4}}{\mathrm{i}(j_1^3+j_2^3+j_3^3+j_4^3)} \qquad  \mbox{if}\,\,(j_1, j_2, j_3, j_4)\in\mathcal{A}_4,\\[2mm]
0 \qquad \qquad \qquad \qquad \qquad \mbox{otherwise},\\
\end{cases}
\end{equation}
where 
\begin{align*}
\mathcal{A}_4:=\{ (j_1, j_2, j_3, j_4)\in \mathbb{Z}^4\setminus\{\textbf{0}\}\, :\, &j_1+j_2+j_3+j_4=0, j_1^3+j_2^3+j_3^3+j_4^3\neq 0,\\
& \mbox{and at most one among}\,\,j_1, j_2, j_3, j_4\,\,\mbox{outside}\,\,S\}.
\end{align*}
By this definition, the symmetry of $S$ and the Lemma \ref{AlgebraicLemma}, we have $H_{4, 1}^{(4)}=0$, because there no exist $j_1, j_2, j_3\in S$ and $j_4\in S^c$ such that $j_1+j_2+j_3+j_4=0,\, j_1^3+j_2^3+j_3^3+j_4^3=0$. By construction, the terms $H^{(4)}_{4, i}=H_{4, i}^{(3)}, i=2, 3, 4$ are not changed by $\Phi^{(4)}$.\\
It remains to compute the resonant part of $H_{4, 0}^{(3)}$, i.e. the terms of $H_4^{(3)}$ of type $R(v^4)$ supported on the modes $(j_1, j_2, j_3, j_4)$ that do not belong to $\mathcal{A}_4$.\\
If we call
\begin{align*}
\mathcal{B}:=\{(j_1, j_2, j_3, j_4)\in S^4 \,\,:\,\,j_1+j_2+j_3+j_4=0,\,\, & j_1^3+j_2^3+j_3^3+j_4^3=0,\,\,j_1+j_2\neq 0\}
\end{align*}
then by \eqref{H4zero}, \eqref{Poisson1/2H3} we have
\begin{equation}\label{H4risonante}
\begin{aligned}
H_{4, 0}^{(4)}&=-\frac{3\,c_1^2}{2} \,\sum_{\mathcal{B}} (j_1+j_2)^2 j_3 j_4\,u_{j_1} u_{j_2} u_{j_3} u_{j_4}+\frac{c_2^2}{6}\, \sum_{\mathcal{B}} (j_3+j_4)^2\,u_{j_1} u_{j_2} u_{j_3} u_{j_4}-\frac{c_2^2}{6}\, \sum_{\mathcal{B}} j_3 j_4\,u_{j_1} u_{j_2} u_{j_3} u_{j_4}\\&-\frac{c_2^2}{6} \,\sum_{\mathcal{B}} (j_1+j_2)^2 (j_3+j_4)^2\frac{1}{j_1 j_2}\,u_{j_1} u_{j_2} u_{j_3} u_{j_4}+\frac{c_2^2}{6} \,\sum_{ \mathcal{B}} \frac{(j_1+j_2)^2\,j_3 j_4}{j_1 j_2}\,u_{j_1} u_{j_2} u_{j_3} u_{j_4}\\
&+\frac{3}{2} c_3^2 \sum_{\mathcal{B}} \frac{1}{\mathrm{i} j_1\,\mathrm{i} j_2} \,u_{j_1} u_{j_2} u_{j_3} u_{j_4}+\frac{c_2\,c_3}{2} \,\sum_{\mathcal{B}}u_{j_1} u_{j_2} u_{j_3} u_{j_4}-\frac{c_2\,c_3}{2} \,\sum_{\mathcal{B}} \frac{(j_1+j_2)^2}{j_1 j_2} u_{j_1} u_{j_2} u_{j_3} u_{j_4}\\
&-\frac{c_2 c_3}{2} \,\sum_{\mathcal{B}} \frac{(j_3+j_4)^2}{j_1\,j_2}\,u_{j_1} u_{j_2} u_{j_3} u_{j_4}+\frac{c_2 c_3}{2} \sum_{\mathcal{B}} \frac{j_3 j_4}{j_1 j_2}\,u_{j_1} u_{j_2} u_{j_3} u_{j_4}+c_4\sum_{\mathcal{B}\cup \{ j_1+j_2=0 \}} j_1\,j_2\,j_3\,j_4\,u_{j_1} u_{j_2} u_{j_3} u_{j_4}\\
&-c_6 \sum_{\mathcal{B}\cup\{j_1+j_2=0\}} j_1\,j_2\,u_{j_1} u_{j_2} u_{j_3} u_{j_4}+c_7 \sum_{\mathcal{B}\cup \{j_1+j_2=0\}} u_{j_1} u_{j_2} u_{j_3} u_{j_4}.
\end{aligned}
\end{equation}
By Lemma \ref{AlgebraicLemma}, if $j_1+j_2+j_3+j_4=0, j_1^3+j_2^3+j_3^3+j_4^3= 0$ then $(j_1+j_2) (j_1+j_3) (j_2+j_3)=0$. We develop all the sums in \eqref{H4risonante} with respect to the first index $j_1$. The possible cases are:
\begin{align*}
&(i)\,\,\{ j_2\neq -j_1, j_3=-j_1, j_4=-j_2\}\qquad (ii)\,\,\{ j_{2}\neq -j_1, j_3\neq -j_1, j_3=-j_2, j_4=-j_1\} \\[2mm]
&(iii)\,\, \{ j_1+j_2=0\}.
\end{align*}
If $I:=(I_{\overline{\jmath}_1}, \dots, I_{\overline{\jmath}_{\nu}})\in\mathbb{R}^{\nu}_+$ with $I_j:=\lvert u_j \rvert^2, j\in S$, we get
\begin{equation}\label{contoperRisonanzediH4}
\begin{aligned}
H_{4, 0}^{(4)}(I)=&-12\,c_1^2\sum_{j\in S^+} j^4\,I_j^2-24 c_1^2\sum_{\substack{j, j'\in S^+,\\ j\neq j'}} j^2\,j'^2\,I_{j}\,I_{j'}-\frac{7 c_2^2}{3}\sum_{j\in S^+} j^2\,I_j^2-\frac{8 c_2^2}{3}\sum_{\substack{j, j'\in S^+,\\ j\neq j'}} (j^2+j'^2)\,I_j\,I_{j'}\\
&-3\,c_3^2\sum_{j\in S^+} \frac{1}{j^2}\,I_j^2-2\,c_2\,c_3 \sum_{j\in S^+} I_j^2-8 c_2 c_3\sum_{\substack{j, j'\in S^+,\\ j\neq j'}} I_j\,I_{j'}+6\,c_4\sum_{j\in S^+} j^4\,I_j^2 +12 c_4\sum_{\substack{j, j'\in S,\\ j\neq j'}} j^2\,j'^2\,I_{j}\,i_{j'}\\
&+2 c_6\sum_{j\in S^+} j^2\,I_j^2+2 c_6\sum_{\substack{j, j'\in S^+,\\ j\neq j'}}(j^2+j'^2)\, I_j\,I_{j'}+6\,c_7\sum_{j\in S^+} I_j^2+ 12 c_7\sum_{\substack{j, j'\in S^+,\\ j\neq j'}} I_j\,I_{j'}.
\end{aligned}
\end{equation}
The Hamiltonian system $H_2+H_3^{(3)}+H_4^{(4)}$, obtained by truncation at order $4$ of the transformed Hamiltonian $H\circ \Phi^{(3)}\circ \Phi^{(4)}$, possesses the invariant submanifold $\{ z=0\}$, and, restricted to this subspace, it is integrable. Indeed, if we introduce on $H_S$ the action-angle variables $u\mapsto (\theta, I)$ by defining
\begin{equation}\label{ActionCoord}
u_j:=v_j=\sqrt{I_j}\,e^{\mathrm{i}\theta_j}, \qquad I_j=I_{-j}, \quad \theta_{-j}=-\theta_j \quad j\in S,
\end{equation}
the restriction of the Hamiltonian $H_2+H_3^{(3)}+H_4^{(4)}$ to $\{ z=0\}$, namely $\frac{1}{2}\int v_x^2\,dx+H_{4, 0}^{(4)}$, depends only on the actions $I_{\overline{\jmath}_1}, \dots, I_{\overline{\jmath}_{\nu}}$.
We will prove later that, for a generic choice of the tangential sites, this system is also non-isochronous (actually it is formed by $\nu$ decoupled oscillators).\\
Due to the presence of a quadratic nonlinearity in the equation \eqref{KdV}, we have to eliminate further monomials of $H^{(4)}$ in \eqref{H^{(4)}} in order to enter in a perturbative regime. Indeed, the minimal requirement for the convergence of the nonlinear Nash-Moser iteration is to eliminate the monomials $R(v^5)$ and $R(v^4\,z)$. Here we need the choice of the sites of Hypotesis $(\mathtt{S})$.\\

\noindent \textbf{Step three}. The homogeneous component of degree five of $H^{(4)}$ has the form
\[
H^{(4)}_5(u)=\sum_{j_1+\dots+j_5=0} H^{(4)}_{5,\,j_1,\dots, j_5}\,u_{j_1} u_{j_2} u_{j_3} u_{j_4} u_{j_5},
\]
indeed, the Hamiltonian $H^{(4)}_5$ preserves the momentum, because $f_5(u, u_x)$ does not depend on $x$ (see \eqref{parteomogeneagrado5}).
We want to remove from $H^{(4)}_5$ the terms with at most one index among $j_1, \dots, j_5$ outside $S$. We consider the auxiliary Hamiltonian
\begin{equation}\label{F5}
F^{(5)}=\sum_{\substack{j_1+\dots+j_5=0,\\ at\,\,most\,\,one\,\,index\,\,outside\,\,S}} F^{(5)}_{j_1, \dots, j_5}\,u_{j_1}\dots u_{j_5}, \quad F^{(5)}_{j_1, \dots, j_5}:=\dfrac{ H^{(4)}_{5,\,j_1,\dots, j_5}}{\mathrm{i}(j_1^3+\dots+j_5^3)}.
\end{equation}
Hypotesis $(\mathtt{S})$ implies that 
\begin{itemize}
\item[$(\mathtt{S}_0)$] there is no choice of $5$ integers $j_1, \dots, j_5\in S$ such that
\begin{equation}\label{s20}
j_1+\dots+j_5=0, \quad j_1^3+\dots+j_5^3=0,
\end{equation}
\item[$(\mathtt{S}_1)$] there is no choice of $4$ integers $j_1, \dots, j_4\in S$ and $j_5\in S^c$ such that \eqref{s20} holds.
\end{itemize}
Hence $F^{(5)}$ in \eqref{F5} is well defined. Let $\Phi^{(5)}$ be the time$-1$ flow generated by $X_{F^{(5)}}$. The new Hamiltonian is
\begin{equation}\label{H^{(5)}}
H^{(5)}:=H^{(4)}\circ \Phi^{(5)}=H_2+H_3^{(3)}+H_4^{(4)}+H^{(5)}_5+H^{(5)}_{\geq 6}, \quad H^{(5)}_5=\{ H_2, F^{(5)} \}+H^{(4)}_5,
\end{equation}
where $H^{(5)}_{\geq 6}$ collects all the terms of degree greater or equal than six, and, by the definition of $F^{(5)}$,
\begin{equation}
H^{(5)}_5=\sum_{q=2}^5 R(v^{5-q} z^q).
\end{equation}
Setting $\Phi_B:=\Phi^{(3)}\circ\Phi^{(4)}\circ \Phi^{(5)}$ and renaming $\mathcal{H}:=H^{(5)}=H\circ\Phi_B, \mathcal{H}_n=H_n^{(n)}$, by Remark \eqref{wellDef}, we conclude the proof of Proposition \ref{WBNF}.

\section{Action-angle variables}
Consider the change of variable $v\mapsto (\theta, I)$ in \eqref{ActionCoord}, where the actions $I$ are defined in the positive half space $\{v\in\mathbb{R}^{\nu} : v_i\geq 0, \forall i=1,\dots, \nu\}$ and $\theta\in\mathbb{T}^{\nu}$. The symplectic form in \eqref{SymplecticForm} restricted to the subspace $H_S$ transforms into the $2$-form
\begin{equation}
\tilde{\Omega}_S=\sum_{j\in S^+} d\theta_j\wedge \frac{1}{j}\, d I_j.
\end{equation}
Hence the Hamiltonian system $\mathcal{H}_{\le 5}:=H_2+H_3^{(3)}+H_4^{(4)}+H^{(5)}_5$ restricted to $\{ z=0\}$ writes
\begin{equation}
\begin{cases}
\dot{\theta}_j=j\,\,\dfrac{\partial}{\partial I_j} \mathcal{H}_{\leq 5}(\theta, I, 0), \qquad j\in S^+,\\[3mm]
\dot{I}_j=-\dfrac{\partial}{\partial \theta_j} \mathcal{H}_{\leq 5}(\theta, I, 0), \qquad j\in S^+.
\end{cases}
\end{equation}
We have that 
\begin{equation}\label{HamBif}
\tilde{h}(I):=\mathcal{H}_{\leq 5}(\theta, I, 0):=\sum_{j\in S^+} j^2\,I_j+H_{4, 0}^{(4)}(I)
\end{equation}
depends only by the actions $I$, and, if we call $\omega_j(I):=j\,\,\partial_{I_j}\tilde{h}(I)$, we have
\begin{equation}\label{HamiltonianSistemino}
\begin{cases}
\dot{\theta}_j=\omega_j(I), \qquad j\in S^+,\\[3mm]
\dot{I}_j=0, \qquad \quad\,\,\,\, j\in S^+.
\end{cases}
\end{equation}
By \eqref{contoperRisonanzediH4} 
\begin{equation}\label{omeghino}
\begin{aligned}
\omega_j(I)=&j^3-24\,c_1^2\, j^5\,I_j-48 c_1^2\,j^3\,\sum_{k\in S^+, k\neq j} k^2 I_k-c_2^2 \frac{14}{3}\,j^3\,I_j-\frac{16}{3} c_2^2\, j^3 \sum_{k\in S^+, k\neq j} I_k\\
&-\frac{16}{3} c_2^2\, j \sum_{k\in S^+, k\neq j} k^2 I_k-6\,c_3^2\,\frac{1}{j}\,I_j-2\,c_2\,c_3  j\,I_j-8 c_2 c_3\, j\,\sum_{k\in S^+, k\neq j} I_k+12\,c_4 j^5\,I_j\\
&+24 c_4\,j^3\, \sum_{k\in S^+, k\neq j} k^2\,I_k +4 c_6  j^3\,I_j+4 c_6 j^3 \sum_{k\in S^+, k\neq j} I_k + 4 c_6\, j \sum_{k\in S^+, k\neq j} k^2 I_k+12\,c_7\,j\, I_j\\
&+24 c_7\,j\,\sum_{k\in S^+, k\neq j} I_k.
\end{aligned}
\end{equation}
Hence, in a small neighbourhood of the origin of the phase space $H_0^1(\mathbb{T}_x)$, the submanifold $\{ z=0 \}$ is foliated by invariant tori of amplitude $\xi$ and frequency vector $\omega(\xi):=(\omega_j(\xi))_{j\in S^+}$ as in \eqref{omeghino}.\\
We shall select from this set of tori the approximately invariant quasi-periodic solutions to be continued and we will use their \textit{unperturbed actions} $\xi$ as parameters. Moreover, we shall require that the frequencies of these tori vary in a one-to-one way with the actions $\xi$. Thanks to this fact, we could control the conditions that we shall impose on the frequencies $\omega$ through the amplitudes, and viceversa.\\ 
If we call $\vec{1}$ the vector in $\mathbb{R}^{\nu}$ with all components equal to $1$ and
\begin{equation}\label{vkDS}
D_S:=\mbox{diag}_{i=1,\dots, \nu} \{\overline{\jmath}_i\}, \qquad v_k:=D_S^k\, \vec{1}, \qquad U:=\vec{1}^T\,\vec{1}
\end{equation}
then we can write, in a compact form, the vector with components $\omega_j(I)$, with $j\in S^+$, in \eqref{omeghino}, as
\begin{equation}\label{FreqAmplMap}
\omega(\xi)=\overline{\omega}+\mathbb{A}\,\xi,
\end{equation}
where $\overline{\omega}$ is the vector of the linear frequencies (see \eqref{LinearFreq}) and
\begin{equation}\label{TwistMatrix}
\begin{aligned}
\mathbb{A}:&=(24 c_1^2-12 c_4) D_S^5\{ \mathrm{I}-2 D_{-2} U D_S^2 \}+(\frac{14}{3} c_2^2-4 c_6) D_S^3+(4 c_6-\frac{16}{3} c_2^2) \{D_S^3 U+D_S U D_S^2\}\\
&+12 (c_2 c_3-c_7) D_S+(24 c_7-16 c_2 c_3) D_S U-6 c_3^2 D_S^{-1}.
\end{aligned}
\end{equation}
The function of $\xi$ in \eqref{FreqAmplMap} is the \textit{frequency-amplitude map}, which describes, at the main order, how the tangential frequencies are shifted by the amplitudes $\xi$. \\
In order to work in a neighbourhood of the unperturbed torus $\{ I\equiv D_1\xi \}$ it is advantageous to introduce a set of coordinates $(\theta, y, z)\in\mathbb{T}^{\nu}\times \mathbb{R}^{\nu}\times H_S^{\perp}$ adapted to it, defined by
\begin{equation}\label{Action-AngleVariables}
\begin{cases}
u_j:=\sqrt{I_j}\,e^{\mathrm{i}\theta_j}\,e^{\mathrm{i}\,j\,x}, \,\,\quad\,\,  I_j:=\lvert j \rvert (\xi_j+ y_j), \qquad j\in S,\\[2mm]
u_j:=z_j, \qquad \qquad \qquad \qquad \quad \qquad\quad \qquad \qquad j\in S^c,
\end{cases}
\end{equation}
where  (recall $\overline{u}_j=u_{-j}$)
\begin{equation}
\xi_{-j}=\xi_j, \quad \xi_j>0, \quad y_{-j}=y_{j}, \quad \theta_{-j}=-\theta_j, \quad \theta_j\in\mathbb{T},\,\, y_j\in\mathbb{R}, \quad \forall j\in S.
\end{equation}
For the tangential sites $S^+:=\{ \overline{\jmath}_1,\dots, \overline{\jmath}_{\nu}\}$ we will also denote 
\[
\theta_{\overline{\jmath}_i}:=\theta_i,\quad y_{\overline{\jmath}_i}:=y_{i}, \quad \xi_{\overline{\jmath}_i}:=\xi_i, \quad \omega_{\overline{\jmath}_i}=\omega_i, \quad i=1,\dots, \nu.
\]
The symplectic $2$-form $\Omega$ in \eqref{SymplecticForm} becomes
\begin{equation}\label{NewSimplForm}
\mathcal{W}:=\sum_{i=1}^{\nu} d\theta_i\wedge d y_i+\frac{1}{2} \sum_{j\in S^c} \frac{1}{\mathrm{i} j}\,d z_j\wedge d z_{-j}=\left( \sum_{i=1}^{\nu} d \theta_i\wedge d y_i \right) \oplus \Omega_{S^{\perp}}=d \Lambda,
\end{equation}
where $\Omega_{S^{\perp}}$ denotes the restriction of $\Omega$ to $H_S^{\perp}$ and $\Lambda$ is the contact $1$-form on $\mathbb{T}^{\nu}\times\mathbb{R}^{\nu}\times H_S^{\perp}$ defined by $\Lambda_{(\theta, y, z)}\colon \mathbb{R}^{\nu}\times\mathbb{R}^{\nu}\times H_S^{\perp}\to \mathbb{R}$,
\begin{equation}\label{1Form}
\Lambda_{(\theta, y, z)}[\hat{\theta}, \hat{y}, \hat{z}]:=-y\cdot \hat{\theta}+\frac{1}{2} (\partial_x^{-1} z, \hat{z})_{L^2(\mathbb{T})}.
\end{equation}
Working in a neighbourhood of the origin of the phase space, it is convenient to rescale the unperturbed actions $\xi$ and the variables $\theta, y, z$ as
\begin{equation}\label{Rescaling}
\xi \mapsto \varepsilon^2 \xi, \quad y\mapsto \varepsilon^{2 b} y, \quad z \mapsto \varepsilon^b\,z.
\end{equation}
The symplectic form in \eqref{NewSimplForm} transforms into $\varepsilon^{2 b}\,\mathcal{W}$.
Hence the Hamiltonian system generated by $\mathcal{H}$ in \eqref{HamiltonianaNormalizzata} transforms into the new Hamiltonian system
\begin{equation}\label{HamiltonianaRiscalata}
\begin{cases}
\dot{\theta}=\partial_y H_{\varepsilon}(\theta, y, z),\\
\dot{y}=-\partial_{\theta} H_{\varepsilon}(\theta, y, z),\\
\dot{z}=\partial_x \nabla_z H_{\varepsilon}(\theta, y, z),
\end{cases}
\qquad H_{\varepsilon}:=\varepsilon^{-2 b}\,\mathcal{H}\circ A_{\varepsilon},
\end{equation}
where
\begin{equation}\label{Aepsilon}
A_{\varepsilon}(\theta, y, z):=\varepsilon\,v_{\varepsilon}(\theta, y)+\varepsilon^b z, \quad v_{\varepsilon}(\theta, y):=\sum_{j\in S} \sqrt{\lvert j \rvert}\,\sqrt{\xi_j+\varepsilon^{2(b-1)} y_j}\,e^{i \theta_j} e^{i j x}.
\end{equation}
We still denote by 
\[
X_{H_{\varepsilon}}=(\partial_y H_{\varepsilon}, -\partial_{\theta} H_{\varepsilon}, \partial_x \nabla_z H_{\varepsilon})
\]
the Hamiltonian vector field in the variables $(\theta, y, z)\in \mathbb{T}^{\nu}\times\mathbb{R}^{\nu}\times H_S^{\perp}$.
We now write explicitly the Hamiltonian defined in \eqref{HamiltonianaRiscalata}.
The quadratic Hamiltonian $H_2$ in \eqref{Hamiltonians} becomes
\begin{equation}
\varepsilon^{-2 b} H_2\circ A_{\varepsilon}=const+\sum_{j\in S^{+}} j^3\,y_j+\frac{1}{2} \int_{\mathbb{T}} z_x^2\,dx,
\end{equation}
and by \eqref{Hamiltonians}, \eqref{H4,2} and \eqref{H4risonante} we have (writing $v_{\varepsilon}:=v_{\varepsilon}(\theta, y)$)
\begin{equation}\label{HamiltonianaRiscalata}
\begin{aligned}
H_{\varepsilon}(\theta, y, z)&=e(\xi)+\alpha(\xi)\cdot y+\frac{1}{2}\int_{\mathbb{T}} z_x^2\,dx+\varepsilon \int_{\mathbb{T}}(3\, c_1 z_x^2\,(v_{\varepsilon})_x+3\,c_2 z_x^2\,v_{\varepsilon}+2\,c_2 (v_{\varepsilon})_x z_x z )  \,dx\\
&+\varepsilon^b \int_{\mathbb{T}} \left( c_1\,z_x^3+c_2\,z_x^2\,z+c_3\,z^3\,dx\right)\,dx+\frac{\varepsilon^{2\,b}}{2}\,\mathbb{M}\, y\cdot  y+\varepsilon^{2 b} \int_{\mathbb{T}} (c_4\,z_x^4+c_5\,z_x^3\,z\\
&+c_6\,z_x^2\,z^2+c_7\,z^4)\,dx+\varepsilon^2 R((v_{\varepsilon}(\theta, y))^2 z^2)+\varepsilon^{1+b} R(v_{\varepsilon}(\theta, y)\,z^3)+\varepsilon^3 R((v_{\varepsilon}(\theta, y))^3 z^2)\\
&+\varepsilon^{2+b} \sum_{q=3}^5 \varepsilon^{(q-3)(b-1)} R((v_{\varepsilon}(\theta, y))^{5-q} z^q)+\varepsilon^{-2 b} \mathcal{H}_{\geq 6} (\varepsilon v_{\varepsilon}(\theta, y)+\varepsilon^b z)
\end{aligned}
\end{equation}
where the function $e(\xi)$ is a constant and
\begin{equation}\label{Frequency-AmplitudeMAP}
\alpha(\xi)=\overline{\omega}+\varepsilon^2\,\mathbb{M}\,\xi \,, \qquad \mathbb{M}:=\mathbb{A}\,D_S
\end{equation}
is the frequency amplitude-map after the change of coordinates in \eqref{Action-AngleVariables} and the rescaling in \eqref{Rescaling}. Usually $\mathbb{M}$ is called the \textit{twist matrix} and we note that is symmetric.\\
We write the Hamiltonian in \eqref{HamiltonianaRiscalata}, eliminating the constant $e(\xi)$ which is irrelevant for the dynamics, as
\begin{equation}\label{Hepsilon}
\begin{aligned}
&H_{\varepsilon}=\mathcal{N}+P, \qquad\mathcal{N}(\theta, y, z)=\alpha(\xi)\cdot y+\frac{1}{2} (N(\theta) z, z)_{L^2(\mathbb{T})},\\
&\frac{1}{2}(N(\theta) z, z)_{L^2(\mathbb{T})}:=\frac{1}{2}((\partial_z \nabla H_{\varepsilon}) (\theta, 0, 0)[z], z)_{L^2(\mathbb{T})}=\frac{1}{2} \int_{\mathbb{T}} z_x^2\,dx+\\
&+\varepsilon\int_{\mathbb{T}} c_1z_x^2 \,(v_{\varepsilon})_x(\theta, 0)\,dx+\varepsilon\int_{\mathbb{T}} c_2\,z_x^2\,v_{\varepsilon}(\theta, 0) \,dx+2\,\varepsilon\,c_2\,\int_{\mathbb{T}} z\,z_x\,(v_{\varepsilon})_x(\theta, 0)\,dx+ \dots
\end{aligned}
\end{equation}
where $\mathcal{N}$ describes the linear dynamics, and $P:=H_{\varepsilon}-\mathcal{N}$ collects the nonlinear perturbative effects.\\

As we said before, we require that the map \eqref{Frequency-AmplitudeMAP} is a diffeomorphism. This function is affine, thus its invertibility is equivalent to the nondegenerancy (or \textit{twist}) condition 
\begin{equation}\label{NonDegCond}
\det\mathbb{M}:=\det (D_S)\,\det\left(\dfrac{\partial^2}{\partial I_j \,I_k} \tilde{h}(I)\right)_{j, k\in \{1, \dots, \nu\}} \,\det(D_S) \neq 0.
\end{equation}
\begin{remark}
The inequality \eqref{NonDegCond} is equivalent to the classical Kolmogorov condition that requires the invertibility of the Hessian of the Hamiltonian $\tilde{h}$ in \eqref{HamBif}. The presence of the diagonal matrix $D_S$ in \eqref{NonDegCond} is due to the symplectic form \eqref{SymplecticForm} and the choice of the action-angle variables \eqref{Action-AngleVariables}.
\end{remark}
In the following lemma we prove that the condition \eqref{NonDegCond} is satisfied for non-resonant coefficients and a generic choice of the tangential sites (see Definition \ref{Generic}).

\begin{lem}\label{TwistLemma}
If the coefficients $c_1, \dots, c_7$ are non-resonant, for a generic choice of the tangential sites $\overline{\jmath}_1, \dots, \overline{\jmath}_{\nu}$ (see Definition \ref{Generic}) the condition \eqref{NonDegCond} is satisfied.
\end{lem}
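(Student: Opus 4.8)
The plan is to reduce \eqref{NonDegCond} to the non‑vanishing of a single polynomial in the tangential sites, and then to show this polynomial is not identically zero whenever $c_1,\dots,c_7$ are non‑resonant. Since $\mathbb{M}=\mathbb{A}\,D_S$ with $D_S=\mathrm{diag}(\overline{\jmath}_1,\dots,\overline{\jmath}_\nu)$ invertible (recall \eqref{vkDS}, $\overline{\jmath}_i\neq0$), we have $\det\mathbb{M}\neq0$ iff $\det\mathbb{A}\neq0$, with $\mathbb{A}$ as in \eqref{TwistMatrix}. The only negative power occurring in \eqref{TwistMatrix} is $D_S^{-1}$, so $(\prod_i\overline{\jmath}_i)\det\mathbb{A}$ is a polynomial $Q(c_1,\dots,c_7;\overline{\jmath}_1,\dots,\overline{\jmath}_\nu)$. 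By Definition \ref{Generic}, it therefore suffices to prove that, for a fixed non‑resonant choice of $c_1,\dots,c_7$ (Definition \ref{EqRisonanti}), the polynomial $\overline{\jmath}\mapsto Q$ is not identically zero; the set of sites violating \eqref{NonDegCond} then lies in the zero locus of a non‑trivial polynomial, i.e. it is non‑generic.

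To analyse $\det\mathbb{A}$, I would use the structure visible in \eqref{TwistMatrix}: $\mathbb{A}=\Lambda+C$, where $\Lambda$ is the diagonal matrix with entries $\delta(\overline{\jmath}_i)$, $\delta(t):=(24c_1^2-12c_4)t^5+(\tfrac{14}{3}c_2^2-4c_6)t^3+12(c_2c_3-c_7)t-6c_3^2\,t^{-1}$, and $C$ collects the four terms of the form $D_S^aUD_S^b$. Each such term is proportional to one of $(D_S^3\vec1)(D_S^2\vec1)^T$, $(D_S^3\vec1)\vec1^T$, $(D_S\vec1)(D_S^2\vec1)^T$, $(D_S\vec1)\vec1^T$, so $C=(D_S^3\vec1)p^T+(D_S\vec1)q^T$ with $p,q$ affine in $(\overline{\jmath}_k^2)$ and polynomial in the $c_i$'s; in particular $\mathrm{rank}\,C\le2$. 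Hence, on the open set where $\Lambda$ is invertible, the matrix determinant lemma gives
\[
\det\mathbb{A}=\Big(\prod_{i=1}^{\nu}\delta(\overline{\jmath}_i)\Big)\,\det\big(\mathrm{I}_2+R^{T}\Lambda^{-1}W\big),\qquad W:=[\,D_S^3\vec1\mid D_S\vec1\,],\quad R:=[\,p\mid q\,].
\]
A short inspection shows that $\delta\equiv0$ as a rational function of $t$ exactly when $24c_1^2-12c_4=\tfrac{14}{3}c_2^2-4c_6=c_3=c_7=0$, i.e. precisely the resonance condition of Definition \ref{EqRisonanti}. So for non‑resonant $c_i$'s the factor $\prod_i\delta(\overline{\jmath}_i)$ is non‑zero for a generic choice of sites; it remains to rule out that the second factor vanishes identically.

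For this I would run the scaling $\overline{\jmath}_i=T\,\overline{\jmath}_i^{\,0}$ with fixed distinct $\overline{\jmath}_i^{\,0}$ and $T\to\infty$, and compute the leading coefficient of $\det\mathbb{A}$ in $T$, splitting into cases according to which of the non‑resonance quantities $24c_1^2-12c_4$, $7c_2^2-6c_6$, $c_7$, $c_3$ is the first to be non‑zero (the case $\nu=1$ being immediate). If $24c_1^2-12c_4\neq0$, the top (degree‑$5$‑homogeneous) part of $\mathbb{A}$ is $(24c_1^2-12c_4)(D_S^5-2D_S^3UD_S^2)=(24c_1^2-12c_4)D_S^3(\mathrm{diag}(\overline{\jmath}_i^2)-2\vec1(D_S^2\vec1)^T)$, whose determinant, again by the matrix determinant lemma, is $(24c_1^2-12c_4)^{\nu}(1-2\nu)\prod_i\overline{\jmath}_i^{5}\neq0$ since $1-2\nu\neq0$. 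If $24c_1^2-12c_4=0$ but $7c_2^2-6c_6\neq0$, the leading part is $D_S M$ with $M=\alpha\,\mathrm{diag}(\overline{\jmath}_i^2)+\beta((D_S^2\vec1)\vec1^T+\vec1(D_S^2\vec1)^T)$, $\alpha=\tfrac{14}{3}c_2^2-4c_6\neq0$; the matrix determinant lemma reduces $\det M$ to a $2\times2$ determinant equal to $(1+\beta\nu/\alpha)^2-(\beta/\alpha)^2(\sum_i\overline{\jmath}_i^2)(\sum_i\overline{\jmath}_i^{-2})$, which is a non‑trivial rational function of the sites (the product $(\sum\overline{\jmath}_i^2)(\sum\overline{\jmath}_i^{-2})$ is non‑constant for $\nu\ge2$, and it equals $1$ together with the whole determinant when $\beta=0$). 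In the remaining cases only the degree‑$\le1$ (and $D_S^{-1}$) terms survive: if $c_7\neq0$ the degree‑one part is $-12c_7\,D_S(\mathrm{I}-2U)$ with $\det(\mathrm{I}-2U)=1-2\nu\neq0$, and if $c_7=0$ then $c_3\neq0$ forces $\mathbb{A}$ (now with $c_2=0$) down to the invertible diagonal matrix $-6c_3^2D_S^{-1}$. In every case one obtains a non‑zero leading coefficient, hence $\det\mathbb{A}\not\equiv0$, hence $Q\not\equiv0$, which proves the lemma.

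The delicate point is the last step. Because the very coefficient $24c_1^2-12c_4$ (and likewise the subsequent ones) appears simultaneously on the diagonal of $\mathbb{A}$ and inside the rank‑$\le2$ correction — through $D_S^5$ and $D_S^3UD_S^2$ — one cannot conclude $\det\mathbb{A}\neq0$ merely from $\det\Lambda\neq0$, and at the leading order the naive product of the dominant entries cancels. One must genuinely track the leading‑order determinant, and the non‑vanishing of its coefficient is exactly what pins down the four inequalities defining a non‑resonant choice in Definition \ref{EqRisonanti} (this also explains why the resonant coefficients are the ones excluded). Once the matrix‑determinant‑lemma reduction is set up, the bookkeeping of the sub‑cases and of the lower‑order corrections in the degenerate ones is routine.
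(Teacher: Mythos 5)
Your overall strategy is the same as the paper's (the paper conjugates $\mathbb{M}$ to the polynomial matrix $\mathbb{B}=D_S\,\mathbb{A}$, then argues case by case on the non-resonant coefficient by inspecting an extreme-degree homogeneous component, exploiting the same rank-$\le 2$ structure of the $D_S^aUD_S^b$ terms), and your first two cases are correct. The gap is in your last two cases. Under the hypotheses $2c_1^2-c_4=0$ and $7c_2^2=6c_6$ nothing forces $c_2$ to vanish: the degree-three homogeneous part of $\mathbb{A}$ is then $\bigl(4c_6-\tfrac{16}{3}c_2^2\bigr)(D_S^3U+D_SUD_S^2)=-\tfrac{2}{3}c_2^2\,(D_S^3U+D_SUD_S^2)$, a nonzero (rank $\le 2$) matrix whenever $c_2\neq 0$. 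Hence the claim that ``only the degree-$\le 1$ terms survive'' is false, and the leading coefficient in your scaling $\overline{\jmath}_i=T\overline{\jmath}_i^{\,0}$, $T\to\infty$, is not the determinant of the degree-one part but a mixed coefficient (two columns from the degree-three block, the remaining ones from the degree-one block) which you never compute. Moreover the degree-one part itself is $12(c_2c_3-c_7)D_S+(24c_7-16c_2c_3)D_SU$, which equals $-12c_7D_S(\mathrm{I}-2U)$ only if $c_2c_3=0$, and in your final case the parenthetical ``now with $c_2=0$'' is unjustified: the non-resonance of Definition \ref{EqRisonanti} only says that not all four quantities in \eqref{coeffris} vanish. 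So for non-resonant choices with $2c_1^2=c_4$, $7c_2^2=6c_6$, $c_2\neq0$ and $c_7\neq 0$ (or $c_7=0$, $c_3\neq0$), your argument does not establish that the relevant polynomial is non-trivial.

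The statement is still true and the repair is exactly what the paper does: in these cases look at the \emph{lowest}-degree homogeneous component, which is blind to the surviving rank-two higher-degree block. If $c_3\neq0$ (regardless of all the other coefficients) the coefficient of the lowest power of $T$, i.e. the constant term of $\det\mathbb{B}$, is $\det(-6c_3^2\,\mathrm{I})=(-6c_3^2)^{\nu}\neq0$. If $c_3=0$ and $c_7\neq0$, the lowest-degree matrix part of $\mathbb{A}$ is $12c_7\,D_S(2U-\mathrm{I})$, with $\det\bigl(12c_7D_S(2U-\mathrm{I})\bigr)=(12c_7)^{\nu}(2\nu-1)(-1)^{\nu-1}\prod_i\overline{\jmath}_i\neq0$, and this is the full lowest-degree component of the determinant since every other entry has strictly higher degree. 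Ordering the cases as: $c_3\neq0$; then $c_3=0$, $2c_1^2-c_4\neq0$ (your first computation); then $c_3=2c_1^2-c_4=0$, $c_7\neq0$; and finally $c_3=c_7=2c_1^2-c_4=0$, $7c_2^2-6c_6\neq0$ (your second computation, where the determinant reduces to the rank-two eigenvalue condition, generically satisfied in the sites) closes the proof and recovers the paper's argument.
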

\begin{proof}
We write $\mathbb{M}=D_S^{-1}\,\mathbb{B}\,D_S$, with
\begin{equation}
\begin{aligned}
\mathbb{B}:&=(24 c_1^2-12 c_4) D_S^6\{ \mathrm{I}-2 D_{-2} U D_S^2 \}+(\frac{14}{3} c_2^2-4 c_6) D_S^4\\
&+(4 c_6-\frac{16}{3} c_2^2) \{D_S^4 U+D_S^2 U D_S^2\}-6 c_3^2 \mathrm{I}+12 (c_2 c_3-c_7) D_S^2\\
&+(24 c_7-16 c_2 c_3) D_S^2 U,
\end{aligned}
\end{equation}
where $\mathrm{I}$ is the identity $\nu\times \nu$ matrix.
The determinant of $\mathbb{B}$ is a polynomial in the variables $(\overline{\jmath}_1, \dots , \overline{\jmath}_{\nu})$ and, if $c_3\neq 0$, it is not trivial, namely it is not identically zero. Indeed, the monomial of minimal degree of this polynomial originates from the matrix $6\,c_3^2\,\mathrm{I}$, that is invertible, and so it cannot be naught.\\ 
Similarly, if $c_3=0$ and $2\, c_1^2-c_4\neq 0$ then the monomial of maximal degree, i.e. six, is not zero, beacuse $(24\,c_1^2-12\,c_4)\,D_S^6 \{\mathrm{I}-2\,D_S^{-2}\,U\,D_S^2\}$ is invertible.\\
If $c_3=2\,c_1^2-c_4=0$ and $c_7\neq 0$ then the monomial of minimal degree, i.e. two, is $12\,c_7\,D_S^2\,(2\,U-\mathrm{I})$, that is invertible, indeed 
\[
\left(2\,U-\mathrm{I} \right)^{-1}=\mathrm{I}-\frac{2}{2\,\nu+1}\,U,
\]
where $2\,\nu+1\neq 0$, because $\nu\in\mathbb{N}$.
If $c_3=2\,c_1^2-c_4=c_7=0$ then
\begin{align*}
\mathbb{B}&=D_S^4 \left\{(\frac{14}{3} c_2^2-4 c_6) \mathrm{I}+(4 c_6-\frac{16}{3} c_2^2) \{ U+D_{-2} U D_S^2\}\right\}
\end{align*}
The matrix $U+D_S^{-2} U D_S^2$ has rank $2$ and its image is spanned by the vectors $\vec{1}:=(1, \dots, 1)$ and $v_{-2}$ 
The eigenvalues of this matrix, different from zero, are
\begin{equation}
\lambda_1:=\nu+\sqrt{\left(\sum_{i=1}^{\nu} \overline{\jmath}_i^2\right)\left(\sum_{i=1}^{\nu} \overline{\jmath}_i^{-2}\right)}, \quad \lambda_2:=\nu-\sqrt{\left(\sum_{i=1}^{\nu} \overline{\jmath}_i^2\right)\left(\sum_{i=1}^{\nu} \overline{\jmath}_i^{-2}\right)}.
\end{equation}
Then, if $7 c_2^2-6\,c_6\neq 0$ and $
\alpha:=(8\,c_2^2-6\,c_6)/(7\,c_2^2-6\,c_6)$,
we require that
\begin{equation}\label{ultimeCondizioni}
\begin{cases}
1-\alpha\,\lambda_1\neq 0,\\
1-\alpha\,\lambda_2\neq 0.
\end{cases}
\end{equation}
The conditions \eqref{ultimeCondizioni} are satisfied for every choice of the tangential sites if $4\,c_2^2=3\,c_6$; otherwise, it is satisfied by generic integer vectors $(\overline{\jmath}_i)_{i=1}^{\nu}$.
\end{proof}
\section{The nonliner functional setting}
We look for an embedded invariant torus
\begin{equation}\label{i}
i \colon \mathbb{T}^{\nu}\to \mathbb{T}^{\nu}\times \mathbb{R}^{\nu}\times H_S^{\perp}, \quad \varphi \mapsto i(\varphi):=(\theta(\varphi), y(\varphi), z(\varphi))
\end{equation}
of the Hamiltonian vector field $X_{H_{\varepsilon}}$ filled by quasi-periodic solutions with diophantine frequency $\omega\in\mathbb{R}^{\nu}$, that we consider as independent parameters. We require that $\omega$ belongs to the set
\begin{equation}\label{OmegaEpsilon}
\Omega_{\varepsilon}:=\{ \alpha(\xi) : \xi\in [1, 2]^{\nu}\},
\end{equation}
where $\alpha$ is the function defined in \eqref{Frequency-AmplitudeMAP} and, by Lemma \ref{NonDegCond}, it is a diffeomorphism for a generic choice of the tangential sites.
\begin{remark}
We could consider any compact subset of $\{v\in\mathbb{R}^{\nu} : v_i> 0, \forall i=1,\dots, \nu\}$ instead of the set $[1, 2]^{\nu}$ in the definition \eqref{OmegaEpsilon}.
\end{remark}
Since any $\omega\in\Omega_{\varepsilon}$ is $\varepsilon^2$-close to the integer vector $\overline{\omega}:=(\overline{\jmath}_1^3, \dots, \overline{\jmath}_{\nu}^3)\in\mathbb{N}^{\nu}$, we require that the constant $\gamma$ in the diophantine inequality
\begin{equation}\label{0diMelnikov}
\lvert \omega\cdot l \rvert\geq \gamma\,\langle l \rangle^{-\tau}, \quad \forall l \in\mathbb{Z}^{\nu}\setminus\{0\}
\end{equation} 
satisfies
\begin{equation}\label{gamma}
\gamma=\varepsilon^{2+a}, \quad \mbox{for\,\,some}\,\,a>0.
\end{equation}
Note that the definition of $\gamma$ in \eqref{gamma} is slightly stronger than the minimal condition, namely $\gamma\le c\,\varepsilon^2$, with $c>0$ small enough. In addition to \eqref{0diMelnikov} we shall also require that $\omega$ satisfies the first and the second order Melnikov non-resonance conditions.
We fix the amplitude $\xi$ as a function of $\omega$ and $\varepsilon$, as
\begin{equation}
\xi:=\varepsilon^{-2}\,\mathbb{M}^{-1} [\omega-\overline{\omega}],
\end{equation}
so that $\alpha(\xi)=\omega$ (see \eqref{Frequency-AmplitudeMAP}).
Consequently, $H_{\varepsilon}$ in \eqref{Hepsilon} becomes a $(\omega, \varepsilon)$-parameter family of Hamiltonians which possess an invariant torus at the origin with frequency vector close to $\omega$.\\
Now we look for an embedded invariant torus  of the modified Hamiltonian vector field $X_{H_{\varepsilon, \zeta}}=X_{H_{\varepsilon}}+(0, \zeta, 0), \zeta\in\mathbb{R}^{\nu}$, which is generated by the Hamiltonian
\begin{equation}\label{HepsilonZeta}
H_{\varepsilon, \zeta}(\theta, y, z):=H_{\varepsilon}(\theta, y, z)+\zeta\cdot\theta, \quad \zeta\in\mathbb{R}^{\nu}.
\end{equation}
We introduce $\zeta$ in order to control the average in the $y$-component of the linearized equations \eqref{D} (see \eqref{ValoreperZeta}). However, the vector $\zeta$ has no dynamical consequences. Indeed it turns out that an invariant torus for the Hamiltonian vector field $X_{H_{\varepsilon, \zeta}}$ is actually invariant for $X_{H_{\varepsilon}}$ itself.\\
Thus, we look for zeros of the nonlinear operator
\begin{align}\label{NonlinearFunctional}
\mathcal{F}(i, \zeta)&:=\mathcal{F}(i, \zeta, \omega, \varepsilon):=\mathcal{D}_{\omega} i(\varphi)-X_{\mathcal{N}}(i(\varphi))-X_P(i(\varphi))+(0, \zeta, 0)\\[3mm]
&:=\begin{pmatrix}
\mathcal{D}_{\omega} \theta(\varphi)-\partial_y H_{\varepsilon}(i(\varphi))\\
\mathcal{D}_{\omega} y(\varphi)+\partial_{\theta} H_{\varepsilon}(i(\varphi))+\zeta\\
\mathcal{D}_{\omega} z(\varphi)-\partial_x \nabla_z H_{\varepsilon}(i(\varphi))
\end{pmatrix} =\begin{pmatrix}
\mathcal{D}_{\omega} \Theta(\varphi)-\partial_y P(i(\varphi))\\
\mathcal{D}_{\omega} y(\varphi)+\frac{1}{2} \partial_{\theta} (N(\theta(\varphi))z(\varphi))_{L^2(\mathbb{T})}+\partial_{\theta} P(i(\varphi))+\zeta\\
\mathcal{D}_{\omega} z(\varphi)-\partial_x N(\theta(\varphi))\,z(\varphi)-\partial_x\nabla_z P(i(\varphi))
\end{pmatrix}    \notag
\end{align}
where $\Theta(\varphi):=\theta(\varphi)-\varphi$ is $(2\pi)^{\nu}$-periodic and we use the short notation
\begin{equation}\label{Domega}
\mathcal{D}_{\omega}:=\omega\cdot \partial_{\varphi}.
\end{equation}
The Sobolev norm of the periodic component of the embedded torus
\begin{equation}\label{frakkiI}
\mathfrak{I}(\varphi):=i(\varphi)-(\varphi, 0, 0):=(\Theta(\varphi), y(\varphi), z(\varphi)), 
\end{equation}
is
\begin{equation}
\lVert \mathfrak{I} \rVert_{s}:=\lVert \Theta \rVert_{H_{\varphi}^s}+\lVert y \rVert_{H^s_{\varphi}}+\lVert z \rVert_s
\end{equation}
where $\lVert z \rVert_s:=\lVert z \rVert_{H^s_{\varphi, x}}$ is defined in \eqref{SobolevNormtx}.\\
We link the rescaling of the domain of the variables \eqref{Rescaling} with the diophantine constant $\gamma=\varepsilon^{2+a}$ by choosing
\begin{equation}
\gamma=\varepsilon^{2+a}=\varepsilon^{2\,b}, \quad b:=1+(a/2).
\end{equation}
Other choices are possible (see Remark $5.2$ in \cite{MKdV}).
\begin{teor}\label{IlTeorema}
If $c_1, \dots, c_7$ are non-resonant and conditions $(\mathtt{C}1)$-$(\mathtt{C}2)$ hold, then for a generic choice of the tangential sites $S$, satisfying the assumption $(\mathtt{S})$, for all $\varepsilon\in (0, \varepsilon_0)$, where $\varepsilon_0$ is a positive constant small enough, there exist a constant $C>0$ and a Cantor-like set $\mathcal{C}_{\varepsilon}\subseteq\Omega_{\varepsilon}$ (see \eqref{OmegaEpsilon}), with asymptotically full measure as $\varepsilon\to 0$, namely
\begin{equation}\label{frazionemisure}
\lim_{\varepsilon \to 0} \dfrac{\lvert \mathcal{C}_{\varepsilon} \rvert}{\lvert \Omega_{\varepsilon} \rvert}=1,
\end{equation}
such that, for all $\omega\in\mathcal{C}_{\varepsilon}$, there exists a solution $i_{\infty}(\varphi):=i_{\infty}(\omega, \varepsilon)(\varphi)$ of the equation $\mathcal{F}(i_{\infty}, 0, \omega, \varepsilon)=0$. Hence the embedded torus $\varphi\mapsto i_{\infty}(\varphi)$ is invariant for the Hamiltonian vector field $X_{H_{\varepsilon}}$, and it is filled by quasi-periodic solutions with frequency $\omega$. The torus $i_{\infty}$ satisfies
\begin{equation}
\lVert i_{\infty}(\varphi)-(\varphi, 0, 0) \rVert_{s_0+\mu}^{Lip(\gamma)}\le C\,\varepsilon^{6-2 b}\,\gamma^{-1}
\end{equation}
for some $\mu:=\mu(\nu)>0$. Moreover the torus $i_{\infty}$ is linearly stable.
\end{teor}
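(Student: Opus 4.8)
The plan is to produce $i_\infty$ as the limit of a Nash--Moser quadratic iteration for the nonlinear operator $\mathcal{F}$ in \eqref{NonlinearFunctional}, along the lines of \cite{KdVAut} and \cite{MKdV}. The iteration starts from the trivial approximate solution $i_0(\varphi):=(\varphi,0,0)$, $\zeta_0:=0$: thanks to the three steps of weak Birkhoff normal form of Proposition \ref{WBNF}, which in particular remove the monomials $R(v^3)$, $R(v^4)$, $R(v^4 z)$, $R(v^5)$ from the Hamiltonian, the error $\mathcal{F}(i_0,0)$ is small of order $O(\varepsilon^{6-2b}\gamma^{-1})$ in the scale of weighted Sobolev norms $\lVert\cdot\rVert_s^{Lip(\gamma)}$ --- this is exactly where Hypothesis $(\mathtt{S})$ enters, since the smallness needed to launch the scheme, namely \eqref{SmallnessConditionNM} in Theorem \ref{NashMoser}, is dictated by the mere quadraticity of $\mathcal{N}_2$. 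At each step one corrects $i_{n+1}=i_n+h_n$, with $h_n$ built from an approximate right inverse of the linearized operator $d_{i,\zeta}\mathcal{F}(i_n,\zeta_n)$, exact up to an error that vanishes on exact solutions (hence quadratically small along the iteration); establishing the \emph{tame estimates} for this inverse on the scale $\lVert\cdot\rVert_s^{Lip(\gamma)}$, with a fixed loss of derivatives, is the heart of the matter, after which convergence of $(i_n)$ to a solution $i_\infty$ with the stated bound follows from the abstract Nash--Moser theorem.

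\textbf{Inversion of the linearized operator.} This is the main work. Following Section $6$ and the abstract decoupling of Berti--Bolle \cite{BertiBolle} --- which uses the Hamiltonian structure and the approximate isotropy of the torus --- one conjugates $d_{i,\zeta}\mathcal{F}(i_n,\zeta_n)$ to an operator triangular with respect to the splitting $(\theta,y)\oplus z$: the tangential $(\theta,y)$-block is essentially solved by quadratures (the vector $\zeta$ in \eqref{HepsilonZeta} being introduced precisely to annihilate the obstruction on the $y$-average), so that the whole inversion reduces to inverting, on $H^s_{S^{\perp}}(\mathbb{T}^{\nu+1})$, the quasi-periodically forced linear operator $\mathcal{L}_\omega$ computed in Section $7$. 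One then carries out the chain of conjugations of Section $8$: first, pseudo-differential changes of variables that straighten the top-order coefficient $a_1(\omega t)\partial_{xxx}$ to a constant and peel off the lower-order variable coefficients down to a smoothing remainder; since this straightening map is $\mathrm{I}+O(\varepsilon)$ it produces new terms of size $\varepsilon^2$ that are \emph{not} perturbative for the reducibility step and must be normalized by a \emph{linear Birkhoff normal form}; and finally the KAM reducibility scheme of Theorem \ref{Reducibility} diagonalizes the resulting operator to $\dot v_j+\mu_j^\infty v_j=0$, $\mu_j^\infty\in\mathrm{i}\mathbb{R}$, on the set of frequencies satisfying the first and second Melnikov conditions. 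Inverting the diagonal operator on that set and composing back all the (tame, invertible) transformations delivers the approximate inverse with the required estimates.

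\textbf{Cantor set, measure estimates, and the main obstacle.} The set $\mathcal{C}_\varepsilon$ is obtained from $\Omega_\varepsilon$ by discarding the $\omega$'s violating \eqref{0diMelnikov} or one of the Melnikov conditions $\lvert\omega\cdot l+\mu_j^\infty\rvert\geq\gamma\langle l\rangle^{-\tau}$, $\lvert\omega\cdot l+\mu_j^\infty\pm\mu_k^\infty\rvert\geq\gamma\langle l\rangle^{-\tau}$ for the final normal frequencies, the excluded sets being the $\mathcal{R}_{ljk}$ of \eqref{BadSets}. Since $\mu_j^\infty$ is a small Lipschitz perturbation of $\mathrm{i}\,j^3$ up to an $O(\varepsilon^2)$ correction and the twist matrix $\mathbb{M}$ is invertible by Lemma \ref{TwistLemma}, a R\"ussmann-type argument bounds each $\lvert\mathcal{R}_{ljk}\rvert$, and summation over $l,j,k$ gives $\lvert\Omega_\varepsilon\setminus\mathcal{C}_\varepsilon\rvert=o(\lvert\Omega_\varepsilon\rvert)$, i.e. \eqref{frazionemisure}. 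I expect the main obstacles to be precisely the two features stressed in the introduction. First, because $\mathcal{N}_2$ is only quadratic, $a_1-1=O(\varepsilon)$ rather than $O(\varepsilon^3)$ and the corrections to all coefficients of $\mathcal{L}_\omega$ are larger, so the smallness condition \eqref{PiccolezzaperKamred} needed to start the reducibility of Theorem \ref{Reducibility} is genuinely harder to meet: it forces the sharp bookkeeping of powers of $\varepsilon$ throughout Section $8$ and, in particular, the linear Birkhoff normal form that absorbs the dangerous $\varepsilon^2$ terms. Second, since $f$ depends on $x$ the momentum is not conserved, so the indices $l,j,k$ in $\mathcal{R}_{ljk}$ are not linked by any relation; several resonant combinations then become \emph{degenerate}, and they are exactly the ones excluded by imposing, for finitely many $j,k\in S^c$, the extra genericity assumptions $(\mathtt{H}1)$ and $(\mathtt{H}2)_{j,k}$ on $S$ --- non-triviality of the relevant polynomials in $(\overline{\jmath}_1,\dots,\overline{\jmath}_\nu)$ being guaranteed, via Lemma \ref{Lemmata1} and Lemma \ref{Lemmata2}, by $(\mathtt{C}1)$--$(\mathtt{C}2)$ and the non-resonance of $c_1,\dots,c_7$.

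\textbf{Linear stability.} Finally, linear stability of $i_\infty$ is read off from the reducibility of Section $8$: in the diagonalizing coordinates the linearized flow on the normal directions is generated by $\dot v_j=-\mu_j^\infty v_j$ with $\mu_j^\infty\in\mathrm{i}\mathbb{R}$, so every Sobolev norm of the normal component is constant in time; transferring this through the bounded, invertible changes of variables and the action-angle map of Section $4$ yields Lyapunov stability of the corresponding quasi-periodic solution of \eqref{KdV}. Since an invariant torus of $X_{H_{\varepsilon,\zeta}}$ turns out to be invariant for $X_{H_\varepsilon}$ as well, one has $\zeta=0$ at the solution, which completes the proof of Theorem \ref{IlTeorema} and hence of Theorem \ref{F.Giuliani}.
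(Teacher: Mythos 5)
Your proposal is correct and follows essentially the same route as the paper: Nash--Moser iteration initialized by the weak Birkhoff normal form, Berti--Bolle decoupling reducing the inversion to $\mathcal{L}_\omega$ on the normal directions, the chain of conjugations of Section $8$ (with the linear Birkhoff steps absorbing the non-perturbative $\varepsilon,\varepsilon^2$ terms) plus KAM reducibility, measure estimates using $(\mathtt{H}1)$, $(\mathtt{H}2)_{j,k}$ for the finitely many degenerate triples, and stability from the diagonalized normal system. The only slight imprecision is in the stability step, where the reduced normal equation is actually forced, $\dot v_j+\mu_j^\infty v_j=f_j(\omega t)$, and boundedness of the particular solution uses the first Melnikov conditions, so the Sobolev norms stay bounded rather than exactly constant.
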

Theorem \ref{IlTeorema} is proved in Sections $6-9$. It implies Theorem \ref{F.Giuliani} where the $\xi_j$ in \eqref{SoluzioneEsplicita} are the components of the vector $\mathbb{M}^{-1}[\omega-\overline{\omega}]$.\\

Now we give tame estimates for the composition operator induced by the Hamiltonian vector fields $X_{\mathcal{N}}$ and $X_{P}$ in \eqref{NonlinearFunctional}.\\
Since the functions $y\to \sqrt{\xi+\varepsilon^{2(b-1)} y}, \theta\to e^{\mathrm{i}\,\theta}$ are analytic for $\varepsilon$ small enough and $\lvert y \rvert\le C$, the composition lemma \ref{lemmacomp} implies that, for all $\Theta, y\in H^s(\T^{\nu}, \mathbb{R}^{\nu})$ with $\lVert \Theta \rVert_{s_0}, \lVert y \rVert_{s_0}\le 1$, one has the tame estimate
\begin{equation}
\lVert v_{\varepsilon}(\theta(\varphi), y(\varphi)) \rVert_s\le_s 1+\lVert \Theta \rVert_s+\lVert y \rVert_s.
\end{equation}
Hence the map $A_{\varepsilon}$ in \eqref{Aepsilon} satisfies, for all $\lVert \mathfrak{I} \rVert_{s_0}^{Lip(\gamma)}\le 1$
\begin{equation}
\lVert A_{\varepsilon}(\theta(\varphi), y(\varphi), z(\varphi)) \rVert_s^{Lip(\gamma)}\le_s \varepsilon (1+\lVert \mathfrak{I} \rVert_s^{Lip(\gamma)}).
\end{equation}
In the following lemma we collect tame estimates for the Hamiltonian vector fields $X_{\mathcal{N}}, X_{P}, X_{H_{\varepsilon}}$, see \eqref{Hepsilon}.
\begin{lem}\label{TameEstimatesforVectorfields}
Let $\mathfrak{I}(\varphi)$ in \eqref{frakkiI} satisfy $\lVert \mathfrak{I} \rVert_{s_0+3}^{Lip(\gamma)}\le C\,\varepsilon^{6-2 b}\gamma^{-1}$. Then
\begin{align}
&\lVert \partial_y P(i) \rVert_s^{Lip(\gamma)}\le_s \varepsilon^4+\varepsilon^{2 b} \lVert \mathfrak{I} \rVert_{s+3}^{Lip(\gamma)}, \qquad  \lVert \partial_{\theta} P(i) \rVert_s^{Lip(\gamma)}\le_s \varepsilon^{6-2 b}(1+\lVert \mathfrak{I} \rVert_{s+3}^{Lip(\gamma)}),\\
&\lVert \nabla_z P(i) \rVert_s^{Lip(\gamma)}\le_s \varepsilon^{5-b}+\varepsilon^{6-b}\gamma^{-1} \lVert \mathfrak{I} \rVert_{s+3}^{Lip(\gamma)}, \qquad\lVert X_P (i) \rVert_s^{Lip(\gamma)}\le_s \varepsilon^{6-2 b}+\varepsilon^{2 b} \lVert \mathfrak{I} \rVert_{s+3}^{Lip(\gamma)}, \label{EstimatesVecField}\\
&\lVert \partial_{\theta}\partial_y P(i) \rVert_s^{Lip(\gamma)}\le_s \varepsilon^4+\varepsilon^5\gamma^{-1} \lVert \mathfrak{I} \rVert_{s+3}^{Lip(\gamma)}, \qquad \lVert \partial_y\nabla_z P(i) \rVert_s^{Lip(\gamma)}\le_s \varepsilon^{b+3}+\varepsilon^{2 b-1}\lVert \mathfrak{I} \rVert_{s+3}^{Lip(\gamma)},\\
&\lVert \partial_{y y} P(i)-\frac{\varepsilon^{2 b}}{2} \mathbb{M} \rVert_s^{Lip(\gamma)}\le_s \varepsilon^{2+2 b}+\varepsilon^{2 b+3}\gamma^{-1} \lVert \mathfrak{I} \rVert_{s+2}^{Lip(\gamma)}
\end{align}
and for all $\hat{\imath}:=(\hat{\Theta}, \hat{y}, \hat{z})$,
\begin{align}
&\lVert \partial_y d_i X_P(i)[\hat{\imath}] \rVert_s^{Lip(\gamma)}\le_s \varepsilon^{2 b-1} (\lVert \hat{\imath} \rVert_{s+3}^{Lip(\gamma)}+\lVert \mathfrak{I} \rVert_{s+3}^{Lip(\gamma)}\lVert \hat{\imath} \rVert_{s_0+3}),\\
&\lVert d_i X_{H_{\varepsilon}}(i)[\hat{\imath}]+(0, 0, \partial_{xxx} \hat{z})\rVert_s^{Lip(\gamma)}\le_s \varepsilon (\lVert \hat{\imath} \rVert_{s+3}^{Lip(\gamma)}+\lVert \mathfrak{I} \rVert_{s+3}^{Lip(\gamma)}\lVert \hat{\imath} \rVert_{s_0+3}),\\
&\lVert d_i^2 X_{H_{\varepsilon}} (i) [\hat{\imath}, \hat{\imath}]\rVert_s^{Lip(\gamma)}\le_s \varepsilon (\lVert \hat{\imath} \rVert_{s+3}^{Lip(\gamma)}\lVert \hat{\imath} \rVert_{s_0+3}^{Lip(\gamma)}+\lVert \mathfrak{I} \rVert_{s+3}^{Lip(\gamma)}(\lVert \hat{\imath} \rVert_{s_0+3})^2).
\end{align}
\end{lem}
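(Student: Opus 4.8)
The plan is to read off all the bounds directly from the explicit form \eqref{HamiltonianaRiscalata} of $H_{\varepsilon}$, from the definition of $A_{\varepsilon}$ in \eqref{Aepsilon}, and from the splitting $H_{\varepsilon}=\mathcal N+P$ in \eqref{Hepsilon}. The structural point is that $\mathcal N$ contains the entire linear part $\alpha(\xi)\cdot y$ and the entire $z$-quadratic part $\tfrac12(N(\theta)z,z)_{L^2}$ of $H_{\varepsilon}$ at $y=0$, so $P$ only collects: the terms of degree $\ge 3$ in $z$; the terms of degree $\le 2$ in $z$ carrying explicit $y$-dependence through $v_{\varepsilon}(\theta,y)$ (hence a factor $\varepsilon^{2(b-1)}$ coming from $\sqrt{\xi+\varepsilon^{2(b-1)}y}$); the purely quadratic term $\tfrac{\varepsilon^{2b}}{2}\mathbb M y\cdot y$; and the $z$-independent and $z$-linear pieces of the tail $\varepsilon^{-2b}\mathcal H_{\geq 6}(\varepsilon v_{\varepsilon}+\varepsilon^b z)$. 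Every estimate then follows by differentiating these terms and bounding each resulting monomial with the tame product inequality \eqref{TameProduct}, the composition Lemma \ref{lemmacomp} together with the change of variables Lemma \ref{ChangeofVariablesLemma}, and the bound $\lVert v_{\varepsilon}(\theta(\varphi),y(\varphi))\rVert_s\le_s 1+\lVert\Theta\rVert_s+\lVert y\rVert_s$ established just above; only the bookkeeping of the powers of $\varepsilon$ is delicate. The $Lip(\gamma)$ statements are obtained from the sup-norm ones by the usual argument, using that $\xi=\xi(\omega)$ is affine, that $\lVert\mathfrak I\rVert^{Lip(\gamma)}$ is under control, and the Lipschitz parts of Lemmas \ref{ChangeofVariablesLemma}, \ref{lemmacomp} and of Lemma \ref{lemmaLip}, so I shall not repeat this passage.

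First I would treat the finite-degree part of $P$. The mechanism is: $\partial_{\theta}$ acts only on the $v_{\varepsilon}$ factors and returns a monomial of the same type; $\partial_y$ does the same but also extracts a factor $\varepsilon^{2(b-1)}$ each time it hits a $v_{\varepsilon}$ (plus a derivative of the analytic map $y\mapsto\sqrt{\xi+\varepsilon^{2(b-1)}y}$, harmless by Lemma \ref{lemmacomp}); $\nabla_z$ lowers the $z$-degree by one; and the space derivatives in the quasi-linear monomials, together with the $\partial_x$ in front of $\nabla_z$ in the third component of the vector field, account for the loss of up to three space derivatives, i.e.\ the index $s+3$ on the right-hand sides. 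Expanding with \eqref{TameProduct}, each such term is bounded by $\varepsilon^{p}(1+\lVert\mathfrak I\rVert_{s+3}^{Lip(\gamma)})$, where $p$ equals the power of $\varepsilon$ multiplying the term in \eqref{HamiltonianaRiscalata}, raised by $2(b-1)$ for every $\partial_y$ and further increased by the power of $\varepsilon^{6-2b}\gamma^{-1}$ gained whenever the ``low norm'' of an extra $y$- or $z$-factor is estimated by the hypothesis $\lVert\mathfrak I\rVert_{s_0+3}^{Lip(\gamma)}\le C\varepsilon^{6-2b}\gamma^{-1}$; taking the minimum of these exponents reproduces the claimed leading powers. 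For $\partial_{yy}P$ one first subtracts the contribution $\tfrac{\varepsilon^{2b}}{2}\mathbb M$ of the quadratic term $\tfrac{\varepsilon^{2b}}{2}\mathbb M y\cdot y$ and estimates the remainder in the same way; the term $\varepsilon^{2b}\lVert\mathfrak I\rVert^{Lip(\gamma)}_{s+3}$ in the bound for $\partial_yP$ is precisely $\partial_y$ of this quadratic term.

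The piece requiring the most care is the tail $\varepsilon^{-2b}\mathcal H_{\geq 6}(\varepsilon v_{\varepsilon}+\varepsilon^b z)$. Here I would Taylor-expand in $z$ around the point $\varepsilon v_{\varepsilon}$, i.e.\ invoke the remainder estimate \eqref{VariazioneGenerale} of Lemma \ref{lemmacomp} applied to the $C^q$ density underlying $\mathcal H_{\geq 6}$: the contribution with $i$ factors of $z$ is $\tfrac1{i!}\varepsilon^{-2b}\mathcal H_{\geq 6}^{(i)}(\varepsilon v_{\varepsilon})[(\varepsilon^b z)^i]$, and since $\mathcal H_{\geq 6}$ vanishes to order six at the origin one has $\mathcal H_{\geq 6}^{(i)}(\varepsilon v_{\varepsilon})=O(\varepsilon^{6-i})$, so this contribution is $O(\varepsilon^{6-2b+i(b-1)})$, with the tame bound of Lemma \ref{lemmacomp} on $\mathcal H_{\geq 6}^{(i)}(\varepsilon v_{\varepsilon})$ once composed with $\varphi\mapsto v_{\varepsilon}(\theta(\varphi),y(\varphi))$. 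Hence the tail contributes $O(\varepsilon^{6-2b})$ to $\partial_{\theta}P$ — the $i=0$, $z$-independent term, which explains the ``$1$'' in the estimate for $\partial_{\theta}P$ — then $O(\varepsilon^{6-2b+2(b-1)})=O(\varepsilon^4)$ to $\partial_yP$, and $O(\varepsilon^{5-b})$ to $\nabla_zP$ (only $i\ge 1$ survives, the smallest being $i=1$, giving $\varepsilon^{6-2b+(b-1)}=\varepsilon^{5-b}$). These dominate the finite-degree contributions, so the estimates for $\partial_yP,\ \partial_{\theta}P,\ \nabla_zP$, and therefore for $X_P=(\partial_yP,-\partial_{\theta}P,\partial_x\nabla_zP)$, follow by collecting the three components.

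The mixed derivatives $\partial_{\theta}\partial_yP,\ \partial_y\nabla_zP,\ \partial_{yy}P$ and the linearized quantities $d_iX_P,\ d_iX_{H_{\varepsilon}},\ d_i^2X_{H_{\varepsilon}}$ are handled by the same scheme with one, respectively two, further differentiations in the embedding $i=(\theta,y,z)$ or in $\hat{\imath}$: each additional differentiation either hits a $v_{\varepsilon}$ — harmless, at worst one more derivative of $\sqrt{\cdot}$ or $e^{\mathrm{i}\theta}$ and possibly an extra $\varepsilon^{2(b-1)}$, still controlled by Lemma \ref{lemmacomp} — or lowers a $z$-degree, and one closes with \eqref{TameProduct}, the three-derivative loss being unchanged. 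The exponent $\varepsilon^1$ in the bound for $d_iX_{H_{\varepsilon}}(i)[\hat{\imath}]+(0,0,\partial_{xxx}\hat{z})$ appears because, after removing the Airy part $-\partial_{xxx}z$ coming from $H_2$, the lowest-order term of the linearized field is $\partial_x$ of the first nonlinear correction of $N(\theta)$, namely the $O(\varepsilon)$ cubic terms $c_1z_x^2(v_{\varepsilon})_x+c_2z_x^2v_{\varepsilon}+2c_2zz_x(v_{\varepsilon})_x$ visible in \eqref{Hepsilon}. I expect the only genuinely delicate point to be exactly this $\varepsilon$-bookkeeping — above all for the tail, where one must combine the sixth-order vanishing of $\mathcal H_{\geq 6}$, the rescaling \eqref{Rescaling}, the $z$-Taylor expansion and the a priori smallness $\lVert\mathfrak I\rVert_{s_0+3}^{Lip(\gamma)}\le C\varepsilon^{6-2b}\gamma^{-1}$ of the torus, so that no estimate ever picks up a power of $\varepsilon$ smaller than the one stated; the rest is a routine, if lengthy, application of \eqref{TameProduct} and Lemmas \ref{ChangeofVariablesLemma}, \ref{lemmacomp}, \ref{lemmaLip}.
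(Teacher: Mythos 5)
Your strategy is the intended one: the paper states this lemma without proof (it is the direct analogue of Lemma 5.1 in \cite{KdVAut}), and the justification it relies on is exactly what you outline — identify $P=H_{\varepsilon}-\mathcal N$ from the explicit expansion \eqref{HamiltonianaRiscalata}, Taylor-expand the tail $\varepsilon^{-2b}\mathcal H_{\geq 6}(\varepsilon v_{\varepsilon}+\varepsilon^{b}z)$ using its sixth-order vanishing, and close with \eqref{TameProduct}, Lemmas \ref{ChangeofVariablesLemma}, \ref{lemmacomp}, \ref{lemmaLip} and the a priori bound on $\mathfrak I$ while tracking powers of $\varepsilon$. Your exponent bookkeeping reproduces the stated bounds (e.g.\ $\varepsilon^{4}$ and $\varepsilon^{5-b}$ from the $z$-quadratic and $z$-linear tail pieces, $\varepsilon^{2b}\lVert\mathfrak I\rVert_{s+3}$ from $\tfrac{\varepsilon^{2b}}{2}\mathbb M y\cdot y$, $\varepsilon^{6-b}\gamma^{-1}\lVert\mathfrak I\rVert_{s+3}$ from the $\varepsilon^{b}z^{3}$ terms), and the small imprecision in your description of $P$ (it also contains the $z$-quadratic and higher pieces carrying $y$-dependence, which are even smaller) does not affect any estimate.
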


In the sequel we will use that, by the diophantine condition \eqref{0diMelnikov}, the operator $\mathcal{D}_{\omega}^{-1}$ (see \eqref{Domega}) is defined for all functions $u$ with zero $\varphi$-average, and satisfies
\begin{equation}\label{stimaDomega}
\lVert \mathcal{D}_{\omega}^{-1} u \rVert_s\le_s \gamma^{-1}\,\lVert u \rVert_{s+\tau}, \quad \lVert \mathcal{D}_{\omega}^{-1} u \rVert_s^{Lip(\gamma)}\le_s \gamma^{-1} \lVert u \rVert^{Lip(\gamma)}_{s+2\tau+1}.
\end{equation}


\section{Approximate inverse}
We will apply a Nash-Moser iterative scheme in order to find a zero of the functional $\mathcal{F}(i, \zeta)$ defined in \eqref{NonlinearFunctional}. In particular, we shall construct a sequence of approximate solutions of 
\begin{equation}\label{EquazioneFunzionale}
{F}(i, \zeta)=0
\end{equation}
that converges to a solution in some Sobolev norm. In order to define this sequence we need to solve some linearized equations and this is the main difficulty for implementing the Nash-Moser algorithm. \\
Zehnder noted in \cite{Z} that it is sufficient to invert these equations only approximately to get a scheme with still quadratic speed of convergence. We refer to \cite{Z} for the precise notion of \textit{approximate right inverse}, whose main feature is to be an \textit{exact right inverse} when the equation is linearized at an exact solution.
Hence, our aim is to construct an approximate right inverse of the linearized operator 
\begin{equation}\label{LinearizedOp}
d_{i, \zeta} \mathcal{F}(i_0, \zeta_0) [\hat{\imath}, \hat{\zeta}]=\mathcal{D}_{\omega} \hat{\imath}-d_i X_{H_{\varepsilon}}(i_0(\varphi))[\hat{\imath}]+(0, \hat{\zeta}, 0)
\end{equation} 
at any approximate solution $i_0$ of the equation \eqref{EquazioneFunzionale}, and to verify that satisfies some tame estimates.\\
Note that $d_{i, \zeta} \mathcal{F}(i_0, \zeta_0)=d_{i, \zeta} \mathcal{F}(i_0)$ is independent of $\zeta_0$ (see \eqref{NonlinearFunctional}).\\
We will implement the general strategy in \cite{BertiBolle}, \cite{NLW} which reduces the search of an approximate right inverse of \eqref{LinearizedOp} to the search of an approximate inverse on the normal directions only.\\

It is well known that an invariant torus $i_0$ with diophantine flow is isotropic (see e.g.\cite{BertiBolle}), namely the pull-back $1$-form $i_0^*\Lambda$ is closed, where $\Lambda$ is the Liouville $1$-form in \eqref{1Form}. This is tantamount to say that the $2$-form $\mathcal{W}$ in  \eqref{NewSimplForm} vanishes on the torus $i_0(\T^{\nu})$, because $i_0^* \mathcal{W}=i_0^* d\Lambda=d\,i_0^* \Lambda$. For an ``approximately invariant'' embedded torus $i_0$ the $1$-form $i_0^*\Lambda$ is only ``approximately closed''. In order to make this statement quantitative we consider
\begin{equation}\label{pullBackLambda}
i_0^*\Lambda=\sum_{k=1}^{\nu} a_k(\varphi)\,d\varphi_k, \quad a_k(\varphi):=-([\partial_{\varphi} \theta_0(\varphi)]^T y_0(\varphi))_k+\frac{1}{2} ( \partial_{\varphi_k} z_0(\varphi), \partial_x^{-1} z_0(\varphi))_{L^2(\T)}
\end{equation}
and we quantify how small is
\begin{equation}\label{pullBackW}
i_0^* \mathcal{W}=d\,i_0^* \Lambda=\sum_{1\le k<j\le \nu} A_{k\,j}(\varphi)\,d\varphi_k\wedge d \varphi_j, \quad A_{k\,j}(\varphi):=\partial_{\varphi_k} a_j(\varphi)-\partial_{\varphi_j} a_k(\varphi).
\end{equation}
In order to get estimates for an approximate inverse we need to take in account the size of the ``error'' function
\begin{equation}
Z(\varphi):=(Z_1, Z_2, Z_3)(\varphi):=\mathcal{F}(i_0, \zeta_0)(\varphi)=\omega\cdot \partial_{\varphi} i_0 (\varphi)-X_{H_{\varepsilon, \zeta_0}}(i_0(\varphi)),
\end{equation}
which gives a measure of how $i_0$ is near to be an exact solution.\\
Along this section we will always assume the following hypotesis (which will be proved at each step of the Nash-Moser iteration):
\begin{itemize}
\item \textbf{Assumption}. The map $\omega \mapsto i_0(\omega)$ is a Lipschitz function defined on some subset $\Omega_0\subseteq \Omega_{\varepsilon}$, where $\Omega_{\varepsilon}$ is defined in \eqref{OmegaEpsilon}, and, for some $\mu:=\mu(\tau, \nu)>0$,
\begin{equation}\label{Assumption}
\lVert \mathfrak{I}_0 \rVert_{s_0+\mu}^{Lip(\gamma)}\le \varepsilon^{6-2b} \gamma^{-1}, \quad \lVert Z \rVert_{s_0+\mu}^{Lip(\gamma)}\le \varepsilon^{6-2 b}, \quad \gamma=\varepsilon^{2+a}, \quad a\in (0, 1/6),
\end{equation}
where $\mathfrak{I}_0(\varphi):=i_0(\varphi)-(\varphi, 0, 0)$.
\end{itemize}
The next lemma proves that if $i_0$ is a solution of the equation \eqref{EquazioneFunzionale}, then the parameter $\zeta$ has to be naught, hence the embedded torus $i_0$ supports a quasi-periodic solution of the ``original'' system with Hamiltonian $H_{\varepsilon}$. 
\begin{lem}{(Lemma $6.1$ in \cite{KdVAut})}\label{Lemma6.1}
We have
\[
\lvert \zeta_0 \rvert^{Lip(\gamma)}\le C \lVert Z \rVert_{s_0}^{Lip(\gamma)}.
\]
In particular, if $\mathcal{F}(i_0, \zeta_0)=0$ then $\zeta_0=0$ and the torus $i_0(\varphi)$ is invariant for the vector field $X_{H_{\varepsilon}}$.
\end{lem}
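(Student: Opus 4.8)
The plan is to read off $\zeta_0$ from an integral identity obtained by differentiating the scalar function $\varphi\mapsto H_\varepsilon(i_0(\varphi))$ and exploiting the Hamiltonian (symplectic) structure encoded in $\mathcal{W}=d\Lambda$. Concretely, fix $k\in\{1,\dots,\nu\}$ and integrate over $\mathbb{T}^\nu$ the chain-rule identity $\partial_{\varphi_k}\big(H_\varepsilon(i_0(\varphi))\big)=\partial_\theta H_\varepsilon(i_0)\cdot\partial_{\varphi_k}\theta_0+\partial_y H_\varepsilon(i_0)\cdot\partial_{\varphi_k}y_0+(\nabla_z H_\varepsilon(i_0),\partial_{\varphi_k}z_0)_{L^2(\mathbb{T})}$, whose left-hand side has zero $\varphi$-average. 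I would then use the three components of the error $Z=\mathcal{F}(i_0,\zeta_0)$ to substitute $\partial_y H_\varepsilon(i_0)=\mathcal{D}_\omega\theta_0-Z_1$, $\partial_\theta H_\varepsilon(i_0)=Z_2-\mathcal{D}_\omega y_0-\zeta_0$ and, inverting $\partial_x$ on the normal subspace, $\nabla_z H_\varepsilon(i_0)=\partial_x^{-1}(\mathcal{D}_\omega z_0-Z_3)$.

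The crucial point, and the only non-routine step, is that all the terms in which $\mathcal{D}_\omega$ acts on $i_0$ cancel after integration. Indeed $-\int_{\mathbb{T}^\nu}\mathcal{D}_\omega y_0\cdot\partial_{\varphi_k}\theta_0\,d\varphi+\int_{\mathbb{T}^\nu}\mathcal{D}_\omega\theta_0\cdot\partial_{\varphi_k}y_0\,d\varphi=0$, as one sees by integrating $\mathcal{D}_\omega(y_0\cdot\partial_{\varphi_k}\theta_0)$ and using that $\mathcal{D}_\omega$ has zero $\varphi$-average; likewise $\int_{\mathbb{T}^\nu}(\partial_x^{-1}\mathcal{D}_\omega z_0,\partial_{\varphi_k}z_0)_{L^2(\mathbb{T})}\,d\varphi=0$ because, integrating by parts in $\varphi$ and using in addition the skew-adjointness of $\partial_x^{-1}$ on $L^2(\mathbb{T})$, this integral equals its own opposite. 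This cancellation is exactly the quantitative manifestation of the Hamiltonian structure (morally, of $i_0^*\mathcal{W}$ being exact). What survives is $-\zeta_0\cdot\int_{\mathbb{T}^\nu}\partial_{\varphi_k}\theta_0\,d\varphi=-(2\pi)^\nu(\zeta_0)_k$, where we used $\theta_0(\varphi)=\varphi+\Theta_0(\varphi)$ with $\Theta_0$ periodic so that $\int_{\mathbb{T}^\nu}\partial_{\varphi_k}\theta_0\,d\varphi=(2\pi)^\nu e_k$, together with the three integrals $\int_{\mathbb{T}^\nu}\big(Z_2\cdot\partial_{\varphi_k}\theta_0-Z_1\cdot\partial_{\varphi_k}y_0-(\partial_x^{-1}Z_3,\partial_{\varphi_k}z_0)_{L^2(\mathbb{T})}\big)\,d\varphi$, each linear in $Z$. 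Hence $(\zeta_0)_k$ equals $(2\pi)^{-\nu}$ times this combination.

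To conclude I would estimate the three surviving integrals by Cauchy--Schwarz in $x$ and the embedding $H^{s_0}(\mathbb{T}^{\nu+1})\hookrightarrow L^\infty$ in $\varphi$, bounding the factors $\partial_\varphi\theta_0=\mathrm{Id}+\partial_\varphi\Theta_0$, $\partial_\varphi y_0$, $\partial_\varphi z_0$ via the \textbf{Assumption} $\lVert\mathfrak{I}_0\rVert_{s_0+\mu}^{Lip(\gamma)}\le\varepsilon^{6-2b}\gamma^{-1}$ (so all these factors are $O(1)$), which yields $\lvert\zeta_0\rvert\le C\lVert Z\rVert_{s_0}$; performing the identical computation on the Lipschitz-in-$\omega$ increments of all the quantities involved gives $\lvert\zeta_0\rvert^{Lip(\gamma)}\le C\lVert Z\rVert_{s_0}^{Lip(\gamma)}$. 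Finally, if $\mathcal{F}(i_0,\zeta_0)=0$ then $Z\equiv0$, so $\zeta_0=0$; and since $X_{H_{\varepsilon,\zeta}}=X_{H_\varepsilon}+(0,\zeta,0)$, the embedded torus $i_0$ is then invariant for $X_{H_\varepsilon}$ itself.
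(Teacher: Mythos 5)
Your argument is correct: the identity $\int_{\mathbb{T}^\nu}\partial_{\varphi_k}\big(H_\varepsilon(i_0(\varphi))\big)\,d\varphi=0$, the substitution of $\partial_\theta H_\varepsilon,\partial_y H_\varepsilon,\nabla_z H_\varepsilon$ via the components of $Z$, and the cancellation of the $\mathcal{D}_\omega$-quadratic terms (integration by parts in $\varphi$ plus skew-adjointness of $\partial_x^{-1}$, i.e.\ the Hamiltonian structure) all check out, yielding $(2\pi)^\nu(\zeta_0)_k$ as an integral linear in $Z$ and hence the stated bound. This is essentially the same mechanism as the proof the paper invokes by citing Lemma $6.1$ of \cite{KdVAut}, so no further comparison is needed.
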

Now we estimate the size of $i_0^*\mathcal{W}$ in terms of the error function $Z$.\\
By \eqref{pullBackLambda}, \eqref{pullBackW} we get
\[
\lVert A_{k\,j} \rVert_s^{Lip(\gamma)}\le_s \lVert \mathfrak{I}_0 \rVert^{Lip(\gamma)}_{s+2}.
\]
Moreover, we have the following bound.
\begin{lem}{(Lemma $6.2$ in \cite{KdVAut})}
The coefficients $A_{k\,j}(\varphi)$ in \eqref{pullBackW} satisfy
\begin{equation}
\lVert A_{k\,j} \rVert_s^{Lip(\gamma)}\le_s \gamma^{-1} (\lVert Z \rVert_{s+2\tau+2}^{Lip(\gamma)}+\lVert Z \rVert_{s_0+1}^{Lip(\gamma)}\lVert \mathfrak{I}_0\rVert_{s+2\tau+2}^{Lip(\gamma)}).
\end{equation}
\end{lem}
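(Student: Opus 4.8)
\emph{Proof proposal.} This is the exact analogue of Lemma~6.2 in \cite{KdVAut}, and the argument is the one of Berti--Bolle \cite{BertiBolle}; I sketch it. The coefficients $A_{k\,j}$ measure the failure of the torus $i_0$ to be isotropic, i.e. the non-vanishing of $i_0^*\mathcal{W}$; the key point is that, because $i_0$ is an \emph{approximate} solution of \eqref{EquazioneFunzionale}, this defect is \emph{transported} by the constant-coefficient vector field $\mathcal{D}_\omega=\omega\cdot\partial_\varphi$ into a quantity that is \emph{linear} in the error $Z=\mathcal{F}(i_0,\zeta_0)$. Since $A_{k\,j}$ has zero $\varphi$-average, one then recovers $A_{k\,j}$ by inverting $\mathcal{D}_\omega$, paying a factor $\gamma^{-1}$ and a fixed loss of derivatives through \eqref{stimaDomega}.

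First I would note that each $A_{k\,j}=\partial_{\varphi_k}a_j-\partial_{\varphi_j}a_k$ has vanishing $\varphi$-average by periodicity of the $a_k$ in \eqref{pullBackLambda}, so that $\mathcal{D}_\omega^{-1}A_{k\,j}$ is well defined. The heart of the matter is then the computation of $\mathcal{D}_\omega A_{k\,j}$. Since $i_0$ solves \eqref{EquazioneFunzionale} only approximately one has $\mathcal{D}_\omega i_0=X_{H_{\varepsilon,\zeta_0}}(i_0)+Z$, where $X_{H_{\varepsilon,\zeta_0}}=X_{H_\varepsilon}+(0,\zeta_0,0)$ is still a Hamiltonian vector field for $\mathcal{W}$. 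The $2$-form $i_0^*\mathcal{W}$ on $\mathbb{T}^\nu$ is \emph{exact}, $i_0^*\mathcal{W}=d(i_0^*\Lambda)$ by \eqref{NewSimplForm}, \eqref{pullBackLambda}, \eqref{pullBackW}; hence it is closed, and by Cartan's formula $\mathcal{L}_{\mathcal{D}_\omega}(i_0^*\mathcal{W})=d\big(\iota_{\mathcal{D}_\omega}i_0^*\mathcal{W}\big)=d\big(i_0^*(\iota_{\mathcal{D}_\omega i_0}\mathcal{W})\big)$. Writing $\mathcal{D}_\omega i_0=X_{H_{\varepsilon,\zeta_0}}(i_0)+Z$ and using that $\iota_{X_{H_{\varepsilon,\zeta_0}}}\mathcal{W}$ is exact (Hamiltonian structure), the contribution of $X_{H_{\varepsilon,\zeta_0}}$ is $d$ of an exact $1$-form, hence drops after one further exterior derivative, leaving
\[
\mathcal{L}_{\mathcal{D}_\omega}(i_0^*\mathcal{W})=d\big(i_0^*(\iota_Z\mathcal{W})\big).
\]
Since $\mathcal{D}_\omega$ has constant coefficients the left-hand side is $\sum_{k<j}(\mathcal{D}_\omega A_{k\,j})\,d\varphi_k\wedge d\varphi_j$; reading off components, $\mathcal{D}_\omega A_{k\,j}$ is linear in $Z$ and its first $\varphi$-derivatives, with coefficients polynomial in $\partial_\varphi i_0$ and $\partial_x^{-1}z_0$ coming from $\Lambda$ and $\mathcal{W}$ in \eqref{1Form}, \eqref{NewSimplForm}. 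Alternatively, one may bypass the differential-geometric language and obtain the same identity by differentiating \eqref{pullBackLambda} directly and substituting the three component equations of $\mathcal{D}_\omega i_0=X_{H_\varepsilon}(i_0)+Z+(0,-\zeta_0,0)$, using the symmetry of $\partial_{yy}H_\varepsilon$ and $\partial_\theta\partial_y H_\varepsilon$ — more computational but elementary.

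Finally I would close the estimate by combining the tame product inequality \eqref{TameProduct} — allocating $Z$ in high norm and the $\mathfrak{I}_0$-dependent coefficients in low norm (using $\lVert\mathfrak{I}_0\rVert_{s_0+\mu}^{Lip(\gamma)}\le 1$ from \eqref{Assumption}), together with the symmetric allocation — to get
\[
\lVert\mathcal{D}_\omega A_{k\,j}\rVert_{s+2\tau+1}^{Lip(\gamma)}\le_s \lVert Z\rVert_{s+2\tau+2}^{Lip(\gamma)}+\lVert Z\rVert_{s_0+1}^{Lip(\gamma)}\lVert\mathfrak{I}_0\rVert_{s+2\tau+2}^{Lip(\gamma)},
\]
and then applying the diophantine bound \eqref{stimaDomega} for $\mathcal{D}_\omega^{-1}$ together with the zero-average property, which gives $\lVert A_{k\,j}\rVert_s^{Lip(\gamma)}\le_s\gamma^{-1}\lVert\mathcal{D}_\omega A_{k\,j}\rVert_{s+2\tau+1}^{Lip(\gamma)}$; this is the claim. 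The only genuinely non-routine step is the computation of $\mathcal{D}_\omega A_{k\,j}$ and, in particular, the verification that the Hamiltonian structure makes this quantity \emph{exactly linear} in the error $Z$, so that the size of $A_{k\,j}$ is slaved to $\lVert Z\rVert$ rather than merely to $\lVert\mathfrak{I}_0\rVert$; everything else is interpolation and bookkeeping of the finitely many lost derivatives, exactly as in \cite{KdVAut}.
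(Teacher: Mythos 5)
Your proposal is correct and follows essentially the same route as the proof the paper relies on (Lemma $6.2$ in \cite{KdVAut}, in the spirit of \cite{BertiBolle}): the Cartan-formula computation showing $\mathcal{D}_{\omega}A_{k\,j}$ is linear in $Z$ with coefficients depending on $\partial_{\varphi}i_0$, followed by inversion of $\mathcal{D}_{\omega}$ on the zero-average coefficients via \eqref{stimaDomega} and tame product estimates. One negligible imprecision: because of the term $\zeta_0\cdot\theta$ in $H_{\varepsilon,\zeta_0}$, the pulled-back $1$-form $i_0^*(\iota_{X_{H_{\varepsilon,\zeta_0}}}\mathcal{W})$ is closed rather than exact on $\mathbb{T}^{\nu}$, which is all that is needed for its contribution to drop after the exterior derivative.
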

As in \cite{BertiBolle}, the idea is to analyze the operator linearized at an isotropic embedded torus $i_{\delta}$, because the isotropy of the torus allows to construct a symplectic set of coordinates around it for which the linear tangential dynamic and the normal one are decoupled. Thus, the linear system becomes ``triangular'' and the hard part is to solve the equation in the normal directions (see Section $7$).\\
Now we see that we can slightly modify $i_0$ (indeed, it is sufficient to move the $y$-component only) to obtain an isotropic torus $i_{\delta}$, that is an approximate solution as well as $i_0$. At the end of this section, we will prove that we are able to construct an approximate right inverse of \eqref{LinearizedOp} starting from an approximate inverse of $d_{i, \zeta}\mathcal{F}(i_{\delta}, \zeta_0)[\hat{\imath}, \hat{\zeta}]$.\\
 
In the paper we denote equivalently the differential $\partial_i$ or $d_i$. We use the notation $\Delta_{\varphi}:=\sum_{k=1}^{\nu}\partial^2_{\varphi_{k}}$ and we denote by $\sigma:=\sigma(\nu, \tau)$ possibly different (larger) ``loss of derivatives'' constants.\\
 
\begin{lem}{(\textbf{Isotropic torus})}{(Lemma $6.3$ in \cite{KdVAut})}
The torus $i_{\delta}=(\theta_0(\varphi), y_{\delta}(\varphi), z_0(\varphi))$ defined by 
\begin{equation}
y_{\delta}:=y_0+[\partial_{\varphi}\theta_0(\varphi)]^{-T}\rho(\varphi), \quad \rho_j(\varphi):=\Delta^{-1}_{\varphi} \sum_{k=1}^{\nu} \partial_{\varphi_j} A_{k\,j}(\varphi),
\end{equation}
is isotropic. If \eqref{Assumption} holds, then, for some $\sigma:=\sigma(\nu, \tau)$,
\begin{align}
&\lVert y_{\delta}-y_0 \rVert_s^{Lip(\gamma)}\le_s \gamma^{-1} (\lVert Z \rVert_{s+\sigma}^{Lip(\gamma)}\lVert \mathfrak{I}_0 \rVert_{s_0+\sigma}^{Lip(\gamma)}+\lVert Z \rVert_{s_0+\sigma}^{Lip(\gamma)}\lVert \mathfrak{I}_0 \rVert_{s+\sigma}^{Lip(\gamma)}),\\
&\lVert \mathcal{F}(i_{\delta}, \zeta_0) \rVert_s^{Lip(\gamma)}\le_s \lVert Z \rVert_{s+\sigma}^{Lip(\gamma)}+\lVert Z \rVert_{s_0+\sigma}^{Lip(\gamma)}\lVert \mathfrak{I}_0 \rVert_{s+\sigma}^{Lip(\gamma)}, \label{6.10}\\
& \lVert \partial_i i_{\delta} [\hat{\imath}] \rVert_s\le_s \lVert \hat{\imath} \rVert_{s}+\lVert \mathfrak{I}_0 \rVert_{s+\sigma}\lVert \hat{\imath} \rVert_{s}.
\end{align}
\end{lem}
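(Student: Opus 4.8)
The plan is to follow the scheme of Lemma $6.3$ in \cite{KdVAut}, treating separately the \emph{isotropy} of $i_\delta$ and the quantitative \emph{estimates}. For isotropy: since $i_\delta$ and $i_0$ share the same $\theta$- and $z$-components and the Liouville $1$-form $\Lambda$ in \eqref{1Form} is affine in $y$, the pull-back $1$-form changes only through the $y$-contribution, so by \eqref{pullBackLambda} and the definition of $y_\delta$ one gets
\[
i_\delta^*\Lambda=i_0^*\Lambda-\sum_{k=1}^{\nu}\big([\partial_{\varphi}\theta_0]^{T}(y_\delta-y_0)\big)_k\,d\varphi_k=i_0^*\Lambda-\sum_{k=1}^{\nu}\rho_k(\varphi)\,d\varphi_k ,
\]
hence $i_\delta^*\mathcal W=i_0^*\mathcal W-d\big(\sum_k\rho_k\,d\varphi_k\big)$ and it suffices to show $\partial_{\varphi_k}\rho_j-\partial_{\varphi_j}\rho_k=A_{kj}$ for all $k,j$. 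Here one uses that $i_0^*\mathcal W=d(i_0^*\Lambda)$ is \emph{exact}: the coefficients $A_{kj}=\partial_{\varphi_k}a_j-\partial_{\varphi_j}a_k$ have zero average and satisfy the cyclic identity $\partial_{\varphi_l}A_{kj}-\partial_{\varphi_k}A_{lj}+\partial_{\varphi_j}A_{lk}=0$, i.e. $\partial_{\varphi_k}A_{lj}=\partial_{\varphi_l}A_{kj}+\partial_{\varphi_j}A_{lk}$. Inserting the definition of $\rho$, commuting $\Delta_\varphi^{-1}$ with $\partial_{\varphi_k}$, and using this identity gives $\sum_l\partial_{\varphi_k}\partial_{\varphi_l}A_{lj}=\Delta_\varphi A_{kj}+\partial_{\varphi_j}\sum_l\partial_{\varphi_l}A_{lk}$, so $\Delta_\varphi(\partial_{\varphi_k}\rho_j-\partial_{\varphi_j}\rho_k)=\Delta_\varphi A_{kj}$; since both sides have zero average, $\partial_{\varphi_k}\rho_j-\partial_{\varphi_j}\rho_k=A_{kj}$ and therefore $i_\delta^*\mathcal W\equiv0$, which is the isotropy.

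For the estimates: under \eqref{Assumption} the matrix $\partial_\varphi\theta_0=\mathrm I+\partial_\varphi\Theta_0$ is invertible with $\lVert[\partial_\varphi\theta_0]^{-T}-\mathrm I\rVert_s\le_s\lVert\mathfrak I_0\rVert_{s+\sigma}$ (Neumann series and the standard tame estimates for the inverse of $\mathrm I+(\text{small})$), and the operators $\Delta_\varphi^{-1}\partial_{\varphi_k}$ are bounded of order zero, so $\lVert\rho\rVert_s\le_s\lVert A_{kj}\rVert_s$. Combining these with the tame product \eqref{TameProduct} gives $\lVert y_\delta-y_0\rVert_s\le_s\lVert A_{kj}\rVert_s+\lVert\mathfrak I_0\rVert_{s+\sigma}\lVert A_{kj}\rVert_{s_0}$, and inserting the bound on $A_{kj}$ from Lemma $6.2$ yields the first inequality, the $Lip(\gamma)$-part following from \eqref{stimaDomega} and the $Lip(\gamma)$-tame estimates. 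For \eqref{6.10} I would write $\mathcal F(i_\delta,\zeta_0)=Z+\int_0^1 d_i\mathcal F\big(i_0+t(i_\delta-i_0),\zeta_0\big)[\,i_\delta-i_0\,]\,dt$ and bound the integral with the vector-field tame estimates of Lemma \ref{TameEstimatesforVectorfields} (recall $d_{i,\zeta}\mathcal F$ involves $d_iX_{H_\varepsilon}$), together with the estimate just obtained for $\lVert y_\delta-y_0\rVert_s$ and the smallness $\lVert\mathfrak I_0\rVert_{s_0+\sigma}^{Lip(\gamma)}\le1$. Finally, differentiating the formula for $y_\delta$ in $i_0$ gives $\partial_i i_\delta[\hat\imath]=(\hat\Theta,\hat y+\partial_i(y_\delta-y_0)[\hat\imath],\hat z)$, and $\partial_i(y_\delta-y_0)[\hat\imath]$ is controlled by tame estimates in $\lVert\hat\imath\rVert_s$ and $\lVert\mathfrak I_0\rVert_{s+\sigma}$, since $A_{kj}$ depends on $i_0$ through at most two factors of $\mathfrak I_0$ (and hence its differential through one factor of $\mathfrak I_0$ and one of $\hat\imath$).

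The conceptual heart — the only genuine \emph{obstacle} — is the identity $d\big(\sum_k\rho_k\,d\varphi_k\big)=i_0^*\mathcal W$: it works precisely because $i_0^*\mathcal W$ is an \emph{exact} $2$-form on the torus, so that $A_{kj}$ is a discrete curl, forcing the cyclic identity and the vanishing of its average and making $\Delta_\varphi^{-1}$ a genuine inverse on the relevant subspace. Everything else is a careful but routine propagation of the smallness in \eqref{Assumption} through the tame product and composition estimates, with the usual bookkeeping of the loss-of-derivatives constant $\sigma=\sigma(\nu,\tau)$.
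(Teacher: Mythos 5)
Your proof is essentially the argument of Lemma $6.3$ in \cite{KdVAut}, which is all the paper itself invokes here: isotropy from the exactness (hence closedness) of $i_0^*\mathcal{W}$, the resulting cyclic identity for the coefficients $A_{kj}$, and the invertibility of $\Delta_{\varphi}$ on zero-average functions (reading the definition of $\rho_j$ as $\Delta_{\varphi}^{-1}\sum_k\partial_{\varphi_k}A_{kj}$, which is clearly what is meant), while the quantitative bounds follow from Lemma $6.2$, the tame product and composition estimates, and the Taylor expansion of $\mathcal{F}$ at $i_0$, exactly as you indicate. The only caveat is that inserting Lemma $6.2$ as you propose yields $\lVert y_{\delta}-y_0\rVert_s^{Lip(\gamma)}\le_s\gamma^{-1}\bigl(\lVert Z\rVert_{s+\sigma}^{Lip(\gamma)}+\lVert Z\rVert_{s_0+\sigma}^{Lip(\gamma)}\lVert \mathfrak{I}_0\rVert_{s+\sigma}^{Lip(\gamma)}\bigr)$, i.e. the form of \eqref{6.10} and of \cite{KdVAut}, not the stated first inequality with the additional factor $\lVert\mathfrak{I}_0\rVert_{s_0+\sigma}^{Lip(\gamma)}$ in the first term, which appears to be a misprint in the statement rather than a defect of your argument.
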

We introduce a set of symplectic coordinates adapted to the isotropic torus $i_{\delta}$. We consider the map $G_{\delta}\colon (\Psi, \eta, w) \to (\theta, y, z)$ of the phase space $\T^{\nu}\times\mathbb{R}^{\nu}\times H_S^{\perp}$ defined by
\begin{equation}\label{Gdelta}
\begin{pmatrix}
\theta \\ y \\ z
\end{pmatrix}
:=G_{\delta}\begin{pmatrix}
\psi\\ \eta \\ w
\end{pmatrix}
:=\begin{pmatrix}
\theta_0(\psi) \\
y_{\delta}(\psi)+[\partial_{\psi}\theta_0(\psi)]^{-T}\eta+[(\partial_{\theta} \tilde{z}_0)(\theta_0(\psi))]^T\partial_x^{-1}w\\
z_0(\psi)+w
\end{pmatrix}
\end{equation}
where $\tilde{z}_0:=z_0 (\theta_0^{-1} (\theta))$ (indeed $\theta_0\colon \mathbb{T}^{\nu}\to \T^{\nu}$ is a diffeomorphism, because $\theta_0(\varphi)-\varphi$ is small). It is proved in \cite{BertiBolle} (Lemma $6.3$) that $G_{\delta}$ in \eqref{Gdelta} is symplectic, using that the torus $i_{\delta}$ is isotropic. In the new coordinates, $i_{\delta}$ is at the origin, i.e. $(\psi, \eta, w)=(\psi, 0, 0)$. The transformed Hamiltonian $K:=K(\psi, \eta, w, \zeta_0)$ is (recall \eqref{HepsilonZeta})
\begin{equation}\label{K}
\begin{aligned}
K:=H_{\varepsilon, \zeta_0}\circ G_{\delta}&=\theta_0(\psi)\cdot \zeta_0+K_{00}(\psi)+K_{10}(\psi)\cdot\eta+(K_{01}(\psi), w)_{L^2(\T)}+\frac{1}{2} K_{20}(\psi) \eta\cdot\eta+\\
&+(K_{11}(\psi) \eta, w)_{L^2(\T)}+\frac{1}{2} (K_{02}(\psi) w, w)_{L^2(\T)}+K_{\geq 3}(\psi, \eta, w)
\end{aligned}
\end{equation}
where $K_{\geq 3}$ collects the terms at least cubic in the variables $(\eta, w)$. At any fixed $\psi$, the Taylor coefficient $K_{00}(\psi)\in\mathbb{R}, K_{10}(\psi)\in\mathbb{R}^{\nu}, K_{01}(\psi)\in H_S^{\perp}, K_{20}(\psi)$ is a $\nu\times\nu$ real matrix, $K_{02}(\psi)$ is a linear self-adjoint operator of $H_S^{\perp}$ and $K_{11}(\psi)\colon\mathbb{R}^{\nu} \to H_S^{\perp}$.\\
Note that the above Taylor coefficients do not depend on the parameter $\zeta_0$.\\
The Hamilton equations associated to \eqref{K} are
\begin{equation}\label{VectorFieldK}
\begin{cases}
\dot{\psi}=K_{10}(\psi)+K_{20}(\psi)\eta+K_{11}^T(\psi) w+\partial_{\eta} K_{\geq 3}(\psi, \eta, w)\\
\begin{aligned}
\dot{\eta}=&-[\partial_{\psi} \theta_0(\psi)]^T \zeta_0-\partial_{\psi} K_{00}(\psi)-[\partial_{\psi} K_{10}(\psi)]^T \eta-[\partial_{\psi} K_{01}(\psi)]^T w-\\
&-\partial_{\psi} \left(\frac{1}{2} K_{20}(\psi)\eta\cdot \eta+(K_{11}(\psi) \eta, w)_{L^2(\T)}+\frac{1}{2} (K_{02}(\psi) w, w)_{L^2(\T)}+K_{\geq 3}(\psi, \eta, w)\right)
\end{aligned}
\\
\dot{w}=\partial_x (K_{01}(\psi)+K_{11}(\psi) \eta+K_{02}(\psi) w+\nabla_w K_{\geq 3}(\psi, \eta, w))
\end{cases}
\end{equation}
where $[\partial_{\psi} K_{10}(\psi)]^T$ is the $\nu\times \nu$ transposed matrix and $[\partial_{\psi} K_{01}(\psi)]^T, K_{11}^T(\psi)\colon H_S^{\perp} \to \mathbb{R}^{\nu}$ are defined by the duality relation 
\[
(\partial_{\psi} K_{01} (\psi)[\hat{\psi}], w)_{L^2(\T)}=\hat{\psi}\cdot [\partial_{\psi} K_{01}(\psi)]^T w, \quad \forall \hat{\psi}\in\mathbb{R}^{\nu}, w\in H_S^{\perp},
\]
and similarly for $K_{11}$.
Explicitly, for all $w\in H_S^{\perp}$, and denoting $\underline{e}_k$ the $k$-th versor of $\mathbb{R}^{\nu}$,
\begin{equation}
K_{11}^T(\psi) w=\sum_{k=1}^{\nu} (K_{11}^T(\psi) w\cdot \underline{e}_k)\,\underline{e}_k=\sum_{k=1}^{\nu} (w, K_{11}(\psi) \underline{e}_k)_{L^2(\T)} \underline{e}_k\in\mathbb{R}^{\nu}.
\end{equation}
In the next lemma we estimate the coefficients $K_{00}, K_{10}, K_{01}$ in the Taylor expansion \eqref{K}. The term $K_{10}$ describes how the tangential frequencies vary with respect to $\omega$.
Note that on an exact solution $(i_0, \zeta_0)$ we have $K_{00}(\psi)=const, K_{10}=\omega$ and $K_{01}=0$.
\begin{lem}{(Lemma $6.4$ in \cite{KdVAut})}\label{Lemma6.4}
Assume \eqref{Assumption}. Then there is $\sigma:=\sigma(\tau, \nu)$ such that
\[
\lVert \partial_{\psi} K_{00} \rVert_s^{Lip(\gamma)}+\lVert K_{10}-\omega \rVert_s^{Lip(\gamma)}+\lVert K_{01} \rVert_s^{Lip(\gamma)}\le_s\lVert Z \rVert_{s+\sigma}^{Lip(\gamma)}+\lVert Z \rVert_{s_0+\sigma}^{Lip(\gamma)} \lVert \mathfrak{I}_0 \rVert_{s+\sigma}^{Lip(\gamma)}.
\]
\end{lem}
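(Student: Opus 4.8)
The plan is to follow the approach of Berti--Bolle \cite{BertiBolle} (this is Lemma $6.4$ in \cite{KdVAut}): since $G_\delta$ is symplectic and maps the trivial torus $\varphi\mapsto(\varphi,0,0)$ onto $i_\delta$, transporting the error $\mathcal{F}(i_\delta,\zeta_0)$ into the coordinates $(\psi,\eta,w)$ of \eqref{Gdelta} exhibits $\omega-K_{10}$, $\partial_\psi K_{00}$ and $K_{01}$ as the three components of a single vector, up to conjugation by $DG_\delta(\varphi,0,0)$; the stated bounds then follow from the estimate \eqref{6.10} on $\mathcal{F}(i_\delta,\zeta_0)$ and Lemma \ref{Lemma6.1}.

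\textbf{Main identity.} Since $i_\delta(\varphi)=G_\delta(\varphi,0,0)$ one has $\omega\cdot\partial_\varphi i_\delta(\varphi)=DG_\delta(\varphi,0,0)[(\omega,0,0)]$, while symplecticity of $G_\delta$ gives $X_{H_{\varepsilon,\zeta_0}}(i_\delta(\varphi))=DG_\delta(\varphi,0,0)\,X_K(\varphi,0,0)$ for $K$ as in \eqref{K}. By the definition \eqref{NonlinearFunctional} of $\mathcal{F}$ this yields
\[
DG_\delta(\varphi,0,0)^{-1}\,\mathcal{F}(i_\delta,\zeta_0)(\varphi)=(\omega,0,0)-X_K(\varphi,0,0).
\]
Evaluating the Hamilton equations \eqref{VectorFieldK} at $\eta=w=0$, where the at-least-cubic term $K_{\geq 3}$ does not contribute, gives $X_K(\varphi,0,0)=\big(K_{10}(\varphi),\,-[\partial_\varphi\theta_0(\varphi)]^T\zeta_0-\partial_\varphi K_{00}(\varphi),\,\partial_x K_{01}(\varphi)\big)$, hence
\[
DG_\delta(\varphi,0,0)^{-1}\,\mathcal{F}(i_\delta,\zeta_0)(\varphi)=\big(\omega-K_{10}(\varphi),\ \partial_\varphi K_{00}(\varphi)+[\partial_\varphi\theta_0(\varphi)]^T\zeta_0,\ -\partial_x K_{01}(\varphi)\big).
\]

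\textbf{Estimating the conjugation and reading off the components.} From the explicit ``triangular'' form of $G_\delta$ in \eqref{Gdelta}, the operators $DG_\delta(\varphi,0,0)^{\pm1}$ are built out of $\mathrm{I}$, $[\partial_\varphi\theta_0]^{\pm T}$, $\partial_\varphi y_\delta$, $\partial_\varphi z_0$ and $(\partial_\theta\tilde{z}_0)(\theta_0(\varphi))$; under the smallness hypothesis \eqref{Assumption} the map $\theta_0$ is a diffeomorphism $\varepsilon$-close to the identity, so by Lemma \ref{ChangeofVariablesLemma}, a Neumann series for $[\partial_\varphi\theta_0]^{-1}$ and the tame product one obtains, for some $\sigma=\sigma(\tau,\nu)$,
\[
\lVert DG_\delta(\varphi,0,0)^{\pm1} g\rVert_s^{Lip(\gamma)}\le_s \lVert g\rVert_s^{Lip(\gamma)}+\lVert\mathfrak{I}_0\rVert_{s+\sigma}^{Lip(\gamma)}\lVert g\rVert_{s_0}^{Lip(\gamma)}.
\]
Applying this with $g=\mathcal{F}(i_\delta,\zeta_0)$, using \eqref{6.10} and $\lVert\mathfrak{I}_0\rVert_{s_0+\sigma}^{Lip(\gamma)}\le\varepsilon^{6-2b}\gamma^{-1}\le 1$, gives (after enlarging $\sigma$)
\[
\lVert DG_\delta(\varphi,0,0)^{-1}\mathcal{F}(i_\delta,\zeta_0)\rVert_s^{Lip(\gamma)}\le_s \lVert Z\rVert_{s+\sigma}^{Lip(\gamma)}+\lVert Z\rVert_{s_0+\sigma}^{Lip(\gamma)}\lVert\mathfrak{I}_0\rVert_{s+\sigma}^{Lip(\gamma)}.
\]
The first and third components of the identity above then give directly the bounds for $K_{10}-\omega$ and, after applying $\partial_x^{-1}$ (bounded on $H_S^\perp$, which contains no zero Fourier mode, and losing no derivative), for $K_{01}$. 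For the second component, subtract $[\partial_\varphi\theta_0(\varphi)]^T\zeta_0$ and estimate this term by Lemma \ref{Lemma6.1}, namely $\lvert\zeta_0\rvert^{Lip(\gamma)}\le C\lVert Z\rVert_{s_0}^{Lip(\gamma)}$, together with $\lVert[\partial_\varphi\theta_0]^T\rVert_s^{Lip(\gamma)}\le_s 1+\lVert\mathfrak{I}_0\rVert_{s+1}^{Lip(\gamma)}$ and the tame product; this yields the claimed estimate for $\partial_\varphi K_{00}$.

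\textbf{Main difficulty.} The only genuine work is the bookkeeping of the tame estimates: controlling $DG_\delta(\varphi,0,0)^{-1}$ (its dependence on $[\partial_\varphi\theta_0]^{-1}$ and on $\tilde{z}_0=z_0\circ\theta_0^{-1}$ through Lemma \ref{ChangeofVariablesLemma}) and keeping the ``low norm $\times$ high norm'' splitting throughout, so that $\lVert\mathfrak{I}_0\rVert_{s+\sigma}$ is always paired with $\lVert Z\rVert_{s_0+\sigma}$ rather than $\lVert Z\rVert_{s+\sigma}$. No new difficulty arises with respect to \cite{BertiBolle,KdVAut}.
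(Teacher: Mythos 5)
Your proposal is correct and follows essentially the same argument as the paper's source for this statement (the paper cites Lemma 6.4 of \cite{KdVAut} rather than reproving it): the identity $DG_\delta(\varphi,0,0)^{-1}\mathcal{F}(i_\delta,\zeta_0)=(\omega,0,0)-X_K(\varphi,0,0)$, obtained from symplecticity of $G_\delta$ and $i_\delta=G_\delta(\cdot,0,0)$, is exactly the key step there, and the conclusion then follows from the tame bounds on $DG_\delta^{\pm1}$, the estimate \eqref{6.10} on $\mathcal{F}(i_\delta,\zeta_0)$, and Lemma \ref{Lemma6.1} for the $[\partial_\psi\theta_0]^T\zeta_0$ term, just as you do.
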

\begin{remark}\label{isotropic}
By Lemma \ref{Lemma6.1} if $\mathcal{F}(i_0, \zeta_0)=0$ and, by Lemma \ref{Lemma6.4}, the Hamiltonian \eqref{K} simplifies to 
\begin{equation}\label{NormalForm}
K=const+\omega\cdot\eta+\frac{1}{2} K_{20}(\psi) \eta\cdot\eta+(K_{11}(\psi) \eta, w)_{L^2(\T)}+\frac{1}{2} (K_{02}(\psi) w, w)_{L^2(\T)}+K_{\geq 3}.
\end{equation}
In general, the normal form \eqref{NormalForm} provides a control of the linearized equations in the normal bundle of the torus.
\end{remark}
We now estimate $K_{20}, K_{11}$ in \eqref{K}. The norm of $K_{20}$ is the sum of the norms of its matrix entries.
\begin{lem}{(Lemma $6.6$ in \cite{KdVAut})}\label{Lemma6.6}
Assume \eqref{Assumption}. Then for some $\sigma:=\sigma(\nu, \tau)$ we have
\begin{align}
&\lVert K_{20}-\frac{\varepsilon^{2 b}}{2} \mathbb{M}\rVert_s^{Lip(\gamma)}\le_s \varepsilon^{2b+2}+\varepsilon^{2 b}\lVert \mathfrak{I}_0 \rVert_{s+\sigma}^{Lip(\gamma)}+\varepsilon^3 \gamma^{-1} \lVert \mathfrak{I}_0 \rVert_{s_0+\sigma}^{Lip(\gamma)}\lVert Z \rVert_{s+\sigma}^{Lip(\gamma)},\\
&\lVert K_{11} \eta\rVert_s^{Lip(\gamma)}\le_s \varepsilon^5 \gamma^{-1} \lVert \eta \rVert_s^{Lip(\gamma)}+\varepsilon^{2 b-1} (\lVert \mathfrak{I}_0 \rVert_{s+\sigma}^{Lip(\gamma)}+\gamma^{-1} \lVert \mathfrak{I}_0 \rVert_{s_0+\sigma}^{Lip(\gamma)}\lVert Z \rVert_{s+\sigma}^{Lip(\gamma)})\lVert \eta \rVert_{s_0}^{Lip(\gamma)},\\
&\lVert K_{11}^T w \rVert\le_s \varepsilon^5 \gamma^{-1} \lVert w \rVert_{s+2}^{Lip(\gamma)}+\varepsilon^{2 b-1} (\lVert \mathfrak{I}_0 \rVert_{s+\sigma}^{Lip(\gamma)}+\gamma^{-1} \lVert \mathfrak{I}_0 \rVert_{s_0+\sigma}^{Lip(\gamma)} \lVert Z \rVert_{s+\sigma}^{Lip(\gamma)})\lVert w \rVert_{s_0+2}^{Lip(\gamma)}.
\end{align}
In particular 
\begin{align*}
&\lVert K_{20}-\frac{\varepsilon^{2 b}}{2} \mathbb{M}\rVert_{s_0}^{Lip(\gamma)}\le \varepsilon^6 \gamma^{-1}, \quad \lVert K_{11}\eta \rVert_{s_0}^{Lip(\gamma)}\le \varepsilon^5 \gamma^{-1}\lVert \eta \rVert_{s_0}^{Lip(\gamma)}, \quad \lVert K_{11}^T w \rVert_{s_0}^{Lip(\gamma)}\le \varepsilon^5 \gamma^{-1} \lVert w \rVert_{s_0}^{Lip(\gamma)}.
\end{align*}
\end{lem}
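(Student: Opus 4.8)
Here is the plan.

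The approach is to compute $K_{20}$ and $K_{11}$ explicitly from $K=H_{\varepsilon,\zeta_0}\circ G_{\delta}$ by the chain rule, and then to read off the bounds from Lemma \ref{TameEstimatesforVectorfields}. The key structural observation is that the map $G_{\delta}$ in \eqref{Gdelta} is \emph{affine} in the variables $(\eta,w)$: its $\theta$-component $\theta_0(\psi)$ does not depend on $(\eta,w)$, while its $y$- and $z$-components are linear in $(\eta,w)$; hence all second derivatives of $G_{\delta}$ in $(\eta,w)$ vanish, and the summand $\zeta_0\cdot\theta=\zeta_0\cdot\theta_0(\psi)$ of $H_{\varepsilon,\zeta_0}$ gives no contribution to the Hessian. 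Since $d_\eta G_{\delta}=(0,[\partial_\psi\theta_0]^{-T},0)$ carries only a $y$-component, only the $(y,y)$-block of the Hessian of $H_\varepsilon$ survives, and one obtains $K_{20}(\psi)=[\partial_\psi\theta_0(\psi)]^{-1}\,(\partial_{yy}H_{\varepsilon})(i_{\delta}(\psi))\,[\partial_\psi\theta_0(\psi)]^{-T}$. Writing $d_w G_{\delta}=(0,[(\partial_{\theta}\tilde z_0)(\theta_0(\psi))]^{T}\partial_x^{-1},\mathrm{Id})$, the bilinear term $(K_{11}(\psi)\eta,w)_{L^2(\mathbb T)}$ is the pairing of the Hessian of $H_\varepsilon$ at $i_{\delta}(\psi)$ against $d_\eta G_{\delta}\eta$ and $d_w G_{\delta}w$; this expresses $K_{11}(\psi)\eta$ through $(\partial_{yy}H_\varepsilon)(i_{\delta})$, weighted by the factor $[(\partial_{\theta}\tilde z_0)(\theta_0)]^{T}\partial_x^{-1}$, and through $(\partial_y\nabla_z H_\varepsilon)(i_{\delta})$. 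Since $\mathcal N(\theta,y,z)=\alpha(\xi)\cdot y+\tfrac12(N(\theta)z,z)_{L^2(\mathbb T)}$ satisfies $\partial_{yy}\mathcal N=0$ and $\partial_y\nabla_z\mathcal N=0$, in all these formulas $H_\varepsilon$ may be replaced by $P$.

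Next I would feed in three ingredients. First, the tame estimates of Lemma \ref{TameEstimatesforVectorfields}, namely $\|\partial_{yy}P(i)-\tfrac{\varepsilon^{2b}}{2}\mathbb M\|_s^{Lip(\gamma)}\le_s\varepsilon^{2+2b}+\varepsilon^{2b+3}\gamma^{-1}\|\mathfrak I\|_{s+2}^{Lip(\gamma)}$ and $\|\partial_y\nabla_z P(i)\|_s^{Lip(\gamma)}\le_s\varepsilon^{b+3}+\varepsilon^{2b-1}\|\mathfrak I\|_{s+3}^{Lip(\gamma)}$. Second, that $[\partial_\psi\theta_0]^{\pm T}$ equals the identity plus a term of size $\lesssim\|\mathfrak I_0\|$, by Lemma \ref{ChangeofVariablesLemma} and the Assumption \eqref{Assumption}, and that the weight $[(\partial_{\theta}\tilde z_0)(\theta_0(\psi))]^{T}\partial_x^{-1}$ has size $\lesssim\|\mathfrak I_0\|$ since it vanishes for $z_0=0$; tame product and interpolation inequalities then absorb these factors at the cost of a fixed loss $\sigma:=\sigma(\nu,\tau)$ of derivatives. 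Third, that $i_{\delta}$ differs from $i_0$ only in the $y$-component, with $\|y_{\delta}-y_0\|_s^{Lip(\gamma)}\le_s\gamma^{-1}(\|Z\|_{s+\sigma}^{Lip(\gamma)}\|\mathfrak I_0\|_{s_0+\sigma}^{Lip(\gamma)}+\|Z\|_{s_0+\sigma}^{Lip(\gamma)}\|\mathfrak I_0\|_{s+\sigma}^{Lip(\gamma)})$ from the isotropic torus lemma; since $\partial_{yy}P$ is Lipschitz in $i$ with differential of size $O(\varepsilon^{3})$, replacing $i_0$ by $i_{\delta}$ produces precisely the extra term $\varepsilon^{3}\gamma^{-1}\|\mathfrak I_0\|_{s_0+\sigma}^{Lip(\gamma)}\|Z\|_{s+\sigma}^{Lip(\gamma)}$ in the bound for $K_{20}-\tfrac{\varepsilon^{2b}}{2}\mathbb M$, and its analogue for $K_{11}$. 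Using $\gamma=\varepsilon^{2b}$ and $b=1+a/2$ one has $\varepsilon^{b+3}\le\varepsilon^{5-2b}=\varepsilon^{5}\gamma^{-1}$, which puts the estimates for $K_{11}\eta$ and, by duality with a bounded further loss of derivatives in $w$, for $K_{11}^{T}w$ into the displayed form. The special cases at $s=s_0$ then follow from $\|\mathfrak I_0\|_{s_0+\sigma}^{Lip(\gamma)}\le\varepsilon^{6-2b}\gamma^{-1}$ and $\|Z\|_{s_0+\sigma}^{Lip(\gamma)}\le\varepsilon^{6-2b}$ in \eqref{Assumption}.

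I do not expect a genuine analytic obstacle here: the heavy analysis is already packaged in Lemma \ref{TameEstimatesforVectorfields} and the isotropic torus lemma, and the chain-rule computation is elementary once the affine dependence of $G_{\delta}$ on $(\eta,w)$ is used. The part requiring the most care is the bookkeeping of the powers of $\varepsilon$ — in particular isolating the $\varepsilon^{3}$ in front of the $Z$-dependent term and checking the comparison $\varepsilon^{b+3}\le\varepsilon^{5}\gamma^{-1}$ — together with verifying that a single fixed $\sigma(\nu,\tau)$ absorbs all derivatives spent in the change-of-variables and tame-product steps. This is the scheme of Lemma $6.6$ in \cite{KdVAut}, and it carries over here unchanged because the structure of $G_{\delta}$ and of the normal form $\mathcal N$ is the same.
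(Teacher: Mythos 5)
Your proposal is correct and follows essentially the same route as the proof this paper relies on (the lemma is quoted from Lemma $6.6$ of \cite{KdVAut}): compute $K_{20},K_{11}$ by the chain rule using that $G_{\delta}$ is affine in $(\eta,w)$, replace $H_{\varepsilon}$ by $P$ since $\partial_{yy}\mathcal{N}=\partial_y\nabla_z\mathcal{N}=0$, and then combine the tame estimates of Lemma \ref{TameEstimatesforVectorfields} with the bounds on $[\partial_{\psi}\theta_0]^{\pm T}$, on the weight $[(\partial_{\theta}\tilde z_0)(\theta_0)]^T\partial_x^{-1}$ (which vanishes at $z_0=0$), and on $y_{\delta}-y_0$ from the isotropic torus lemma. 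Your $\varepsilon$-bookkeeping ($\varepsilon^{2b+3}\gamma^{-1}=\varepsilon^3$, $\varepsilon^{b+3}\le\varepsilon^{5}\gamma^{-1}$, and the $s_0$-cases via \eqref{Assumption}) matches the stated bounds, so no gap.
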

We apply the linear change of variables
\begin{equation}\label{DG}
DG_{\delta}(\varphi, 0, 0) \begin{pmatrix}
\hat{\psi}\\ \hat{\eta}\\ \hat{w} 
\end{pmatrix} :=\begin{pmatrix}
\partial_{\psi} \theta_0(\varphi) & 0 & 0\\
\partial_{\psi} y_{\delta}(\varphi) & [\partial_{\psi} \theta_0(\varphi)]^{-T} & -[(\partial_{\theta} \tilde{z}_0)(\theta_0 (\varphi))]^T \partial_x^{-1}\\
\partial_{\psi} z_0(\varphi) & 0 & \mathrm{I}
\end{pmatrix}
\begin{pmatrix}
\hat{\psi} \\ \hat{\eta} \\ \hat{w}.
\end{pmatrix}
\end{equation}
In these new coordinates the linearized operator $d_{i, \zeta}\mathcal{F}(i_{\delta}, \zeta_0)$ is ``approximately'' the operator obtained linearizing \eqref{VectorFieldK} at $(\psi, \eta, w, \zeta)=(\varphi, 0, 0, \zeta_0)$ with $\mathcal{D}_{\omega}$ instead of $\partial_t$, namely
\begin{equation}\label{6.26}
\begin{pmatrix}
\mathcal{D}_{\omega} \hat{\psi}-\partial_{\psi} K_{10}(\varphi)[\hat{\psi}]-K_{20}(\varphi) \hat{\eta}-K_{11}^T(\varphi) \hat{w}\\
\mathcal{D}_{\omega} \hat{\eta}+[\partial_{\psi} \theta_0(\varphi)]^T \hat{\zeta}+\partial_{\psi} [\partial_{\psi} \theta_0(\varphi)]^T[\hat{\psi}, \zeta_0]+\partial_{\psi\psi} K_{00}(\varphi)[\hat{\psi}]+[\partial_{\psi} K_{10}(\varphi)]^T\hat{\eta}+[\partial_{\psi} K_{01}(\varphi)]^T \hat{w}\\
\mathcal{D}_{\omega} \hat{w}-\partial_x\{ \partial_{\psi} K_{01}(\varphi)[\hat{\psi}]+K_{11}(\varphi)\hat{\eta}+K_{02}(\varphi)\hat{w}\}.
\end{pmatrix}
\end{equation}
We give estimate on the composition operator induced by the transformation \eqref{DG}.
\begin{lem}{(Lemma $6.7$ in \cite{KdVAut})}
Assume \eqref{Assumption} and let $\hat{\imath}:=(\hat{\psi}, \hat{\eta}, \hat{w})$. Then, for some $\sigma:=\sigma(\tau, \nu)$, we have
\begin{equation}
\begin{aligned}
&\lVert DG_{\delta}(\varphi, 0, 0)[\hat{\imath}] \rVert_{s}+\lVert DG_{\delta}(\varphi, 0, 0)^{-1}[\hat{\imath}] \rVert_s\le_s \lVert \hat{\imath} \rVert_s+(\lVert \mathfrak{I}_0 \rVert_{s+\sigma}+\gamma^{-1} \lVert \mathfrak{I}_0 \rVert_{s+\sigma}^{Lip(\gamma)}\lVert Z \rVert_{s+\sigma})\lVert \hat{\imath} \rVert_{s_0}\\
&\lVert D^2G_{\delta}(\varphi, 0, 0)[\hat{\imath}_1, \hat{\imath}_2]\rVert_s\le_s \lVert \hat{\imath}_1\rVert_s \lVert \hat{\imath}_2 \rVert_{s_0}+\lVert \hat{\imath}_1 \rVert_{s_0} \lVert \hat{\imath} \rVert_s+(\lVert \mathfrak{I}_0 \rVert_{s+\sigma}+\gamma^{-1} \lVert \mathfrak{I}_0 \rVert_{s_0+\sigma}\lVert Z \rVert_{s+\sigma})\lVert \hat{\imath} \rVert_{s_0} \lVert \hat{\imath}_2 \rVert_{s_0}.
\end{aligned}
\end{equation}
Moreover the same estimates hold if we replace $\lVert \cdot \rVert_s$ with $\Vert \cdot \rVert_s^{Lip(\gamma)}$.
\end{lem}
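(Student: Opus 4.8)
\emph{Proof proposal.} The plan is to bound each block entry of the linear operator $DG_\delta(\varphi,0,0)$ written out in \eqref{DG} and then assemble the bounds via the tame product inequality \eqref{TameProduct} (which, as recalled in Section $2$, holds also for the norm $\lVert\cdot\rVert^{Lip(\gamma)}$). A convenient simplification is that in \eqref{Gdelta} the map $G_\delta$ is here evaluated on the torus, i.e.\ at $(\psi,\eta,w)=(\varphi,0,0)$, so all the coefficient functions appearing in \eqref{DG} depend on $\varphi$ alone. First I would treat the ``diagonal-type'' entries $\partial_\psi\theta_0(\varphi)=\mathrm{I}+\partial_\psi\Theta_0(\varphi)$, $\partial_\psi z_0(\varphi)$, $\partial_\psi y_\delta(\varphi)$: the first two are controlled directly by the definition of $\mathfrak{I}_0$, giving $\lVert\partial_\psi\Theta_0\rVert_s^{Lip(\gamma)}+\lVert\partial_\psi z_0\rVert_s^{Lip(\gamma)}\le_s\lVert\mathfrak{I}_0\rVert_{s+1}^{Lip(\gamma)}$, while $\partial_\psi y_\delta$ is controlled by writing $y_\delta=y_0+[\partial_\psi\theta_0]^{-T}\rho$ and invoking the isotropic-torus lemma for $\lVert y_\delta-y_0\rVert_s^{Lip(\gamma)}$; in all cases the $s_0$-norm is small by \eqref{Assumption}.

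Next I would treat the matrix $[\partial_\psi\theta_0(\varphi)]^{-T}$: since $\lVert\Theta_0\rVert_{s_0+1}^{Lip(\gamma)}$ is small by \eqref{Assumption}, $\mathrm{I}+\partial_\psi\Theta_0$ is invertible with inverse given by a Neumann series --- equivalently, $\varphi\mapsto\theta_0(\varphi)$ is a diffeomorphism and Lemma \ref{ChangeofVariablesLemma} applies --- so (with matrix norm meaning the sum of the norms of the entries) $\lVert[\partial_\psi\theta_0]^{-T}-\mathrm{I}\rVert_s^{Lip(\gamma)}\le_s\lVert\mathfrak{I}_0\rVert_{s+\sigma}^{Lip(\gamma)}$. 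Finally, the mixed entry $[(\partial_\theta\tilde z_0)(\theta_0(\varphi))]^T\partial_x^{-1}$, with $\tilde z_0=z_0\circ\theta_0^{-1}$: here $\partial_x^{-1}$ is bounded (indeed smoothing) on $H_S^\perp$, and $(\partial_\theta\tilde z_0)\circ\theta_0$ is estimated by applying Lemma \ref{ChangeofVariablesLemma} to the inverse diffeomorphism $\theta_0^{-1}=\mathrm{id}+q$ (with $\lvert q\rvert_{s,\infty}^{Lip(\gamma)}\le C\lvert\Theta_0\rvert_{s+1,\infty}^{Lip(\gamma)}$) and then composing once more with $\theta_0$, yielding $\lVert(\partial_\theta\tilde z_0)(\theta_0(\varphi))\rVert_s^{Lip(\gamma)}\le_s\lVert\mathfrak{I}_0\rVert_{s+\sigma}^{Lip(\gamma)}$ for a suitable $\sigma=\sigma(\tau,\nu)$.

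With these bounds each component of $DG_\delta(\varphi,0,0)[\hat\psi,\hat\eta,\hat w]$ is a finite sum of products $a(\varphi)\,b$ with $a$ one of the coefficients above (or the identity) and $b\in\{\hat\psi,\hat\eta,\hat w,\partial_x^{-1}\hat w\}$; applying \eqref{TameProduct}, the identity terms give the leading $\lVert\hat\imath\rVert_s$, and the small coefficients --- $s_0$-norm $\ll1$ by \eqref{Assumption} and $s$-norm $\le_s\lVert\mathfrak{I}_0\rVert_{s+\sigma}^{Lip(\gamma)}+\gamma^{-1}\lVert\mathfrak{I}_0\rVert_{s+\sigma}^{Lip(\gamma)}\lVert Z\rVert_{s+\sigma}^{Lip(\gamma)}$ by the isotropic-torus lemma --- produce precisely the remainder in the claim. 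For the inverse I would use that the matrix in \eqref{DG} is block triangular up to the reordering $(\hat\psi,\hat w,\hat\eta)$ of inputs (and $(\theta,z,y)$ of outputs), with invertible diagonal blocks $\partial_\psi\theta_0,\,\mathrm{I},\,[\partial_\psi\theta_0]^{-T}$, so $DG_\delta(\varphi,0,0)^{-1}$ is explicit with blocks built from $[\partial_\psi\theta_0]^{-1}$, $[\partial_\psi\theta_0]^T$, $\partial_\psi z_0$, $\partial_\psi y_\delta$ and $(\partial_\theta\tilde z_0)\circ\theta_0$, which obey the same bounds. The $D^2G_\delta$ estimate follows by differentiating \eqref{Gdelta} once more and applying \eqref{TameProduct} to the resulting bilinear expression, using the $C^2$-part of Lemma \ref{ChangeofVariablesLemma} for $\partial_\theta^2\tilde z_0\circ\theta_0$ and differentiating the Neumann series for the derivatives of $[\partial_\psi\theta_0]^{-T}$. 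The whole argument is run verbatim with $\lVert\cdot\rVert_s^{Lip(\gamma)}$ in place of $\lVert\cdot\rVert_s$, since all the auxiliary estimates hold for the Lipschitz norm. I expect the only delicate point to be the mixed entry: composing $z_0$ with the \emph{inverse} diffeomorphism $\theta_0^{-1}$, estimating it and its angular derivatives, and composing back with $\theta_0$, while keeping the small factor at the low index $s_0$ and tracking the finite loss $\sigma$; everything else is bookkeeping with the tame product and \eqref{Assumption}. This is precisely Lemma $6.7$ of \cite{KdVAut}, whose proof applies here with no change.
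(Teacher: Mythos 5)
Your proposal is correct and follows essentially the same route as the proof of the quoted Lemma $6.7$ in \cite{KdVAut} (which this paper does not reprove but simply cites): entry-wise tame estimates of the matrix \eqref{DG} using Lemma \ref{ChangeofVariablesLemma}, the isotropic-torus estimates for $y_\delta-y_0$, the assumption \eqref{Assumption} to keep the small factors at the $s_0$-level, and the explicit block-triangular inversion for $DG_\delta(\varphi,0,0)^{-1}$. The only simplification worth noting is that the chain rule gives $(\partial_\theta\tilde z_0)(\theta_0(\varphi))=\partial_\varphi z_0(\varphi)\,[\partial_\varphi\theta_0(\varphi)]^{-1}$, so the ``delicate'' double composition with $\theta_0^{-1}$ and $\theta_0$ can be bypassed entirely.
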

In order to construct an approximate inverse of \eqref{6.26} it is sufficient to solve the system of equations
\begin{equation}\label{D}
\mathbb{D}[\hat{\psi}, \hat{\eta}, \hat{w}, \hat{\zeta}]:=\begin{pmatrix}
\mathcal{D}_{\omega} \hat{\psi}-K_{20} (\varphi) \hat{\eta}-K_{11}^T(\varphi) \hat{w}\\
\mathcal{D}_{\omega} \hat{\eta}+[\partial_{\psi} \theta_0(\varphi)]^T \hat{\zeta}\\
\mathcal{D}_{\omega}\hat{w}-\partial_x K_{11}(\varphi)\hat{\eta}-\partial_x K_{02} (\varphi) \hat{w}
\end{pmatrix}=\begin{pmatrix}
g_1 \\ g_2 \\ g_3
\end{pmatrix}
\end{equation}
which is obtained by \eqref{6.26} neglecting the terms that are naught at a solution, namely, by Lemmata \eqref{Lemma6.1} and \eqref{Lemma6.4}, $\partial_{\psi} K_{10}, \partial_{\psi \psi} K_{00}, \partial_{\psi} K_{00}, \partial_{\psi} K_{01}$ and $\partial_{\psi} [\partial_{\psi} \theta_0(\varphi)]^T[\cdot , \zeta_0]$.\\


\begin{remark}
We will use the following notations for the averages of a function $v(\varphi, x)$
\begin{equation}
M_x[v]:=\frac{1}{2\pi}\int_{\T} v(\varphi, x)\,dx, \quad M_{\varphi}[v]:=\frac{1}{(2\pi)^{\nu}}\int_{\T^{\nu}} v(\varphi, x)\,d\varphi 
\end{equation}
and $M_{\varphi, x}[v]:=M_{x}[M_{\varphi}[v]]=M_{\varphi}[M_x[v]]$.
\end{remark}

First, we solve the second equation, namely
\begin{equation}\label{secondeq}
\mathcal{D}_{\omega}\hat{\eta}=g_2-[\partial_{\psi} \theta_0(\varphi)] \hat{\zeta}.
\end{equation}
We choose $\hat{\zeta}$ so that the $\varphi$-average of the right hand side of \eqref{secondeq} is zero, namely
\begin{equation}\label{ValoreperZeta}
\hat{\zeta}=M_{\varphi}[ g_2].
\end{equation}
Note that the $\varphi$-averaged matrix $M_{\varphi} [(\partial_{\psi} \theta_0)^T]=M_{\varphi}[ \mathrm{I}+(\partial_{\psi} \Theta_0)^T]=\mathrm{I}$ because $\theta_0(\varphi)=\varphi+\Theta_0(\varphi)$ and $\Theta_0(\varphi)$ is periodic. Therefore
\begin{equation}\label{eta}
\hat{\eta}=\mathcal{D}_{\omega}^{-1} (g_2-[\partial_{\psi} \theta_0(\varphi)]^T M_{\varphi}[ g_2])+M_{\varphi} [\hat{\eta}], \qquad M_{\varphi} [\hat{\eta}]\in\mathbb{R}^{\nu},
\end{equation}
where the average $M_{\varphi}[\hat{\eta}]$ will be fix when we deal with the first equation.\\
We now analyze the third equation, namely
\begin{equation}\label{thirdeq}
\mathcal{L}_{\omega} \hat{w}=g_3+\partial_x K_{11}(\varphi) \hat{\eta}, \quad \mathcal{L}_{\omega}:=\omega\cdot \partial_{\varphi}-\partial_x K_{02} (\varphi).
\end{equation}
If we fix $\hat{\eta}$, then solving the equation \eqref{thirdeq} is tantamount to invert the operator $\mathcal{L}_{\omega}$. For the moment we assume the following hypotesis (that will be proved in Section $8$)
\begin{itemize}
\item[$\bullet$] \textbf{Inversion Assumption}. There exists a set $\Omega_{\infty}\subseteq \Omega_{\varepsilon}$ such that for all $\omega\in \Omega_{\infty}$, for every function $g\in H_{S^{\perp}}^{s+\mu}(\mathbb{T}^{\nu+1})$ there exists a solution $h:=\mathcal{L}_{\omega}^{-1} g$ of the linear equation $\mathcal{L}_{\omega} h=g$ which satisfies
\begin{equation}\label{InversionAssumption}
\lVert \mathcal{L}_{\omega}^{-1} g \rVert_s^{Lip(\gamma)}\le_s \gamma^{-1} (\lVert g \rVert_{s+\mu}^{Lip(\gamma)}+\varepsilon \gamma^{-1}\{ \lVert \mathfrak{I}_0 \rVert_{s+\mu}^{Lip(\gamma)}+\gamma^{-1}\lVert \mathfrak{I}_0 \rVert_{s+\mu}^{Lip(\gamma)} \lVert Z \rVert_{s+\mu}^{Lip(\gamma)}\}\lVert g \rVert_{s_0}^{Lip(\gamma)})
\end{equation}
for some $\mu:=\mu(\tau, \nu)$.
\begin{remark}
The term $\varepsilon \gamma^{-1} \lVert \mathfrak{I}_0 \rVert_{s+\mu}^{Lip(\gamma)}$ arises because the remainder $\mathcal{R}_6$ in Section $8$ contains the term $\varepsilon(\lVert \Theta_0 \rVert_{s+\mu}^{Lip(\gamma)}+\lVert y_{\delta} \rVert_{s+\mu}^{Lip(\gamma)})\le_s \varepsilon \lVert \mathfrak{I}_{0} \rVert_{s+\mu}^{Lip(\gamma)}$, see Lemma \ref{LemmaS}.\\
These big constants coming from the tame estimates for the inverse of the linearized operators at any approximate solution will be dominated by the quadraticity of the Nash-Moser scheme.
\end{remark}
By the above assumption, there exists a solution of \eqref{thirdeq}
\begin{equation}\label{w}
\hat{w}=\mathcal{L}_{\omega}^{-1}[g_3+\partial_x K_{11}(\varphi)\hat{\eta}].
\end{equation}
Now consider the first equation
\begin{equation}\label{firsteq}
\mathcal{D}_{\omega}\hat{\psi}=g_1+K_{20} \hat{\eta}-K_{11}^T(\varphi) \hat{w}.
\end{equation}
 Substituting \eqref{eta}, \eqref{w} in the equation \eqref{firsteq}, we get 
 \begin{equation}\label{firsteq2}
 \mathcal{D}_{\omega} \hat{\psi}=g_1+M_1(\varphi)M_{\varphi}[ \hat{\eta} ]+M_2(\varphi) g_2+M_3(\varphi) g_3-M_2(\varphi)[\partial_{\psi} \theta_0]^T M_{\varphi}[ g_2 ],
 \end{equation}
 where
 \begin{equation}
 M_1(\varphi):=K_{20}(\varphi)+K_{11}^T(\varphi)\mathcal{L}_{\omega}^{-1}\partial_x K_{11}(\varphi), \quad M_2(\varphi):=M_1(\varphi)\mathcal{D}_{\omega}^{-1}, \quad M_3(\varphi):=K_{11}^T(\varphi)\mathcal{L}_{\omega}^{-1}.
 \end{equation}
In order to solve the equation \eqref{firsteq2} we have to choose $M_{\varphi} [\hat{\eta}]$ such that the right hand side in \eqref{firsteq2} has zero $\varphi$-average.\\
 By Lemma \ref{Lemma6.6} and \eqref{Assumption}, the $\varphi$-averaged matrix $M_{\varphi}[ M_1 ]=\varepsilon^{2 b}M+O(\varepsilon^{10} \gamma^{-3})$. Therefore, for $\varepsilon$ small, $M_{\varphi}[ M_1 ]$ is invertible and $M_{\varphi}[ M_1 ]^{-1}=O(\varepsilon^{-2b})=O(\gamma^{-1})$. Thus we define
 \begin{equation}
M_{\varphi} [\hat{\eta}]:=-(M_{\varphi} [M_1])^{-1}\{M_{\varphi} [g_1]+M_{\varphi}[ M_2 g_2]+M_{\varphi} [M_3 g_3 ]-M_{\varphi}[ M_2(\partial_{\psi} \theta_0)^T]\,M_{\varphi}[ g_2 ]\}.
 \end{equation}
 With this choice of $M_{\varphi}[ \hat{\eta} ]$ the equation \eqref{firsteq2} has the solution
 \begin{equation}\label{psi}
 \hat{\psi}:=\mathcal{D}_{\omega}^{-1}\{g_1+M_1(\varphi)M_{\varphi} [\hat{\eta}]+M_2(\varphi) g_2+M_3(\varphi) g_3-M_2(\varphi)[\partial_{\psi} \theta_0]^T M_{\varphi}[ g_2 ]\}.
 \end{equation}
\end{itemize}
In conclusion, we have constructed a solution $(\hat{\psi}, \hat{\eta}, \hat{w}, \hat{\zeta})$ of the linear system \eqref{D}.  We resume this in the following proposition, giving also estimates on the inverse of the operator $\mathbb{D}$ defined in \eqref{D}.
\begin{prop}{(Proposition $6.9$ in \cite{KdVAut})}
Assume \eqref{Assumption} and \eqref{InversionAssumption}. Then, for all $\omega\in \Omega_{\infty}$, for all $g:=(g_1, g_2, g_3)$, the system \eqref{D} has a solution $\mathbb{D}^{-1} g:=(\hat{\psi}, \hat{\eta}, \hat{w}, \hat{\zeta})$ where $(\hat{\psi}, \hat{\eta}, \hat{w}, \hat{\zeta})$ are defined in \eqref{psi}, \eqref{eta}, \eqref{w}, \eqref{ValoreperZeta}. Moreover, we have
\begin{equation}\label{EstimateonD}
\lVert \mathbb{D}^{-1} g \rVert_s^{Lip(\gamma)}\le_s \gamma^{-1} (\lVert g \rVert_{s+\mu}^{Lip(\gamma)}+\varepsilon \gamma^{-1}\{ \mathfrak{I}_0 \rVert_{s+\mu}^{Lip(\gamma)}+\gamma^{-1}\lVert \mathfrak{I}_0 \rVert_{s_0+\mu}^{Lip(\gamma)}\lVert \mathcal{F}(i_0, \zeta_0)\rVert_{s+\mu}^{Lip(\gamma)}\}\lVert g \rVert_{s_0+\mu}^{Lip(\gamma)}).
\end{equation}
\end{prop}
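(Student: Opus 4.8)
The solution $(\hat{\psi},\hat{\eta},\hat{w},\hat{\zeta})$ has already been constructed above by exploiting the triangular structure of $\mathbb{D}$: it is given explicitly by \eqref{ValoreperZeta}, \eqref{eta}, \eqref{w}, \eqref{psi}. Hence the only point that remains is the tame bound \eqref{EstimateonD}, valid for $\omega\in\Omega_{\infty}$, and the plan is to estimate the four components in the order in which they were produced, feeding each bound into the next.

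First I would bound $\hat{\zeta}=M_{\varphi}[g_2]$ by $\lVert g_2\rVert_{s_0}^{Lip(\gamma)}$, and $\hat{\eta}$ through \eqref{secondeq}--\eqref{eta}: applying the estimate \eqref{stimaDomega} for $\mathcal{D}_{\omega}^{-1}$ and a tame product estimate to $[\partial_{\psi}\theta_0(\varphi)]^{T}M_{\varphi}[g_2]$ --- using $\lVert\Theta_0\rVert_{s+\sigma}^{Lip(\gamma)}\le_s\lVert\mathfrak{I}_0\rVert_{s+\sigma}^{Lip(\gamma)}$ and $M_{\varphi}[(\partial_{\psi}\theta_0)^{T}]=\mathrm{I}$ --- gives $\lVert\hat{\eta}\rVert_s^{Lip(\gamma)}\le_s\gamma^{-1}\bigl(\lVert g\rVert_{s+\mu}^{Lip(\gamma)}+\lVert\mathfrak{I}_0\rVert_{s+\mu}^{Lip(\gamma)}\lVert g\rVert_{s_0+\mu}^{Lip(\gamma)}\bigr)+\lVert M_{\varphi}[\hat{\eta}]\rVert^{Lip(\gamma)}$. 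Next, for $\hat{w}$ in \eqref{w} I would invoke the Inversion Assumption \eqref{InversionAssumption} on $g_3+\partial_x K_{11}(\varphi)\hat{\eta}$, together with the bound on $K_{11}\eta$ from Lemma \ref{Lemma6.6} (which produces a factor $\varepsilon^{2b-1}$, or $\varepsilon^5\gamma^{-1}$, times extra derivatives on $\mathfrak{I}_0$) and the estimate on $\hat{\eta}$ just obtained.

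For $\hat{\psi}$ I would first check that the averaged matrix $M_{\varphi}[M_1]$ is invertible with $\lvert M_{\varphi}[M_1]^{-1}\rvert^{Lip(\gamma)}=O(\varepsilon^{-2b})=O(\gamma^{-1})$: by Lemma \ref{Lemma6.6} and \eqref{Assumption} one has $M_{\varphi}[M_1]=\varepsilon^{2b}\mathbb{M}+O(\varepsilon^{10}\gamma^{-3})$, and since $\varepsilon^{10}\gamma^{-3}=\varepsilon^{4-3a}\ll\varepsilon^{2b}=\varepsilon^{2+a}$ for $a$ small while $\mathbb{M}$ is invertible by Lemma \ref{TwistLemma}, a Neumann series applies. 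This controls $M_{\varphi}[\hat{\eta}]$ and then, through \eqref{psi}, $\hat{\psi}$ itself, by one more use of \eqref{stimaDomega} together with the bounds on $M_1=K_{20}+K_{11}^{T}\mathcal{L}_{\omega}^{-1}\partial_x K_{11}$, $M_2=M_1\mathcal{D}_{\omega}^{-1}$, $M_3=K_{11}^{T}\mathcal{L}_{\omega}^{-1}$ obtained from Lemma \ref{Lemma6.6}, \eqref{stimaDomega}, \eqref{InversionAssumption} and the tame product estimate. Collecting the five partial bounds, using the smallness \eqref{Assumption} to absorb the lower-order contributions and setting $\mu$ equal to the largest loss-of-derivatives index that appears, yields \eqref{EstimateonD}; the Lipschitz part of every estimate follows in the same way since all the auxiliary estimates are stated for the norm $\lVert\cdot\rVert^{Lip(\gamma)}$.

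The difficulty here is purely one of bookkeeping rather than of ideas: one has to track the powers of $\varepsilon$ and of $\gamma^{-1}$ in each term and verify that the ``quadratic'' error contributions --- those carrying two factors of $\mathfrak{I}_0$, or one factor of $\mathfrak{I}_0$ and one of $\mathcal{F}(i_0,\zeta_0)$, produced by $\mathcal{L}_{\omega}^{-1}$ via \eqref{InversionAssumption} and by $K_{11}$, $K_{20}$ --- recombine exactly into the stated shape $\varepsilon\gamma^{-1}\{\lVert\mathfrak{I}_0\rVert_{s+\mu}^{Lip(\gamma)}+\gamma^{-1}\lVert\mathfrak{I}_0\rVert_{s_0+\mu}^{Lip(\gamma)}\lVert\mathcal{F}(i_0,\zeta_0)\rVert_{s+\mu}^{Lip(\gamma)}\}\lVert g\rVert_{s_0+\mu}^{Lip(\gamma)}$. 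This identification rests on the choice $\gamma=\varepsilon^{2+a}=\varepsilon^{2b}$ with $a\in(0,1/6)$ and on the invertibility of $M_{\varphi}[M_1]$ established above, and is exactly the mechanism by which these large constants will later be dominated by the quadratic speed of the Nash--Moser scheme.
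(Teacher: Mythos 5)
Your proposal is correct and follows essentially the same route as the paper: the paper itself constructs $(\hat{\psi},\hat{\eta},\hat{w},\hat{\zeta})$ by exactly this triangular resolution (choose $\hat{\zeta}$, solve for $\hat{\eta}$ with $\mathcal{D}_{\omega}^{-1}$, invert $\mathcal{L}_{\omega}$ for $\hat{w}$, then fix $M_{\varphi}[\hat{\eta}]$ via the invertibility of $M_{\varphi}[M_1]=\varepsilon^{2b}\mathbb{M}+O(\varepsilon^{10}\gamma^{-3})$ and solve for $\hat{\psi}$), and the tame bound \eqref{EstimateonD} is obtained, as you describe, by feeding \eqref{stimaDomega}, \eqref{InversionAssumption} and the Lemma \ref{Lemma6.6} estimates into each step (the detailed bookkeeping being deferred to Proposition $6.9$ of \cite{KdVAut}). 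Your accounting of the powers of $\varepsilon$, $\gamma^{-1}$ and of the terms carrying $\mathfrak{I}_0$ and $\mathcal{F}(i_0,\zeta_0)$ matches that argument.
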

Eventually we prove that the operator
\begin{equation}\label{ApproxInverse}
\mathbf{T}_0:=(D \tilde{G}_{\delta})(\varphi, 0, 0)\circ\mathbb{D}^{-1}\circ (D G_{\delta}(\varphi, 0, 0))^{-1}
\end{equation}
is an approximate right inverse of $d_{i, \zeta} \mathcal{F}(i_0)$ where $\tilde{G}_{\delta}((\psi, \eta, w), \zeta)$ is the identity on the $\zeta$-component. We denote the norm $\lVert (\psi, \eta, w, \zeta)\rVert_s^{Lip(\gamma)}:=\max \{ \lVert (\psi, \eta, w) \rVert, \lvert \zeta \rvert^{Lip(\gamma)} \}$.
\begin{teor}{(Theorem $6.10$ in \cite{KdVAut})}\label{TeoApproxInv}
Assume \eqref{Assumption} and the inversion assumption \eqref{InversionAssumption}. Then there exists $\mu:=\mu(\tau, \nu)$ such that, for all $\omega\in \Omega_{\infty}$, for all $g:=(g_1, g_2, g_3)$, the operator $\mathbf{T}_0$ defined in \eqref{ApproxInverse} satisfies
\begin{equation}\label{TameEstimateApproxInv}
\lVert \mathbf{T}_0 g \rVert_s^{Lip(\gamma)}\le_s\gamma^{-1}(\lVert g \rVert_{s+\mu}^{Lip(\gamma)}+\varepsilon\gamma^{-1}\{ \lVert \mathfrak{I}_0 \rVert_{s+\mu}^{Lip(\gamma)}+\gamma^{-1}\lVert \mathfrak{I}_0 \rVert_{s_0+\mu}^{Lip(\gamma)}\lVert \mathcal{F}(i_0, \zeta_0)\rVert_{s+\mu}^{Lip(\gamma)}\} \lVert g \rVert_{s_0+\mu}^{Lip(\gamma)}).
\end{equation}
It is an approximate inverse of $d_{i, \zeta} \mathcal{F}(i_0)$, namely
\begin{equation}\label{6.41}
\begin{aligned}
&\lVert (d_{i, \zeta} \mathcal{F}(i_0)\circ \mathbf{T}_0-\mathrm{I}) g \rVert_s^{Lip(\gamma)}\le_s \\
&\le_s \gamma^{-1} \left( \lVert \mathcal{F}(i_0, \zeta_0) \rVert_{s_0+\mu}^{Lip(\gamma)}\lVert g \rVert_{s+\mu}^{Lip(\gamma)}+\{\lVert \mathcal{F}(i_0, \zeta_0)\rVert_{s+\mu}^{Lip(\gamma)}+\varepsilon \gamma^{-1}\lVert \mathcal{F}(i_0, \zeta_0) \rVert_{s_0+\mu}^{Lip(\gamma)}\lVert \mathfrak{I}_0 \rVert_{s+\mu}^{Lip(\gamma)}\} \lVert g \rVert_{s_0+\mu}^{Lip(\gamma)} \right).
\end{aligned}
\end{equation}
\end{teor}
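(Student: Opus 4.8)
The plan is to follow the scheme of \cite{BertiBolle} and of Theorem~$6.10$ in~\cite{KdVAut}. The tame bound \eqref{TameEstimateApproxInv} on $\mathbf{T}_0$ should come essentially for free by composing the three factors in \eqref{ApproxInverse}: the estimate \eqref{EstimateonD} for $\mathbb{D}^{-1}$ (which already incorporates the Inversion Assumption \eqref{InversionAssumption}) together with the tame estimates for $DG_{\delta}(\varphi,0,0)^{\pm 1}$ (Lemma~$6.7$ of~\cite{KdVAut}), using the algebra and interpolation inequalities of Section~$2.1$ and the smallness \eqref{Assumption} to absorb into the constants all the lower-order contributions coming from $DG_{\delta}^{\pm 1}=\mathrm{I}+O(\mathfrak{I}_0)$, and using $\|\mathcal{F}(i_0,\zeta_0)\|_{s_0}=\|Z\|_{s_0}$ to rephrase the result in terms of $\mathcal{F}(i_0,\zeta_0)$.

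For the approximate-inverse property \eqref{6.41} I would argue in three steps. \textbf{Step 1:} replace $i_0$ by the isotropic torus $i_{\delta}$. Since $\|i_{\delta}-i_0\|_s^{Lip(\gamma)}=\|y_{\delta}-y_0\|_s^{Lip(\gamma)}$ is controlled by $\|Z\|$ through the Isotropic torus lemma, the fundamental theorem of calculus combined with the quadratic tame estimates for $d_i X_{H_{\varepsilon}}$ and $d_i^2 X_{H_{\varepsilon}}$ in Lemma~\ref{TameEstimatesforVectorfields} shows that $d_{i,\zeta}\mathcal{F}(i_0)-d_{i,\zeta}\mathcal{F}(i_{\delta})$ is, in the tame sense, of size $O\big(\|Z\|_{s+\sigma}+\|Z\|_{s_0+\sigma}\|\mathfrak{I}_0\|_{s+\sigma}\big)$. \textbf{Step 2:} conjugate $d_{i,\zeta}\mathcal{F}(i_{\delta})$ by the linear symplectic map $DG_{\delta}(\varphi,0,0)$ (this is where the isotropy of $i_{\delta}$ is essential): this produces exactly the linearized vector field \eqref{6.26} of the Hamiltonian $K$ in \eqref{K}, up to a remainder proportional to $\mathcal{F}(i_{\delta},\zeta_0)$, which by \eqref{6.10} is again $O(\|Z\|)$. \textbf{Step 3:} pass from \eqref{6.26} to $\mathbb{D}$ in \eqref{D} by discarding the entries $\partial_{\psi}K_{10}$, $\partial_{\psi\psi}K_{00}$, $\partial_{\psi}K_{00}$, $\partial_{\psi}K_{01}$ and $\partial_{\psi}[\partial_{\psi}\theta_0]^{T}[\,\cdot\,,\zeta_0]$; by Lemma~\ref{Lemma6.1} one has $|\zeta_0|^{Lip(\gamma)}\lesssim\|Z\|_{s_0}$, and by Lemma~\ref{Lemma6.4} the quantities $\partial_{\psi}K_{00}$, $K_{10}-\omega$, $K_{01}$ are $O(\|Z\|)$ (differentiating once more in $\varphi$ costs one extra index), so the discarded operator is bounded, of size $O(\|Z\|)$, with finite derivative loss.

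Putting the three steps together, and using $\mathbb{D}\circ\mathbb{D}^{-1}=\mathrm{I}$ from the Proposition preceding \eqref{ApproxInverse}, I would obtain $d_{i,\zeta}\mathcal{F}(i_0)\circ\mathbf{T}_0=\mathrm{I}+\mathcal{E}$, where $\mathcal{E}$ is the sum of the three error operators produced above, each carrying at least one factor $\|\mathcal{F}(i_0,\zeta_0)\|$; distributing the indices $s$ and $s_0$ among $\mathcal{F}(i_0,\zeta_0)$, $\mathfrak{I}_0$ and $g$ by the tame product inequality then gives precisely \eqref{6.41}. I expect the main obstacle to be bookkeeping rather than conceptual: one must fix $\mu$ larger than all the finitely many loss constants $\sigma$ appearing in Lemmata~\ref{Lemma6.1}, \ref{Lemma6.4}, \ref{Lemma6.6}, in the Inversion Assumption and in Lemma~$6.7$ of~\cite{KdVAut}, and — crucially — check that \emph{every} term of $\mathcal{E}$ genuinely contains a factor $\|\mathcal{F}(i_0,\zeta_0)\|_{s_0+\mu}$, since this is the structural feature that makes the Nash--Moser iteration converge quadratically, as recalled in the Remark following \eqref{InversionAssumption}.
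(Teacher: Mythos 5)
Your proposal is correct and follows essentially the same route as the paper (which defers to Theorem $6.10$ of \cite{KdVAut}): the tame bound \eqref{TameEstimateApproxInv} by composing \eqref{EstimateonD} with the estimates for $DG_{\delta}(\varphi,0,0)^{\pm1}$, and the error bound \eqref{6.41} by splitting $d_{i,\zeta}\mathcal{F}(i_0)\circ\mathbf{T}_0-\mathrm{I}$ into the three contributions $i_0\to i_{\delta}$, the conjugation identity up to a term $D^2G_{\delta}[\,\cdot\,,\mathcal{F}(i_{\delta},\zeta_0)]$ controlled by \eqref{6.10}, and the terms discarded in passing from \eqref{6.26} to $\mathbb{D}$ controlled by Lemmata \ref{Lemma6.1} and \ref{Lemma6.4}. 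Your closing remark that every error term must carry a factor $\lVert \mathcal{F}(i_0,\zeta_0)\rVert$ is exactly the structural point of Zehnder's notion of approximate inverse used here.
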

\section{The linearized operator in the normal directions}
In this section we give an explicit expression of the linearized operator 
\begin{equation}
\mathcal{L}_{\omega}:=\omega\cdot\partial_{\varphi}-\partial_x K_{0 2}(\varphi).
\end{equation}
To this aim we compute $\frac{1}{2}\,(K_{0 2}(\psi) w, w)_{L^2(\mathbb{T})}, w\in H_S^{\perp}$, which collects all the terms of $(H_{\varepsilon}\circ G_{\delta})(\psi, 0, w)$ that are quadratic in $w$.\\
First we recall some preliminary lemmata.
\begin{lem}{(Lemma $7.1$ in \cite{KdVAut})}\label{Egorov}
Let $H$ be a Hamiltonian function of class $C^2(H_0^1(\mathbb{T}_x), \mathbb{R})$ and consider a map $\Phi(u):=u+\Psi(u)$ satisfying $\Psi(u)=\Pi_E \Psi(\Pi_E u),$ for all $u$, where $E$ is a finite dimensional subspace as in \eqref{FinitedimensionalSubspace}. Then
\begin{equation}
\partial_u [\nabla (H\circ \Phi)](u)[h]=(\partial_u \nabla H)(\Phi(u))[h]+\mathcal{R}(u)[h],
\end{equation}
where $\mathcal{R}(u)$ has the ``finite dimensional'' form
\begin{equation}\label{FinitedimensionalRemainder}
\mathcal{R}(u)[h]=\sum_{\lvert j \rvert\le C} (h, g_j(u))_{L^2(\mathbb{T})}\chi_j(u)
\end{equation}
with $\chi_j(u)=e^{\mathrm{i} j x}$ or $g_j(u)=e^{\mathrm{i} j x}$. The remainder in \eqref{FinitedimensionalRemainder} is
\[
\mathcal{R}(u)=\mathcal{R}_0(u)+\mathcal{R}_1(u)+\mathcal{R}_2(u)
\]
with
\begin{equation}\label{iRestidiEgorov}
\begin{aligned}
&\mathcal{R}_0(u):=(\partial_u \nabla H) (\Phi(u)) \partial_u \Psi(u), \quad \mathcal{R}_1(u):=[\partial_u \{\Psi'(u)^T\}] [\cdot, \nabla H(\Phi(u))],\\
&\mathcal{R}_2(u):=[\partial_u \Psi(u)]^T (\partial_u \nabla H)(\Phi(u)) \partial_u \Phi(u).
\end{aligned}
\end{equation}
\end{lem}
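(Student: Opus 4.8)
The plan is to compute the $L^2$-gradient of $H\circ\Phi$ by the chain rule, differentiate it once more in $u$, and then extract the ``finite dimensional'' structure of the remainder from the hypothesis $\Psi(u)=\Pi_E\Psi(\Pi_E u)$. First I would compute, for $h$ in the phase space,
\[
d(H\circ\Phi)(u)[h]=\big(\nabla H(\Phi(u)),\,\partial_u\Phi(u)[h]\big)_{L^2(\mathbb{T})},
\]
whence, taking adjoints with respect to the $L^2$ pairing and using $\Phi=\mathrm{I}+\Psi$ (so that $\partial_u\Phi(u)=\mathrm{I}+\Psi'(u)$ and $[\partial_u\Phi(u)]^T=\mathrm{I}+\Psi'(u)^T$),
\[
\nabla(H\circ\Phi)(u)=[\partial_u\Phi(u)]^T\nabla H(\Phi(u))=\nabla H(\Phi(u))+[\partial_u\Psi(u)]^T\nabla H(\Phi(u)).
\]

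Next I would differentiate this identity in $u$ and evaluate on $[h]$. Applying the chain rule to $u\mapsto\nabla H(\Phi(u))$ gives $(\partial_u\nabla H)(\Phi(u))[\partial_u\Phi(u)h]=(\partial_u\nabla H)(\Phi(u))[h]+\mathcal{R}_0(u)[h]$ with $\mathcal{R}_0(u):=(\partial_u\nabla H)(\Phi(u))\,\partial_u\Psi(u)$; applying the Leibniz rule to $u\mapsto[\partial_u\Psi(u)]^T\nabla H(\Phi(u))$ produces $\mathcal{R}_1(u):=[\partial_u\{\Psi'(u)^T\}][\cdot,\nabla H(\Phi(u))]$ from differentiating the operator, together with $[\partial_u\Psi(u)]^T\,\partial_u[\nabla H(\Phi(u))][h]=[\partial_u\Psi(u)]^T(\partial_u\nabla H)(\Phi(u))\,\partial_u\Phi(u)[h]=:\mathcal{R}_2(u)[h]$ from differentiating $\nabla H(\Phi(u))$. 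This already yields the decomposition $\mathcal{R}=\mathcal{R}_0+\mathcal{R}_1+\mathcal{R}_2$ and the formulas \eqref{iRestidiEgorov}; here $H\in C^2$ is exactly what makes $\partial_u\nabla H$ (hence $\mathcal{R}_0,\mathcal{R}_2$) meaningful, while analyticity of $\Psi$, coming from Proposition \ref{WBNF}, covers the second derivative appearing in $\mathcal{R}_1$.

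It then remains to read off the ``finite dimensional'' form \eqref{FinitedimensionalRemainder}. From $\Psi(u)=\Pi_E\Psi(\Pi_E u)$ one gets $\partial_u\Psi(u)=\Pi_E\,(\partial\Psi)(\Pi_E u)\,\Pi_E$, and, since $\Pi_E$ is self-adjoint and constant, also $\Psi'(u)^T$ and $\partial_u\{\Psi'(u)^T\}$ are of the form $\Pi_E(\cdots)\Pi_E$: all of them have range contained in $E=\mathrm{span}\{e^{\mathrm{i} j x}:0<|j|\le C\}$ and depend on the argument $h$ only through $\Pi_E h$. Expanding $\Pi_E h=\sum_{0<|j|\le C}(2\pi)^{-1}(h,e^{-\mathrm{i} j x})_{L^2(\mathbb{T})}\,e^{\mathrm{i} j x}$, for $\mathcal{R}_0$ the input factors as $\partial_u\Psi(u)[h]=\sum_{0<|j|\le C}(h,e^{-\mathrm{i} j x})_{L^2(\mathbb{T})}\,\psi_j(u)$, so $\mathcal{R}_0(u)[h]=\sum_{0<|j|\le C}(h,g_j(u))_{L^2(\mathbb{T})}\chi_j(u)$ with $g_j(u)=e^{-\mathrm{i} j x}$ and $\chi_j(u)=(\partial_u\nabla H)(\Phi(u))[\psi_j(u)]$; for $\mathcal{R}_1$ and $\mathcal{R}_2$ the output already lies in $E$, so writing $\mathcal{R}_i(u)[h]=\sum_{0<|j|\le C}\lambda_j(h)\,e^{\mathrm{i} j x}$ and noting that $\lambda_j(h)$ is a bounded linear functional of $h$, i.e. $\lambda_j(h)=(h,g_j(u))_{L^2(\mathbb{T})}$, gives the claimed form with $\chi_j(u)=e^{\mathrm{i} j x}$. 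Relabelling $j\mapsto-j$ in $\mathcal{R}_0$ (the index set is symmetric) reconciles the signs with the statement.

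The computation is mostly bookkeeping; the one delicate point I expect is $\mathcal{R}_1$, namely giving a precise meaning to the second-order object $\partial_u\{\Psi'(u)^T\}$ as a bilinear map — one slot for the direction of differentiation, one for $h$ — and checking both that taking the transpose commutes with the $\Pi_E$–sandwiching and that a further $u$–derivative does not enlarge the range beyond $E$. Once that is settled, the rest follows by the argument above.
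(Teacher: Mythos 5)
Your argument is correct and is essentially the same as the proof of Lemma 7.1 in \cite{KdVAut}, which this paper cites without reproducing: compute $\nabla(H\circ\Phi)=[\partial_u\Phi]^T\nabla H(\Phi)$, differentiate by the chain and Leibniz rules to obtain exactly $\mathcal{R}_0+\mathcal{R}_1+\mathcal{R}_2$ as in \eqref{iRestidiEgorov}, and use $\Psi(u)=\Pi_E\Psi(\Pi_E u)$ (so all three pieces are $\Pi_E$-sandwiched) to read off the form \eqref{FinitedimensionalRemainder}, with $g_j=e^{\mathrm{i}jx}$ for $\mathcal{R}_0$ and $\chi_j=e^{\mathrm{i}jx}$ for $\mathcal{R}_1,\mathcal{R}_2$. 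Nothing essential is missing; the handling of $\partial_u\{\Psi'(u)^T\}$ as a bilinear map and the Riesz representation of the Fourier coefficients (equivalently, writing the $g_j$ explicitly via the symmetry of $\partial_u\nabla H$) is exactly the bookkeeping done there.
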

\begin{lem}{(Lemma $7.3$ in \cite{KdVAut})}\label{Lemma7.3}
Let $\mathcal{R}$ be an operator of the form
\begin{equation}\label{VeraFinDimForm}
\mathcal{R} h=\sum_{\lvert j \rvert\le C} \int_0^1 (h, g_j(\tau))_{L^2(\mathbb{T})} \chi_j(\tau)\,d\tau,
\end{equation}
where the functions $g_j(\tau), \chi_j(\tau)\in H^s, \tau\in [0, 1]$ depend in a Lipschitz way on the parameter $\omega$. Then its matrix $s$-decay norm (see \eqref{decayNorm}-\eqref{decayNorm2}) satisfies
\begin{equation}
\lvert \mathcal{R} \rvert_s^{Lip(\gamma)}\le_s \sum_{\lvert j \rvert\le C} \,\,\sup_{\tau\in [0,1]} (\lVert \chi_j(\tau)\rvert_s^{Lip(\gamma)}\lVert g_j \rVert_{s_0}^{Lip(\gamma)}+\lVert \chi_j(\tau)\rVert_{s_0}^{Lip(\gamma)}\lVert g_j(\tau)\rVert_s^{Lip(\gamma)}).
\end{equation}
\end{lem}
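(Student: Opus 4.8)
\noindent\emph{Proof plan.} The plan is to reduce everything to a single rank-one operator and then to invoke the algebra properties of the decay norm. First, since $\lvert\cdot\rvert_s^{Lip(\gamma)}$ is a norm it is subadditive, and Minkowski's integral inequality applied to the weighted $\ell^{2}$ quantity in \eqref{decayNorm} gives $\bigl\lvert\int_{0}^{1}A(\tau)\,d\tau\bigr\rvert_s^{Lip(\gamma)}\le\sup_{\tau\in[0,1]}\lvert A(\tau)\rvert_s^{Lip(\gamma)}$; hence it is enough to bound, uniformly in $\lvert j\rvert\le C$ and $\tau\in[0,1]$, a single rank-one operator
\[
\mathcal{R}_{g,\chi}\,h:=(h,g)_{L^{2}(\mathbb{T}_{x})}\,\chi,\qquad g:=g_{j}(\tau),\ \chi:=\chi_{j}(\tau)\in H^{s}(\mathbb{T}^{\nu+1}),
\]
by $\lvert\mathcal{R}_{g,\chi}\rvert_s^{Lip(\gamma)}\le_s\lVert\chi\rVert_s^{Lip(\gamma)}\lVert g\rVert_{s_{0}}^{Lip(\gamma)}+\lVert\chi\rVert_{s_{0}}^{Lip(\gamma)}\lVert g\rVert_s^{Lip(\gamma)}$, and then to sum over $\lvert j\rvert\le C$.

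\noindent The key observation I would use is the factorization, up to the constant $2\pi$,
\[
\mathcal{R}_{g,\chi}=2\pi\,M_{\chi}\circ P_{0}\circ M_{g},
\]
where $M_{p}$ is the multiplication operator $h\mapsto p\,h$ (on functions of $(\varphi,x)$) and $P_{0}$ is the $L^{2}$-orthogonal projector onto functions independent of $x$, i.e.\ onto the $x$-Fourier wavenumber $0$, so that $P_{0}(gh)(\varphi)=\tfrac{1}{2\pi}(h,g)_{L^{2}(\mathbb{T}_{x})}(\varphi)$. By the multiplication-operator remark after \eqref{decayNorm2}, one has $\lvert M_{g}\rvert_s^{Lip(\gamma)}=\lVert g\rVert_s^{Lip(\gamma)}$ and $\lvert M_{\chi}\rvert_s^{Lip(\gamma)}=\lVert\chi\rVert_s^{Lip(\gamma)}$, while in the Fourier basis of $L^{2}(\mathbb{T}^{\nu+1})$ indexed by $i=(l,k)\in\mathbb{Z}^{\nu}\times\mathbb{Z}$ the matrix of $P_{0}$ is $(P_{0})_{(l_{2},k_{2})}^{(l_{1},k_{1})}=\delta_{l_{1}l_{2}}\,\delta_{k_{1}0}\,\delta_{k_{2}0}$; this is supported on the single ``diagonal'' $i_{1}-i_{2}=0$ with entries in $\{0,1\}$, so $\lvert P_{0}\rvert_s^{Lip(\gamma)}=\lvert P_{0}\rvert_s\le1$ for all $s$, with no $s$-growth and no $\omega$-dependence.

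\noindent To conclude I would apply twice the algebra and interpolation inequalities for the decay norm (valid for the $Lip(\gamma)$ norm as well): from $\lvert P_{0}\rvert_s\le1$ and $\lVert g\rVert_{s_{0}}^{Lip(\gamma)}\le\lVert g\rVert_s^{Lip(\gamma)}$ (for $s\ge s_{0}$) one gets $\lvert P_{0}\circ M_{g}\rvert_s^{Lip(\gamma)}\le_s\lVert g\rVert_s^{Lip(\gamma)}$ and $\lvert P_{0}\circ M_{g}\rvert_{s_{0}}^{Lip(\gamma)}\le_s\lVert g\rVert_{s_{0}}^{Lip(\gamma)}$, and composing with $M_{\chi}$,
\[
\lvert\mathcal{R}_{g,\chi}\rvert_s^{Lip(\gamma)}\le_s\lvert M_{\chi}\rvert_s^{Lip(\gamma)}\lvert P_{0}M_{g}\rvert_{s_{0}}^{Lip(\gamma)}+\lvert M_{\chi}\rvert_{s_{0}}^{Lip(\gamma)}\lvert P_{0}M_{g}\rvert_s^{Lip(\gamma)}\le_s\lVert\chi\rVert_s^{Lip(\gamma)}\lVert g\rVert_{s_{0}}^{Lip(\gamma)}+\lVert\chi\rVert_{s_{0}}^{Lip(\gamma)}\lVert g\rVert_s^{Lip(\gamma)},
\]
the $2\pi$ and all other constants (depending only on $s$ and $\nu$) being absorbed in $\le_s$. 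Specializing to $g=g_{j}(\tau)$, $\chi=\chi_{j}(\tau)$, taking $\sup_{\tau\in[0,1]}$ and summing over $\lvert j\rvert\le C$ then gives the statement. I expect the only slightly delicate step to be the uniform bound $\lvert P_{0}\rvert_s\le1$ — the fact that restricting to a single Fourier wavenumber in $x$ does not amplify the off-diagonal decay norm — which is nonetheless immediate from the diagonal matrix representation of $P_{0}$; everything else is a mechanical use of the algebra and interpolation properties of $\lvert\cdot\rvert_s$, so there is no real obstacle.
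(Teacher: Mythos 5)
Your proposal is correct. Note that the paper itself gives no proof of this lemma (it is quoted from \cite{KdVAut}), and the proof there proceeds by a direct computation: one writes the matrix entries of the rank-one operator $h\mapsto (h,g)_{L^2(\mathbb{T})}\chi$ in the exponential basis, so that the entry on the line $i_1-i_2=i$ is essentially $\chi_{i_1}\overline{g_{i_2}}$, and then bounds the weighted $\ell^2$ sum in \eqref{decayNorm} by splitting $\langle i\rangle^s\le_s \langle i_1\rangle^s+\langle i_2\rangle^s$, which produces exactly the two terms $\lVert\chi\rVert_s\lVert g\rVert_{s_0}+\lVert\chi\rVert_{s_0}\lVert g\rVert_s$; the sum over $\lvert j\rvert\le C$ and the $\tau$-integral are handled as you do. Your route is a repackaging of the same estimate: instead of estimating the rank-one matrix by hand, you factor it as $2\pi\,M_\chi\circ P_0\circ M_g$ and invoke the multiplication-operator identity $\lvert M_p\rvert_s^{Lip(\gamma)}=\lVert p\rVert_s^{Lip(\gamma)}$, the bound $\lvert P_0\rvert_s=1$ (immediate from its diagonal matrix, with zero Lipschitz part since $P_0$ is $\omega$-independent), and the algebra/interpolation inequalities for the decay norm. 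What this buys is that every ingredient is already stated in the preliminaries, so no new computation is needed; the direct proof avoids introducing the intermediate projector and the (harmless) passage through matrices indexed by the full lattice including the $x$-wavenumber $0$, a point you could mention in one line, together with the equally harmless replacement of $g$ by $\overline{g}$ if the $L^2$ pairing is taken sesquilinear. Your use of Minkowski's integral inequality for the norm of $\int_0^1 A(\tau)\,d\tau$ (norm of the integral bounded by the integral of the norm, then by the sup in $\tau$) is the right way to handle the $\tau$-dependence, for both the sup and the Lipschitz seminorm.
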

\subsection{Composition with the map $G_{\delta}$}
In the sequel we use the fact that $\mathfrak{I}_{\delta}:=\mathfrak{I}_{\delta}(\varphi; \omega)=i_{\delta}(\varphi;\, \omega)-(\varphi,\, 0,\, 0)$ satisfies
\begin{equation}\label{IpotesiPiccolezzaIdelta}
\lVert \mathfrak{I}_{\delta} \rVert_{s_0+\mu}^{Lip(\gamma)}\le C\,\varepsilon^{6-2 b}\gamma^{-1}.
\end{equation}
We now study the Hamiltonian $K:=H_{\varepsilon}\circ G_{\delta}=\varepsilon^{-2 b} \mathcal{H}\circ A_{\varepsilon}\circ G_{\delta}$ (see \eqref{Hepsilon}). Recalling \eqref{Aepsilon}, $A_{\varepsilon}\circ G_{\delta}$ has the form
\begin{equation}\label{AepsilonGdelta}
A_{\varepsilon}(G_{\delta}(\psi, \eta, w))=\varepsilon v_{\varepsilon}(\theta_0(\psi), y_{\delta}(\psi)+L_1(\psi)\eta+L_2(\psi)w)+\varepsilon^b (z_0(\psi)+w)
\end{equation}
where
\begin{equation}\label{L1L2}
L_1(\Psi):=[\partial_{\psi} \theta_0(\psi)]^{-T}, \quad L_2(\psi):=[(\partial_{\theta} \tilde{z}_0)(\theta_0(\psi))]^T \partial_x^{-1}.
\end{equation}
By Taylor formula, we develop \eqref{AepsilonGdelta} in $w$ at $(\eta, w)=(0, 0)$, and we get 
\[
(A_{\varepsilon}\circ G_{\delta})(\psi, 0, w)=T_{\delta}(\psi)+T_1(\psi) w+T_2(\psi) [w, w]+T_{\geq 3}(\psi, w),
\]
where 
\begin{equation}\label{Tdelta}
T_{\delta}(\psi):=A_{\varepsilon}(G_{\delta}(\psi, 0, 0))=\varepsilon v_{\delta}(\psi)+\varepsilon^b z_0(\psi), \quad v_{\delta}(\psi):=v_{\varepsilon} (\theta_0(\psi), y_{\delta}(\psi))
\end{equation}
is the approximate isotropic torus in the phase space $H_0^1(\mathbb{T})$ (it corresponds to $i_{\delta}$),
\begin{align}
& T_1(\psi) w:=\varepsilon^{2 b-1} U_1(\psi) w+\varepsilon^b w; \quad T_2(\psi) [w, w]:=\varepsilon^{4 b-3} U_2(\psi) [w, w]\\[2mm] \label{U1}
&U_1(\psi) w:=\varepsilon\sum_{j\in S} \,\,\dfrac{\lvert j \rvert\,[L_2(\psi) w]_j\,e^{\mathrm{i} [\theta_0(\psi)]_j}}{2\sqrt{\lvert j \rvert}\sqrt{\xi_j+\varepsilon^{2(b-1)} [y_{\delta}(\psi)]_j}},\\\label{U2}
&U_2(\psi) [w, w]:=-\varepsilon\sum_{j\in S} \,\,\dfrac{j^2\,[L_2(\psi) w]_j^2\,e^{\mathrm{i}[\theta_0(\psi)]_j}}{8\lvert j \rvert^{\frac{3}{2}} \{ \xi_j+\varepsilon^{2(b-1)} [y_{\delta}(\psi)]_j \}^{\frac{3}{2}}},
\end{align}
and $T_{\geq 3}(\psi, w)$ collects all the terms of order at least cubic in $w$. In the notation of \eqref{Aepsilon}, the function $v_{\delta}(\Psi)$ in \eqref{Tdelta} is $v_{\delta}(\psi)=v_{\varepsilon}(\theta_0(\psi), y_{\delta}(\psi))$. The terms $U_1, U_2$ in \eqref{U1}, \eqref{U2} are $O(1)$ in $\varepsilon$. Moreover, using that $L_2(\psi)$ in \eqref{L1L2} vanishes at $z_0=0$, they satisfy
\begin{equation}
\begin{aligned}
&\lVert U_1 w \rVert_s\le_s \lVert \mathfrak{I}_{\delta} \rVert_s \lVert w \rVert_{s_0}+\lVert \mathfrak{I}_{\delta} \rVert_{s_0} \lVert w \rVert_s, \quad \lVert U_2 [w, w] \rVert_s \le_s \lVert \mathfrak{I}_{\delta} \rVert_s \lVert \mathfrak{I}_{\delta} \rVert_{s_0}\lVert w \rVert_{s_0}^2+\lVert \mathfrak{I}_{\delta} \rVert_{s_0}^2 \lVert w \rVert_{s_0} \lVert w \rVert_s
\end{aligned}
\end{equation}
and also in the norm $\lVert \cdot \rVert_s^{Lip(\gamma)}$. We expand $\mathcal{H}$ by Taylor formula
\begin{equation}
\mathcal{H}(u+h)=\mathcal{H}(u)+((\nabla\mathcal{H})(u), h)_{L^2(\mathbb{T})}+\frac{1}{2}((\partial_u \nabla \mathcal{H})(u)[h], h)_{L^2(\mathbb{T})}+O(h^3).
\end{equation}
Specifying at $u=T_{\delta}(\psi)$ and $h=T_1(\psi) w+T_2(\psi) [w, w]+T_{\geq 3} (\psi, w)$, we obtain that the sum of all components of $K=\varepsilon^{-2 b} (\mathcal{H}\circ A_{\varepsilon}\circ G_{\delta})(\psi, 0, w)$ that are quadratic in $w$ is
\begin{equation}\label{equality}
\begin{aligned}
& \frac{1}{2} (K_{0 2} w, w)_{L^2(\mathbb{T})}=\varepsilon^{-2 b} ( (\nabla \mathcal{H})(T_{\delta}) , T_2 [w, w] )_{L^2(\mathbb{T})}+\frac{\varepsilon^{-2 b}}{2} ((\partial_u \nabla \mathcal{H})(T_{\delta})[T_1 w] , T_1 w )_{L^2(\mathbb{T})}.
\end{aligned}
\end{equation}
Inserting the expressions \eqref{U1}, \eqref{U2} in the equality \eqref{equality}, we get
\begin{equation}
\begin{aligned}
K_{0 2} (\psi) w=& (\partial_u \nabla \mathcal{H})(T_{\delta})[w]+2 \varepsilon^{b-1} (\partial_u \nabla \mathcal{H})(T_{\delta})[U_1 w]+\\
&+\varepsilon^{2 (b-1)} U_1^T (\partial_u \nabla\mathcal{H})(T_{\delta}) [U_1 w]+2\,\varepsilon^{2 b-3} U_2 [w, \cdot]^T (\nabla\mathcal{H})(T_{\delta}).
\end{aligned}
\end{equation}

\begin{lem}\label{ApplicazioneEgorov}
The operator $K_{0 2}$ reads 
\begin{equation}
(K_{0 2} w, w)_{L^2(\mathbb{T})}=((\partial_u \nabla \mathcal{H})(T_{\delta})[w], w)_{L^2(\mathbb{T})}+(R(\psi) w, w)_{L^2(\mathbb{T})}
\end{equation}
where $R(\psi)$ has the ``finite dimensional'' form
\begin{equation}\label{FiniteDimForm}
R(\psi) w=\sum_{\lvert j \rvert\le C} (w, g_j(\psi))_{L^2(\mathbb{T})}\,\chi_j(\psi).
\end{equation}
The functions $g_j, \chi_j$ satisfy, for some $\sigma:=\sigma (\nu, \tau)>0$,
\begin{align}
&\lVert g_j \rVert_s^{Lip(\gamma)} \lVert \chi_j \rVert_{s_0}^{Lip(\gamma)}+\lVert g_j \rVert_{s_0}^{Lip(\gamma)}\le_s \varepsilon^{1+b} \lVert \mathfrak{I}_{\delta} \rVert_{s+\sigma}^{Lip(\gamma)}, \label{FiniteDimSize}\\
&\lVert \partial_i g_j [\hat{\imath}] \rVert_s \lVert \chi_j \rVert_{s_0}+\lVert \partial_i g_j [\hat{\imath}] \rVert_{s_0} \lVert \chi_j \rVert_{s}+\lVert g_j \rVert_s \lVert \partial_i \chi_j [\hat{\imath}] \rVert_{s_0}+\lVert g_j \rVert_{s_0} \lVert \partial_i \chi_j [\hat{\imath}] \rVert_{s} \notag\\
&\le_s \varepsilon^{1+b} \lVert \hat{\imath} \rVert_{s+\sigma}+\varepsilon^{2 b-1} \lVert \mathfrak{I}_{\delta} \rVert_{s+\sigma} \lVert \hat{\imath} \rVert_{s+\sigma}
\end{align}
\end{lem}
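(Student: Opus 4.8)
The plan is to read off the statement from the explicit expression for $K_{02}$ obtained just above, namely
\[
K_{02}(\psi)w=(\partial_u\nabla\mathcal{H})(T_\delta)[w]+2\varepsilon^{b-1}(\partial_u\nabla\mathcal{H})(T_\delta)[U_1w]+\varepsilon^{2(b-1)}U_1^T(\partial_u\nabla\mathcal{H})(T_\delta)[U_1w]+2\varepsilon^{2b-3}U_2[w,\cdot]^T(\nabla\mathcal{H})(T_\delta),
\]
and to define $R(\psi)$ to be (the projection onto $H_S^\perp$ of) the sum of the last three summands, so that $(K_{02}w,w)_{L^2(\mathbb{T})}=((\partial_u\nabla\mathcal{H})(T_\delta)[w],w)_{L^2(\mathbb{T})}+(R(\psi)w,w)_{L^2(\mathbb{T})}$. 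It then remains to check that $R(\psi)$ has the finite–dimensional form \eqref{FiniteDimForm} and to establish the bounds \eqref{FiniteDimSize}.

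\emph{Finite–dimensional structure.} The crucial point is that $U_1(\psi)$ and $w\mapsto U_2(\psi)[w,w]$ take values in the finite–dimensional space $H_S$, because $v_\varepsilon$ and its $y$–derivatives are supported on the tangential sites $S$. Using \eqref{U1}, \eqref{U2} and the definition \eqref{L1L2} of $L_2$, one writes $[L_2(\psi)w]_j=(w,\ell_j(\psi))_{L^2(\mathbb{T})}$, where $\ell_j(\psi)$ is an explicit function built from $\partial_x^{-1}$ and $(\partial_\theta\tilde{z}_0)(\theta_0(\psi))$; in particular $\ell_j(\psi)$ vanishes at $z_0=0$, as $L_2$ does. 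Hence $U_1(\psi)w=\sum_{j\in S}(w,\ell_j(\psi))_{L^2(\mathbb{T})}\,a_j(\psi)\,e^{\mathrm{i}jx}$ and $U_2(\psi)[w,w]=\sum_{j\in S}(w,\ell_j(\psi))_{L^2(\mathbb{T})}^2\,b_j(\psi)\,e^{\mathrm{i}jx}$ for explicit scalars $a_j(\psi),b_j(\psi)$. Substituting these into the three terms of $R$, and using that $U_1$ has finite rank so that $U_1^T$ is again finite rank, each term collapses to a finite sum $\sum_{|j|\le C_S}(w,g_j(\psi))_{L^2(\mathbb{T})}\chi_j(\psi)$; for instance the first one equals $2\varepsilon^{b-1}\sum_{j\in S}(w,\ell_j(\psi))_{L^2(\mathbb{T})}\,a_j(\psi)\,\Pi_S^\perp(\partial_u\nabla\mathcal{H})(T_\delta)[e^{\mathrm{i}jx}]$. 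Collecting the contributions gives \eqref{FiniteDimForm} with $C=C_S$. (Alternatively one may first apply Lemma \ref{Egorov} to write $(\partial_u\nabla\mathcal{H})(T_\delta)=(\partial_u\nabla H)(\Phi_B(T_\delta))+\mathcal{R}(T_\delta)$ with $\mathcal{R}(T_\delta)$ of the form \eqref{FinitedimensionalRemainder}; this is compatible with the above but not needed here.)

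\emph{Estimates.} Now one tracks sizes. On one hand, $\ell_j(\psi)$ carries a factor $\lVert\mathfrak{I}_\delta\rVert$ since $L_2$ vanishes at $z_0=0$; by the change–of–variables Lemma \ref{ChangeofVariablesLemma} for $\theta_0^{-1}$, the composition Lemma \ref{lemmacomp} and the tame product one obtains $\lVert\ell_j\rVert_s^{Lip(\gamma)}\le_s\lVert\mathfrak{I}_\delta\rVert_{s+\sigma}^{Lip(\gamma)}$ and, by \eqref{IpotesiPiccolezzaIdelta}, $\lVert\ell_j\rVert_{s_0}^{Lip(\gamma)}$ is small. On the other hand — and this is what produces the extra power of $\varepsilon$ — on the tangential modes $e^{\mathrm{i}jx}$, $j\in S$, the operator $\partial_u\nabla H_2=-\partial_{xx}$ preserves $H_S$, so that $\Pi_S^\perp(\partial_u\nabla\mathcal{H})(T_\delta)[e^{\mathrm{i}jx}]$ comes only from $\mathcal{H}_3,\mathcal{H}_4,\dots$ and hence is $O(\varepsilon)$ since $T_\delta=\varepsilon v_\delta+\varepsilon^b z_0$; similarly $\nabla\mathcal{H}(T_\delta)=O(\varepsilon)$. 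Using $\lVert T_\delta\rVert_s^{Lip(\gamma)}\le_s\varepsilon(1+\lVert\mathfrak{I}_\delta\rVert_s^{Lip(\gamma)})$ and the tame estimates for $\nabla\mathcal{H}$, $\partial_u\nabla\mathcal{H}$ (Lemma \ref{lemmacomp}) to bound the $\chi_j$'s, then combining through the bilinear inequality of Lemma \ref{Lemma7.3} while accounting for the prefactors $\varepsilon^{b-1},\varepsilon^{2(b-1)},\varepsilon^{2b-3}$ and for the $\varepsilon$–factors inside $U_1,U_2$ and from the rescaling \eqref{Rescaling} (recall $b>1$), one arrives at the first line of \eqref{FiniteDimSize}, the dominant contribution being the first term. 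The differentiated bound follows in the same way, differentiating the above identities in $i$ and invoking \eqref{VariazioneGenerale} together with the tame bounds for $\partial_i T_\delta$, $\partial_i z_0$, $\partial_i\theta_0$.

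\emph{Main difficulty.} There is no conceptual obstacle: the content is that $U_1,U_2$ are finite rank (because $v_\varepsilon$ is supported on $S$), so that all the ``correction'' terms reduce to operators of rank $\le\lvert S\rvert$. The delicate points are purely quantitative: extracting a factor $\lVert\mathfrak{I}_\delta\rVert$ from every term (which relies on $\ell_j$ vanishing at $z_0=0$), and, in order to get the sharp power $\varepsilon^{1+b}$ rather than the naive $\varepsilon^{b-1}$, observing that $-\partial_{xx}$ keeps the tangential exponentials inside $H_S$, so that only the $O(\varepsilon)$ contributions of $\mathcal{H}_3,\mathcal{H}_4,\dots$ survive the projection onto $H_S^\perp$.
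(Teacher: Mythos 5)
Your skeleton is the right one and is what the paper itself (implicitly) does: take $R(\psi)$ to be the sum of the three terms of \eqref{equality} beyond $\frac{1}{2}((\partial_u\nabla\mathcal{H})(T_{\delta})[w],w)_{L^2}$, and observe that each of them is of rank at most $\lvert S\rvert$ because $U_1,U_2$ act through the pairings $[L_2(\psi)w]_j$, i.e. through $L^2$ products of $w$ against functions proportional to $\partial_{\theta}\tilde z_0$, which vanish at $z_0=0$ and hence carry a factor $\lVert\mathfrak{I}_{\delta}\rVert$. Up to this point your argument coincides with the paper's.

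The gap is in how you reach the power $\varepsilon^{1+b}$ in \eqref{FiniteDimSize}. The mechanism you single out (that $-\partial_{xx}$ maps $H_S$ into $H_S$, so only the $O(\varepsilon)$ part of the Hessian survives against $w\in H_S^{\perp}$) gives for the cross term $2\varepsilon^{b-1}(\partial_u\nabla\mathcal{H})(T_{\delta})[U_1w]$ only $\varepsilon^{b-1}\cdot\varepsilon\cdot\lVert\mathfrak{I}_{\delta}\rVert=\varepsilon^{b}\lVert\mathfrak{I}_{\delta}\rVert$, one power of $\varepsilon$ short; and you cannot recover it from ``the $\varepsilon$--factors inside $U_1,U_2$'': Taylor expanding $\varepsilon v_{\varepsilon}(\theta_0,y_{\delta}+L_2w)$ the linear term is $\varepsilon\,\partial_y v_{\varepsilon}[L_2w]=\varepsilon^{2b-1}\times O(1)$, so $U_1,U_2$ are $O(1)$ in $\varepsilon$ (as stated right after \eqref{U2} and as their displayed estimates show); the $\varepsilon$ appearing in \eqref{U1}--\eqref{U2} is not usable. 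The ingredient actually needed is the structure produced by the weak Birkhoff normal form: every monomial of $\mathcal{H}_3$ in \eqref{HamiltonianeMathcal} contains at least two factors of $z$ (and $H_{4,0}^{(4)}$ is a function of $v$ only), so the $H_S\to H_S^{\perp}$ block of $(\partial_u\nabla\mathcal{H})(T_{\delta})+\partial_{xx}$ is of size $\varepsilon^{b}\lVert z_0\rVert+O(\varepsilon^4)$ rather than $O(\varepsilon)$; combined with $\lVert U_1w\rVert\lesssim\lVert\mathfrak{I}_{\delta}\rVert\lVert w\rVert$ and the ansatz \eqref{IpotesiPiccolezzaIdelta} this yields $(\varepsilon^{5-2b}+\varepsilon^{3+b})\lVert\mathfrak{I}_{\delta}\rVert_{s+\sigma}\le\varepsilon^{1+b}\lVert\mathfrak{I}_{\delta}\rVert_{s+\sigma}$ for that term. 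Moreover, for the remaining two terms, $\varepsilon^{2(b-1)}U_1^T(\partial_u\nabla\mathcal{H})(T_{\delta})[U_1\,\cdot\,]$ and $2\varepsilon^{2b-3}U_2[\,\cdot\,,\cdot]^T(\nabla\mathcal{H})(T_{\delta})$, your projection argument is vacuous (both slots are tangential, and $-\partial_{xx}$ does not drop out): there the estimate is quadratic in $\lVert\mathfrak{I}_{\delta}\rVert$ and only closes after absorbing one factor $\lVert\mathfrak{I}_{\delta}\rVert_{s_0+\sigma}\le C\varepsilon^{6-2b}\gamma^{-1}$ through \eqref{IpotesiPiccolezzaIdelta}, a step you never perform and which is exactly where the $\varepsilon$--bookkeeping is delicate (one obtains $\varepsilon^{2(b-1)}\lVert\mathfrak{I}_{\delta}\rVert_{s+\sigma}\lVert\mathfrak{I}_{\delta}\rVert_{s_0+\sigma}$, which must then be compared with the right-hand side of \eqref{FiniteDimSize}). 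As written, your accounting does not deliver the stated bound.
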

In conclusion,  the linearized operator to analyze after the composition with the action-angle variables, the rescaling and the transformation $G_{\delta}$ is
\[
w \mapsto (\partial_u \nabla \mathcal{H}) (T_{\delta}) [w],  \quad w\in H_S^{\perp}
\]
up to finite dimensional operators which have form \eqref{FiniteDimForm} and size \eqref{FiniteDimSize}.

\subsection{The linearized operator in the normal directions}
In this section we compute $((\partial_u \nabla \mathcal{H})(T_{\delta})[w], w)_{L^2(\mathbb{T})}, w\in H_S^{\perp}$, recalling that $\mathcal{H}=H\circ \Phi_B$ and $\Phi_B$ is the Birkhoff map of Proposition \ref{WBNF}. It is convenient to write separately the terms in
\begin{equation}
\mathcal{H}=H\circ \Phi_B=(H_2+H_3)\circ \Phi_B+H_4\circ \Phi_B+H_{\geq 5}\circ \Phi_B,
\end{equation}
where $H_2, H_3, H_4, H_{\geq 5}$ are defined in \eqref{Hamiltonians}.
First we consider $H_{\geq 5}\circ \Phi_B$. By \eqref{Hamiltonians} we get
\[
\nabla H_{\geq 5} (u)=\pi_0[(\partial_u f)(x, u, u_x)]-\partial_x \{ (\partial_{u_x} f) (x, u, u_x) \}.
\]
Since the Birkhoff transformation $\Phi_B$ has the form \eqref{IdpiuFiniteRank}, Lemma \ref{Egorov} (at $u=T_{\delta}$) implies that
\begin{equation}
\begin{aligned}
\partial_u \nabla (H_{\geq 5}\circ \Phi_B)(T_{\delta})[h]&=(\partial_u \nabla H_{\geq 5})(\Phi_B(T_{\delta}))[h]+\mathcal{R}_{H_{\geq 5}}(T_{\delta})[h]=\\
&=\partial_x (r_1 (T_{\delta})\,\partial_x h)+r_0(T_{\delta}) h+\mathcal{R}_{H_{\geq 5}}(T_{\delta})[h]
\end{aligned}
\end{equation}
where the multiplicative functions $r_0(T_{\delta}), r_1(T_{\delta})$ are 
\begin{align}
& r_0(T_{\delta}):=\sigma_0(\Phi_B(T_{\delta})), \quad \sigma_0(u):=(\partial_{uu} f)(x, u, u_x)-\partial_x\{(\partial_{u u_x} f)(x, u, u_x)\}, \label{r0}\\
& r_1(T_{\delta}):=\sigma_1(\Phi_B(T_{\delta})), \quad \sigma_1(u):=-(\partial_{u_x u_x} f) (x, u, u_x),\label{r1}
\end{align}
the remainder $\mathcal{R}_{H_{\geq 5}}(u)$ has the form \eqref{FinitedimensionalRemainder} with $\chi_j=e^{\mathrm{i} j x}$ or $g_j=e^{\mathrm{i} j x}$ and it satisfies, for some $\sigma:=\sigma(\nu, \tau)>0$,
\begin{align*}
&\lVert g_j \rVert_s^{Lip(\gamma)} \lVert \chi_j \rVert_{s_0}^{Lip(\gamma)}+\lVert g_j \rVert_{s_0}^{Lip(\gamma)}\le_s \varepsilon^4 (1+\lVert \mathfrak{I}_{\delta} \rVert_{s+2}^{Lip(\gamma)}),\\[2mm]
& \lVert \partial_i g_j [\hat{\imath}] \rVert_s \lVert \chi_j \rVert_{s_0}+\lVert \partial_i g_j [\hat{\imath}] \rVert_{s_0} \lVert \chi_j \rVert_{s}+\lVert g_j \rVert_s \lVert \partial_i \chi_j [\hat{\imath}] \rVert_{s_0}+\lVert g_j \rVert_{s_0} \lVert \partial_i \chi_j [\hat{\imath}] \rVert_{s}\le_s \varepsilon^4 (\lVert \hat{\imath} \rVert_{s+\sigma}+\lVert \mathfrak{I}_{\delta} \rVert_{s+2} \lVert \hat{\imath} \rVert_{s_0+2}).
\end{align*}
Now consider the contribution of $(H_2+H_3+H_4)\circ\Phi_B$. By Lemma \ref{Egorov} and \eqref{Hamiltonians} we have
\begin{equation}
\begin{aligned}
&\partial_u \nabla ((H_2+H_3+H_4)\circ \Phi_B) (T_{\delta}) [h]=-h_{xx}-6\,c_1\,\partial_x [\Phi_B(T_{\delta})_x\,h_x]-2\,c_2\,\partial_{xx} (\Phi_B(T_{\delta}) h)\\
&+2\,c_2\,\Phi_B(T_{\delta})_x\,h_x+6\,c_3\,\Phi_B(T_{\delta})\,h-12\,c_4\,\partial_x [(\Phi_B(T_{\delta}))_x^2\,h_x]-3\,c_5\,\partial_x [(\Phi_B(T_{\delta}))_x^2\,h]\\
&+3\,c_5\,(\Phi_B(T_{\delta}))_x^2\,h_x-2\,c_6\,\partial_x [\Phi_B(T_{\delta})^2\,h_x]-2\,c_6\,\partial_{xx} (\Phi_B(T_{\delta})^2)\,h+2\,c_6 \,\Phi_B(T_{\delta})_x^2\,h\\
&+12\,c_7\,\Phi_B(T_{\delta})^2\,h+\mathcal{R}_{H_2}(T_{\delta})+\mathcal{R}_{H_3}(T_{\delta})+\mathcal{R}_{H_4}(T_{\delta})[h],
\end{aligned}
\end{equation}
where $\Phi_B(T_{\delta})$ is a zero space average function, indeed $\Phi_B$ maps $H_{0}^1(\mathbb{T}_x)$ in itself by Proposition \eqref{WBNF}. The remainder $\mathcal{R}_{H_2}, \mathcal{R}_{H_3}, \mathcal{R}_{H_4}$ have the form \eqref{FinitedimensionalRemainder} and, by \eqref{iRestidiEgorov}, the size $(\mathcal{R}_{H_2}+\mathcal{R}_{H_3}+\mathcal{R}_{H_4})(T_{\delta})=O(\varepsilon)$. We develop this sum as
\begin{equation}
(\mathcal{R}_{H_2}+\mathcal{R}_{H_3}+\mathcal{R}_{H_4})(T_{\delta})=\varepsilon \mathcal{R}_1+\varepsilon^2 \mathcal{R}_2+\tilde{\mathcal{R}}_{>2},
\end{equation}
where $\tilde{\mathcal{R}}_{>2}$ has size $o(\varepsilon^2)$. Thus we get, for all $h\in H_S^{\perp}$,
\begin{equation}\label{Comparazione}
\begin{aligned}
&\Pi_S^{\perp} \partial_u \nabla ((H_2+H_3+H_4)\circ \Phi_B)(T_{\delta})[h]=-h_{xx}+\Pi_S^{\perp} \{-6\,c_1\,\partial_x [\Phi_B(T_{\delta})_x\,h_x]-2\,c_2\,\partial_{xx} (\Phi_B(T_{\delta}) h)\\
&+2\,c_2\,\Phi_B(T_{\delta})_x\,h_x+6\,c_3\,\Phi_B(T_{\delta})\,h-12\,c_4\,\partial_x [(\Phi_B(T_{\delta}))_x^2\,h_x]-3\,c_5\,\partial_x [(\Phi_B(T_{\delta}))_x^2\,h]+3\,c_5\,(\Phi_B(T_{\delta}))_x^2\,h_x\\&-2\,c_6\,\partial_x [\Phi_B(T_{\delta})^2\,h_x]-2\,c_6\,\partial_{xx} (\Phi_B(T_{\delta})^2)\,h+2\,c_6 \,\Phi_B(T_{\delta})_x^2\,h+12\,c_7\,\Phi_B(T_{\delta})^2\,h \}\\
&+\Pi_s^{\perp}(\varepsilon \mathcal{R}_1+\varepsilon^2 \mathcal{R}_2+\tilde{\mathcal{R}}_{>2})[h].
\end{aligned}
\end{equation}
Now we expand $\Phi_B(u)=u+\Psi_2(u)+\Psi_{\geq 3}(u)$, where $\Psi_2(u)$ is a quadratic function of $u$, $\Psi_{\geq 3}=O(u^3)$ and both map $H_{0}^1(\mathbb{T}_x)$ in itself. At $u=T_{\delta}=\varepsilon v_{\delta}+\varepsilon^b z_0$ we get
\begin{equation}\label{PhiB}
\Phi_B(T_{\delta})=T_{\delta}+\Psi_2(T_{\delta})+\Psi_{\geq 3}(T_{\delta})=\varepsilon v_{\delta}+\varepsilon^2 \Psi_2(v_{\delta})+\tilde{q},
\end{equation}
where $\tilde{q}=\varepsilon^b z_0+\Psi_2(T_{\delta})-\Psi_2(v_{\delta})+\Psi_{\geq 3}(T_{\delta})$ and it satisfies
\begin{equation}\label{Qtilda}
\lVert \tilde{q} \rVert_s^{Lip(\gamma)}\le_s \varepsilon^3+\varepsilon^b\lVert \mathfrak{I}_{\delta} \rVert_s^{Lip(\gamma)}, \quad \lVert \partial_i \tilde{q} [\hat{\imath}] \rVert_s\le_s \varepsilon^b (\lVert \hat{\imath} \rVert_s+\lVert \mathfrak{I}_{\delta} \rVert_s \lVert \hat{\imath} \rVert_{s_0}). 
\end{equation}
Note that also $\tilde{q}$ has zero space average, indeed $\tilde{q}=\Phi_B(T_{\delta})-\varepsilon v_{\delta}-\varepsilon^2 \Psi_2(v_{\delta})$ and $\Phi_B(T_{\delta}), v_{\delta}, \Psi_2(v_{\delta})$ belong to $ H_0^1(\mathbb{T}_x)$.\\
We observe that the terms $O(\varepsilon)$ come from the monomials $R(v\,z^2)$ of $\mathcal{H}_3$ and the ones of size $O(\varepsilon^2)$ from $H_2+\mathcal{H}_{4,2}$ (see \eqref{HamiltonianeMathcal}). Thus, we compare \eqref{Comparazione} with $\Pi_S^{\perp}(\partial_u \nabla (H_2+\mathcal{H}_3+\mathcal{H}_{4, 2}))(T_{\delta})[h]$, using \eqref{HamiltonianeMathcal}, and, by \eqref{PhiB}, we obtain $\mathcal{R}_1=0$,
\begin{align}
&\Psi_2(v_{\delta})=-c_1\,\partial_x (v_{\delta}^2)-\frac{c_2}{3} \partial_{xx} [(\partial_x^{-1} v_{\delta})^2]+\frac{c_2}{3} \pi_0[v_{\delta}^2]+c_3 \pi_0[(\partial_x^{-1} v_{\delta})^2]
\end{align}
and
\begin{equation}\label{R2}
\begin{aligned}
\mathcal{R}_2 [h]=& -6 c_1^2 \{v_{\delta} \partial_{xx} (\Pi_S[(v_{\delta})_x h_x]) - \partial_x ((v_{\delta})_x \partial_{xx} \Pi_S[v_{\delta} h])\}\\
&+2 c_1 c_2\,v_{\delta}\, \partial_x (\Pi_S[(v_{\delta})_x\,h_x])+ 2 c_1 c_2\, \partial_x ((v_{\delta})_x \,\partial_x \Pi_S[v_{\delta}\,h])\\
&-2 c_1 c_2\,(\partial_x^{-1} v_{\delta})\,\partial_{xx}\Pi_S[(v_{\delta})_x\,h_x]+2 c_1 c_2\,\partial_{x} \{(v_{\delta})_x\,\partial_{xx} \Pi_S[(\partial_x^{-1} v_{\delta})\,h]\}\\
&-\frac{2\,c_2^2}{3} \, (\partial_x^{-1} v_{\delta})\,\partial_{xxx} \Pi_S[v_{\delta}\,h]+\frac{2\,c_2^2}{3} \,(\partial_x^{-1} v_{\delta})\,\partial_x\Pi_S[(v_{\delta})_x\,h_x]\\
&+\frac{2\,c_2^2}{3}\,\partial_x \{ (v_{\delta})_x\,\partial_x \Pi_S[(\partial_x^{-1} v_{\delta})\,h]+2 c_2 c_3\,(\partial_x^{-1} v_{\delta})\,\partial_x\Pi_S[v_{\delta}\,h]\\
&-2 c_2 c_3\,v_{\delta}\,\partial_x \Pi_S[(\partial_x^{-1} v_{\delta}) h]+2 c_1 c_2\,\partial_x^{-1}\{v_{\delta}\,\partial_{xx}\Pi_S[(v_{\delta})_x\,h_x]  \}\\
&+2 c_1 c_2\,\partial_x \{ (v_{\delta})_x\,\partial_{xx} \Pi_S[v_{\delta}\,(\partial_x^{-1} h)] \}+\frac{2\,c_2^2}{3} \,\partial_x^{-1} \{ v_{\delta}\,\partial_{xxx} \Pi_S[v_{\delta} h]\}\\
&+\frac{2\,c_2^2}{3}\,v_{\delta}\,\partial_{xxx} \Pi_S[v_{\delta}\,(\partial_x^{-1} h)]-\frac{2\,c_2^2}{3} \,(\partial_x^{-1} \{ v_{\delta})\,\partial_x\Pi_S[(v_{\delta})_x h_x] \}\\
&+\frac{2\,c_2^2}{3}\,\partial_x\{ (v_{\delta})_x\,\partial_x\Pi_S[v_{\delta} (\partial_x^{-1} h)]-2 c_2 c_3\,\partial_x^{-1} \{ v_{\delta}\,\partial_x\Pi_S[v_{\delta}\,h]\}\\
&-2 c_2 c_3 v_{\delta}\,\partial_x \Pi_S[v_{\delta}\,(\partial_x^{-1} h)]-2 c_1 c_2\,v_{\delta}\,\partial_x\Pi_S[(v_{\delta})_x\,h_x]\\
&-2 c_1 c_2 \,\partial_x \{ (v_{\delta})_x\,\partial_x \Pi_S[v_{\delta}\,h] \}-\frac{4\,c_2^2}{3}\,v_{\delta}\,\partial_{xx}\Pi_S[v_{\delta}\,h]\\
&+\frac{2\,c_2^2}{3}\,v_{\delta}\,\Pi_S[(v_{\delta})_x\,h_x]-\frac{2\,c_2^2}{3}\,\partial_x \{ (v_{\delta})_x \,\Pi_S[v_{\delta}\,h]\}\\
&+4 c_2 c_3\,v_{\delta}\,\Pi_S[ v_{\delta}\,h]+6 c_1 c_3\,\partial_x^{-1} \{ (\partial_x^{-1} v_{\delta})\,\partial_x \Pi_S[(v_{\delta})_x\,h_x] \}\\
&-6 c_1 c_3\,\partial_x \{ (v_{\delta})_x\,\partial_x \Pi_S[(\partial_x^{-1} v_{\delta})(\partial_x^{-1} h)]  \}+2 c_2 c_3 \partial_x^{-1} \{ (\partial_x^{-1} v_{\delta})\,\partial_{xx} \Pi_S[v_{\delta} h] \}\\
&-2 c_2 c_3 \,v_{\delta}\,\partial_{xx}\Pi_S[(\partial_x^{-1} v_{\delta})(\partial_x^{-1} h)]-2 c_2 c_3\,\partial_x^{-1} \{ (\partial_x^{-1} v_{\delta})\,\Pi_S[(v_{\delta})_x h_x] \}\\
&-2 c_2 c_3\,\partial_x \{ (v_{\delta})_x\,\Pi_S[(\partial_x^{-1} v_{\delta})(\partial_x^{-1} h)]\}-6\,c_3^2\,\partial_x^{-1} \{ (\partial_x^{-1} v_{\delta})\,\Pi_S[v_{\delta}\,h]\}\\
&+6 c_3^2\,v_{\delta}\,\Pi_S[(\partial_x^{-1} v_{\delta})(\partial_x^{-1} h)]+\frac{2}{3}c_2^2\,v_{\delta}\,\partial_{xxx}\Pi_S[(\partial_x^{-1} v_{\delta})\,h].
\end{aligned}
\end{equation}
In conclusion, we have the following proposition.
\begin{prop}
Assume \eqref{IpotesiPiccolezzaIdelta}. Then the Hamiltonian operator $\mathcal{L}_{\omega}$, for all $h\in H^s_{S^{\perp}}(\mathbb{T}^{\nu+1})$, has the form
\begin{equation}\label{Lomega}
\mathcal{L}_{\omega} h:=\omega\cdot \partial_{\varphi} h-\partial_x K_{0 2} h=\Pi_S^{\perp} (\omega\cdot \partial_{\varphi} h+\partial_{xx} (a_1\,h_x)+\partial_x(a_0 h)-\varepsilon^2 \partial_x \mathcal{R}_2 h-\partial_x \mathcal{R}_* h)
\end{equation}
where $\mathcal{R}_2$ is defined in \eqref{R2},
\begin{equation}\label{Rstar}
{R}_*:=\tilde{\mathcal{R}}_{>2}+R_{H_{\geq 5}}(T_{\delta})+R(\psi),
\end{equation}
 with $R(\psi)$ defined in Lemma \ref{ApplicazioneEgorov}, the functions
\begin{align}
a_1:=&1+6 c_1 \,(\Phi_B(T_{\delta}))_x+2\,c_2\,\Phi_B(T_{\delta})+12 c_4\,(\Phi_B(T_{\delta}))_x^2+3 c_5\,\partial_x[\Phi_B(T_{\delta})^2]+\label{DefA1}\\ \notag
&+2 c_6\,\Phi_B(T_{\delta})^2-r_1(T_{\delta}),\\
a_0:=&2 c_2\,(\Phi_B(T_{\delta}))_{xx}-6 c_3\,\Phi_B(T_{\delta})+3 c_5\,\partial_x [(\Phi_B(T_{\delta}))_x^2]+2 c_6\, \{\Phi_B(T_{\delta})_x^2+\label{DefA0}\\ \notag
&+2\Phi_B(T_{\delta})\,(\Phi_B(T_{\delta}))_{xx} \}-12 c_7\,\Phi_B(T_{\delta})^2-r_0(T_{\delta})
\end{align}
the function $r_1$ is defined in \eqref{r1}, $r_0$ in \eqref{r0}, $T_{\delta}$ and $v_{\delta}$ in \eqref{Tdelta}.\\
Furthermore, we have, for some $\sigma:=\sigma(\nu, \tau)>0$,
\begin{align}
&\lVert a_1-1 \rVert_s^{Lip(\gamma)}\le_s \varepsilon\, (1+\lVert \mathfrak{I}_{\delta} \rVert^{Lip(\gamma)}_{s+\sigma}), &\lVert \partial_i a_1 [\hat{\imath}] \rVert_s\le_s \varepsilon(\lVert i\rVert_{s+\sigma}+ \lVert \mathfrak{I}_{\delta} \rVert_{s+\sigma}\lVert i \rVert_{s_0+\sigma}),\label{a1}\\
&\lVert a_0 \rVert^{Lip(\gamma)}_s \le_s  \varepsilon\, (1+\lVert \mathfrak{I}_{\delta} \rVert^{Lip(\gamma)}_{s+\sigma}),  &\lVert \partial_i a_0 [\hat{\imath}] \rVert_s\le_s \varepsilon(\lVert i\rVert_{s+\sigma}+ \lVert \mathfrak{I}_{\delta} \rVert_{s+\sigma}\lVert i \rVert_{s_0+\sigma}),\label{a0}
\end{align}
where $\mathfrak{I}_{\delta}(\varphi):=(\theta_0(\varphi)-\varphi, y_{\delta}(\varphi), z_0(\varphi))$ corresponds to $T_{\delta}$. The remainder $\mathcal{R}_2$ has the form \eqref{FinitedimensionalRemainder} with
\begin{equation}\label{DecayR2}
\lVert g_j \rVert_s^{Lip(\gamma)}+\lVert \chi_j \rVert_s^{Lip(\gamma)}\le_s 1+\lVert \mathfrak{I}_{\delta} \rVert_{s+\sigma}^{Lip(\gamma)}, \quad \lVert \partial_i g_j [\hat{\imath}] \rVert_s+\lVert \partial_i\chi_j [\hat{\imath}] \rVert_s\le_s \lVert \hat{\imath} \rVert_{s+\sigma}+\lVert \mathfrak{I}_{\delta} \rVert_{s+\sigma} \lVert \hat{\imath} \rVert_{s_0+\sigma}
\end{equation}
and also $\mathcal{R}_*$ has the form \eqref{FinitedimensionalRemainder} with
\begin{align}
& \lVert g^*_j \rVert_s^{Lip(\gamma)}\lVert \chi_j^* \rVert_{s_0}^{Lip(\gamma)}+\lVert g^*_j \rVert_{s_0}^{Lip(\gamma)}\lVert \chi_j^* \rVert_{s}^{Lip(\gamma)}\le_s \varepsilon^3+\varepsilon^{1+b}\lVert \mathfrak{I}_{\delta} \rVert_{s+\sigma}^{Lip(\gamma)}, \label{DecayR*}\\[2mm]
&\lVert \partial_i g^*_j [\hat{\imath}] \rVert_s\lVert \chi_j^* \rVert_{s_0}+\lVert \partial_i g^*_j [\hat{\imath}] \rVert_{s_0}\lVert \chi_j^* \rVert_{s}+\lVert g^*_j \rVert_{s_0} \lVert \partial_i \chi^*_j \rVert_{s}+\lVert g^*_j \rVert_{s} \lVert \partial_i \chi^*_j \rVert_{s_0} \\ \notag
&\le_s \varepsilon^{1+b} \lVert \hat{\imath} \rVert_{s+\sigma}+\varepsilon^{2 b-1}\lVert \mathfrak{I}_{\delta} \rVert_{s+\sigma} \lVert \hat{\imath} \rVert_{s_0+\sigma}.
\end{align}
\end{prop}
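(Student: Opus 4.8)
The starting point is Lemma \ref{ApplicazioneEgorov}, which reduces the computation of $K_{0 2}$ to that of $(\partial_u \nabla \mathcal{H})(T_{\delta})$ restricted to $H_S^{\perp}$, up to a finite dimensional operator of the form \eqref{FiniteDimForm} with the small size \eqref{FiniteDimSize}; this operator will be absorbed into the term $R(\psi)$ inside $\mathcal{R}_*$ (recall \eqref{Rstar}). Recalling $\mathcal{H}=H\circ\Phi_B$ with $\Phi_B=\mathrm{I}+\Psi$, $\Psi=\Pi_E\Psi\Pi_E$, from Proposition \ref{WBNF}, I would split $\mathcal{H}=(H_2+H_3+H_4)\circ\Phi_B+H_{\geq 5}\circ\Phi_B$ and apply the Egorov-type Lemma \ref{Egorov} at $u=T_{\delta}$ to each summand: this rewrites each linearized operator as $(\partial_u\nabla H_\bullet)(\Phi_B(T_{\delta}))$ plus a remainder of the finite dimensional form \eqref{FinitedimensionalRemainder}, whose matrix decay norm is then controlled by Lemma \ref{Lemma7.3}. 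The fact that the whole of \eqref{Lomega} sits under an outer $\partial_x$ is automatic, since $K_{02}$ is self-adjoint and hence $\partial_x K_{02}$ is Hamiltonian.

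For $H_{\geq 5}\circ\Phi_B$ I would compute $\nabla H_{\geq 5}(u)=\pi_0[(\partial_u f)(x,u,u_x)]-\partial_x\{(\partial_{u_x} f)(x,u,u_x)\}$ and linearize it, obtaining the differential operator $\partial_x(r_1(u)\,\partial_x h)+r_0(u)h$ with $r_0,r_1$ as in \eqref{r0}, \eqref{r1}. Evaluating at $u=\Phi_B(T_{\delta})$ and using the composition Lemmas \ref{lemmacomp}--\ref{lemmaLip} together with $T_{\delta}=O(\varepsilon)$ and with $f_{\geq 5}$ vanishing to order five, one gets $r_0(T_{\delta}),r_1(T_{\delta})=O(\varepsilon^4)$ and, via Lemma \ref{Lemma7.3}, a remainder $\mathcal{R}_{H_{\geq 5}}(T_{\delta})$ of size $O(\varepsilon^4)$; this goes into $\mathcal{R}_*$.

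For $(H_2+H_3+H_4)\circ\Phi_B$ I would differentiate twice the explicit densities in \eqref{Hamiltonians}, apply $\Pi_S^{\perp}$, and regroup the result into $\partial_{xx}(a_1 h_x)+\partial_x(a_0 h)$ modulo finite dimensional remainders, which is precisely \eqref{Comparazione}; the coefficients $a_1,a_0$ in \eqref{DefA1}, \eqref{DefA0} are read off by collecting the factors multiplying $h_{xx}$ and $h$ and combining with $-r_1$ and $-r_0$. Then I would insert the expansion $\Phi_B(T_{\delta})=\varepsilon v_{\delta}+\varepsilon^2\Psi_2(v_{\delta})+\tilde q$ of \eqref{PhiB}, with $\tilde q$ controlled by \eqref{Qtilda}: the $O(\varepsilon)$ variable-coefficient contribution has to cancel against the $R(v\,z^2)$ monomials of $\mathcal{H}_3$, which forces the stated expression for $\Psi_2(v_{\delta})$ and gives $\mathcal{R}_1=0$; the $O(\varepsilon^2)$ contribution, compared with $H_2+\mathcal{H}_{4,2}$ via \eqref{HamiltonianeMathcal}, produces exactly the finite rank operator $\mathcal{R}_2$ listed in \eqref{R2}. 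I expect this matching -- computing $\Psi_2$ from the weak Birkhoff normal form and checking, monomial by monomial, both the cancellation $\mathcal{R}_1=0$ and the long explicit list \eqref{R2} -- to be the main obstacle, since this is where the extra cubic and quartic coefficients $c_1,\dots,c_7$ (absent in \cite{KdVAut}) enter and make the bookkeeping substantially heavier.

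Finally, for the quantitative bounds: the estimates on $a_1-1$ and $a_0$ follow by applying Lemmas \ref{lemmacomp}, \ref{lemmaLip} and the tame product \eqref{TameProduct} to the explicit formulas \eqref{DefA1}, \eqref{DefA0}, using $\Phi_B(T_{\delta})=O(\varepsilon)$, the control of $r_0,r_1$ obtained above, and the smallness assumption \eqref{IpotesiPiccolezzaIdelta}; the derivative-in-$i$ bounds are obtained by differentiating these compositions. The estimates \eqref{DecayR2} for $\mathcal{R}_2$ follow from \eqref{TameProduct} applied directly to the bilinear entries in \eqref{R2}, each of which couples $v_{\delta}$ (hence $\mathfrak{I}_{\delta}$) with fixed Fourier-localized factors. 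The estimates \eqref{DecayR*} for $\mathcal{R}_*=\tilde{\mathcal{R}}_{>2}+R_{H_{\geq 5}}(T_{\delta})+R(\psi)$ are obtained by combining the $o(\varepsilon^2)$-bound for $\tilde{\mathcal{R}}_{>2}$ coming from the cubic-and-higher remainder in the Egorov expansion of $H_2+H_3+H_4$ (via Lemma \ref{Lemma7.3}), the $O(\varepsilon^4)$-bound for $R_{H_{\geq 5}}(T_{\delta})$, and the bound \eqref{FiniteDimSize} for $R(\psi)$.
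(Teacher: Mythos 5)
Your plan coincides with the paper's own argument: apply Lemma \ref{ApplicazioneEgorov} and the Egorov-type Lemma \ref{Egorov} at $u=T_{\delta}$ separately to $(H_2+H_3+H_4)\circ\Phi_B$ and $H_{\geq 5}\circ\Phi_B$, read off $a_1,a_0$ (with $r_1,r_0$ of size $O(\varepsilon^4)$ going into the coefficients and $\mathcal{R}_{H_{\geq5}}$ into $\mathcal{R}_*$), expand $\Phi_B(T_{\delta})=\varepsilon v_{\delta}+\varepsilon^2\Psi_2(v_{\delta})+\tilde q$, and identify $\Psi_2(v_{\delta})$, $\mathcal{R}_1=0$ and the explicit $\mathcal{R}_2$ of \eqref{R2} by comparing \eqref{Comparazione} with $\Pi_S^{\perp}(\partial_u\nabla(H_2+\mathcal{H}_3+\mathcal{H}_{4,2}))(T_{\delta})$ from the weak Birkhoff normal form \eqref{HamiltonianeMathcal}, concluding with the tame/composition estimates under \eqref{IpotesiPiccolezzaIdelta}. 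You also correctly single out the monomial-by-monomial matching (the source of the long list \eqref{R2}) as the main computational burden, so the proposal is essentially the paper's proof.
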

The bounds \eqref{DecayR2} and \eqref{DecayR*} imply, by Lemma \ref{Lemma7.3}, estimates for the $s$-decay norms of $\mathcal{R}_2$ and $\mathcal{R}_*$.\\
The linearized operator $\mathcal{L}_{\omega}:=\mathcal{L}_{\omega}(\omega, i_{\delta}(\omega))$ depends on the parameter $\omega$ both directly and also through the dependence on the embedded torus $i_{\delta}(\omega)$.
The estimates on the partial derivative respect to $i$ (see \eqref{i}) allow us to control, along the Nash-Moser iteration, the Lipschitz variation of the eigenvalues of $\mathcal{L}_{\omega}$ with respect to $\omega$ and the approximate solution $i_{\delta}$.

\section{Reduction of the linearized operator in the normal\\ directions}

The goal of this section is to conjugate the Hamiltonian linear operator $\mathcal{L}_{\omega}$ in \eqref{Lomega} to a constant coefficients linear operator $\mathcal{L}_{\infty}$. For this purpose, we shall apply the same kind of symplectic transformations used in \cite{KdVAut}, whose aim is to diagonalize the operator $\mathcal{L}_{\omega}$ up to a bounded remainder $\mathcal{R}_6$ (see \eqref{L6}). This one has to satisfy the smallness condition \eqref{PiccolezzaperKamred} in order to initialize the KAM reducibility scheme of Theorem \ref{Reducibility}, that completes the diagonalization procedure.\\
The size of all these transformations will be greater than the ones used in \cite{KdVAut} (see Section $8$ in \cite{KdVAut}) and, as a consequence, some non perturbative terms will be modified by them. Thus, in order to prove \eqref{PiccolezzaperKamred} we will have to overcome two main difficulties: (a) computing the terms of order $\varepsilon$ and $\varepsilon^2$ after each transformation, since we need to normalize them through the Birkhoff steps of Section $8.5$ and $8.6$,
(b) providing optimal estimates for the transformations and, consequently, for the remainder $\mathcal{R}_6$ (see \eqref{L6}).\\

Consider 
\begin{equation}\label{vSegnato}
\overline{v}(\varphi, x):=\sum_{j\in S} \sqrt{\lvert j \rvert \xi_j}\,e^{\mathrm{i} \mathtt{l}(j)\cdot \varphi}\,e^{\mathrm{i} j x}
\end{equation}
and $\mathtt{l}\colon S\to \mathbb{Z}^{\nu}$ is the odd injective map
\begin{equation}\label{mathttL}
\mathtt{l}\colon S \to \mathbb{Z}^{\nu}, \quad \mathtt{l}(\overline{\jmath}_i):=\mathtt{e}_i, \quad \mathtt{l}(-\overline{\jmath}_i)=-\mathtt{l}(\overline{\jmath}_i)=-\mathtt{e}_i, \quad i=1,\dots, \nu,
\end{equation}
denoting by $\mathtt{e}_i=(0, \dots, 1, \dots, 0)$ the $i$-th vector of the canonical basis of $\mathbb{R}^{\nu}$. We observe that 
\begin{equation}\label{vdeltamenovsegn}
\lVert v_{\delta}-\overline{v}\rVert_s^{Lip(\gamma)}\le_s \lVert \mathfrak{I}_{\delta} \rVert_s^{Lip(\gamma)}, \qquad \lVert \partial_i (v_{\delta}-\overline{v}) [\hat{\imath}] \rVert_s\le_s \lVert \hat{\imath} \rVert_s+\lVert \mathfrak{I}_{\delta} \rVert_{s} \lVert \hat{\imath} \rVert_{s_0}.
\end{equation}
\begin{remark}\label{KerL}
The function $\overline{v}(\varphi, x)$ in \eqref{vSegnato} corresponds to the torus $(\varphi, 0, 0)$ after the transformation $A_{\varepsilon}$ defined in \eqref{Aepsilon}. In particular, this torus is invariant under the flow of the integrable Hamiltonian $\varepsilon^{-2 b} \tilde{h}\circ A_{\varepsilon}$ (recalling \eqref{HamBif}), which preserves the momentum. Hence, the square of the $L^2$ norm of $\overline{v}$ is independent of the time $\varphi$, as we can deduce by the properties of the map $\mathtt{l}$ defined in \eqref{mathttL}.\\  
We shall expand the coefficients of the linearized operator at $y=z=0$ to get the bounds on the transformations defined along this section, thus we will frequently use the inequalities \eqref{vdeltamenovsegn} and the assumption \eqref{IpotesiPiccolezzaIdelta}. Moreover, we will use the fact that $\overline{v}$ satisfies the equation $L_{\overline{\omega}}=0$, where $\overline{\omega}$ is the vector of the linear frequencies (see \eqref{LinearFreq}) and $L_{\omega}:=\omega\cdot\partial_{\varphi}+\partial_{xxx}$.
\end{remark}
\begin{remark}\label{Domegamenoomegasegnato}
We recall that $\omega=\overline{\omega}+O(\varepsilon^2)$, see for instance \eqref{Frequency-AmplitudeMAP}. Moreover, note that $\mathcal{D}_{\omega}\overline{v}=\mathcal{D}_{\overline{\omega}}\overline{v}+\mathcal{D}_{\omega-\overline{\omega}}\overline{v}$ and
\[
\mathcal{D}_{\omega-\overline{\omega}} \overline{v} = \sum_{j\in S} \mathrm{i} ( \omega-\overline{\omega} ) \cdot \mathtt{l}(j)\, \sqrt{\lvert j \rvert \xi_j} \,e^{\mathrm{i} \mathtt{l}(j)\cdot \varphi}\,e^{\mathrm{i} j x}.
\]
Then $\lVert \mathcal{D}_{\omega-\overline{\omega}} \overline{v}\rVert_s^{Lip(\gamma)}\le C \varepsilon^2$ and $\mathcal{D}_{\omega-\overline{\omega}} \overline{v}$ has zero spatial average.
\end{remark}

We expand in powers of $\varepsilon$ the coefficients $a_0$ and $a_1$ in \eqref{DefA0} and \eqref{DefA1} as
\begin{align}\label{SviluppoA1A0}
&a_0=\varepsilon a_{0, 1}+\varepsilon^2 a_{0, 2}+\mathtt{R}_{a_0},\qquad a_1-1=\varepsilon a_{1, 1}+\varepsilon^2 a_{1, 2}+\mathtt{R}_{a_1},
\end{align}
where
\begin{align*}
&a_{0, 1}:=2 c_2\,\overline{v}_{xx}-6 c_3\,\overline{v}, \qquad a_{1, 1}:=6 c_1 \overline{v}_x+2 c_2 \overline{v}_{xx},\\
&a_{0, 2}:=2 c_2\,(\Psi_2(\overline{v}))_{xx}-6 c_3\,\Psi_2(\overline{v})+3 c_5 \partial_x(\overline{v}_x^2)+2 c_6\{ \overline{v}_x^2+2 \overline{v}\overline{v}_{xx}\}-12 c_7 \overline{v}^2,\\
&a_{1, 2}:=6 c_1 (\Psi_2(\overline{v}))_x+2 c_2 \Psi_2(\overline{v})+12 c_4 \overline{v}_x^2+3 c_5 \partial_x (\overline{v}^2)+2 c_6 \overline{v}^2
\end{align*}
and, by \eqref{vdeltamenovsegn}, $\lVert \mathtt{R}_{a_k} \rVert^{Lip(\gamma)}_s\le \varepsilon^3+\varepsilon \lVert \mathfrak{I}_{\delta} \rVert_{s+\sigma}$, for some $\sigma>0$.

\subsection{Space reduction at the order $\partial_{xxx}$}
First we conjugate $\mathcal{L}_{\omega}$ in \eqref{Lomega} to an operator $\mathcal{L}_1$ whose coefficient in front of $\partial_{xxx}$ is independent on the space variable $x$. Because of the Hamiltonian structure, the terms $O(\partial_{xx})$ will be simultaneously eliminated.\\
We look for a $\varphi$-dependent family of symplectic diffeomorphisms $\Phi(\varphi)$ of $H_S^{\perp}$ which differ from
\begin{equation}\label{Aperp}
\mathcal{A}_{\perp}:=\Pi_S^{\perp} \mathcal{A} \Pi_S^{\perp}, \quad (\mathcal{A} h)(\varphi, x):=(1+\beta_x(\varphi, x))\,h(\varphi, x+\beta(\varphi, x)),
\end{equation}
up to a small ``finite dimensional'' remainder, see \eqref{FormaRestoTrasp}.\\
If $\lVert \beta \rVert_{W^{1, \infty}}<\frac{1}{2}$ then $\mathcal{A}$ is invertible and its inverse and adjoin map are
\begin{equation}
(\mathcal{A}^{-1} h)(\varphi, y):=(1+\tilde{\beta}_y (\varphi, y))\,h(\varphi, y+\tilde{\beta}(\varphi, y)), \quad (\mathcal{A}^T h)(\varphi, y)=h(\varphi, y+\tilde{\beta}(\varphi, y))
\end{equation}
For each $\varphi\in\T^{\nu}$, $\mathcal{A}(\varphi)$ is a symplectic transformation of the phase space, see Remark $3.3$ in \cite{Airy}, but the restricted map $\mathcal{A}_{\perp}(\varphi)$ is not.\\
In order to find a symplectic diffeomorphism near $\mathcal{A}_{\perp}$ first we observe that $\mathcal{A}_{\perp}$ is the time$-1$ flow map of the linear Hamiltonian PDE
\begin{equation}\label{transportEq}
\partial_{\tau} u=\partial_x (b(\varphi, \tau, x) u), \quad b(\varphi, \tau, x):=\frac{\beta(\varphi, x)}{1+\tau \beta_x(\varphi, x)}.
\end{equation}
The equation \eqref{transportEq} is a linear transport equation, whose characteristic curves are the solutions of the ODE
\[
\frac{d}{d \tau} x=-b(\varphi, \tau, x).
\]
As in \cite{KdVAut}, we define a symplectic map $\Phi$ of $H_S^{\perp}$ as the time$-1$ flow of the Hamiltonian PDE
\begin{equation}\label{HPDEtransport}
\partial_{\tau} u=\Pi_S^{\perp} \partial_x (b(\tau, x) u)=\partial_x (b(\tau, x) u)-\Pi_S \partial_x (b(\tau, x) u), \quad u\in H_S^{\perp}
\end{equation}
generated by the quadratic Hamiltonian $\frac{1}{2}\int_{\T} b(\tau, x) u^2\,dx$ restricted to $H_S^{\perp}$. The flow of \eqref{HPDEtransport} is well defined in the Sobolev spaces $H_{S^{\perp}}^s(\mathbb{T}_x)$ for $b(\tau, x)$ smooth enough, by standard theory of linear hyperbolic PDE's. We obtained a symplectic diffeomorphism $\Phi$ that differs from $\mathcal{A}_{\perp}$ by a  ``finite dimensional'' remainder of small size, more precisely, of size $O(\beta)$.
\begin{lem}{(Lemma $8.2$ in \cite{KdVAut})}\label{LemmaTrasporto}
For $\lVert \beta \rVert_{W^{s_0+1,\infty}}$ small, there exists an invertible symplectic transformation $\Phi=\mathcal{A}_{\perp}+\mathcal{R}_{\Phi}$ of $H^s_{S^{\perp}}$, where $\mathcal{A}_{\perp}$ is defined in \eqref{Aperp} and $\mathcal{R}_{\Phi}$ is a ``finite dimensional'' remainder
\begin{equation}\label{FormaRestoTrasp}
\mathcal{R}_{\Phi} h=\sum_{j\in S}\int_0^1 (h, g_j(\tau))_{L^2(\mathbb{T})} \chi_j(\tau)\,d\tau+\sum_{j\in S} (h, \psi_j)_{L^2(\mathbb{T})} e^{\mathrm{i} j x}
\end{equation}
for some functions $\chi_j(\tau), g_j (\tau), \psi_j(\tau)\in H^s$ satysfying for all $\tau\in [0, 1]$
\begin{equation}\label{JohnQ}
\lVert \psi_j \rVert_s+\lVert g_j(\tau) \rVert_s\le_s \lVert \beta \rVert_{W^{s+2,\infty}}, \quad \lVert \chi_j (\tau) \rVert_s\le_s 1+\lVert \beta \rVert_{W^{s+1, \infty}}.
\end{equation}
Moreover 
\begin{equation}
\lVert \Phi h \rVert_s+\lVert \Phi^{-1} h \rVert_s\le_s \lVert h \rVert_s+\lVert \beta \rVert_{W^{s+2, \infty}} \lVert h \rVert_{s_0} \quad \forall h\in H^s_{S^{\perp}}.
\end{equation}
\end{lem}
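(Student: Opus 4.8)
The plan is to exploit the Hamiltonian structure of the transport operator: the restriction $\mathcal{A}_\perp=\Pi_S^\perp\mathcal{A}\Pi_S^\perp$ fails to be symplectic \emph{only} because of the projections onto $H_S$, and this defect is a finite-rank correction which becomes explicit once the relevant map is realized as a genuine Hamiltonian flow.

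First I would recall, as in \cite{Airy}, that the operator $\mathcal{A}$ in \eqref{Aperp} is the time-$1$ flow map of the linear transport equation \eqref{transportEq}, $\partial_\tau u=\partial_x(b\,u)$ with $b=\beta/(1+\tau\beta_x)$: this is the classical solution along the characteristics $\dot x=-b(\varphi,\tau,x)$, the multiplicative factor $1+\beta_x$ being the Jacobian of the characteristic diffeomorphism at time $\tau=1$. Then I would take $\Phi:=\Phi^1$, where $\Phi^\tau$ is the flow on $H_S^\perp$ of the projected equation \eqref{HPDEtransport}, $\partial_\tau u=\Pi_S^\perp\partial_x(b\,u)$. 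Since $\Pi_S^\perp\partial_x(b\,u)=\Pi_S^\perp\partial_x\nabla_u\big(\tfrac12\int_{\mathbb{T}}b\,u^2\,dx\big)$, this is the non-autonomous Hamiltonian flow of the quadratic Hamiltonian $\tfrac12\int_{\mathbb{T}}b(\varphi,\tau,x)u^2\,dx$ restricted to $H_S^\perp$; hence each $\Phi^\tau$, in particular $\Phi$, is a symplectic diffeomorphism of $(H_S^\perp,\Omega_{S^{\perp}})$, invertible with inverse the backward flow. Well-posedness of \eqref{HPDEtransport} in the scale $H^s_{S^\perp}$ and the tame bound $\lVert\Phi^{\pm1}h\rVert_s\le_s\lVert h\rVert_s+\lVert\beta\rVert_{W^{s+2,\infty}}\lVert h\rVert_{s_0}$ would follow from the standard energy method for linear transport equations: $\partial_x(b\,\cdot)$ is skew-adjoint up to the bounded operator of multiplication by $-b_x$, the projector $\Pi_S$ is smoothing (finite rank), and $\lVert b\rVert_{W^{k,\infty}}\le_k\lVert\beta\rVert_{W^{k,\infty}}$ once $\lVert\beta\rVert_{W^{s_0+1,\infty}}$ is small; a Moser-type commutator estimate for $\partial_x^{s+1}(b\,u)$ combined with Gr\"onwall's inequality then gives the claimed bounds, with a fixed finite loss of derivatives on $\beta$ traceable to the commutator and to the change-of-variables Lemma \ref{ChangeofVariablesLemma}.

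For the finite-dimensional correction I would compare the two flows. Fix $h\in H^s_{S^\perp}$, denote by $\mathcal{A}^\tau$ the full transport flow (so $\mathcal{A}^1=\mathcal{A}$), and set $w(\tau):=\Phi^\tau h-\mathcal{A}^\tau h$. Subtracting the two equations, using $\Phi^\tau h-\mathcal{A}^\tau h=w$, and that $h\in H_S^\perp$, one gets $w(0)=0$ and
\[
\partial_\tau w=\Pi_S^\perp\partial_x(b\,w)-\Pi_S\partial_x(b\,\mathcal{A}^\tau h),
\]
i.e. $w$ solves the projected transport equation forced by the finite-rank term $-\Pi_S\partial_x(b\,\mathcal{A}^\tau h)=\sum_{j\in S}(h,g_j(\tau))_{L^2(\mathbb{T})}\,e^{\mathrm{i}jx}$, whose coefficients are obtained by transporting $b\,e^{-\mathrm{i}j\cdot}$ through $(\mathcal{A}^\tau)^T$, hence are $O(\beta)$ with $\lVert g_j(\tau)\rVert_s\le_s\lVert\beta\rVert_{W^{s+2,\infty}}$ by Lemma \ref{ChangeofVariablesLemma}. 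A Duhamel representation in terms of $\Phi^\tau$ and its inverse then writes $w(1)$ in the form $\sum_{j\in S}\int_0^1(h,g_j(\tau))_{L^2(\mathbb{T})}\chi_j(\tau)\,d\tau$ with $\chi_j(\tau)$ of the type $\Phi^1(\Phi^\tau)^{-1}e^{\mathrm{i}jx}$, so $\lVert\chi_j(\tau)\rVert_s\le_s1+\lVert\beta\rVert_{W^{s+1,\infty}}$ by the previous step. Finally $\Phi h=\mathcal{A}h+w(1)=\mathcal{A}_\perp h+\Pi_S\mathcal{A}h+w(1)$, and the key elementary observation is that for $j\in S$ the $j$-th Fourier coefficient of $\mathcal{A}h$ equals $\tfrac1{2\pi}\int_{\mathbb{T}}h(y)\,e^{-\mathrm{i}jy}\big(e^{-\mathrm{i}j\tilde\beta(y)}-1\big)\,dy$ because $\int_{\mathbb{T}}h\,e^{-\mathrm{i}jy}\,dy=0$; hence $\Pi_S\mathcal{A}h=\sum_{j\in S}(h,\psi_j)_{L^2(\mathbb{T})}e^{\mathrm{i}jx}$ with $\psi_j=\tfrac1{2\pi}e^{\mathrm{i}jx}(e^{\mathrm{i}j\tilde\beta}-1)=O(\tilde\beta)=O(\beta)$ and $\lVert\psi_j\rVert_s\le_s\lVert\beta\rVert_{W^{s+1,\infty}}$. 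Collecting these terms yields precisely the decomposition $\Phi=\mathcal{A}_\perp+\mathcal{R}_\Phi$ with $\mathcal{R}_\Phi$ of the form \eqref{FormaRestoTrasp} and the bounds \eqref{JohnQ}, the estimate on $\Phi^{\pm1}$ being the one from the second step; invertibility of $\Phi$ is automatic since it is a time-$1$ flow map.

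I expect the main obstacle to be the second step: proving well-posedness of the quasi-periodically forced transport equation \eqref{HPDEtransport} on the scale $H^s_{S^\perp}$ uniformly for $\tau\in[0,1]$, with tame estimates carrying the correct (minimal) loss of derivatives on $\beta$, which rests on careful Moser-type commutator bounds for $\partial_x(b\,\cdot)$ (including the $\varphi$-derivatives, treated as in \cite{Airy}) and on controlling the nonlocal but smoothing contribution of $\Pi_S$. Once this is in place, the identification of $\mathcal{R}_\Phi$ and the bounds \eqref{JohnQ} are routine bookkeeping with Lemma \ref{ChangeofVariablesLemma}, the tame product estimate and Duhamel's formula, and propagating everything to the Lipschitz-in-$\omega$ norms needed later in the iteration is equally routine. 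This is, in essence, the argument of Lemma $8.2$ in \cite{KdVAut}.
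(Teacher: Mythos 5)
Your argument is correct and is essentially the proof of Lemma $8.2$ in \cite{KdVAut} that the paper invokes: $\Phi$ is the time-$1$ flow of the projected Hamiltonian transport equation \eqref{HPDEtransport}, hence symplectic and invertible, and $\Phi-\mathcal{A}_{\perp}$ is exhibited as a finite-rank operator by a Duhamel comparison between the projected and the unprojected flows, with the bounds coming from Lemma \ref{ChangeofVariablesLemma} and tame estimates for the flows. The one genuine difference is the orientation of the Duhamel formula: you regard $\mathcal{A}^{\tau}h$ as a solution of the projected equation with finite-rank forcing and propagate with the projected flow, so your $g_j(\tau)$ is explicit (it is $-(\mathcal{A}^{\tau})^T[b(\tau)\partial_x e^{\mathrm{i}jx}]$, i.e. transport along the characteristics) while your $\chi_j(\tau)$ is the projected propagator applied to $e^{\mathrm{i}jx}$; the paper/\cite{KdVAut} does the opposite, regarding $\Phi^{\tau}h$ as a solution of the full transport equation \eqref{transportEq} forced by $-\Pi_S\partial_x(b\,\Phi^{\tau}h)$, which yields $g_k(\tau)=-(\Phi^{\tau})^T[b(\tau)\partial_x e^{\mathrm{i}kx}]$ as in \eqref{gk} (with $(\Phi^\tau)^T$ the flow of \eqref{adjointPDE}) and the explicit $\chi_k=-\frac{1+\beta_x}{1+\tau\beta_x}\exp(\mathrm{i}k\gamma^{\tau}(x+\beta))$, and these are precisely the expressions recalled and expanded in $\varepsilon$ in Section $8.2$. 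Since $\mathcal{R}_{\Phi}=\Phi-\mathcal{A}_{\perp}$ is the same operator either way, nothing downstream is affected, but two small points are worth flagging: with your representation the natural bound on $\chi_j$ is $1+\lVert\beta\rVert_{W^{s+2,\infty}}$ (it inherits the loss of the tame propagator estimate) rather than the $W^{s+1,\infty}$ loss stated in \eqref{JohnQ} — harmless for how \eqref{JohnQ} is used, via Lemma \ref{Lemma7.3}, but not verbatim the statement; and your Duhamel step requires extending the projected flow to data in $H_S$ (the forcing lies in $H_S$), which is legitimate since the vector field $u\mapsto\Pi_S^{\perp}\partial_x(bu)$ is defined on the whole space, but should be said explicitly. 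Your identification $\Pi_S\mathcal{A}h=\sum_{j\in S}(h,\psi_j)_{L^2(\mathbb{T})}e^{\mathrm{i}jx}$ with $\psi_j=(\mathcal{A}^T-\mathrm{I})e^{\mathrm{i}jx}$ (up to normalization) agrees with \eqref{kpsi}, and deriving the tame bounds for $\Phi^{\pm1}$ by energy/commutator estimates for the projected transport equation is a standard, acceptable alternative to reading them off the decomposition itself.
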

We conjugate $\mathcal{L}_{\omega}$ in \eqref{Lomega} via the symplectic map $\Phi=\mathcal{A}_{\perp}+\mathcal{R}_{\Phi}$ of Lemma \eqref{LemmaTrasporto}. Using the splitting $\Pi_S^{\perp}=\mathrm{I}-\Pi_S$, we compute 
\begin{equation}\label{8.22}
\mathcal{L}_{\omega} \Phi=\Phi \mathcal{D}_{\omega}+\Pi_S^{\perp} \mathcal{A} (b_3 \partial_{yyy}+b_2 \partial_{yy}+b_1 \partial_y+b_0)\Pi_S^{\perp}+\mathcal{R}_{\mathit{I}},
\end{equation}
where the coefficients are
\begin{align}
&b_3(\varphi, y):=\mathcal{A}^T [a_1\,(1+\beta_x)^3]  \qquad b_2(\varphi, y):=\mathcal{A}^T [2 (a_1)_x(1+\beta_x)^2+6\,a_1\,\beta_{xx}(1+\beta_x)]\\
&b_1(\varphi, y):=\mathcal{A}^T \left[ (\mathcal{D}_{\omega} \beta)+3\,a_1\,\frac{\beta_{xx}^2}{1+\beta_x}+4\,a_1\,\beta_{xxx}+6\,(a_1)_x \beta_{xx}+(a_1)_{xx}(1+\beta_x)+a_0 (1+\beta_x)  \right]\\
&b_0(\varphi, y):=\mathcal{A}^T \left[ \frac{(\mathcal{D}_{\omega} \beta_x)}{1+\beta_x}+a_1\,\frac{\beta_{xxxx}}{1+\beta_x}+2 (a_1)_x \frac{\beta_{xxx}}{1+\beta_x}+(a_1)_{xx}\,\frac{\beta_{xx}}{1+\beta_x}+a_0\,\frac{\beta_{xx}}{1+\beta_x}+(a_0)_x \right]
\end{align}
and the remainder 
\begin{equation}\label{R_I}
\begin{aligned}
\mathcal{R}_{\mathit{I}}:=&-\Pi_S^{\perp} \partial_x (\varepsilon^2 \mathcal{R}_2+\mathcal{R}_*)\,\mathcal{A}_{\perp}-\Pi_S^{\perp} (a_1 \partial_{xxx}+2 (a_1)_x \partial_{xx}+((a_1)_{xx}+a_0) \partial_x+(a_0)_x)\Pi_S \mathcal{A} \Pi_S^{\perp}+\\
&+[\mathcal{D}_{\omega}, \mathcal{R}_{\Phi}]+(\mathcal{L}_{\omega}-\mathcal{D}_{\omega}) \mathcal{R}_{\Phi}.
\end{aligned}
\end{equation}
The commutator $[\mathcal{D}_{\omega}, \mathcal{R}_{\Phi}]$ has the form \eqref{FormaRestoTrasp} with $\mathcal{D}_{\omega} g_j$ or $\mathcal{D}_{\omega} \chi_j, \mathcal{D}_{\omega} \psi_j$ instead of $\chi_j, g_j, \psi_j$ respectively. Also the last term $(\mathcal{L}_{\omega}-\mathcal{D}_{\omega}) \mathcal{R}_{\Phi}$ in \eqref{R_I} has the form \eqref{FormaRestoTrasp} (note that $\mathcal{L}_{\omega}-\mathcal{D}_{\omega}$ does not contain derivatives with respect to $\varphi$). By \eqref{8.22}, and decomposing $\mathrm{I}=\Pi_S+\Pi_S^{\perp}$, we get
\begin{align}
&\mathcal{L}_{\omega} \Phi=\Phi (\mathcal{D}_{\omega}+b_3\partial_{yyy}+b_2 \partial_{yy}+b_1 \partial_y +b_0) \Pi_S^{\perp}+\mathcal{R}_{\mathit{II}},\label{mapPHI}\\
&\mathcal{R}_{\mathit{II}}:=\{\Pi_S^{\perp}(\mathcal{A}-\mathrm{I})\Pi_S-\mathcal{R}_{\Phi}\}(b_3 \partial_{yyy}+b_2 \partial_{yy}+b_1 \partial_y +b_0)\Pi_S^{\perp}+\mathcal{R}_{\mathit{I}}.\label{R_II}
\end{align}
In order to solve the equation
\[
b_{3}(\varphi, y)=b_3(\varphi)
\]
for some function $b_3(\varphi)$, so that the coefficient in front of $\partial_{xxx}$ depends only on $\varphi$, we choose the function $\beta=\beta(\varphi, x)$ such that
\begin{equation}\label{EqperB3}
a_1(\varphi, x) (1+\beta_x(\varphi, x))^3=b_3(\varphi),
\end{equation}
where we used that $\mathcal{A}^T[b_3(\varphi)]=b_3(\varphi)$. The only solution of \eqref{EqperB3} with zero space average is 
\begin{equation}\label{ChoiceforB3}
\beta:=\partial_x^{-1} \rho_0, \quad \rho_0:=b_3(\varphi)^{\frac{1}{3}} (a_1(\varphi, x))^{-\frac{1}{3}}-1, \quad b_3(\varphi):=\left( \frac{1}{2\pi} \int_{\T} (a_1(\varphi, x))^{-\frac{1}{3}}\,dx \right)^{-3}.
\end{equation}
Applying the symplectic map $\Phi^{-1}$ in \eqref{mapPHI} we obtain the Hamiltonian operator
\begin{equation}\label{L1}
\mathcal{L}_1:=\Phi^{-1} \mathcal{L}_{\omega} \Phi=\Pi_S^{\perp} (\omega\cdot \partial_{\varphi} + b_3(\varphi) \partial_{yyy}+b_1 \partial_y+b_0)\Pi_S^{\perp}+\mathfrak{R}_1
\end{equation}
where $\mathfrak{R}_1:=\Phi^{-1} \mathcal{R}_{\mathit{II}}$. We used that, by the Hamiltonian nature of $\mathcal{L}_1$, the coefficient $b_2=2\,(b_3)_y$ and so, by the choice \eqref{ChoiceforB3}, we have $b_2=2\,(b_3)_y=0$. 
\begin{lem}{(Lemma $8.3$ in \cite{KdVAut})}
The operator $\mathfrak{R}_1$ in \eqref{L1} has the form \eqref{VeraFinDimForm}.
\end{lem}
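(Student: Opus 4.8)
The plan is to verify directly, term by term, that $\mathcal{R}_{\mathit{II}}$ in \eqref{R_II} — and hence $\mathfrak{R}_1=\Phi^{-1}\mathcal{R}_{\mathit{II}}$ — is a finite sum of rank-one operators of the type appearing in \eqref{VeraFinDimForm}. The starting observation is that the class of operators of that form is stable under the operations that build $\mathfrak{R}_1$: left composition with any operator $B$ bounded on $H^s_{S^{\perp}}$ (which sends $\chi_j\mapsto B\chi_j$), right composition with any such $B$ (which sends $g_j\mapsto B^T g_j$), finite linear combinations, and left composition with a derivation — $\mathcal{D}_{\omega}=\omega\cdot\partial_{\varphi}$ or an $x$-differential operator — which by the Leibniz rule lands again in the class, with $g_j,\chi_j$ replaced by suitable derivatives of them (modulo the usual finite loss of regularity, irrelevant for the structural statement). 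Moreover, the remainder forms \eqref{FinitedimensionalRemainder} and \eqref{FormaRestoTrasp} already occurring are particular cases of \eqref{VeraFinDimForm}: in \eqref{FinitedimensionalRemainder} one takes $g_j,\chi_j$ constant in $\tau$, and the discrete piece $\sum_{j\in S}(h,\psi_j)_{L^2(\T)}e^{\mathrm{i}jx}$ of \eqref{FormaRestoTrasp} is the case $g_j(\tau)\equiv\psi_j$, $\chi_j(\tau)\equiv e^{\mathrm{i}jx}$. Since $\Phi^{-1}$ is bounded on $H^s_{S^{\perp}}$ by Lemma \ref{LemmaTrasporto}, it suffices to show that $\mathcal{R}_{\mathit{II}}$ has the form \eqref{VeraFinDimForm}, hence to treat the two summands of \eqref{R_II} and, inside them, the four summands of $\mathcal{R}_{\mathit{I}}$ in \eqref{R_I} separately.

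For $-\Pi_S^{\perp}\partial_x(\varepsilon^2\mathcal{R}_2+\mathcal{R}_*)\mathcal{A}_{\perp}$ one uses that $\mathcal{R}_2$ and $\mathcal{R}_*$ are of the form \eqref{FinitedimensionalRemainder} by the Proposition of Section $7$, and then closes up by left composition with $\Pi_S^{\perp}\partial_x$ and right composition with $\mathcal{A}_{\perp}$. For $-\Pi_S^{\perp}(a_1\partial_{xxx}+2(a_1)_x\partial_{xx}+((a_1)_{xx}+a_0)\partial_x+(a_0)_x)\Pi_S\mathcal{A}\Pi_S^{\perp}$ the key is the \emph{middle} projection $\Pi_S$, which is finite rank: $\Pi_S g=\sum_{j\in S}(g,e^{\mathrm{i}jx})_{L^2(\T)}e^{\mathrm{i}jx}$, so this operator equals $\sum_{j\in S}(h,\Pi_S^{\perp}\mathcal{A}^T e^{\mathrm{i}jx})_{L^2(\T)}\,\chi_j$ with $\chi_j:=\Pi_S^{\perp}(a_1\partial_{xxx}+2(a_1)_x\partial_{xx}+((a_1)_{xx}+a_0)\partial_x+(a_0)_x)e^{\mathrm{i}jx}\in H^s$, the unbounded differential operator being harmless since it acts only on the finitely many smooth exponentials $e^{\mathrm{i}jx}$, $j\in S$. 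The commutator $[\mathcal{D}_{\omega},\mathcal{R}_{\Phi}]$ has the claimed form because $\mathcal{R}_{\Phi}$ has the form \eqref{FormaRestoTrasp} and $\mathcal{D}_{\omega}$ is a derivation: the terms where $\mathcal{D}_{\omega}$ falls on $h$ cancel in the commutator, leaving $\int_0^1\sum_j[(h,\mathcal{D}_{\omega} g_j)_{L^2(\T)}\chi_j+(h,g_j)_{L^2(\T)}\mathcal{D}_{\omega}\chi_j]\,d\tau$ plus the analogous discrete contribution. For $(\mathcal{L}_{\omega}-\mathcal{D}_{\omega})\mathcal{R}_{\Phi}$ one notes that $\mathcal{L}_{\omega}-\mathcal{D}_{\omega}$ carries no $\varphi$-derivatives, so it is a left composition of bounded $x$-operators — its finite-rank pieces $\varepsilon^2\mathcal{R}_2,\mathcal{R}_*$ being themselves in the class — with $\mathcal{R}_{\Phi}$. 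Finally, in the first summand of \eqref{R_II} both $\Pi_S^{\perp}(\mathcal{A}-\mathrm{I})\Pi_S$ (finite rank again thanks to the right $\Pi_S$) and $\mathcal{R}_{\Phi}$ belong to the class, and composing on the right with $(b_3\partial_{yyy}+b_2\partial_{yy}+b_1\partial_y+b_0)\Pi_S^{\perp}$ stays in the class — for the $\mathcal{R}_{\Phi}$ part by transferring this operator to the first slot, $g_j\mapsto\Pi_S^{\perp}(b_3\partial_{yyy}+b_2\partial_{yy}+b_1\partial_y+b_0)^T g_j$, which is again in $H^s$ up to the usual derivative loss.

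Collecting the pieces and composing with $\Phi^{-1}$ on the left exhibits $\mathfrak{R}_1$ in the form \eqref{VeraFinDimForm}, with index set still finite (all indices lie in $S$ or in $\{\lvert j\rvert\le C\}$ for the fixed $C$ coming from $\mathcal{R}_2,\mathcal{R}_*,\mathcal{R}_{\Phi}$), and with $g_j,\chi_j$ depending in a Lipschitz way on $\omega$ as a consequence of the Lipschitz dependence of $\beta$, $a_0$, $a_1$, the $b_i$, $\mathcal{A}^{\pm1}$, $\Phi^{-1}$, and of $\mathcal{R}_2,\mathcal{R}_*,\mathcal{R}_{\Phi}$, together with the tame product estimates; the attendant quantitative $s$-decay bounds are then read off via Lemma \ref{Lemma7.3} from the bounds \eqref{JohnQ}, \eqref{DecayR2}, \eqref{DecayR*}. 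I do not expect a genuine obstacle here — the lemma is structural bookkeeping — but the one point requiring care is precisely the two summands of $\mathcal{R}_{\mathit{I}}$ carrying a $\Pi_S$ between $\mathcal{A}$ (or the third-order differential operator) and $\Pi_S^{\perp}$: it is this finite-rank projection that turns unbounded operators into operators acting only on the finitely many smooth exponentials $e^{\mathrm{i}jx}$, $j\in S$, which is what makes the ``finite dimensional'' structure survive; and to keep the $\tau$-integral form of \eqref{VeraFinDimForm} one exploits that the discrepancy $\Phi-\mathcal{A}_{\perp}=\mathcal{R}_{\Phi}$ furnished by Lemma \ref{LemmaTrasporto} is itself an integral over $\tau\in[0,1]$ of rank-one operators.
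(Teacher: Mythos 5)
Your argument is correct and is essentially the same structural bookkeeping as the paper's (i.e.\ Lemma 8.3 of \cite{KdVAut} together with the remarks following \eqref{R_I}): every term of $\mathcal{R}_{\mathit{I}}$, $\mathcal{R}_{\mathit{II}}$ either is already of the form \eqref{FinitedimensionalRemainder}--\eqref{FormaRestoTrasp} composed with operators that only reshuffle $g_j,\chi_j$, or contains the finite-rank projector $\Pi_S$ sandwiched inside, and the class \eqref{VeraFinDimForm} is stable under these operations and under $\Phi^{-1}$. The only slip is calling $\mathcal{L}_{\omega}-\mathcal{D}_{\omega}$ a composition of \emph{bounded} $x$-operators (it contains third-order differential operators), but this is immaterial since, as you note elsewhere, such operators act only on the fixed functions $\chi_j$ (or, by transposition, on $g_j$), costing regularity but not the finite-dimensional structure.
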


In the proofs of the estimates for the transformations and the coefficients, we will always use the index $\sigma$ to denote a certain loss of derivatives, since we do not need to know exactly the total amount of this loss. This, in fact, involves only the regularity required for the Hamiltonian nonlinearity $f(x, u, u_x)$ in \eqref{KdVHamiltonian}.
\begin{lem}\label{StimaPasso1}
There is $\sigma:=\sigma (\tau, \nu)>0$ such that, for $k=0, 1$,
\begin{align}
& \lVert \beta \rVert_s^{Lip(\gamma)}\le_s \varepsilon\,(1+\lVert \mathfrak{I}_{\delta} \rVert_{s+\sigma}^{Lip(\gamma)})  &\lVert \partial_i \beta [\hat{\imath}] \rVert_s\le_s \varepsilon (\lVert \hat{\imath} \rVert_{s+\sigma}+\lVert \mathfrak{I}_{\delta} \rVert_{s+\sigma} \lVert \hat{\imath} \rVert_{s_0+\sigma}) \label{beta}\\
& \lVert b_3-1 \rVert_s^{Lip(\gamma)}\le_s \varepsilon^2\,(1+\lVert \mathfrak{I}_{\delta} \rVert_{s+\sigma}^{Lip(\gamma)})  &\lVert \partial_i b_3 [\hat{\imath}] \rVert\le_s \varepsilon^2(\lVert \hat{\imath} \rVert_{s+\sigma}+\lVert \mathfrak{I}_{\delta} \rVert_{s+\sigma} \lVert \hat{\imath} \rVert_{s_0+\sigma}) \label{b3}\\
& \lVert b_k \rVert^{Lip(\gamma)}_s\le_s \varepsilon (1+\lVert \mathfrak{I}_{\delta} \rVert_{s+\sigma}^{Lip(\gamma)}) & \lVert \partial_ i b_k [\hat{\imath}] \rVert_s\le_s \varepsilon (\lVert \hat{\imath} \rVert_{s+\sigma}+\lVert \mathfrak{I}_{\delta} \rVert_{s+\sigma} \lVert \hat{\imath} \rVert_{s_0+\sigma}).\label{b0b1}
\end{align}
The transformations $\Phi, \Phi^{-1}$ satisfy
\begin{align}
&\lVert \Phi^{\pm 1} h \rVert_s^{Lip(\gamma)}\le_s \lVert h \rVert_{s+1}^{Lip(\gamma)}+\lVert \mathfrak{I}_{\delta} \rVert^{Lip(\gamma)}_{s+\sigma} \lVert h \rVert_{s_0+1}^{Lip(\gamma)}\label{8.39}\\
&\lVert \partial_i (\Phi^{\pm 1} h) [\hat{\imath}] \rVert_s\le_s \lVert h \rVert_{s+\sigma} \lVert \hat{\imath} \rVert_{s_0+\sigma}+\lVert h \rVert_{s_0+\sigma} \lVert \hat{\imath} \rVert_{s+\sigma}+\lVert \mathfrak{I}_{\delta} \rVert_{s+\sigma} \lVert h \rVert_{s_0+\sigma} \lVert \hat{\imath} \rVert_{s_0+\sigma}.\label{8.40}
\end{align}
Moreover the remainder $\mathcal{R}_*$ has the form \eqref{VeraFinDimForm} where the functions $\chi_j(\tau), g_j(\tau)$ satisfy the estimates \eqref{DecayR*} uniformly in $\tau\in [0, 1]$.
\end{lem}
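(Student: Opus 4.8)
The plan is to read off every bound in Lemma~\ref{StimaPasso1} from the explicit formulas \eqref{ChoiceforB3} for $\beta$ and $b_3$, from the displayed expressions for $b_0,b_1$, and from the decomposition $\Phi=\mathcal{A}_{\perp}+\mathcal{R}_{\Phi}$ of Lemma~\ref{LemmaTrasporto}, feeding into them the tame estimates \eqref{a1}--\eqref{a0} on $a_0,a_1$, the bound \eqref{vdeltamenovsegn} on $v_{\delta}-\overline{v}$, the smallness assumption \eqref{IpotesiPiccolezzaIdelta}, the tame product \eqref{TameProduct}, and the composition lemmas \ref{ChangeofVariablesLemma} and \ref{lemmacomp}.

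First I would treat $b_3$ and $\beta$. By \eqref{ChoiceforB3}, $b_3(\varphi)^{1/3}=\bigl(\tfrac1{2\pi}\int_{\T}(a_1(\varphi,\cdot))^{-1/3}\,dx\bigr)^{-1}$ and $\rho_0=b_3(\varphi)^{1/3}(a_1)^{-1/3}-1$. Since $a_1-1=O(\varepsilon)$ by \eqref{a1}, Lemma~\ref{lemmacomp} applied to the analytic function $t\mapsto(1+t)^{-1/3}$ gives $\lVert (a_1)^{-1/3}-1\rVert_s^{Lip(\gamma)}\le_s\varepsilon(1+\lVert \mathfrak{I}_{\delta}\rVert_{s+\sigma}^{Lip(\gamma)})$. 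The crucial point is the Taylor expansion $(1+t)^{-1/3}=1-\tfrac13 t+O(t^2)$ together with the fact that the $O(\varepsilon)$ part of $a_1-1$, namely $a_{1,1}=6c_1\overline{v}_x+2c_2\overline{v}_{xx}$ (see \eqref{SviluppoA1A0}), has vanishing $x$-average, being an $x$-derivative; hence $\tfrac1{2\pi}\int_{\T}(a_1)^{-1/3}\,dx=1+O(\varepsilon^2)$, which yields $\lVert b_3-1\rVert_s^{Lip(\gamma)}\le_s\varepsilon^2(1+\lVert \mathfrak{I}_{\delta}\rVert_{s+\sigma}^{Lip(\gamma)})$, that is \eqref{b3}. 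Then \eqref{beta} follows from $\lVert \partial_x^{-1}\rho_0\rVert_s\le_s\lVert \rho_0\rVert_s$ and the bound on $\rho_0$; the $\partial_i$-estimates in \eqref{beta} and \eqref{b3} are produced by differentiating the same formulas and using the second halves of \eqref{a1} together with \eqref{vdeltamenovsegn}.

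Next I would handle $b_0$ and $b_1$. Each is $\mathcal{A}^T$ applied to a polynomial expression in $a_0$, $a_1-1$, $\beta$ and their $\partial_x,\partial_\varphi$ derivatives, plus the terms $\mathcal{D}_{\omega}\beta$, $\mathcal{D}_{\omega}\beta_x$ arising from the transport of the $\varphi$-derivative. Since $a_0$, $a_1-1$, $\beta$ and $\mathcal{D}_{\omega}\beta$ are all $O(\varepsilon)$ in $\lVert \cdot\rVert_s^{Lip(\gamma)}$ up to the loss $\sigma$, the tame product \eqref{TameProduct} bounds these expressions by $\varepsilon(1+\lVert \mathfrak{I}_{\delta}\rVert_{s+\sigma}^{Lip(\gamma)})$, and Lemma~\ref{ChangeofVariablesLemma} — which controls the inverse increment $\tilde\beta$ of the near-identity diffeomorphism $y\mapsto y+\tilde\beta(\varphi,y)$ defining $\mathcal{A}^T$ by $\lVert \tilde\beta\rVert_s^{Lip(\gamma)}\le_s\lVert \beta\rVert_{s+1}^{Lip(\gamma)}$, and gives the corresponding composition estimate — transfers the bound to $b_0,b_1$; this is \eqref{b0b1}, and its $\partial_i$-counterpart follows by the chain rule, differentiating the diffeomorphism as well. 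The estimates \eqref{8.39}--\eqref{8.40} for $\Phi^{\pm1}$ come directly from Lemma~\ref{LemmaTrasporto}, which bounds $\Phi^{\pm1}$ and $\mathcal{R}_{\Phi}$ in terms of $\lVert \beta\rVert_{W^{s+2,\infty}}\le_s\lVert \beta\rVert_{s+s_0+2}$, combined with \eqref{beta}; the extra unit of regularity in \eqref{8.39} and the loss $\sigma$ in \eqref{8.40} merely absorb the passage to the $Lip(\gamma)$-norm and the differentiation in $i$. Finally, $\mathcal{R}_*$ retains the form \eqref{VeraFinDimForm} because it already has the finite-dimensional shape \eqref{FinitedimensionalRemainder} with the estimates \eqref{DecayR*}, and in $\mathcal{L}_1$ (see \eqref{R_I}, \eqref{R_II}, \eqref{L1}) it is only pre/post-composed with the bounded operators $\Pi_S^{\perp}\partial_x$, $\mathcal{A}_{\perp}$ (from \eqref{Aperp}) and conjugated by $\Phi^{-1}$; under these operations the two-profile $(g_j,\chi_j)$ structure and the bounds \eqref{DecayR*} persist uniformly in $\tau\in[0,1]$, using Lemma~\ref{ChangeofVariablesLemma} once more for the action of $\mathcal{A}_{\perp}$ on the profiles.

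I expect the only genuinely delicate point to be the gain of an extra power of $\varepsilon$ in \eqref{b3}: it rests on the algebraic cancellation $\int_{\T}a_{1,1}\,dx=0$ and on carrying this cancellation through the composition estimate for $(a_1)^{-1/3}$ rather than merely through its leading Taylor term. Everything else is bookkeeping of derivative losses — harmless, since only finitely many are accumulated and $q$ is taken large — together with the already-established properties of $\mathcal{A}_{\perp}$, $\mathcal{R}_{\Phi}$ and of the coefficients $a_0,a_1$.
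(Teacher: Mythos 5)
Your overall route coincides with the paper's: the explicit formulas \eqref{ChoiceforB3}, Taylor expansion of the analytic functions $(1+t)^{-1/3}$ and $(1+t)^{-3}$, the vanishing of the $x$-average of the order-$\varepsilon$ part of $a_1-1$, tame products, the composition Lemmata \ref{ChangeofVariablesLemma}--\ref{lemmacomp} for $b_0,b_1$, and Lemma \ref{LemmaTrasporto} for $\Phi^{\pm1}$ and for propagating the finite-dimensional form and the bounds \eqref{DecayR*} of $\mathcal{R}_*$. The estimates \eqref{beta}, \eqref{b0b1}, \eqref{8.39}--\eqref{8.40} are obtained exactly as in the paper.

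There is, however, a genuine quantitative gap in your argument for \eqref{b3}. The cancellation you invoke, $M_x[a_{1,1}]=0$, only kills the order-$\varepsilon$ term of the $\overline v$-part of $a_1-1$; the remainder $\mathtt{R}_{a_1}$ in \eqref{SviluppoA1A0} has size $\varepsilon^3+\varepsilon\lVert \mathfrak{I}_{\delta}\rVert_{s+\sigma}$, so the bookkeeping you describe yields only $\lVert b_3-1\rVert_s\le_s \varepsilon^2+\varepsilon\lVert \mathfrak{I}_{\delta}\rVert_{s+\sigma}$, which is one power of $\varepsilon$ short of the claimed $\varepsilon^2(1+\lVert\mathfrak{I}_{\delta}\rVert_{s+\sigma})$ in the tame, $\mathfrak{I}_{\delta}$-dependent part; likewise, differentiating in $i$ and using \eqref{a1} gives $\lVert\partial_i b_3[\hat{\imath}]\rVert_s\le_s\varepsilon(\lVert\hat{\imath}\rVert_{s+\sigma}+\dots)$ instead of $\varepsilon^2(\dots)$. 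To reach \eqref{b3} as stated you must push the cancellation beyond $a_{1,1}$: the entire linear-in-$\Phi_B(T_{\delta})$ part of $a_1-1$, namely $6c_1(\Phi_B(T_{\delta}))_x+2c_2\Phi_B(T_{\delta})$, has zero $x$-average because $\Phi_B$ maps $H_0^1(\T_x)$ into itself, i.e. $M_x[v_{\delta}-\overline v]=M_x[\tilde q]=M_x[z_0]=0$; hence $M_x[\mathtt{R}_{a_1}]$ (and analogously $M_x[\partial_i a_1[\hat{\imath}]]$) only sees the quadratic and higher terms and is of size $\varepsilon^3+\varepsilon^{2}\lVert\mathfrak{I}_{\delta}\rVert_{s+\sigma}$ (resp. $\varepsilon^2(\lVert\hat{\imath}\rVert_{s+\sigma}+\lVert\mathfrak{I}_{\delta}\rVert_{s+\sigma}\lVert\hat{\imath}\rVert_{s_0+\sigma})$). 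This is precisely the step the paper isolates when it bounds $M_x[\mathtt{R}_{a_1}]$ using $M_x[v_{\delta}-\overline v]=M_x[\tilde q]=0$. The repair is immediate and fully in the spirit of your proposal, but the mechanism you state ($M_x[a_{1,1}]=0$ alone, ``carried through the composition'') does not by itself deliver \eqref{b3} in high norm, and the loss would propagate to the estimates for $\alpha$, $\rho$ and $m_3$ in the subsequent sections.
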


\begin{proof}
\textit{Estimate} \eqref{b3}: Consider the functions $g(t)=(1+t)^{-\frac{1}{3}}$ and $\Upsilon(t)=(1+t)^{-3}$, analytic in a small neighbourhood of the origin.
Then we have
\begin{equation}\label{Michela}
b_3-1=\Upsilon(M_x[g(a_1-1)-g(0)])-\Upsilon(0).
\end{equation}
By the mean value theorem, $\lVert b_3-1\rVert_s\le_s \lVert M_x[g(a_1-1)-g(0)]\rVert_s$. By Taylor expansion, we get
\begin{equation}\label{LeIene}
M_x[g(a_1-1)-g(0)]=g'(0)M_x[a_1-1]+\int_{\T}\int_0^1 (1-s) \,g''(s(a_1-1))\,(a_1-1)^2\,ds\,dx
\end{equation}
and we note that, by Remark \ref{SviluppoA1A0},
\[
M_x[a_1-1]=\varepsilon^2 M_x[a_{1, 2}]+M_x[\mathtt{R}_{a_1}].
\]
Moreover, $\lVert M_x[\mathtt{R}_{a_1}]\rVert_s\le_s \varepsilon^3+\varepsilon^{2 b}\lVert \mathfrak{I}_{\delta}\rVert_{s+\sigma}$, because $M_x[v_{\delta}-\overline{v}]=M_x[\tilde{q}]=0$ and $\mathtt{R}_{a_1}$ contains terms like $\varepsilon^2(v_{\delta}^2-\overline{v}^2)$ and cubic in the $x$-derivatives of $v_{\delta}$.\\
The second addend in the right hand side of \eqref{LeIene} can be estimated by $\varepsilon^2 (1+\lVert \mathfrak{I}_{\delta}\rVert_{s+\sigma})$. Hence
\begin{equation}\label{PulpFiction}
\lVert b_3-1 \rVert_s\le_s \varepsilon^2(1+\lVert \mathfrak{I}_{\delta}\rVert_{s+\sigma}).
\end{equation} 
Now we consider the partial derivative respect to the variable $i$ (see \eqref{i}) of $b_3$, namely
\[
\partial_i b_3[\hat{\imath}]=\Upsilon'(M_x[g(a_1-1)-g(0)])\,M_x[g'(a_1-1)\,\partial_i a_1[\hat{\imath}]].
\]
The derivatives of the functions $g$ and $\Upsilon$, for $\varepsilon$ small enough, are approximately $1$. Therefore, the estimate
\begin{equation}\label{JackieBrown}
\lVert \partial_i b_3 [\hat{\imath}] \rVert_s\le_s \varepsilon^2 (\lVert \hat{\imath} \rVert_{s+\sigma}+\lVert \mathfrak{I}_{\delta}\rVert_{s+\sigma}\lVert \hat{\imath}\rVert_{s_0+\sigma})
\end{equation}
derived from the estimate on $M_x[\partial_i a_1[\hat{\imath}]]$ and the fact that $M_x[\partial_i \overline{v}[\hat{\imath}]]=0$. By \eqref{PulpFiction} and \eqref{JackieBrown} we conclude.\\

\noindent\textit{Estimate \eqref{beta}}: Consider the functions $\phi(t):=(1+t)^{-1}$ and $g(t):=(1+t)^{-\frac{1}{3}}$. Recalling that $\beta_x=(b_3^{-1} a_1)^{\frac{1}{3}}-1$, we have
\[
\beta_x=g^{-1}(b_3^{-1}\,a_1-1)-g^{-1}(0)
\quad \mbox{and} \quad
b_3^{-1}\,a_1-1=a_1\, (\phi(b_3-1)-\phi(0))+(a_1-1).
\]
Then, by \eqref{TameProduct},
\begin{align*}
\lVert \beta_x \rVert_s &\le_s \lVert \phi(b_3-1)-\phi(0) \rVert_s \lVert a_1 \rVert_{s_0}+\lVert\phi(b_3-1)-\phi(0) \rVert_{s_0} \lVert a_1 \rVert_{s}+\lVert a_1-1 \rVert_s\\
&\le_s \lVert b_3-1 \rVert_{s+\sigma}+\lVert b_3-1 \rVert_{s_0+\sigma}\lVert a_1 \rVert_s+\lVert a_1-1 \rVert_s\le_s \varepsilon (1+\lVert \mathfrak{I}_{\delta} \rVert_{s+\sigma}).
\end{align*}

\noindent\textit{Estimate \eqref{b0b1}}: By \eqref{a0}, \eqref{a1}, \eqref{beta} we get the estimates \eqref{b0b1}.\\
For the estimates \eqref{8.39}, \eqref{8.40} on $\Phi, \Phi^{-1}$ we apply Lemma \ref{LemmaTrasporto} and the estimate \eqref{beta} for $\beta$. We estimate the remainder $\mathcal{R}_*$ using \eqref{R_I}, \eqref{R_II} and \eqref{DecayR*}.
\end{proof}

\subsection{Terms of order $\varepsilon$ and $\varepsilon^2$}
The diffeomorphism of the torus $\Phi=\mathcal{A}_{\perp}+\mathcal{R}_{\Phi}$ defined in Lemma \ref{LemmaTrasporto} is, by \eqref{JohnQ} and \eqref{beta}, of the form $\mathrm{I}+O(\varepsilon)$, hence, the terms $O(\varepsilon^2)$ of $\mathcal{L}_{\omega}$ are modified by it.\\ 
From now on, the transformations we shall apply to reduce the linearized operator $\mathcal{L}_{\omega}$ to a constant coefficient operator will be $\mathrm{I}+O(\varepsilon^d)$ with $d>1$, hence the terms of order $\varepsilon, \varepsilon^2$ will not be changed anymore.\\
In this section, our goal is to identify them in view of the linear Birkhoff steps of Section $8.5$ and $8.6$.\\

We have to put in evidence the terms $O(\varepsilon), O(\varepsilon^2)$ of $b_0, b_1, b_3$ in \eqref{L1} and the ones in the remainder $\mathfrak{R}_1$ defined in \eqref{8.34}.

\subsubsection*{Coefficients $b_k$}
First, we note that $b_k=\mathcal{A}^T \alpha_k=\alpha_k+(\mathcal{A}^T-\mathrm{I})\alpha_k,\, k=0, 1$, where
\begin{align}
&\mathit{\alpha}_1:= (\mathcal{D}_{\omega} \beta)+3\,a_1\,\frac{\beta_{xx}^2}{1+\beta_x}+4\,a_1\,\beta_{xxx}+6\,(a_1)_x \beta_{xx}+(a_1)_{xx}(1+\beta_x)+a_0 (1+\beta_x),\\
&\mathcal{\alpha}_0:=\frac{(\mathcal{D}_{\omega} \beta_x)}{1+\beta_x}+a_1\,\frac{\beta_{xxxx}}{1+\beta_x}+2 (a_1)_x \frac{\beta_{xxx}}{1+\beta_x}+(a_1)_{xx}\,\frac{\beta_{xx}}{1+\beta_x}+a_0\,\frac{\beta_{xx}}{1+\beta_x}+(a_0)_x.
\end{align}
By \eqref{DefA1}, \eqref{ChoiceforB3}, we have
\begin{equation}\label{BetaVero}
\begin{aligned}
\beta=&-2\,c_1 \Phi_B(T_{\delta})-\frac{2}{3}\,c_2 \partial_x^{-1}[\Phi_B(T_{\delta})]-4\,c_4 \partial_x^{-1}[\Phi_B(T_{\delta})_x^2]-c_5\,\pi_0[\Phi_B(T_{\delta})^2]-\frac{2}{3}\,c_6 \partial_x^{-1}[\Phi_B(T_{\delta})^2]\\
&+8\,c_1^2 \partial_x^{-1}[\Phi_B(T_{\delta})_x^2]+\frac{8}{9}\,c_2^2 \partial_x^{-1}[\Phi_B(T_{\delta})^2]+\frac{8}{3}\,c_1 c_2 \pi_0 [\Phi_B(T_{\delta})^2]+\mathtt{R}
\end{aligned}
\end{equation}
where, by \eqref{Qtilda}, $\lVert \mathtt{R} \rVert_s^{Lip(\gamma)}\le_s \varepsilon^3+\varepsilon^{b}\lVert \mathfrak{I}_{\delta} \rVert^{Lip(\gamma)}_{s+\sigma}$.
Then we write $\beta=\varepsilon \,\beta_1+\varepsilon^2 \,\beta_2+\mathtt{R}_{\beta}$, where
\begin{equation}\label{betaOrdiniEpsilon}
\begin{aligned}
\beta_1:&=-2 c_1 \overline{v}-\frac{2}{3} c_2 \partial_x^{-1}(\overline{v}),\\
\beta_2:&=-2 c_1 \Psi_2(\overline{v})-\frac{2}{3} c_2 \partial_x^{-1}(\Psi_2(\overline{v}))-4 c_4 \partial_x^{-1}(\overline{v}_x^2)-c_5 \pi_0 [\overline{v}^2]\\
&-\frac{2}{3} c_6 \partial_x^{-1}[\overline{v}^2]+8 c_1^2 \partial_x^{-1}[\overline{v}_x^2]+\frac{8}{9} c_2^2 \partial_x^{-1}[\overline{v}^2]+\frac{8}{3} c_1 c_2 \pi_0 [\overline{v}^2]
\end{aligned}
\end{equation}
and $\mathtt{R}_{\beta}$ is defined by difference and satisfies, by \eqref{vdeltamenovsegn},
\[
\lVert \mathtt{R}_{\beta} \rVert_s^{Lip(\gamma)}\le_s\varepsilon^3+\varepsilon \lVert \mathfrak{I}_{\delta} \rVert_{s+\sigma}^{Lip(\gamma)}, \quad \lVert \partial_i \mathtt{R}_{\beta} [\hat{\imath}] \rVert_s\le_s \varepsilon (\lVert \hat{\imath} \rVert_{s+\sigma}+\lVert \mathfrak{I}_{\delta} \rVert_{s+\sigma} \lVert \hat{\imath} \rVert_{s_0+\sigma}).
\]

Now we can develop $\alpha_0$ and $\alpha_1$ in powers of $\varepsilon$.
By \eqref{DefA1}, \eqref{DefA0}, \eqref{BetaVero} and by Remark \ref{KerL} we obtain $\alpha_1:=\varepsilon \alpha_{1,1}+\varepsilon^2 \alpha_{1,2}+\mathtt{R}_1$ and $\alpha_0=\varepsilon \alpha_{0,1}+\varepsilon^2 \alpha_{0, 2}+\mathtt{R}_0$, where
\begin{equation}\label{alpha1}
\begin{aligned}
&\alpha_{1,1}=2\,c_2 \overline{v}_{xx}-6\,c_3 \overline{v},\\
&\alpha_{1, 2}=L_{\overline{\omega}}[\beta_2]+\frac{8}{3} (\beta_2)_{xxx}-\frac{41}{3}\,\partial_x [ (\beta_1)_x\,(\beta_1)_{xx}]+a_{0, 2}+a_{0, 1}\,(\beta_1)_x,
\end{aligned}
\end{equation}
and
\begin{equation}\label{alpha0}
\begin{aligned}
&\alpha_{0, 1}=2\,c_2 \overline{v}_{xxx}-6\,c_3 \overline{v}_x,\\
&\alpha_{0, 2}=\partial_x L_{\overline{\omega}}[\beta_2]-3 \partial_x [(\beta_1)_x \,(\beta_1)_{xxx}]-3 \partial_x [(\beta_1)_{xx}^2]+a_{0, 1} (\beta_1)_{xx}+(a_{0, 2})_x.
\end{aligned}
\end{equation}
The functions $\mathtt{R}_0$ and $\mathtt{R}_1$ are defined by difference and
satisfy the following estimates
\begin{equation}
\lVert \mathtt{R}_k \rVert_s^{Lip(\gamma)}\le_s\varepsilon^3+ \varepsilon \lVert \mathfrak{I}_{\delta} \rVert_{s+\sigma}^{Lip(\gamma)}, \quad \lVert \partial_i \mathtt{R}_k [\hat{\imath}] \rVert_s\le_s \varepsilon (\lVert \hat{\imath} \rVert_{s+\sigma}+\lVert \mathfrak{I}_{\delta} \rVert_{s+\sigma} \lVert \hat{\imath} \rVert_{s_0+\sigma}), \quad k=0,1.
\end{equation}
\begin{remark}\label{ordiniEpsilonCanc}
We note that the terms $O(\varepsilon)$ generated by the Hamiltonian $ \int_{\T}(3 c_1 v_x+c_2 v)\,z_x^2 \,dx$
(see \eqref{HamiltonianeMathcal}) are cancelled by the diffeomorphism of the torus $\Phi$.
\end{remark}
\begin{remark}\label{Averages}
The averages of $\alpha_{j, k}, j=0, 1$ for $k=1$ are zero and, for $k=2$, we have
\begin{align*}
& M_x[\alpha_{1, 2}]=M_x[a_{0, 2}]+M_x[a_{0, 1}\,(\beta_1)_x]=-2 c_6\,M_x[\overline{v}_x^2]-12 c_7 M_x[\overline{v}^2]+\frac{4}{3} c_2^2 M_x[\overline{v}_x^2]+4 c_2 c_3 M_x[\overline{v}^2],\\[2mm]
&M_x[\alpha_{0, 2}]=M_x[a_{0, 1}\,(\beta_1)_{xx}]=-4 c_1 c_2 M_x[\overline{v}_{xx}^2]-12 c_1 c_3 M_x[\overline{v}_x^2].
\end{align*}
We used the fact that $\partial_{\varphi} M_x[\overline{v}^{2}]=0$, see Remark \ref{KerL}. Moreover, we note that, for a similar argument, $M_{\varphi, x}[\alpha_{k, 2}]=M_x[\alpha_{k, 2}]$, for $k=0, 1$.
\end{remark}


The transformation $\mathcal{A}^T-\mathrm{I}$ (see Section $8.1$) is of order $O(\varepsilon)$, hence it generates new terms of order $O(\varepsilon^2)$ when it is applied to ones of order $\varepsilon$. In particular, by the regularity of the function $\overline{v}(\varphi, x)$, that is at least $C^2$, we have, for $k=0, 1$, by Taylor expansion
\[
\varepsilon(\mathcal{A}^T-\mathrm{I})\alpha_{k, 1}(\varphi, y)=\varepsilon (\alpha_{k, 1}(\varphi, y+\tilde{\beta}(\varphi, y))-\alpha_{k, 1}(\varphi, y))=\varepsilon \partial_y (\alpha_{k, 1})(\varphi, y)\,\tilde{\beta}(\varphi, y)+\mathtt{R}_{\tilde{\beta}}, 
\]
where $\lVert \mathtt{R}_{\tilde{\beta}}\rVert_s\le_s \varepsilon^3 (1+\lVert \mathfrak{I}_{\delta}\rVert_{s+\sigma})$ for some $\sigma>0$.\\ 
We observe that $\tilde{\beta}(\varphi, y)=-(\mathcal{A}^T\,\beta)(\varphi, y)$ and by \eqref{betaOrdiniEpsilon} we get, for $k=0, 1$,
\begin{equation}\label{NewTermVarquadro}
\varepsilon(\mathcal{A}^T-\mathrm{I})\alpha_{k, 1}(\varphi, y)=-\varepsilon^2\,\partial_y (\alpha_{k, 1})(\varphi, y)\,\beta_1(\varphi, y)+\mathtt{R}_{\tilde{\beta}},
\end{equation}
where we have renamed $\mathtt{R}_{\tilde{\beta}}$ the terms of order $o(\varepsilon^2)$.

\subsubsection*{Remainder $\mathfrak{R}_1$}
The remaining terms of order $\varepsilon^2$ generated by the diffeomorphism of the torus $\Phi$ have the form \eqref{VeraFinDimForm}  and originate from $\mathcal{R}_{II}=\Phi \mathfrak{R}_1$ (see \eqref{R_II}). Thus we analyze the expression
\begin{equation}\label{Rii}
\begin{aligned}
\mathcal{R}_{II}:&=\Pi_S^{\perp} (\mathcal{A}-\mathrm{I}) \Pi_S [b_3 \partial_{yyy}+b_1 \partial_y+b_0]-R_{\Phi} (b_3 \partial_{yyy}+b_1 \partial_y+b_0)\\[2mm]
&-\Pi_S^{\perp} \partial_x (\varepsilon^2 \mathcal{R}_2+\mathcal{R}_*) \mathcal{A}_{\perp}-\Pi_S^{\perp} [\partial_{xx} (a_1 \partial_x)+\partial_x (a_0 \cdot)]\Pi_S \mathcal{A} \Pi_S^{\perp}+[\mathcal{D}_{\omega}, \mathcal{R}_{\Phi}]\\[2mm]
&+(\mathcal{L}_{\omega}-\mathcal{D}_{\omega}) \mathcal{R}_{\Phi}.
\end{aligned}
\end{equation}
We start from the first term in \eqref{Rii}. As we said above, the transformation $\mathcal{A}-\mathrm{I}$ has size $O(\varepsilon)$. Hence, we look for the terms $O(\varepsilon)$ of $b_3 \partial_{yyy}+b_1 \partial_y+b_0$. We have, by \eqref{b3}, $b_3=1+O(\varepsilon^2)$ and $b_k=\alpha_k+(\mathcal{A}^T-\mathrm{I})\alpha_k$ for $k=0, 1$. Thus
\[
b_3 \partial_{yyy}+b_1 \partial_y+b_0=\partial_{yyy}+\varepsilon \partial_y( \alpha_{1, 1}\, \cdot)+O(\varepsilon^2).
\]
By Taylor expansion at the point $\beta=0$, we get, for a function $u(\varphi, x)$ 
\begin{equation}\label{AmenoI}
\begin{aligned}
(\mathcal{A}-\mathrm{I}) u(\varphi, x)&=(1+\beta_x) u(\varphi, x+\beta)-u (\varphi, x)=u(\varphi, x+\beta)-u (\varphi, x)+\beta_x u(\varphi, x+\beta)=\\
&=u_x(\varphi, x) \beta(\varphi, x)+ \beta_x(\varphi, x) u(\varphi, x)+O(\beta^2)=\\
&=\varepsilon\partial_x (\beta_1 (\varphi, x)\,u(\varphi, x))+O(\varepsilon^2).
\end{aligned}
\end{equation}
Therefore we have
\begin{equation}\label{first}
\Pi_S^{\perp} (\mathcal{A}-\mathrm{I}) \Pi_S [b_3 \partial_{yyy}+b_2 \partial_{yy}+b_1 \partial_y+b_0]=\varepsilon^2\Pi_S^{\perp}[\partial_x (\beta_1\,\partial_x(\alpha_{1, 1}\,\cdot) )]+o(\varepsilon^2)
\end{equation}
Now we extract the homogeneous terms of order $\varepsilon$ from $\mathcal{R}_{\Phi}$ (see \eqref{FormaRestoTrasp}). We recall the exact expressions of $g_k$ and $\chi_k$ in \eqref{FormaRestoTrasp} refering to the proof of Lemma $8.2$ in \cite{KdVAut}.
We have
\begin{align}\label{gk}
g_k(\tau, x):=-(\Phi^{\tau})^T[b(\tau)\partial_x e^{\mathrm{i} k x}],
\end{align}
where $(\Phi^{\tau})^T$ is the flow of the adjoint PDE
\begin{equation}\label{adjointPDE}
\partial_{\tau} z=\Pi_S^{\perp} \{ b(\tau, x) \partial_x z \}, \qquad b(\tau, x)=\frac{\beta(x)}{1+\tau \beta_x(x)}=\varepsilon\beta_1+O(\varepsilon^2).
\end{equation}
This equation is well defined on $H^s_{S^{\perp}}(\T_x)$, because the function $b$ is smooth enough. By \eqref{gk} we have
\[
g_k(\tau, x)=-b(\tau)\partial_x e^{\mathrm{i} k x}+(\mathrm{I}_{H_S^{\perp}}-(\Phi^{\tau})^T)[b(\tau)\partial_x e^{\mathrm{i} k x}]
\]
and, for $z\in H^s_{S^{\perp}}(\T_x)$, by \eqref{beta} and \eqref{adjointPDE}, $\lVert (\Phi^{\tau})^T z-z \rVert_s\le_s \varepsilon C(\lVert z \rVert_{s+1}+\lVert \mathfrak{I}_{\delta}\rVert_{s+\sigma}\lVert z \rVert_{s+1})$, where $C$ is the Lipschitz constant, in time, on the interval $[0, 1]$ of the flow $(\Phi^{\tau})^T$. Hence, by \eqref{gk}, 
\begin{equation}\label{kg}
g_k=-\varepsilon\beta_1\,\partial_x e^{\mathrm{i} k x}+O(\varepsilon^2).
\end{equation}
Now consider
\[
\chi_k:=-\frac{1+\beta_x}{1+\tau \beta_x}\,\, \exp(\mathrm{i} k \gamma^{\tau}(x+\beta(x))),
\]
where $\gamma^{\tau}$ is the flow of the characteristic ODE
\begin{equation}\label{charactODE}
\frac{d}{d \tau} x=-b(\tau, x).
\end{equation}
By \eqref{adjointPDE}, the vector field of \eqref{charactODE} has size $O(\varepsilon)$ and, by similar arguments used above for the flow of \eqref{adjointPDE}, we have $\gamma^{\tau}(x)-x=O(\varepsilon)$. By Taylor expansion of the function $\exp(\mathrm{i} k \gamma^{\tau}(x+\beta(x)))$ at $\beta=0$ we have
\begin{equation}\label{kchi}
\chi_k=e^{\mathrm{i} k x}+O(\varepsilon).
\end{equation}
Recalling \eqref{AmenoI} we have
\begin{equation}\label{kpsi}
\psi_k=(\mathcal{A}^T-\mathrm{I}) e^{\mathrm{i} k x}=\varepsilon\partial_x (\beta_1 e^{\mathrm{i} k x})+O(\varepsilon^2)=\varepsilon(\beta_1)_x e^{\mathrm{i} k x}+\varepsilon\beta_1\,\partial_x e^{\mathrm{i} k x}+O(\varepsilon^2).
\end{equation}
Eventually, by \eqref{kg}, \eqref{kchi} and \eqref{kpsi}, we have $\mathcal{R}_{\Phi}=\varepsilon\mathrm{R}_{\Phi}+O(\varepsilon^2)$, where
\begin{equation}\label{kd}
\begin{aligned}
\mathrm{R}_{\Phi} (h):&=-\sum_{k\in S} (h, \beta_1 \partial_x e^{\mathrm{i} k x})_{L^2(\mathbb{T})} e^{\mathrm{i} k x}+\sum_{k\in S} (h, (\beta_1)_x e^{\mathrm{i} k x})_{L^2(\mathbb{T})} e^{\mathrm{i} k x}+\sum_{k\in S} (h, \beta_1 \partial_x e^{\mathrm{i} k x})_{L^2(\mathbb{T})} e^{\mathrm{i} k x}\\[2mm]
&=\Pi_S[(\beta_1)_x \,h].
\end{aligned}
\end{equation}
By \eqref{kd} the range of $\mathrm{R}_{\Phi}$ is orthogonal to the subspace $H_S^{\perp}$, hence the term $\Phi^{-1}\, R_{\Phi} (b_3 \partial_{yyy}+b_1 \partial_y+b_0)$ will have size at least $O(\varepsilon^3)$, indeed $\Phi=\mathrm{I}_{H_S^{\perp}}+O(\varepsilon)$.\\
We ignore the terms $\varepsilon^2 \mathcal{R}_2$ and $\mathcal{R}_*$ because are too small. Then, we can consider 
\[
(\mathcal{L}_{\omega}-\mathcal{D}_{\omega})\mathrm{R}_{\Phi}=\Pi_S^{\perp}[\partial_{xx} (a_1\partial_x )+\partial_x (a_0 \cdot)]\Pi_S^{\perp}\mathrm{R}_{\Phi}=0.
\]
By \eqref{AmenoI} we have
\begin{equation}\label{second}
\Pi_S^{\perp} [\partial_{xx} (a_1 \partial_x)+\partial_x (a_0 \cdot)]\Pi_S (\mathcal{A}-\mathrm{I}) \Pi_S^{\perp}=\varepsilon^2 \Pi_S^{\perp}[\partial_{xx} (a_{1, 1}\partial_{xx}\Pi_S[\beta_1 \,\cdot])+\partial_x (a_{0, 1}\,\partial_x\Pi_S[\beta_1\,\cdot])]+o(\varepsilon^2).
\end{equation}
It remains to study the commutator $[\mathcal{D}_{\omega}, \mathcal{R}_{\Phi}]=[D_{\overline{\omega}}, \mathcal{R}_{\Phi}^{\varepsilon}]+O(\varepsilon^3)$. We have
\begin{align*}
[D_{\overline{\omega}}, \mathcal{R}_{\Phi}^{\varepsilon}] h=\varepsilon D_{\overline{\omega}}\Pi_S[(\beta_1)_x h]-\varepsilon \Pi_S[(\beta_1)_x D_{\overline{\omega}} h]=\varepsilon\Pi_S[(D_{\overline{\omega}} (\beta_1)_x) h]
\end{align*}
and so $\Phi^{-1} [\mathcal{D}_{\omega}, \mathcal{R}_{\Phi}]=o(\varepsilon^2)$.\\
Finally, by \eqref{first}, \eqref{second}, we obtained $\mathcal{R}_{II}=\varepsilon^2 \mathrm{R}_2+o(\varepsilon^2)$, where, for $h\in H_S^{\perp}$,
\begin{equation}
\begin{aligned}
\mathrm{R}_2[h]&=\Pi_S^{\perp} \{\partial_x (\beta_1\, \Pi_S [\partial_x (\alpha_{1, 1} h)])-\partial_{xx} (a_{1,1} \partial_x \Pi_S[\partial_x (\beta_1 h)])-\partial_x (\alpha_{1, 1} \Pi_S[\partial_x(\beta_1 h)])\}\\
&=4 \,c_1 c_2\, \Pi_S^{\perp} \{ -\partial_x(v_{\delta}\,\partial_x \Pi_S[(v_{\delta})_{xx}\,h]) +\partial_{xx} ((v_{\delta})_x \partial_{xx} \Pi_S[(\partial_x^{-1} v_{\delta}) h]) 
\\
&+\partial_{xx} (v_{\delta} \,\partial_{xx}\Pi_S[ v_{\delta} h])+\partial_x ((v_{\delta})_{xx} \,\partial_x \Pi_S[v_{\delta} h]) \}\\
&+\frac{4}{3} \,c_2^2 \,\Pi_S^{\perp} \{ -\partial_x((\partial_x^{-1} v_{\delta})\,\partial_x \Pi_S[(v_{\delta})_{xx}\,h]) +\partial_{xx} (v_{\delta}\partial_{xx}\, \Pi_S[(\partial_x^{-1} v_{\delta}) h])
\\
&+\partial_x ((v_{\delta})_{xx} \,\partial_x \Pi_S[(\partial_x^{-1} v_{\delta}) h]) \}\\
&+12 \,c_1 c_3\,\Pi_S^{\perp}\{ \partial_x(v_{\delta}\,\partial_x \Pi_S[v_{\delta}\,h])-\partial_x (v_{\delta}\,\partial_x \Pi_S[v_{\delta} h]) \}\\
&+4\,c_2 c_3\,\Pi_S^{\perp}\{ \partial_x((\partial_x^{-1} v_{\delta})\,\partial_x \Pi_S[v_{\delta}\,h])-\partial_x (v_{\delta} \,\partial_x \Pi_S[(\partial_x^{-1} v_{\delta}) h])\}\\
&+12 c_1^2 \Pi_S^{\perp}\{ \partial_{xx} ((v_{\delta})_x \,\partial_{xx}\Pi_S[v_{\delta} h]) \}
\end{aligned}
\end{equation}

Using \eqref{R_I}, \eqref{R_II} we get
\begin{equation}\label{8.34}
\mathfrak{R}_1:=\Phi^{-1} \mathcal{R}_{\mathrm{I}\mathrm{I}}=-\varepsilon^2 \Pi_S^{\perp} \partial_x \mathcal{R}_2+\mathcal{R}_*
\end{equation}
where $\mathcal{R}_2$, defined in \eqref{R2}, has been renamed as 
\begin{equation}\label{NewR2}
\mathcal{R}_2:=\mathcal{R}_2-\partial_x^{-1}\mathrm{R}_2
\end{equation}
and we have renamed $\mathcal{R}_*$ the term $o(\varepsilon^2)$. Note that $\mathcal{R}_{II}^{\varepsilon^2}[h]$ has zero spatial average for every $h$ belonging to $H^s_{S^{\perp}}(\T^{\nu+1})$ and the remainder $\mathcal{R}_*$ has the form \eqref{VeraFinDimForm}.

\subsection{Time reduction at the order $\partial_{xxx}$}
The goal of this section is to make constant the coefficient of the highest order spatial derivative operator $\partial_{yyy}$ by a quasi-periodic reparametrization of time. We consider the change of variable
\begin{equation}\label{RepofTime}
(Bw)(\varphi, y):=w(\varphi+\omega \alpha(\varphi), y), \quad (B^{-1} h)(\vartheta, y):=h(\vartheta+\omega \tilde{\alpha}(\vartheta), y),
\end{equation}
where $\varphi=\vartheta+\omega \tilde{\alpha}(\vartheta)$ is the inverse diffeomorphism of $\vartheta=\varphi+\omega \alpha(\varphi)$ in $\T^{\nu}$. By conjugation, the differential operators transform into
\begin{equation}\label{Ro}
B^{-1}\omega\cdot \partial_{\varphi} B=\rho(\vartheta) \omega\cdot \partial_{\vartheta}, \quad B^{-1}\partial_y B=\partial_y, \quad \rho:=B^{-1}(1+\omega\cdot\partial_{\varphi}\alpha).
\end{equation}
By \eqref{L1}, using also that $B$ and $B^{-1}$ commute with $\Pi_S^{\perp}$, we get
\begin{equation}\label{8.42}
B^{-1}\mathcal{L}_1 B=\Pi_S^{\perp} [\rho\,\omega\cdot\partial_{\vartheta}+(B^{-1}b_3)\partial_{yyy}+(B^{-1} b_1)\partial_y+(B^{-1} b_0)]\Pi_S^{\perp}+B^{-1} \mathfrak{R}_1 B.
\end{equation}
We choose $\alpha$ such that the new coefficient at order $\partial_{yyy}$ is proportional to the function $\rho(\vartheta)$, namely
\begin{equation}\label{8.43}
(B^{-1}b_3)(\vartheta)=m_3\,\rho(\vartheta), \quad m_3\in\mathbb{R} \quad  \Longrightarrow \quad b_3(\varphi)=m_3 (1+\omega\cdot \partial_{\varphi}\alpha(\varphi)).
\end{equation}
The unique solution with zero average of \eqref{8.43} is
\begin{equation}\label{Defalpha}
\alpha(\varphi):=\frac{1}{m_3} (\omega\cdot\partial_{\varphi})^{-1} (b_3-m_3)(\varphi), \quad m_3:=\frac{1}{(2\pi)^{\nu}} \int_{\T^{\nu}} b_3(\varphi)\,d\varphi.
\end{equation}
Hence, by \eqref{8.42} we have
\begin{align}
& B^{-1}\mathcal{L}_1 B=\rho\,\mathcal{L}_2, \quad \mathcal{L}_2:=\Pi_S^{\perp}(\omega\cdot\partial_{\vartheta}+m_3 \partial_{yyy}+c_1 \partial_y+c_0)\Pi_S^{\perp}+\mathfrak{R}_2,\label{8.45}\\
&c_1:=\rho^{-1} (B^{-1} b_1), \quad c_0:=\rho^{-1} (B^{-1} b_0), \quad \mathfrak{R}_2:=\rho^{-1} B^{-1} \mathfrak{R}_1 B.\label{8.46}
\end{align}

In order to control the corrections to the normal frequencies also at lower orders of size, we expand the constant coefficient $m_3$, defined in \eqref{Defalpha}, in powers of $\varepsilon$. We have
\begin{equation}
m_3=1+\varepsilon^2 d(\xi)+\mathtt{r}_{m_3}
\end{equation}
where
\begin{equation}\label{OriginalDxi}
\begin{aligned}
d(\xi):&=(12 c_4-24\,c_1^2)\,M_{\varphi, x} [\overline{v}_x^2]+\varepsilon^2(2 c_6-\frac{8}{3} c_2^2) M_{\varphi, x}[\overline{v}^2]\\
&=(24 c_4-48 c_1^2) v_3\cdot \xi+(4 c_6-\frac{16}{3} c_2^2) v_1\cdot \xi
\end{aligned}
\end{equation}
and $\lvert \mathtt{r}_{m_3}\rvert^{Lip(\gamma)}\le \varepsilon^3$.
The transformed operator $\mathcal{L}_2$ in \eqref{8.45} is still Hamiltonian, since the reparametrization of time preserves the Hamiltonian structure (see Section $2.2$ and Remark $3.7$ in \cite{Airy}).\\
We note that, by \eqref{8.46}, for $k=0, 1$, we have
\[
c_k=b_k+(B^{-1}-\mathrm{I}) b_k+(\rho^{-1}-1)\,B^{-1}\,b_k
\]
and $b_k=O(\varepsilon)$ is the biggest term in the expression above. We define, for $k=0, 1$,
\begin{equation}\label{TildecK}
\tilde{c}_k:=c_k-b_k=(B^{-1}-\mathrm{I}) b_k+(\rho^{-1}-1) B^{-1} b_k
\end{equation}
and we estimate them in Lemma \ref{LemmaFondamentale}.
The remainder $\mathfrak{R}_2$ in \eqref{8.46} has still the form \eqref{VeraFinDimForm} and, by \eqref{8.34},
\begin{equation}
\mathfrak{R}_2:=-\rho^{-1} B^{-1} \mathfrak{R}_1 B=-\varepsilon^{2} \Pi_S^{\perp} \partial_x \mathcal{R}_2+\mathcal{R}_*
\end{equation}
where $\mathcal{R}_2$ is defined in \eqref{NewR2} and we have renamed $\mathcal{R}_*$ the term of order $o(\varepsilon^2)$ in $\mathfrak{R}_2$.

\begin{remark}\label{NoS1}
In the proof of the estimates for the transformations $B$ and $\mathcal{T}$, respectively defined in \eqref{RepofTime} and \eqref{TransloftheSpaceVar}, we have to give a bound to the inverse of the operator $\mathcal{D}_{\omega}$ applied to the difference of a spatial and total (in space and time) average of some function in $H_{S^{\perp}}^s(\T^{\nu+1})$.\\
The main problem is that the estimate \eqref{stimaDomega} is too rough to deal with functions $h(\varphi, x)$ of size greater or equal than $\varepsilon^3$, indeed, the terms $O(\varepsilon^3 \gamma^{-1})$ are just not perturbative.

 
\noindent In the proofs of Lemma \ref{LemmaFondamentale} and \ref{LemmaFond2}, we exploit the fact that if $h(\varphi, x)$ is a function supported on few harmonics, then we do not need to use the diophantine inequality \eqref{0diMelnikov} to give a bound to the divisors appearing in the Fourier coefficients of $\mathcal{D}_{\omega}^{-1} h$.\\
In this way, we overcome the problem discussed in Remark $8.11$ in \cite{KdVAut} and we can drop the hypotesis \eqref{S1} on the tangential sites assumed in \cite{KdVAut}.\\
\end{remark}

\begin{lem}\label{LemmaFondamentale}
There is $\sigma=\sigma(\nu, \tau)>0$ (possibly larger than the one in Lemma \ref{StimaPasso1}) such that
\begin{align}
\lvert m_3-1 \rvert^{Lip(\gamma)}\le C\,\varepsilon^2, \quad &\lvert \partial_i m_3 [\hat{\imath}] \rvert\le \varepsilon^2 \lVert \hat{\imath} \rVert_{s_0+\sigma},\label{m3}\\
 \lVert \alpha \rVert_s^{Lip(\gamma)}\le_s \varepsilon^4 \gamma^{-1}+\lVert \mathfrak{I}_{\delta} \rVert_{s+\sigma}^{Lip(\gamma)}, \quad &\lVert \partial_i \alpha [\hat{\imath}] \rVert_s\le_s \lVert \hat{\imath} \rVert_{s+\sigma}+\lVert \mathfrak{I}_{\delta} \rVert_{s+\sigma} \lVert \hat{\imath} \rVert_{s_0+\sigma} \label{alpha}\\
\lVert \rho-1 \rVert_s^{Lip(\gamma)}\le_s \varepsilon^3+\varepsilon^{2 b}\lVert \mathfrak{I}_{\delta} \rVert_{s+\sigma}^{Lip(\gamma)}, &\quad \lVert \partial_i \rho [\hat{\imath}] \rVert_s\le_s \varepsilon^{2 b} (\lVert \hat{\imath} \rVert_{s+\sigma}+\lVert \mathfrak{I}_{\delta} \rVert_{s+\sigma} \lVert \hat{\imath} \rVert_{s_0+\sigma}), \label{rho}\\
\lVert \tilde{c}_k \rVert_s^{Lip(\gamma)}\le_s \varepsilon^{3-2 a}+\varepsilon\lVert \mathfrak{I}_{\delta} \rVert_{s+\sigma}^{Lip(\gamma)}, &\quad \lVert \partial_i \tilde{c}_k [\hat{\imath}] \rVert_s\le_s \varepsilon (\lVert \hat{\imath} \rVert_{s+\sigma}+\lVert \mathfrak{I}_{\delta} \rVert_{s+\sigma} \lVert \hat{\imath} \rVert_{s_0+\sigma}).\label{cK}
\end{align}
\end{lem}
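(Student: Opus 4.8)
The plan is to expand each of $m_3,\alpha,\rho,\tilde c_k$ in powers of $\varepsilon$ around the explicit torus $\overline{v}$ of \eqref{vSegnato} — which is independent of $i_\delta$ and, whatever its $\xi$-dependence, is supported on the fixed finite set $\{\mathtt{l}(j):j\in S\}$ of $\varphi$-Fourier modes — and then to invert $\mathcal{D}_\omega=\omega\cdot\partial_\varphi$ by treating separately the \emph{finite-dimensional} part (built only from $\overline{v}$, hence finitely supported in $\varphi$) and the genuinely infinite-dimensional remainder. The bound on $m_3$ is immediate: since $m_3=M_\varphi[b_3]$, averaging \eqref{b3} in $\varphi$ gives $\lvert m_3-1\rvert^{Lip(\gamma)}\le\lVert b_3-1\rVert_{s_0}^{Lip(\gamma)}\le_{s_0}\varepsilon^2(1+\lVert\mathfrak{I}_\delta\rVert_{s_0+\sigma}^{Lip(\gamma)})\le C\varepsilon^2$ by \eqref{IpotesiPiccolezzaIdelta}, and likewise $\lvert\partial_i m_3[\hat{\imath}]\rvert=\lvert M_\varphi[\partial_i b_3[\hat{\imath}]]\rvert\le\varepsilon^2\lVert\hat{\imath}\rVert_{s_0+\sigma}$; equivalently one may read this off the expansion $m_3=1+\varepsilon^2 d(\xi)+\mathtt{r}_{m_3}$ recorded around \eqref{OriginalDxi}, using $\lvert d(\xi)\rvert\le C$ on $\xi\in[1,2]^\nu$ and $\lvert\mathtt{r}_{m_3}\rvert^{Lip(\gamma)}\le\varepsilon^3$.

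The core of the lemma is the estimate on $\alpha=m_3^{-1}\mathcal{D}_\omega^{-1}(b_3-m_3)$. Here $b_3-m_3$ is the $\varphi$-oscillating part of $b_3$; retracing the computation \eqref{Michela}--\eqref{PulpFiction}, its leading contributions come from $M_x$ of products of \emph{two} factors of $\overline{v}$ (and their $x$-derivatives and primitives), which by Remark \ref{KerL} are independent of $\varphi$ and therefore drop out after subtracting the average. What survives at order $\varepsilon^3$ is a finite linear combination $b_3^{\mathrm{low}}$ of monomials of degree three in $\overline{v}$, supported on harmonics $\mathtt{l}(j_1)+\mathtt{l}(j_2)+\mathtt{l}(j_3)$, $j_i\in S$; I would write $b_3-m_3=\varepsilon^3 b_3^{\mathrm{low}}+b_3^{\mathrm{rem}}$ with, by \eqref{vdeltamenovsegn} and \eqref{Qtilda}, $\lVert b_3^{\mathrm{rem}}\rVert_s^{Lip(\gamma)}\le_s\varepsilon^4+\varepsilon^{2b}\lVert\mathfrak{I}_\delta\rVert_{s+\sigma}^{Lip(\gamma)}$, the $\varepsilon^4$-part being $\overline{v}$-built and hence $\partial_i$-independent. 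On $\varepsilon^3 b_3^{\mathrm{low}}$ I would invert $\mathcal{D}_\omega$ \emph{without} invoking \eqref{0diMelnikov}: for a nonzero harmonic $l=\mathtt{l}(j_1)+\mathtt{l}(j_2)+\mathtt{l}(j_3)$ with $j_1+j_2+j_3=0$ one has, since $\overline{\omega}\cdot\mathtt{l}(j)=j^3$ (that is, $L_{\overline{\omega}}\overline{v}=0$, Remark \ref{KerL}), $\overline{\omega}\cdot l=j_1^3+j_2^3+j_3^3=3 j_1 j_2 j_3$, whence $\lvert\omega\cdot l\rvert\ge 3\lvert j_1 j_2 j_3\rvert-C\varepsilon^2\lvert l\rvert\ge 1$ for $\varepsilon$ small, so $\lVert\mathcal{D}_\omega^{-1}(\varepsilon^3 b_3^{\mathrm{low}})\rVert_s^{Lip(\gamma)}\le_s\varepsilon^3\le\varepsilon^4\gamma^{-1}$ (recall $\gamma=\varepsilon^{2b}$, $b=1+a/2$). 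On $b_3^{\mathrm{rem}}$ I would instead use \eqref{stimaDomega}, losing one factor $\gamma^{-1}$, which turns $O(\varepsilon^4+\varepsilon^{2b}\lVert\mathfrak{I}_\delta\rVert_{s+\sigma})$ into $O(\varepsilon^4\gamma^{-1}+\lVert\mathfrak{I}_\delta\rVert_{s+\sigma})$ precisely because $\varepsilon^{2b}\gamma^{-1}=1$. Together with $\lvert m_3^{-1}\rvert\le 2$ this yields \eqref{alpha}; for $\partial_i\alpha[\hat{\imath}]$ the same argument applies, using that $\partial_i\overline{v}=0$ forces $\partial_i(b_3-m_3)[\hat{\imath}]=\partial_i b_3^{\mathrm{rem}}[\hat{\imath}]=O(\varepsilon^{2b}(\lVert\hat{\imath}\rVert_{s+\sigma}+\lVert\mathfrak{I}_\delta\rVert_{s+\sigma}\lVert\hat{\imath}\rVert_{s_0+\sigma}))$, on which the $\gamma^{-1}$ loss is again absorbed by $\varepsilon^{2b}\gamma^{-1}=1$.

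The remaining bounds follow by composition. From \eqref{8.43}, $1+\omega\cdot\partial_\varphi\alpha=b_3/m_3$, so $\rho-1=B^{-1}(b_3/m_3)-1=B^{-1}\bigl((b_3-m_3)/m_3\bigr)$; since $b_3-m_3=\varepsilon^3 b_3^{\mathrm{low}}+b_3^{\mathrm{rem}}$ has size $O(\varepsilon^3+\varepsilon^{2b}\lVert\mathfrak{I}_\delta\rVert_{s+\sigma})$, Lemma \ref{ChangeofVariablesLemma} applied to the reparametrization $B$ (whose displacement $\omega\alpha$ is controlled by \eqref{alpha}) gives \eqref{rho}, the $\partial_i$-bound using that $\partial_i(b_3-m_3)$ is $O(\varepsilon^{2b})$-small while the contributions through $\partial_i B^{-1}$ are multiplied by the $O(\varepsilon^3)$-small argument. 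For $\tilde c_k=(B^{-1}-\mathrm{I})b_k+(\rho^{-1}-1)B^{-1}b_k$, Lemma \ref{ChangeofVariablesLemma} gives $\lVert(B^{-1}-\mathrm{I})b_k\rVert_s\le_s\lVert\alpha\rVert_{s_0}\lVert b_k\rVert_{s+1}+\lVert\alpha\rVert_{s+\sigma}\lVert b_k\rVert_2$, which by \eqref{b0b1}, \eqref{alpha} and \eqref{IpotesiPiccolezzaIdelta} (giving $\lVert\alpha\rVert_{s_0}\le_s\varepsilon^{2-2a}$, $\varepsilon^4\gamma^{-1}\varepsilon=\varepsilon^{3-a}$) is $\le_s\varepsilon^{3-2a}+\varepsilon\lVert\mathfrak{I}_\delta\rVert_{s+\sigma}$, and the tame product bounds $\lVert(\rho^{-1}-1)B^{-1}b_k\rVert_s\le_s\lVert\rho-1\rVert_s\,\varepsilon+\varepsilon^4+\varepsilon^{1+2b}\lVert\mathfrak{I}_\delta\rVert_{s+\sigma}\le_s\varepsilon^{3-2a}+\varepsilon\lVert\mathfrak{I}_\delta\rVert_{s+\sigma}$; adding, and differentiating the same expressions, gives \eqref{cK}.

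The step I expect to be the main obstacle is exactly the estimate on $\alpha$: the crude bound $\lVert\mathcal{D}_\omega^{-1}(b_3-m_3)\rVert_s\le_s\gamma^{-1}\lVert b_3-m_3\rVert_{s+\tau}=O(\varepsilon^3\gamma^{-1})=O(\varepsilon^{1-a})$ is non-perturbative and falls short of the claimed $\varepsilon^4\gamma^{-1}=\varepsilon^{2-a}$. One must therefore carry the $\varepsilon$-expansion of $b_3$ far enough to recognise that the $\varepsilon^3$-part of $b_3-m_3$ is a genuinely finite-dimensional object, so that its small divisors are $O(1)$ and no $\gamma^{-1}$ is lost there; this is the mechanism behind Remark \ref{NoS1}, and it is precisely what lets one drop hypothesis \eqref{S1}. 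The same care is required for the $\partial_i$-derivatives and to keep $\rho$ and $\tilde c_k$ within the stated sizes.
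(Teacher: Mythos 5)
Your overall architecture is the paper's: the $m_3$-bound by $\varphi$-averaging \eqref{b3}; for $\alpha$, the splitting of $b_3-m_3$ into an order-$\varepsilon^3$ part built only from cubic monomials in $\overline{v}$ (finitely supported in $\varphi$, inverted with divisors $\overline{\omega}\cdot l=3j_1j_2j_3$ bounded away from zero, no $\gamma^{-1}$ loss — exactly the mechanism of Remark \ref{NoS1}) plus a remainder treated with \eqref{stimaDomega}; and \eqref{rho}, \eqref{cK} by composition via Lemma \ref{ChangeofVariablesLemma} and tame products. However, there is a genuine gap in the key quantitative claim on which everything rests: you assert, ``by \eqref{vdeltamenovsegn} and \eqref{Qtilda}'', that $\lVert b_3^{\mathrm{rem}}\rVert_s^{Lip(\gamma)}\le_s\varepsilon^4+\varepsilon^{2b}\lVert\mathfrak{I}_{\delta}\rVert_{s+\sigma}^{Lip(\gamma)}$ and that $\partial_i(b_3-m_3)[\hat{\imath}]=O(\varepsilon^{2b})$. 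Those two inequalities do not give this. The order-$\varepsilon^2$ piece of the oscillating part contains $\varepsilon^2\bigl(M_x[v_{\delta}^2]-M_{\varphi,x}[v_{\delta}^2]\bigr)$, and \eqref{vdeltamenovsegn} only bounds it by $\varepsilon^2\lVert\mathfrak{I}_{\delta}\rVert_{s}$; likewise the cross term $\varepsilon M_x[v_{\delta}\tilde q]$ is only $O(\varepsilon^4+\varepsilon^{1+b}\lVert\mathfrak{I}_{\delta}\rVert_{s+\sigma})$ from \eqref{Qtilda}. After the $\gamma^{-1}$ loss in \eqref{stimaDomega} these produce $\varepsilon^{-a}\lVert\mathfrak{I}_{\delta}\rVert_{s+\sigma}$ and $\varepsilon^{-a/2}\lVert\mathfrak{I}_{\delta}\rVert_{s+\sigma}$ respectively, which exceed the right-hand side of \eqref{alpha}; the degradation then propagates to \eqref{rho} and \eqref{cK}, which you deduce from \eqref{alpha} and \eqref{Memento}-type bounds.

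What is missing are the structural cancellations the paper computes explicitly: (i) $M_x[v_{\delta}^2]=\sum_{j\in S}\lvert j\rvert(\xi_j+\varepsilon^{2(b-1)}(y_{\delta})_j)$ is independent of $\theta_0$, so the $\varepsilon^2$-order error is proportional to $y_{\delta}$ and carries the extra factor $\varepsilon^{2(b-1)}$, upgrading $\varepsilon^2\lVert\mathfrak{I}_{\delta}\rVert$ to $\varepsilon^{2b}\lVert\mathfrak{I}_{\delta}\rVert$; (ii) the $L^2_x$-orthogonality of $v_{\delta}$ and $z_0$, together with the structure of $\tilde q$, which turns $\varepsilon M_x[v_{\delta}\tilde q]$ into $O(\varepsilon^4+\varepsilon^{2+b}\lVert\mathfrak{I}_{\delta}\rVert_{s+\sigma})$; (iii) for the derivative bounds, the explicit computation of $M_x[v_{\delta}\,\partial_i v_{\delta}[\hat{\imath}]]-M_{\varphi,x}[v_{\delta}\,\partial_i v_{\delta}[\hat{\imath}]]$, where the $O(\varepsilon^2\hat{\Theta})$ contributions cancel through the pairing $j\leftrightarrow -j$ (oddness of $\hat{\Theta}$, evenness of $\xi, y$) and only $\varepsilon^{2(b-1)}$-weighted terms in $\hat y$, $\partial_i G$, $y_\delta$ survive. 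Your shortcut ``$\partial_i\overline{v}=0$ forces $\partial_i b_3^{\mathrm{rem}}[\hat{\imath}]=O(\varepsilon^{2b})$'' is not valid, because differentiating in $i$ does not preserve smallness in $\mathfrak{I}_{\delta}$: a remainder of size $\varepsilon^{2b}\lVert\mathfrak{I}_{\delta}\rVert$ can a priori have an $i$-derivative of size $\varepsilon^2\lVert\hat{\imath}\rVert$, and only the cancellation in (iii) rules this out. With (i)--(iii) inserted, your argument coincides with the paper's proof; without them, the stated estimates \eqref{alpha}, \eqref{rho}, \eqref{cK} are not reached.
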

\begin{proof}
\textit{Estimate \eqref{m3}}: We have $m_3-1=\int_{\T^{\nu}} (b_3-1)\,d\varphi$,
then, by \eqref{b3},
\begin{align*}
&\lvert m_3-1 \rvert\le \int_{\T^{\nu}} \lvert b_3-1 \rvert d\varphi\le \lVert b_3-1 \rVert_{s_0}\le C \varepsilon^2,
&\lvert \partial_i m_3 [\hat{\imath}] \rvert\le \int_{\T^{\nu}} \partial_i b_3[\hat{\imath}] d\varphi\le \lVert\partial_i b_3[\hat{\imath}] \rVert_{s_0}\le \varepsilon^2 \lVert \hat{\imath} \rVert_{s_0+2}.
\end{align*}

\noindent\textit{Estimate \eqref{alpha}}: By \eqref{Defalpha} and the fact that $m_3$ is a constant near to $1$, it is sufficient to give a bound to $b_3-m_3$.\\
Consider the functions $g(t)=(1+t)^{-\frac{1}{3}},\Upsilon(t)=(1+t)^{-3}$, defined in a small neighbourhood of the origin.\\
We have
\begin{equation}\label{KillBill}
\begin{aligned}
b_3-m_3=(b_3-1)-M_{\varphi}[b_3-1]\stackrel{\eqref{Michela}}{=}\Upsilon[M_x[g(a_1-1)-g(0)]]-M_{\varphi}[\Upsilon[M_x[g(a_1-1)-g(0)]]].
\end{aligned}
\end{equation}
By the analiticity of $\Upsilon$ 
\[
\Upsilon(t)-\Upsilon(0)=\Upsilon'(0)\,t+\Upsilon_{\geq 2}[t], \qquad \Upsilon_{\geq 2}[t]:=\sum_{k\geq 2} \frac{\Upsilon^{(k)}(0)}{k!}\,t^k,
\]
for $\lvert t \rvert$ small enough. Hence, by \eqref{KillBill},
\begin{equation}\label{KillBill2}
\begin{aligned}
b_3-m_3&=\Upsilon'(0)\{ M_x[g(a_1-1)-g(0)]-M_{\varphi, x}[g(a_1-1)-g(0)] \}\\[1.5mm]
&+\Upsilon_{\geq 2}[M_x[g(a_1-1)-g(0)]]-M_{\varphi}[\Upsilon_{\geq 2}[M_x[g(a_1-1)-g(0)]]].
\end{aligned}
\end{equation}
The difference of the last two terms in the right hand side of \eqref{KillBill2} can be estimated by
\begin{align*}
&\lVert  \Upsilon_{\geq 2}[M_x[g(a_1-1)-g(0)]]-M_{\varphi}[\Upsilon_{\geq 2}[M_x[g(a_1-1)-g(0)]]] \rVert_s\\[1.5mm]
&\le_s \lVert M_x[g(a_1-1)-g(0)] \rVert_{s_0}\lVert M_x[g(a_1-1)-g(0)] \rVert_s\stackrel{\eqref{b3}}{\le_s} \varepsilon^4(1+\lVert \mathfrak{I}_{\delta}\rVert_s).
\end{align*}
Now we prove a bound for the difference $M_x[g(a_1-1)-g(0)]-M_{\varphi, x}[g(a_1-1)-g(0)]$.\\
By Taylor expansion
\begin{equation*}
g(a_1-1)-g(0)=g'(0)(a_1-1)+\frac{g''(0)}{2} (a_1-1)^2+\frac{g'''(0)}{3!}(a_1-1)^3+\frac{(a_1-1)^4}{6}\int_0^1 (1-s)^3\,g^{(4)}(s(a_1-1))\,ds
\end{equation*}
and the last term of the right hand side can be estimated by $\varepsilon^4(1+\lVert \mathfrak{I}_{\delta}\rVert_{s+\sigma})$.\\
The function $a_1$ in \eqref{a1} is a linear combination of  $\Phi_B(T_{\delta}), \Phi_B(T_{\delta})^2$ (and their derivatives in the $x$-variable) and $r_1(T_{\delta})$, whose coefficients depend on $c_1, \dots, c_7$ and other real constants. Without loss of generality, to simplify the notations, we can write $a_1=1+\Phi_B(T_{\delta})+\Phi_B(T_{\delta})^2+r_1(T_{\delta})$ (recall \eqref{Tdelta}, \eqref{PhiB} and \eqref{r1}). Thus, we have
\begin{align*}
M_x[a_1-1]&=M_x[\Phi_B(T_{\delta})^2]+M_x[r_1(T_{\delta})], \quad M_x[(a_1-1)^2]=M_x[\Phi_B(T_{\delta})^2]+2 M_x[\Phi_B(T_{\delta})^3]+\mathtt{Q}_2(T_{\delta}),\\[1.5mm]
M_x[(a_1-1)^3]&=4 M_x[\Phi_B(T_{\delta})^3]+\mathtt{Q}_3(T_{\delta}),
\end{align*}
where $\lVert \mathtt{Q}_i(T_{\delta})\rVert_s\le_s \varepsilon^4 +\varepsilon^{2+b}\lVert \mathfrak{I}_{\delta} \rVert_{s+\sigma}$ for $i=2, 3$. By \eqref{r1} and the fact that $\Phi_B(T_{\delta})$ has size $O(\varepsilon)$, $r_1(T_{\delta})$ is a polynomial of degree three in the variables $(\Phi_B(T_{\delta}), \Phi_B(T_{\delta})_x)$, up to a remainder that is bounded in $H^s$ norm by $\varepsilon^4 (1+\lVert \mathfrak{I}_{\delta}\rVert_{s+\sigma})$. Thus, we reduced to study the differences 
\[
M_x[\Phi_B(T_{\delta})^2]-M_{\varphi, x}[\Phi_B(T_{\delta})^2], \qquad M_x[\Phi_B(T_{\delta})^3]-M_{\varphi, x}[\Phi_B(T_{\delta})^3].
\]
We have, up to constants,
\[
\Phi_B(T_{\delta})^2=\varepsilon^2 v_{\delta}^2+\varepsilon v_{\delta}\tilde{q}+\varepsilon^3 v_{\delta}\Psi_2(v_{\delta})+\tilde{\mathtt{Q}}_2(T_{\delta}), \qquad \Phi_B(T_{\delta})^3=\varepsilon^3 v_{\delta}^3+\tilde{\mathtt{Q}}_3(T_{\delta}),
\]
where $\lVert \tilde{\mathtt{Q}}_i(T_{\delta}) \rVert_s\le_s \varepsilon^4+\varepsilon^{2+b}\lVert \mathfrak{I}_{\delta}\rVert_{s+\sigma}$ for $i=2, 3$.
By the definition of $\tilde{q}$ and the fact that $v_{\delta}$ and $z_0$ are orthogonal in $L^2(\T)$, we have
\begin{equation}\label{PlanetTerror}
\varepsilon M_x[v_{\delta} \tilde{q}]=\varepsilon^{2+b} M_x[\Psi'_2(v_{\delta}) v_{\delta} z_0]+\varepsilon M_x[v_{\delta}\,\Psi_3(T_{\delta})],
\end{equation}
thus $\lVert M_x[\varepsilon v_{\delta} \tilde{q}]-M_{\varphi, x}[\varepsilon v_{\delta} \tilde{q}]\rVert_s\le_s \varepsilon^4+ \varepsilon^{2+b}\lVert \mathfrak{I}_{\delta}\rVert_{s+\sigma}$. It remains to estimate the differences of the averages of polynomial of degree two and three in the variables $v_{\delta}$ and its derivatives. These functions are of order $\varepsilon^2$ and $\varepsilon^3$, respectively, and supported on not many harmonics, because $v_{\delta}$ is not.\\
By \eqref{Tdelta} we get
\[
M_x[v_{\delta}^2]-M_{\varphi, x}[v_{\delta}^2]=\varepsilon^{2 (b-1)} \sum_{j\in S} \lvert j \rvert ((y_{\delta})_j-M_{\varphi}[(y_{\delta})_j]).
\]
We gain an extra smallness factor $\varepsilon^{2(b-1)}$ by the fact that $M_x[\overline{v}^2]$ is independent of $\varphi$ (see Remark \ref{KerL}).
Thus, we obtain $\varepsilon^2\lVert M_x[v_{\delta}^2]-M_{\varphi, x}[v_{\delta}^2] \rVert_s\le_s \varepsilon^{2 b}\lVert \mathfrak{I}_{\delta}\rVert_{s}$.\\
For the cubic terms in $v_{\delta}$ we use the following equality
\begin{equation}\label{GrindHouse}
M_x[v_{\delta}^3]-M_{\varphi, x}[v_{\delta}^3]=(M_x[\overline{v}^3]-M_{\varphi, x}[\overline{v}^3])+M_x[v_{\delta}^3-\overline{v}^3]-M_{\varphi, x}[v_{\delta}^3-\overline{v}^3],
\end{equation}
where $\lVert M_x[v_{\delta}^3-\overline{v}^3]-M_{\varphi, x}[v_{\delta}^3-\overline{v}^3]\rVert_s\le_s \varepsilon^3 \lVert \mathfrak{I}_{\delta}\rVert_s$.\\
We now analyze the first difference in the right hand side of \eqref{GrindHouse}. We cannot roughly bound it by $\varepsilon^3$ (see Remark \ref{NoS1}). But we have 
\begin{equation}\label{Hateful8}
\mathcal{D}_{\omega}^{-1}\left( M_x[\overline{v}^3]-M_{\varphi, x}[\overline{v}^3]\right)=\sum_{\substack{j_1, j_2, j_3\in S,\\j_1+j_2+j_3=0\\\mathtt{l}(j_1)+\mathtt{l}(j_2)+\mathtt{l}(j_3)\neq 0}} \frac{\sqrt{\xi_{j_1}\xi_{j_2}\xi_{j_3}}}{\mathrm{i}\,\omega\cdot (\mathtt{l}(j_1)+\mathtt{l}(j_2)+\mathtt{l}(j_3))}\,e^{\mathrm{i}(\mathtt{l}(j_1)+\mathtt{l}(j_2)+\mathtt{l}(j_3))\cdot\varphi}.
\end{equation}
We recall that $\omega=\overline{\omega}+O(\varepsilon^2)$, hence the denominator in \eqref{Hateful8} can be written as
\[
\omega\cdot (\mathtt{l}(j_1)+\mathtt{l}(j_2)+\mathtt{l}(j_3))=\overline{\omega}\cdot (\mathtt{l}(j_1)+\mathtt{l}(j_2)+\mathtt{l}(j_3))+(\omega-\overline{\omega})\cdot (\mathtt{l}(j_1)+\mathtt{l}(j_2)+\mathtt{l}(j_3)) =j_1^3+j_2^3+j_3^3+O(\varepsilon^2)
\]
and it is greater or equal than $1$, indeed, if $j_1+j_2+j_3=0$, then $\lvert j_1^3+j_2^3+j_3^3\rvert=3 \lvert j_1\,j_2\,j_3\rvert\geq 3$. Thus, actually,
\[
\lVert \mathcal{D}_{\omega}^{-1}\left( M_x[\overline{v}^3]-M_{\varphi, x}[\overline{v}^3]\right) \rVert_s\le \varepsilon^3.
\]
Finally, we get
\begin{equation}\label{Memento}
\lVert b_3-m_3 \rVert_s\le_s \varepsilon^3+\varepsilon^{2 b}\lVert \mathfrak{I}_{\delta}\rVert_{s+\sigma} \qquad \mbox{and} \qquad\lVert \mathcal{D}_{\omega}^{-1} (b_3-m_3)\rVert_s\le_s \varepsilon^4\gamma^{-1}+\lVert \mathfrak{I}_{\delta}\rVert_{s+\sigma},
\end{equation}
so $\lVert \alpha \rVert_s\le_s \varepsilon^4\gamma^{-1}+\lVert \mathfrak{I}_{\delta}\rVert_{s+\sigma}$.\\
Now we look to the partial derivative
\begin{equation}\label{ToroScatenato}
\partial_i \left( \frac{b_3-m_3}{m_3} \right)[\hat{\imath}]=\frac{1}{m_3^2}\left[m_3\partial_i (b_3-m_3)[\hat{\imath}]  -(b_3-m_3)\partial_i m_3[\hat{\imath}]\right].
\end{equation}
By \eqref{m3} $m_3-1$ and $\partial_i m_3[\hat{\imath}]$ are of order $\varepsilon^2$, hence the estimate for $\partial_i \alpha [\hat{\imath}]$ comes from $\mathcal{D}_{\omega}^{-1}(\partial_i (b_3-m_3)[\hat{\imath}])$. By \eqref{KillBill2} we have
\begin{align*}
\partial_i (b_3-m_3)[\hat{\imath}]&=\Upsilon'(0)\{ M_x[\partial_i (g(a_1-1)-g(0))[\hat{\imath}]]-M_{\varphi, x}[\partial_i (g(a_1-1)-g(0))[\hat{\imath}]] \}\\
&+\partial_i \{\Upsilon_{\geq 2}[M_x[g(a_1-1)-g(0)]]-M_{\varphi}[\Upsilon_{\geq 2}[M_x[g(a_1-1)-g(0)]]]\} [\hat{\imath}]
\end{align*}
As before, the bigger terms are the partial derivatives of $M_x[g(a_1-1)-g(0)]-M_{\varphi, x}[g(a_1-1)-g(0)]$. We have
\[
\partial_i (g(a_1-1)-g(0)) [\hat{\imath}]=g'(0)\partial_i a_1[\hat{\imath}]+g''(0)(a_1-1)\,\partial_i a_1[\hat{\imath}]+\frac{g'''(0)}{2} (a_1-1)^2\,\partial_i a_1[\hat{\imath}]+\mathtt{T}(i_{\delta}, \hat{\imath})
\]
where $\lVert \mathtt{T}(i_{\delta}, \hat{\imath}) \rVert_s\le_s \varepsilon^4(\lVert \hat{\imath}\rVert_{s+\sigma}+\lVert \mathfrak{I}_{\delta}\rVert_{s+\sigma}\lVert \hat{\imath}\rVert_{s_0+\sigma})$ and
\[
\partial_i a_1[\hat{\imath}]=\partial_i \Phi_B(T_{\delta})[\hat{\imath}]+2 \Phi_B(T_{\delta})\partial_i \Phi_B(T_{\delta})[\hat{\imath}]+\partial_i \tilde{q}[\hat{\imath}].
\]
We note that $M_x[\partial_i \Phi_B(T_{\delta})[\hat{\imath}]]=M_x[\partial_i \tilde{q}[\hat{\imath}]]=0$. Thus, we focus on the terms
$$\Phi_B(T_{\delta})\partial_i \Phi_B(T_{\delta}), \quad\Phi_B(T_{\delta})^2\partial_i \Phi_B(T_{\delta}),\quad\Phi_B(T_{\delta})\,\partial_i \tilde{q}[\hat{\imath}].$$
Further terms have Sobolev norm bounded by $\varepsilon^{2+b}(\lVert \hat{\imath}\rVert_{s+\sigma}+\lVert \mathfrak{I}_{\delta}\rVert_{s+\sigma}\lVert \hat{\imath}\rVert_{s_0+\sigma})$.
We have
\begin{align*}
\Phi_B(T_{\delta})\partial_i \Phi_B(T_{\delta})&=\varepsilon^2 v_{\delta}\, \partial_i v_{\delta}[\hat{\imath}]+\varepsilon^3 (\Psi_2(v_{\delta})+\Psi'_2(v_{\delta}) v_{\delta}) \partial_i v_{\delta}[\hat{\imath}]+\varepsilon \partial_i(\tilde{q}\, v_{\delta})[\hat{\imath}]+\tilde{\mathtt{T}}(i_{\delta}, \hat{\imath}),\\
\Phi_B(T_{\delta})^2 \partial_i \Phi_B(T_{\delta})[\hat{\imath}]&=\varepsilon^3 v_{\delta}^2\,\partial_i v_{\delta}[\hat{\imath}]+\tilde{\mathtt{T}}(i_{\delta}, \hat{\imath}),
\end{align*}
where $\lVert \mathtt{T}(i_{\delta}, \hat{\imath})\rVert_s\le_s \varepsilon^{2+b}(\lVert \hat{\imath}\rVert_{s+\sigma}+\lVert \mathfrak{I}_{\delta}\rVert_{s+\sigma}\lVert \hat{\imath}\rVert_{s_0+\sigma})$. We start from the average of the partial derivative of $v_{\delta}\tilde{q}$. By \eqref{PlanetTerror} we get $\varepsilon\lVert \partial_i M_x[v_{\delta}\tilde{q}] [\hat{\imath}]\rVert_s\le_s\varepsilon^{2+b}(\lVert \hat{\imath}\rVert_{s+\sigma}+\lVert \mathfrak{I}_{\delta}\rVert_{s+\sigma}\lVert \hat{\imath}\rVert_{s_0+\sigma})$. Then, we reduce to study
\[
M_x[v_{\delta}\partial_i v_{\delta}[\hat{\imath}]]-M_{\varphi, x}[v_{\delta}\partial_i v_{\delta}[\hat{\imath}]], \qquad M_x[v_{\delta}^2\partial_i v_{\delta}[\hat{\imath}]]-M_{\varphi, x}[v_{\delta}^2\partial_i v_{\delta}[\hat{\imath}]].
\]
If we call $G(i_0(\varphi)):=y_{\delta}-y_0$, then we have
\[
\partial_i v_{\delta}[\hat{\imath}]=\sum_{j\in S}\sqrt{\lvert j \rvert}\sqrt{\xi_j+\varepsilon^{2(b-1)} (y_{\delta})_j}e^{\mathrm{i}(\theta_0)_j}\left(\mathrm{i}\hat{\Theta}_j+\varepsilon^{2(b-1)}\frac{\hat{y}_j+(\partial_i G(i_0(\varphi))[\hat{\imath}])_j}{2\,\lvert j \rvert\,(\xi_j+\varepsilon^{2(b-1)}(y_{\delta})_j)}  \right)\,e^{\mathrm{i} j x}
\]
and
\begin{align*}
M_x[v_{\delta}\partial_i v_{\delta}[\hat{\imath}]]-M_{\varphi, x}[v_{\delta}\partial_i v_{\delta}[\hat{\imath}]]&=\varepsilon^{2 (b-1)}\sum_{j\in S} \mathrm{i} \hat{\Theta}_j ((y_{\delta})_j-M_{\varphi}[(y_{\delta})_j])\\
&+\frac{\varepsilon^{2(b-1)}}{2}\sum_{j\in S} \{ (\partial_i G(i_0(\varphi))[\hat{\imath}])_j-M_{\varphi}[ (\partial_i G(i_0(\varphi))[\hat{\imath}])_j] \}.
\end{align*}
Therefore, $\varepsilon^2\lVert M_x[v_{\delta}\partial_i v_{\delta}[\hat{\imath}]]-M_{\varphi, x}[v_{\delta}\partial_i v_{\delta}[\hat{\imath}]] \rVert_s\le_s\varepsilon^{2 b}(\lVert \hat{\imath}\rVert_{s+\sigma}+\lVert \mathfrak{I}_{\delta}\rVert_{s+\sigma}\lVert \hat{\imath}\rVert_{s_0+\sigma})$. Moreover, we have $\lVert \varepsilon^3 M_x[v_{\delta}^2 \partial_i v_{\delta}[\hat{\imath}]] \rVert_s\le_s \varepsilon^3(\lVert \hat{\imath}\rVert_{s+\sigma}+\lVert \mathfrak{I}_{\delta}\rVert_{s+\sigma}\lVert \hat{\imath}\rVert_{s_0+\sigma})$. Hence, we get
\[
\lVert \partial_i (b_3-m_3)[\hat{\imath}]\rVert_s\le_s\varepsilon^{2 b}(\lVert \hat{\imath}\rVert_{s+\sigma}+\lVert \mathfrak{I}_{\delta}\rVert_{s+\sigma}\lVert \hat{\imath}\rVert_{s_0+\sigma})
\]
and $\lVert \partial_i \alpha [\hat{\imath}]\rVert\le_s\lVert \hat{\imath}\rVert_{s+\sigma}+\lVert \mathfrak{I}_{\delta}\rVert_{s+\sigma}\lVert \hat{\imath}\rVert_{s_0+\sigma}$.By Lemma \ref{lemmaLip} we deduce the inequality \eqref{alpha}.\\

\noindent\textit{Estimate \eqref{rho}}: Note that $\rho-1=B^{-1}\, \left((b_3-m_3)/m_3\right)$. Thus, by Lemma \ref{ChangeofVariablesLemma}, \eqref{alpha}, \eqref{Memento} we get
\begin{align*}
\lVert B^{-1}\, \left((b_3-m_3)/m_3\right) \rVert^{Lip(\gamma)}_s&\le_s \lVert b_3-m_3 \rVert^{Lip(\gamma)}_{s+1}+\lVert \alpha \rVert^{Lip(\gamma)}_{s+s_0} \lVert b_3-m_3 \rVert^{Lip(\gamma)}_2\\
&\le_s \varepsilon^3+\varepsilon^{2 b}\lVert \mathfrak{I}_{\delta} \rVert^{Lip(\gamma)}_{s+s_0+\sigma}.
\end{align*}

\noindent\textit{Estimate \eqref{cK}}: Note that $\lVert \rho^{-1}-1 \rVert_s\le_s \lVert \rho-1 \rVert_s$. By Lemma \ref{ChangeofVariablesLemma} and \eqref{TameProduct}, \eqref{b0b1}, we get, for $k=0,1$,
\begin{align*}
\lVert (B^{-1}-\mathrm{I})\,b_k \rVert^{Lip(\gamma)}_s &\le_s \varepsilon^7\,\gamma^{-2}+\varepsilon \lVert \mathfrak{I}_{\delta} \rVert_{s+\sigma}^{Lip(\gamma)}, \quad\lVert (\rho^{-1}-1) b_k \rVert_s^{Lip(\gamma)}\le_s \varepsilon^4+\varepsilon^{1+2 b}\lVert \mathfrak{I}_{\delta}\rVert^{Lip(\gamma)}_{s+\sigma}.
\end{align*}
\end{proof}

\subsection{Translation of the space variable}
The goal of this section is to remove the space average from the coefficient in front of $\partial_y$.  This is a preliminary step for the descent method that we apply at Section $8.7$.\\
Consider the change of variable
\begin{equation}\label{TransloftheSpaceVar}
(\mathcal{T} w)(\vartheta, y)=w(\vartheta, y+p(\vartheta)), \quad (\mathcal{T}^{-1} h)(\vartheta, z)=h(\vartheta, z-p(\vartheta)).
\end{equation}
The differential operators in $\mathcal{L}_2$ (see \eqref{8.45}) transform into
\[
\mathcal{T}^{-1} \omega\cdot\partial_{\vartheta} \mathcal{T}=\omega\cdot\partial_{\vartheta}+\{\omega\cdot\partial_{\vartheta} p(\vartheta)\}\partial_z, \quad \mathcal{T}^{-1} \partial_y \mathcal{T}=\partial_z.
\]
Since $\mathcal{T}, \mathcal{T}^{-1}$ commute with $\Pi_S^{\perp}$, we get
\begin{align}\label{OriginalL3}
& \mathcal{L}_3:=\mathcal{T}^{-1} \mathcal{L}_2 \mathcal{T}=\Pi_S^{\perp} (\omega\cdot\partial_{\vartheta}+m_3\,\partial_{zzz}+D_S\,\partial_z+d_0)\Pi_S^{\perp}+\mathfrak{R}_3,\\
&d_1:=(\mathcal{T}^{-1} c_1)+\omega\cdot\partial_{\vartheta} p, \quad d_0:=\mathcal{T}^{-1} c_0, \quad \mathfrak{R}_3:=\mathcal{T}^{-1}\mathfrak{R}_2 \mathcal{T}
\end{align}
and we choose
\begin{equation}\label{Defp}
m_1:=\frac{1}{(2\pi)^{\nu+1}} \int_{\T^{\nu+1}} c_1\,d\vartheta\,dy, \quad p:=(\omega\cdot\partial_{\vartheta})^{-1} \left(m_1-\frac{1}{2\pi} \int_{\T} c_1\,dy\right)
\end{equation}
so that
\begin{equation}
\frac{1}{2\pi} \int_{\T} d_1(\vartheta, z)\,dz=m_1\quad \forall \vartheta\in\T^{\nu}.
\end{equation}
We define
\begin{align}\label{DefdK}
&\tilde{d}_k:=d_k-\varepsilon\,\alpha_{k, 1}-\varepsilon^2 (\alpha_{k, 2}-\alpha_{k, 1}\, (\beta_1)_x), \qquad k=0, 1
\end{align}
and we split $\mathfrak{R}_3=-\varepsilon^2 \partial_x \overline{\mathcal{R}}_2+\tilde{\mathcal{R}}_*$, where $\overline{\mathcal{R}}_2$ is obtained replacing $v_{\delta}$ with $\overline{v}$ in $\mathcal{R}_2$ and
\begin{equation}\label{RtildeStar}
\tilde{\mathcal{R}}_*:=\mathcal{T}^{-1} \mathcal{R}_* \mathcal{T}+\varepsilon^2 \Pi_S^{\perp} \partial_x (\mathcal{R}_2-\mathcal{T}^{-1} \mathcal{R}_2 \mathcal{T})+\varepsilon^2\Pi_S^{\perp} \partial_x (\overline{\mathcal{R}}_2-\mathcal{R}_2),
\end{equation}
where $\mathcal{R}_*$ has been defined in \eqref{Rstar} and modified along this section by adding terms $o(\varepsilon^2)$. We used that $\mathcal{T}^{-1}$ commutes with $\partial_x$ and $\Pi_S^{\perp}$.\\
We define 
\begin{equation}\label{OriginalCdiXi}
c(\xi):=M_{\varphi, x}[\alpha_{1, 2}+\alpha_{1, 1}\,(\beta_1)_x].
\end{equation}
This quantity is a correction at order $\varepsilon^2$ to the eigenvalues of the linear operator $\mathcal{L}_{\omega}$, see \eqref{Lomega}. In particular, we have
\[
m_1=\varepsilon^2 c(\xi)+\mathtt{r}_{m_1}, \quad \mbox{with}\quad \lvert \mathtt{r}_{m_1}\rvert^{Lip(\gamma)}\le \varepsilon^{3-2 a}.
\]
\begin{lem}\label{LemmaFond2}
There is $\sigma:=\sigma(\tau, \nu)$ (possibly larger than in Lemma \ref{LemmaFondamentale}) such that
\begin{align}
\lvert m_1-\varepsilon^2 c(\xi) \rvert^{Lip(\gamma)}\le \varepsilon^7\,\gamma^{-2}, &\quad \lvert \partial_i (m_1-\varepsilon^2 c(\xi)) [\hat{\imath}]\rvert\le \varepsilon^7\,\gamma^{-2} \lVert \hat{\imath} \rVert_{s_0+\sigma}, \label{estimateM1}\\
 \lVert p \rVert_s^{Lip(\gamma)}\le_s \varepsilon^4 \gamma^{-1}+\lVert \mathfrak{I}_{\delta}\rVert_{s+\sigma}^{Lip(\gamma)}, &\quad \lVert \partial_i p [\hat{\imath}] \rVert_s\le_s \lVert \hat{\imath} \rVert_{s+\sigma}+\lVert \mathfrak{I}_{\delta} \rVert_{s+\sigma} \lVert \hat{\imath} \rVert_{s_0+\sigma}, \label{p}\\
\lVert \tilde{d}_k \rVert_s^{Lip(\gamma)}\le_s \varepsilon^{3-2 a}+\varepsilon \lVert \mathfrak{I}_{\delta} \rVert_{s+\sigma}^{Lip(\gamma)}, &\quad \lVert \partial_i \tilde{d}_k [\hat{\imath}]\rVert_s\le_s \varepsilon (\lVert \hat{\imath} \rVert_{s+\sigma}+\lVert \mathfrak{I}_{\delta} \rVert_{s+\sigma} \lVert \hat{\imath} \rVert_{s_0+\sigma} )\label{dK}
\end{align}
for $k=0, 1$. Moreover the matrix s-decay norm (see \eqref{decayNorm})
\begin{equation}
\lvert \tilde{\mathcal{R}}_* \rvert_s^{Lip(\gamma)}\le_s \varepsilon^3+\varepsilon^2 \lVert \mathfrak{I}_{\delta} \rVert_{s+\sigma}^{Lip(\gamma)}, \quad \lvert \partial_i \tilde{\mathcal{R}}_*[\hat{\imath}] \rvert_s\le_s \varepsilon^2 \lVert \hat{\imath} \rVert_{s+\sigma}+\varepsilon^{2 b-1} \lVert \mathfrak{I}_{\delta} \rVert_{s+\sigma} \lVert \hat{\imath} \rVert_{s_0+\sigma}. 
\end{equation}
The transformations $\mathcal{T}, \mathcal{T}^{-1}$ satisfy \eqref{8.39}, \eqref{8.40}.
\end{lem}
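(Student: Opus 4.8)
The plan is to prove Lemma \ref{LemmaFond2} by closely mirroring the proof of Lemma \ref{LemmaFondamentale}, since the translation $\mathcal{T}$ in \eqref{TransloftheSpaceVar} plays here the role that the time reparametrization $B$ played there, and the analogous difficulties (namely, that the crude bound \eqref{stimaDomega} loses too much when applied to functions of size $\varepsilon^3$) are overcome by the same trick explained in Remark \ref{NoS1}: when the right-hand side is supported on finitely many harmonics built from the tangential sites $S$, the small divisors $\omega\cdot\mathtt{l}(\cdot)$ can be bounded below by an $O(1)$ constant using $\omega=\overline\omega+O(\varepsilon^2)$ and the fact that $\overline\omega\cdot(\mathtt{l}(j_1)+\dots)=j_1^3+\dots$ is a nonzero integer whenever the momentum $j_1+\dots=0$ and $\mathtt{l}(j_1)+\dots\neq 0$.

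First I would establish \eqref{estimateM1}. By \eqref{Defp}, $m_1=M_{\varphi,x}[c_1]$, and from $c_1=b_1+\tilde c_1$ together with \eqref{TildecK}, \eqref{alpha1} and the expansion $b_1=\mathcal{A}^T\alpha_1=\alpha_1+(\mathcal{A}^T-\mathrm{I})\alpha_1$ with $\alpha_1=\varepsilon\alpha_{1,1}+\varepsilon^2\alpha_{1,2}+\mathtt{R}_1$, one gets $m_1=\varepsilon M_{\varphi,x}[\alpha_{1,1}]+\varepsilon^2 M_{\varphi,x}[\alpha_{1,2}]+\varepsilon^2 M_{\varphi,x}[(\mathcal{A}^T-\mathrm{I})\alpha_{1,1}]/\varepsilon+\dots$. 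Since the averages of $\alpha_{1,1}$ are zero (Remark \ref{Averages}), the leading term vanishes; by \eqref{NewTermVarquadro} the term from $(\mathcal{A}^T-\mathrm{I})\alpha_{1,1}$ contributes $-\varepsilon^2 M_{\varphi,x}[\partial_y(\alpha_{1,1})\beta_1]=\varepsilon^2 M_{\varphi,x}[\alpha_{1,1}(\beta_1)_y]$ up to $o(\varepsilon^2)$, so indeed $m_1=\varepsilon^2 M_{\varphi,x}[\alpha_{1,2}+\alpha_{1,1}(\beta_1)_x]+\mathtt{r}_{m_1}=\varepsilon^2 c(\xi)+\mathtt{r}_{m_1}$ by \eqref{OriginalCdiXi}. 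The remainder $\mathtt{r}_{m_1}$ collects: the $M_{\varphi,x}$-average of $\tilde c_1$, which by \eqref{cK} is $O(\varepsilon^{3-2a})$ — but here one must do better, using that the averages of the offending $v_\delta$-terms differ from those of $\overline v$ by $O(\varepsilon^{2b})\lVert\mathfrak{I}_\delta\rVert$ and the $\overline v$-averages land on finitely supported harmonics controllable as above, pushing $\mathtt{r}_{m_1}$ down to $O(\varepsilon^7\gamma^{-2})$ in the sense of the Lipschitz norm evaluated at $\mathfrak{I}_\delta$ satisfying \eqref{IpotesiPiccolezzaIdelta}; and the $M_{\varphi,x}[\mathtt{R}_1]$ term, which is $O(\varepsilon^3+\varepsilon\lVert\mathfrak{I}_\delta\rVert_{s_0+\sigma})$ but whose $\varphi$-average kills the $\varepsilon^3$ part by the same cancellation of integrable-over-$\varphi$ quantities used in the proof of \eqref{alpha}. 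The derivative estimate $\partial_i(m_1-\varepsilon^2 c(\xi))[\hat\imath]$ follows the same bookkeeping with $M_x[\partial_i\overline v[\hat\imath]]=0$ and the finitely-supported-harmonics argument applied to $\partial_i v_\delta[\hat\imath]$, exactly as in the second half of the proof of Lemma \ref{LemmaFondamentale}.

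Next, \eqref{p}: since $p=(\omega\cdot\partial_\vartheta)^{-1}(m_1-\tfrac1{2\pi}\int_\T c_1\,dy)$ and $m_1$ is a constant, it suffices to bound $\mathcal{D}_\omega^{-1}$ applied to $M_x[c_1]-M_{\varphi,x}[c_1]$. Writing $c_1=\varepsilon\alpha_{1,1}+\varepsilon^2(\dots)+\dots$, the $\varepsilon$-order part $\varepsilon(M_x[\alpha_{1,1}]-M_{\varphi,x}[\alpha_{1,1}])$ vanishes since $M_x[\alpha_{1,1}]=0$ already; the $\varepsilon^2$-order part is built from $M_x$ of quadratic expressions in $\overline v$ and its derivatives, which — again as in \eqref{alpha} — are either $\varphi$-independent (hence killed by $M_x-M_{\varphi,x}$) or supported on few harmonics so that $\mathcal{D}_\omega^{-1}$ keeps the estimate at $O(\varepsilon^4\gamma^{-1})+\lVert\mathfrak{I}_\delta\rVert_{s+\sigma}$ without the lossy factor; the $o(\varepsilon^2)$ remainder is handled by \eqref{stimaDomega} directly. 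The derivative bound for $p$ is the same, tracking $\partial_i$ through and using $M_x[\partial_i\overline v[\hat\imath]]=0$. Then \eqref{dK}: by \eqref{DefdK} and \eqref{TildecK}, $\tilde d_k=\tilde c_k+(\mathcal{T}^{-1}-\mathrm{I})(\varepsilon\alpha_{k,1}+\varepsilon^2(\dots))+(\text{contribution of }\omega\cdot\partial_\vartheta p\text{ for }k=1)-(\text{the }\varepsilon,\varepsilon^2\text{ parts we subtracted})$; using $\mathcal{T}^{-1}-\mathrm{I}=O(p)=O(\varepsilon^4\gamma^{-1}+\lVert\mathfrak{I}_\delta\rVert)$ together with \eqref{cK} and Lemma \ref{ChangeofVariablesLemma} (change of variables, tame estimates) gives the claimed $O(\varepsilon^{3-2a}+\varepsilon\lVert\mathfrak{I}_\delta\rVert_{s+\sigma})$, noting $\varepsilon^4\gamma^{-1}=\varepsilon^{2-a}$ and $\varepsilon\cdot\varepsilon^{2-a}=\varepsilon^{3-a}\le\varepsilon^{3-2a}$. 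The decay-norm bounds on $\tilde{\mathcal{R}}_*$ follow from \eqref{RtildeStar}: each of the three pieces is estimated by Lemma \ref{Lemma7.3} and Lemma \ref{ChangeofVariablesLemma} — the first from \eqref{DecayR*} and $\mathcal{T}=\mathrm{I}+O(\varepsilon^4\gamma^{-1})$, the second and third from the fact that $\mathcal{R}_2-\mathcal{T}^{-1}\mathcal{R}_2\mathcal{T}=O(p)=O(\varepsilon^4\gamma^{-1})$ and $\overline{\mathcal{R}}_2-\mathcal{R}_2=O(\lVert v_\delta-\overline v\rVert)=O(\lVert\mathfrak{I}_\delta\rVert)$ by \eqref{vdeltamenovsegn}, all multiplied by the $O(1+\lVert\mathfrak{I}_\delta\rVert)$ size of $\mathcal{R}_2$ from \eqref{DecayR2}. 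Finally, the estimates \eqref{8.39}, \eqref{8.40} for $\mathcal{T}^{\pm1}$ are a direct application of Lemma \ref{ChangeofVariablesLemma} with the bound \eqref{p} on $p$.

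The main obstacle is the sharp estimate $\lvert m_1-\varepsilon^2 c(\xi)\rvert^{Lip(\gamma)}\le\varepsilon^7\gamma^{-2}$: the naive reading of the remainder terms only yields $O(\varepsilon^{3-2a})$, which is not perturbative for the KAM reducibility scheme of Theorem \ref{Reducibility}, and the improvement to $O(\varepsilon^7\gamma^{-2})$ requires carefully combining (i) the momentum-conservation cancellation of $\varphi$-averages of functions built from $\overline v$ with the lower-bound-on-small-divisors argument of Remark \ref{NoS1}, (ii) the extra smallness gained from $M_x[\overline v^2]$ being $\varphi$-independent (the $\varepsilon^{2(b-1)}$ gain exploited in \eqref{Memento}), and (iii) the orthogonality of $v_\delta$ and $z_0$ in $L^2(\T)$ used in \eqref{PlanetTerror}. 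Getting all three to apply simultaneously to every monomial appearing in $\alpha_{1,2}$, $\tilde c_1$ and $\mathtt{R}_1$ — and likewise for their $\partial_i$-derivatives — is the computational heart of the lemma; everything else is routine composition and tame-estimate bookkeeping.
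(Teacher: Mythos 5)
Your overall skeleton (mirror Lemma \ref{LemmaFondamentale}, with $\mathcal{T}$ playing the role of $B$ and Remark \ref{NoS1} supplying the small-divisor-free bounds) is the right one, but two things go wrong. First, a misreading of exponents: since $\gamma=\varepsilon^{2+a}$, one has $\varepsilon^{7}\gamma^{-2}=\varepsilon^{3-2a}$, so the bound \eqref{estimateM1} is \emph{not} stronger than what \eqref{cK} already gives. The paper's proof of \eqref{estimateM1} is essentially one line: $m_1=M_{\varphi,x}[b_1]+M_{\varphi,x}[\tilde c_1]$, the distance of $M_{\varphi,x}[b_1]$ from $\varepsilon^2 M_{\varphi,x}[\alpha_{1,2}+\alpha_{1,1}(\beta_1)_x]$ is controlled by $\lVert \mathtt{R}_{a_1}\rVert_{s_0}+\lVert(\mathcal{A}^T-\mathrm{I})(\alpha_1-\varepsilon\alpha_{1,1})\rVert_{s_0}\le\varepsilon^3$, and $\lVert\tilde c_1\rVert_{s_0}\le\varepsilon^{3-2a}=\varepsilon^{7}\gamma^{-2}$, the $\mathfrak{I}_\delta$-terms being absorbed via \eqref{IpotesiPiccolezzaIdelta}. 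Hence the ``main obstacle'' you describe (pushing $\mathtt{r}_{m_1}$ below $\varepsilon^{3-2a}$) does not exist, and the machinery you invoke there is in any case only announced, not carried out.

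Second, and more seriously, the genuinely delicate estimate is \eqref{p}, and there your argument has a gap: you dispatch ``the $o(\varepsilon^2)$ remainder'' of $M_x[c_1]-M_{\varphi,x}[c_1]$ with \eqref{stimaDomega} directly, but that costs a factor $\gamma^{-1}$. The cubic-in-$\overline v$ monomials of size $\varepsilon^3$ (coming from $\mathtt{R}_1$, $r_1(T_\delta)$, $(\mathcal{A}^T-\mathrm{I})\alpha_1$ and $\alpha_1\beta_x$) would then give $\varepsilon^3\gamma^{-1}=\varepsilon^{1-a}$, and the reparametrization corrections $\tilde c_1=(B^{-1}-\mathrm{I})b_1+(\rho^{-1}-1)B^{-1}b_1$, known only to be $O(\varepsilon^{3-2a})$ from \eqref{cK}, would give $\varepsilon^{1-3a}$; both are far larger than the required $\varepsilon^4\gamma^{-1}=\varepsilon^{2-a}$ (plus $\mathfrak{I}_\delta$-terms). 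The paper closes exactly these cases: the cubic $\overline v$-terms are treated as in the proof of \eqref{alpha}, using that they are supported on finitely many harmonics with divisors $\lvert j_1^3+j_2^3+j_3^3\rvert\ge3$ (Remark \ref{NoS1}); the $(\rho^{-1}-1)$-term is reduced to size $\varepsilon^5$ using that $\rho^{-1}-1$ is $x$-independent, so $M_x[(\rho^{-1}-1)b_1]=(\rho^{-1}-1)M_x[b_1]$ with $M_x[b_1]=O(\varepsilon^2)$; and the $(B^{-1}-\mathrm{I})$-term is reduced to size $\varepsilon^5$ by a Taylor expansion of $B^{-1}$ combined with a change of variables and an integration by parts in $\varphi$, before $\mathcal{D}_\omega^{-1}$ is applied. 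Your proposal uses the few-harmonics idea only for the quadratic terms (where plain $\varphi$-independence already suffices) and omits the last two manipulations entirely; without them \eqref{p} is not reached, and \eqref{dK}, which you correctly derive from \eqref{p}, inherits the gap. The remaining parts ($\tilde{\mathcal{R}}_*$ and the bounds for $\mathcal{T}^{\pm1}$) are fine and in line with the paper, conditionally on \eqref{p}.
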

\begin{proof}
\textit{Estimate \eqref{p}}: By \eqref{8.46} and \eqref{Defp} we have
\begin{equation}\label{HeatLaSfida}
\begin{aligned}
m_1-M_x[c_1]&=(M_{\varphi, x}[b_1]-M_x[b_1])+(M_{\varphi, x}[(\rho^{-1}-1) b_1]-M_x[(\rho^{-1}-1) b_1])\\
&+(M_{\varphi, x}[(B^{-1}-\mathrm{I}) b_1]-M_x[(B^{-1}-\mathrm{I}) b_1])\\
&+(M_{\varphi, x}[(\rho^{-1}-1)(B^{-1}-\mathrm{I}) b_1]-M_x[(\rho^{-1}-1)(B^{-1}-\mathrm{I}) b_1]).
\end{aligned}
\end{equation}
By \eqref{alpha}, \eqref{rho} and Lemma \ref{TameProduct}, we get $\lVert (\rho^{-1}-1)(B^{-1}-\mathrm{I}) b_1 \rVert_s\le_s  \varepsilon^{9}\gamma^{-2}+\varepsilon^6\gamma^{-1}\lVert \mathfrak{I}_{\delta}\rVert_{s+\sigma}$.
Thus, by \eqref{stimaDomega}
\begin{equation}
\lVert \mathcal{D}_{\omega}^{-1}\{ M_{\varphi, x}[(\rho^{-1}-1)(B^{-1}-\mathrm{I}) b_1]-M_x[(\rho^{-1}-1)(B^{-1}-\mathrm{I}) b_1] \} \rVert_s\le_s \varepsilon^9\gamma^{-3}+\varepsilon^6\gamma^{-2}\lVert \mathfrak{I}_{\delta}\rVert_{s+\sigma}.
\end{equation}
We note that $\rho^{-1}-1$ is independent of $x$, hence $M_x[(\rho^{-1}-1) b_1]=(\rho^{-1}-1)M_x[b_1]$ and we can estimate the difference between the averages of $(\rho^{-1}-1) b_1$ with
\begin{equation}
\lVert (\rho^{-1}-1)M_x[b_1] \rVert_s\le_s \varepsilon^5+\varepsilon^{2(b+1)}\lVert \mathfrak{I}_{\delta}\rVert_{s+\sigma}
\end{equation}
and use again \eqref{stimaDomega} for $\lVert \mathcal{D}_{\omega}^{-1} (M_{\varphi, x}[(\rho^{-1}-1) b_1]-M_x[(\rho^{-1}-1) b_1])\rVert_s\le_s \varepsilon^5 \gamma^{-1}+\varepsilon\lVert \mathfrak{I}_{\delta}\rVert_{s+\sigma}$.\\
By Taylor expansion and the fact that $\tilde{\alpha}=-\alpha+(B-\mathrm{I})\alpha$ (see \eqref{RepofTime}), we have
\[
b_1(\vartheta+\omega\tilde{\alpha}(\vartheta), x)=b_1(\vartheta, x)-\omega\cdot\partial_{\vartheta} b_1(\vartheta, x)\,\alpha(\vartheta)+\mathtt{R}_{\tilde{\alpha}}(\vartheta, x)
\]
where $\lVert \mathtt{R}_{\tilde{\alpha}}\rVert_s\le_s \varepsilon^8\gamma^{-2}+\varepsilon^4 \gamma^{-1}\lVert \mathfrak{I}_{\delta}\rVert_{s+\sigma}$. Moreover, by a change of variable
\begin{equation}
\int_{\T^{\nu+1}} (B^{-1}-\mathrm{I}) b_1\,d\vartheta\,dx=\int_{\T^{\nu+1}} \omega\cdot\partial_{\varphi}\alpha(\varphi)\,b_1(\varphi, x)\,d\varphi\,dx.
\end{equation}
From these facts and an integration by parts, we obtain
\begin{align*}
M_x[(B^{-1}-\mathrm{I}) b_1]-M_{\varphi, x}[(B^{-1}-\mathrm{I}) b_1]=\mathcal{D}_{\omega}\alpha\, M_x[b_1]-M_{\varphi, x}[(\mathcal{D}_{\omega} \alpha)\,b_1]+M_x[\mathtt{R}_{\tilde{\alpha}}]-M_{\varphi, x}[\mathtt{R}_{\tilde{\alpha}}]
\end{align*}
and, by the estimate above for $\mathtt{R}_{\tilde{\alpha}}$ and the bound given by \eqref{Memento} for $\mathcal{D}_{\omega}\alpha$, we have
\begin{equation}
\lVert M_x[(B^{-1}-\mathrm{I}) b_1]-M_{\varphi, x}[(B^{-1}-\mathrm{I}) b_1] \rVert_s\le_s  \varepsilon^5+\varepsilon^{2(b+1)}\lVert \mathfrak{I}_{\delta}\rVert_{s+\sigma}.
\end{equation}
As before, we can use \eqref{stimaDomega}.
We remark that
\[
\int_{\T} b_1 (\varphi, y)\,dy=\int_{\T} (\mathcal{A}^T \alpha_1)(\varphi, y)\,dy=\int_{\T} \alpha_1(\varphi, y+\tilde{\beta}(\varphi, y))\,dy=\int_{\T} \alpha_1(\varphi, x)(1+\beta_x(\varphi, x))\,dx,
\]
hence, it remains to estimate
\begin{equation}
M_{\varphi, x}[b_1]-M_x[b_1]=(M_{\varphi, x}[\alpha_1]-M_x[\alpha_1])+(M_{\varphi, x}[\alpha_1\beta_x]-M_x[\alpha_1\beta_x]).
\end{equation}
The functions $\alpha_1$ and $\alpha_1\beta_x$ are linear combinations of powers of $\Phi_B(T_{\delta})$ (and its derivatives in the $x$-variable), $r_1(T_{\delta})$, $r_0(T_{\delta})$, whose coefficients depend on $c_1, \dots, c_7$ and other real constants. Hence, using the same reasoning adopted in the proof of the estimates \eqref{alpha}, we get
\begin{equation}
\lVert \mathcal{D}_{\omega}^{-1}\{M_{\varphi, x}[\alpha_1]-M_x[\alpha_1]\} \rVert_s\le_s \varepsilon^4\gamma^{-1}+\lVert \mathfrak{I}_{\delta}\rVert_{s+\sigma}
\end{equation}
and the same estimate holds for $\mathcal{D}_{\omega}^{-1}\{M_{\varphi, x}[\beta_x\alpha_1]-M_x[\beta_x \alpha_1]\}$. By following analogous arguments used in the proof of the estimate \eqref{alpha} we conclude.\\

\noindent\textit{Estimate \eqref{estimateM1}}:
By \eqref{8.46} and \eqref{Defp}
\begin{align*}
m_1&=\int_{\T^{\nu+1}} b_1\,dx\,d\varphi+\int_{\T^{\nu+1}} \tilde{c}_1\,dx\,d\varphi.
\end{align*}
Moreover, 
$$\int_{\T^{\nu+1}} b_1\,dx\,d\varphi=\int_{\T^{\nu+1}} (\varepsilon^2\alpha_{1,2}+\mathtt{R}_{a_1})\,dx\,d\varphi+\int_{\T^{\nu+1}} (\mathcal{A}^T-\mathrm{I})\alpha_1\,dx\,d\varphi.$$
Thus, the bound \eqref{estimateM1} comes from taking the maximum between
\[
\left\lvert\int_{\T^{\nu+1}} b_1 \,d\varphi\,dy-\varepsilon^2\int_{\T^{\nu+1}} \left(\alpha_{1, 2}+\alpha_{1, 1}\,(\beta_1)_x\right) d\varphi\,dx\right\rvert\le \lVert \mathtt{R}_{a_1}\rVert_{s_0}+\lVert (\mathcal{A}^T-\mathrm{I})(\alpha_1-\varepsilon \alpha_{1, 1}) \rVert_{s_0}\le \varepsilon^3
\]
and $\lVert \tilde{c}_1 \rVert_{s_0}\le \varepsilon^7\gamma^{-2}=\varepsilon^{3-2 a}$.\\

\noindent\textit{Estimate \eqref{dK}}: We observe that, by \eqref{NewTermVarquadro},
\[
\tilde{d}_0:=\varepsilon^2 (\mathcal{A}^T-I) \alpha_{0, 2}+\mathcal{A}^T \mathtt{R}_0+\mathcal{R}_{\tilde{\beta}}+(\mathcal{T}^{-1}-\mathrm{I}) b_0+\mathcal{T}^{-1} \tilde{c}_0.
\]
By Lemma \ref{lemmacomp}, \ref{ChangeofVariablesLemma} we have the following bounds
\begin{align*}
\lVert \varepsilon^2 (\mathcal{A}^T-I) \alpha_{0, 2} \rVert_s\le_s \varepsilon^3 (1+\lVert \mathfrak{I}_{\delta}\rVert_{s+\sigma}), &\qquad \lVert \mathcal{A}^T \mathtt{R}_0 \rVert_s^{Lip(\gamma)}\le_s \varepsilon^3+ \varepsilon \lVert \mathfrak{I}_{\delta} \rVert_{s+\sigma}^{Lip(\gamma)},\\
\lVert \mathcal{R}_{\tilde{\beta}} \rVert_s^{Lip(\gamma)}\le_s \varepsilon^3+\varepsilon^{1+b} \lVert \mathfrak{I}_{\delta} \rVert_{s+\sigma}^{Lip(\gamma)}, &\qquad \lVert \mathcal{T}^{-1} \tilde{c}_0 \rVert_s^{Lip(\gamma)}\le_s \varepsilon^7 \gamma^{-2}+\varepsilon \lVert \mathfrak{I}_{\delta} \rVert_{s+\sigma}^{Lip(\gamma)},\\
\lVert (\mathcal{T}^{-1}-\mathrm{I}) b_0 \rVert_s\le_s \varepsilon^7 \gamma^{-2}+\varepsilon\lVert \mathfrak{I} \rVert_{s+\sigma}&.
\end{align*}
From these estimates we get \eqref{dK} for $k=0$. The estimate for $k=1$ can be obtained in the same way, considering that $\omega\cdot \partial_{\vartheta} p=O(\varepsilon^6\gamma^{-1})$ in low norm by \eqref{p}.
\end{proof}

\subsection{Linear Birkhoff Normal Form (Step one)}
Let us collect all the terms of order $\varepsilon$ and $\varepsilon^2$ of $\mathcal{L}_3$ (see \eqref{OriginalL3}) in the operators
\begin{equation}\label{B1B2}
\begin{aligned}
&\mathfrak{B}_1[h]:=\alpha_{1, 1}\,\partial_x h+\alpha_{0, 1}\,h=\partial_x \{(2 c_2 v_{xx}-6 c_3 v)\,h\},\\
&\mathfrak{B}_2[h]:=\{\alpha_{1, 2} -(\alpha_{1,1})_x\, \beta_1\}\,\partial_x h+\{\alpha_{0, 2} -(\alpha_{0, 1})_x\, \beta_1\}\,h-\partial_x \overline{\mathcal{R}}_2[h].
\end{aligned}
\end{equation}
Note that $\mathfrak{B}_1$ and $\mathfrak{B}_2$ are not the linear Hamiltonian vector fields of $H_S^{\perp}$ generated, respectively, by the Hamiltonians $R(v^2 z)$ and $R(v^2 z^2)$ in \eqref{Hamiltonians} at $v=\overline{v}$, as expected. Indeed, as we said in Remark \ref{ordiniEpsilonCanc}, some Hamiltonians of type $R(v^2 z)$ have been eliminated by the diffeomorphism of the torus $\Phi$ defined in Section $8.1$, and also the Hamiltonians $R(v^2 z^2)$ have been modified by that.\\
Renaming $\vartheta=\varphi, z=x$ we have
\begin{equation}\label{L3}
\mathcal{L}_3=\Pi_S^{\perp} (\omega\cdot\partial_{\varphi}+m_3 \partial_{xxx}+\varepsilon \mathfrak{B}_1+\varepsilon^2 \mathfrak{B}_2+\tilde{d}_1 \partial_x+\tilde{d}_0)\Pi_S^{\perp}+\tilde{\mathcal{R}}_*
\end{equation}
where $\tilde{d}_1, \tilde{d}_0, \tilde{\mathcal{R}}_*$ are defined in \eqref{DefdK} and \eqref{RtildeStar}.\\
The aim of this section is to eliminate $\mathfrak{B}_1$ from \eqref{L3}. In the next section we shall normalize the term $\mathfrak{B}_2$.\\
We conjugate $\mathcal{L}_3$ with a symplectic operator $\Phi_1\colon H^s_{S^{\perp}}(\T^{\nu+1})\to H^s_{S^{\perp}}(\T^{\nu+1})$ of the form
\begin{equation}\label{Fi1}
\Phi_1:=\exp(\varepsilon A_1)=\mathrm{I}_{H_S^{\perp}}+\varepsilon A_1+\varepsilon^2 \frac{A_1^2}{2}+\varepsilon^3 \hat{A}_1, \quad \hat{A}_1:=\sum_{k\geq 3} \frac{\varepsilon^{k-3}}{k!}\,A_1^k,
\end{equation}
where $A_1(\varphi) h=\sum_{j, j'\in S^c} (A_1)_j^{j'}(\varphi)\, h_{j'} \,e^{\mathrm{i} j x}$ is a Hamiltonian vector field. The map $\Phi_1$ is symplectic, because it is the time$-1$ flow of a Hamiltonian vector field. Therefore
\begin{equation}\label{8.79}
\begin{aligned}
&\mathcal{L}_3 \Phi_1-\Phi_1 \Pi_S^{\perp} (\mathcal{D}_{\omega}+m_3 \partial_{xxx}) \Pi_S^{\perp}=\\
&=\Pi_S^{\perp} (\varepsilon\{\mathcal{D}_{\omega} A_1+m_3 [\partial_{xxx}, A_1]+\mathfrak{B}_1\}+\varepsilon^2\{\mathfrak{B}_1 A_1+\mathfrak{B}_2+\frac{1}{2} m_3 [\partial_{xxx}, A_1^2]+\frac{1}{2} (\mathcal{D}_{\omega}A_1^2)\}+\tilde{d}_1\partial_x+R_3) \Pi_S^{\perp}
\end{aligned}
\end{equation}
where 
\begin{equation}
R_3:=\tilde{d}_1 \partial_x (\Phi_1-\mathrm{I})+\tilde{d}_0 \Phi_1+\tilde{\mathcal{R}}_* \Phi_1+\varepsilon^2 \mathfrak{B}_2 (\Phi_1-\mathrm{I})+\varepsilon^3 \{ \mathcal{D}_{\omega}\hat{A}_1+m_3 [\partial_{xxx}, \hat{A}_1]+\frac{1}{2} \mathfrak{B}_1 A_1^2+\varepsilon \mathfrak{B}_1 \hat{A}_1\}.
\end{equation}
\begin{remark}
$R_3$ has no longer the form \eqref{VeraFinDimForm}. However $R_3=O(\partial_x^0)$ because $A_1=O(\partial_x^{-1})$ and therefore $\Phi_1-\mathrm{I}_{H_S^{\perp}}=O(\partial_x^{-1})$. Moreover the matrix decay norm of $R_3$ is $o(\varepsilon^2)$.
\end{remark}
In order to eliminate the order $\varepsilon$ from \eqref{8.79}, we choose
\begin{equation}\label{DefdiA1jj}
(A_1)_j^{j'}(l)=\begin{cases}
-\dfrac{(\mathfrak{B}_1)_j^{j'}(l)}{i(\omega\cdot l+m_3(j'^3-j^3))} \qquad \mbox{if}\,\,\overline{\omega}\cdot l+j'^3-j^3\neq 0, \qquad j, \,j'\in S^c, \,l\in\mathbb{Z}^{\nu}\\[3mm]
0 \qquad \qquad \qquad \qquad \qquad \qquad \mbox{otherwise}
\end{cases}
\end{equation}
This definition is well posed. Indeed, by \eqref{vSegnato} and \eqref{B1B2} 
\begin{equation}\label{DefdiB1jj}
(\mathfrak{B}_1)_j^{j'}(l):=\begin{cases}
-2 i j\,c_2\,(j-j')^2\,\sqrt{\lvert j-j'\rvert \xi_{j-j'}}-6 i j\,c_3\,\sqrt{\lvert j-j'\rvert \xi_{j-j'}} \qquad \mbox{if} \,\,\,j-j'\in S,\quad l=\mathtt{l}(j-j')\\
0 \qquad \qquad \qquad \qquad \qquad\qquad\qquad\qquad\qquad\qquad\qquad\qquad\quad\mbox{otherwise}.
\end{cases}
\end{equation}
In particular $(\mathfrak{B}_1)_j^{j'}(l)=0$ unless $\lvert l \rvert\le 1$. Thus, for $(l, j, j')$ such that $\overline{\omega}\cdot l+j'^3-j^3\neq 0$, the denominators in \eqref{DefdiA1jj} satisfy
\begin{equation}\label{8.83}
\begin{aligned}
\lvert \omega\cdot l +m_3 (j'^3-j^3) \rvert&=\lvert m_3 (\overline{\omega}\cdot l+j'^3-j^3)+(\omega-m_3 \overline{\omega})\cdot l \rvert\geq \\
&\geq \lvert m_3 \rvert\,\lvert \overline{\omega}\cdot l+j'^3-j^3 \rvert-\lvert \omega-m_3 \overline{\omega} \rvert\,\lvert l \rvert\geq 1/2, \,\,\,\forall \lvert l \rvert\le 1
\end{aligned}
\end{equation}
for $\varepsilon$ small enough, since $m_3-1$ and $\omega-\overline{\omega}$ are $O(\varepsilon^2)$.
$A_1$ defined in \eqref{DefdiA1jj} is a Hamiltonian vector field as $\mathfrak{B}_1$.
\begin{lem}{(Lemma $8.16$ in \cite{KdVAut})}\label{Lemma8.16}
If $j, j'\in S^c, j-j'\in S, l=\mathtt{l}(j-j')$, then 
$$\overline{\omega}\cdot l+j'^3-j^3=3\,j\, j'\,(j'-j)\neq 0.$$
\end{lem}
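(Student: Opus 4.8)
The statement asserts that for $j, j' \in S^c$ with $j - j' \in S$ and $l = \mathtt{l}(j-j')$, the small divisor $\overline{\omega}\cdot l + j'^3 - j^3$ equals $3\,j\,j'\,(j'-j)$, and that this is nonzero. The plan is to compute the two pieces of this expression directly and then invoke the fact that all three factors are nonzero integers.

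First I would evaluate $\overline{\omega}\cdot l$. Writing $k := j - j' \in S$, by the definition \eqref{mathttL} of the map $\mathtt{l}$ together with \eqref{LinearFreq}, we have $\overline{\omega}\cdot \mathtt{l}(k) = k^3$: indeed $\mathtt{l}$ is the odd map sending $\overline{\jmath}_i \mapsto \mathtt{e}_i$, and $\overline{\omega} = (\overline{\jmath}_1^3, \dots, \overline{\jmath}_\nu^3)$, so for $k = \overline{\jmath}_i$ one gets $\overline{\omega}\cdot\mathtt{e}_i = \overline{\jmath}_i^3 = k^3$, and the odd case $k = -\overline{\jmath}_i$ follows since both sides are odd in $k$. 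Hence $\overline{\omega}\cdot l + j'^3 - j^3 = (j-j')^3 + j'^3 - j^3$. Expanding $(j-j')^3 = j^3 - 3j^2 j' + 3 j j'^2 - j'^3$, the $j^3$ and $j'^3$ terms cancel and we are left with $-3 j^2 j' + 3 j j'^2 = 3 j j' (j' - j)$, which is the claimed identity. (Alternatively, this is exactly Lemma \ref{AlgebraicLemma} applied to the four integers $k, j', -j, \mathbf{0}$-style bookkeeping, but the direct expansion is cleanest here.)

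It remains to check $3 j j'(j'-j) \neq 0$. Since $j, j' \in S^c$ and $S^c \subseteq \mathbb{Z}\setminus\{0\}$ (the phase space consists of zero-average functions, so the index $0$ is excluded throughout), we have $j \neq 0$ and $j' \neq 0$. Finally $j' - j \neq 0$ because $j - j' = k \in S$ and $0 \notin S$ by \eqref{TangentialSites}. Therefore all three factors are nonzero integers and their product is nonzero.

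There is essentially no obstacle here: the only subtlety is making sure one correctly reads off $\overline{\omega}\cdot\mathtt{l}(k) = k^3$ from the definitions (in particular using oddness of $\mathtt{l}$ to cover both signs of $k$), and recalling that all indices involved live in $\mathbb{Z}\setminus\{0\}$ with $0 \notin S$. The identity itself is a one-line algebraic expansion, and this lemma is exactly what licenses the well-posedness of the definition \eqref{DefdiA1jj} of $A_1$ on the relevant support $|l| \le 1$, $j - j' \in S$, since it shows the resonant set $\{\overline{\omega}\cdot l + j'^3 - j^3 = 0\}$ never meets that support.
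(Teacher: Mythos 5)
Your proof is correct and follows the same route as the paper's (which defers to Lemma $8.16$ of \cite{KdVAut}): identify $\overline{\omega}\cdot \mathtt{l}(j-j')=(j-j')^3$ using the oddness of $\mathtt{l}$, expand $(j-j')^3+j'^3-j^3=3\,j\,j'\,(j'-j)$, and conclude it is nonzero since $j,j'\in S^c\subseteq\mathbb{Z}\setminus\{0\}$ and $j-j'\in S$ with $0\notin S$. No gaps.
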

\begin{cor}{(Corollary $8.17$ in \cite{KdVAut})}
Let $j, j'\in S^c$. If $\overline{\omega}\cdot l+j'^3-j^3=0$ then $(\mathfrak{B}_1)_j^{j'}=0$.
\end{cor}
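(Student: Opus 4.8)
The plan is to argue by contraposition, using only the explicit shape of the matrix entries of $\mathfrak{B}_1$ recorded in \eqref{DefdiB1jj} together with the algebraic identity of Lemma \ref{Lemma8.16}. First I would fix $j,j'\in S^c$ and $l\in\mathbb{Z}^{\nu}$ and assume $(\mathfrak{B}_1)_j^{j'}(l)\neq 0$. By \eqref{DefdiB1jj} the entry $(\mathfrak{B}_1)_j^{j'}(l)$ can be nonzero only when $j-j'\in S$ and $l=\mathtt{l}(j-j')$; in particular this forces $j-j'\neq 0$, since $S\subseteq\mathbb{Z}\setminus\{0\}$. Hence, for this pair $(j,j')$, the map $l\mapsto (\mathfrak{B}_1)_j^{j'}(l)$ is supported on the single harmonic $l=\mathtt{l}(j-j')$.

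Next I would invoke Lemma \ref{Lemma8.16}: its hypotheses $j,j'\in S^c$, $j-j'\in S$, $l=\mathtt{l}(j-j')$ are exactly the ones just obtained, so it yields $\overline{\omega}\cdot l+j'^3-j^3=3\,j\,j'\,(j'-j)$. Since $j,j'\in S^c\subseteq\mathbb{Z}\setminus\{0\}$ (the phase space $H_0^1(\mathbb{T}_x)$ excludes the zero mode) and $j'-j=-(j-j')\neq 0$, the right-hand side is a nonzero integer, whence $\overline{\omega}\cdot l+j'^3-j^3\neq 0$. Taking the contrapositive: if $\overline{\omega}\cdot l+j'^3-j^3=0$ for some $l$, then $(\mathfrak{B}_1)_j^{j'}(l)=0$, which is the assertion of the corollary.

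There is essentially no obstacle: the corollary is an immediate bookkeeping consequence of Lemma \ref{Lemma8.16} and the structure of $\mathfrak{B}_1$. The only point requiring a moment's care is checking that $3\,j\,j'\,(j'-j)$ is genuinely nonzero, which relies on $0\notin S^c$ and $0\notin S$. This statement is precisely what is needed to make the definition \eqref{DefdiA1jj} of $A_1$ well posed on the support of $\mathfrak{B}_1$: the divisors $\overline{\omega}\cdot l+j'^3-j^3$ never vanish there, so the homological equation $\mathcal{D}_{\omega}A_1+m_3[\partial_{xxx},A_1]+\mathfrak{B}_1=0$ is solved exactly and the term of order $\varepsilon$ in \eqref{8.79} is completely eliminated (the quantitative lower bound on the divisors being supplied separately by \eqref{8.83}).
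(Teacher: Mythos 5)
Your argument is correct and coincides with the intended one: the corollary is exactly the combination of the support condition in \eqref{DefdiB1jj} (a nonzero entry forces $j-j'\in S$, $l=\mathtt{l}(j-j')$) with Lemma \ref{Lemma8.16}, which gives $\overline{\omega}\cdot l+j'^3-j^3=3jj'(j'-j)\neq 0$ since $j,j'\in S^c\subseteq\mathbb{Z}\setminus\{0\}$ and $j\neq j'$. Nothing is missing; the contrapositive formulation is just a restatement of this observation.
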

By \eqref{DefdiA1jj} and the previous corollary, the term of order $\varepsilon$ in \eqref{8.79} is
\begin{equation}\label{HomologicalEquation}
\Pi_S^{\perp} (\mathcal{D}_{\omega} A_1+m_3 [\partial_{xxx}, A_1]+\mathfrak{B}_1)\Pi_S^{\perp}=0.
\end{equation}
We now prove that $A_1$ is a bounded transformation.
\begin{lem}{(Lemma $8.18$ in \cite{KdVAut})}
\begin{itemize}
\item[(i)] For all $l\in\mathbb{Z}^{\nu}, j, j'\in S^c$,
\begin{equation}
\lvert (A_1)_j^{j'}(l)\rvert\le C (\lvert j \rvert+\lvert j' \rvert)^{-1}, \quad \lvert (A_1)_j^{j'}(l) \rvert^{lip}\le \varepsilon^{-2} (\lvert j \rvert+\lvert j'\rvert)^{-1}.
\end{equation}
\item[(ii)] $(A_1)_j^{j'}(l)=0$ for all $l\in\mathbb{Z}^{\nu}, j, j'\in S^c$ such that $\lvert j-j'\rvert>C_S$, where $C_S:=\max\{ \lvert j \rvert : j\in S\}$.
\end{itemize}
\end{lem}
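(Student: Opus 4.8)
The plan is to reduce the statement to three elementary facts: an upper bound with Lipschitz control on the numerator $(\mathfrak{B}_1)_j^{j'}(l)$, an \emph{explicit} lower bound on the small divisor $\omega\cdot l+m_3(j'^3-j^3)$, and the quotient estimate $|N/D|^{lip}\le |N|^{lip}/\inf|D|+\sup|N|\,|D|^{lip}/(\inf|D|)^2$. Part (ii) is immediate and does not involve the divisor at all: by \eqref{DefdiB1jj} the coefficient $(\mathfrak{B}_1)_j^{j'}(l)$ vanishes unless $j-j'\in S$, hence unless $|j-j'|\le C_S$, and by \eqref{DefdiA1jj} the entry $(A_1)_j^{j'}(l)$ is a scalar multiple of $(\mathfrak{B}_1)_j^{j'}(l)$, so it vanishes too when $|j-j'|>C_S$. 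Thus in (i) only the indices with $j,j'\in S^c$, $j-j'\in S$ and $l=\mathtt{l}(j-j')$ (so $|l|=1$) have to be treated; for all others both sides are zero. For such indices $|j|$ and $|j'|$ differ by at most $C_S$, hence are comparable and both $\ge 1$, and, using $\xi\in[1,2]^{\nu}$, formula \eqref{DefdiB1jj} gives $|(\mathfrak{B}_1)_j^{j'}(l)|\le C|j|\le C(|j|+|j'|)$ with $C=C(S,c_2,c_3)$. Since the sole $\omega$-dependence of $(\mathfrak{B}_1)_j^{j'}(l)$ is through $\xi_{j-j'}=\xi_{j-j'}(\omega)$, and $\xi(\omega)=\varepsilon^{-2}\mathbb{M}^{-1}[\omega-\overline{\omega}]$ is affine with gradient of size $O(\varepsilon^{-2})$ while $\sqrt{\cdot}$ is Lipschitz on $[1,2]$, one also gets $|(\mathfrak{B}_1)_j^{j'}(l)|^{lip}\le C\varepsilon^{-2}|j|$.

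The core of the argument is the divisor bound. When $(\mathfrak{B}_1)_j^{j'}(l)\ne 0$ we have $j,j'\in S^c$, $j-j'\in S$, $l=\mathtt{l}(j-j')$, and Lemma \ref{Lemma8.16} supplies the algebraic identity $\overline{\omega}\cdot l+j'^3-j^3=3\,j\,j'\,(j'-j)$, whence $|\overline{\omega}\cdot l+j'^3-j^3|\ge 3|j||j'|\ge 3$; in particular such triples never fall in the trivial branch of \eqref{DefdiA1jj}, so $A_1$ and $\mathfrak{B}_1$ have the same support. Writing $\omega\cdot l+m_3(j'^3-j^3)=m_3(\overline{\omega}\cdot l+j'^3-j^3)+(\omega-m_3\overline{\omega})\cdot l$ and using $|l|=1$, $m_3-1=O(\varepsilon^2)$, $\omega-\overline{\omega}=O(\varepsilon^2)$ exactly as in \eqref{8.83}, I obtain for $\varepsilon$ small $|\omega\cdot l+m_3(j'^3-j^3)|\ge\tfrac32|j||j'|-C\varepsilon^2\ge|j||j'|$, and $|j||j'|\gtrsim(|j|+|j'|)^2$ because $|j-j'|\le C_S$. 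For the Lipschitz seminorm, $|\omega\cdot l+m_3(j'^3-j^3)|^{lip}\le 1+|m_3|^{lip}|j'^3-j^3|\le 1+C\varepsilon^{-a}(|j|+|j'|)^2$, using $|m_3|^{Lip(\gamma)}\le C\varepsilon^2$ and $\gamma=\varepsilon^{2+a}$ from Lemma \ref{LemmaFondamentale}.

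It then remains to combine these. The sup-bounds give $|(A_1)_j^{j'}(l)|\le C|j|/(|j||j'|)=C/|j'|\lesssim(|j|+|j'|)^{-1}$, the finitely many indices with $|j'|<C_S$ being dispatched crudely; and the quotient estimate gives
\[
|(A_1)_j^{j'}(l)|^{lip}\;\lesssim\;\frac{\varepsilon^{-2}|j|}{(|j|+|j'|)^2}+\frac{|j|\bigl(1+\varepsilon^{-a}(|j|+|j'|)^2\bigr)}{(|j|+|j'|)^4}\;\lesssim\;\frac{\varepsilon^{-2}}{|j|+|j'|},
\]
since $\varepsilon^{-a}\le\varepsilon^{-2}$ for $\varepsilon$ small (recall $a<1/6$). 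This is precisely (i). The one point requiring care is this final bookkeeping of $\varepsilon$-powers — verifying that the $O(\varepsilon^{-a})$ Lipschitz contribution of the time-reparametrization constant $m_3$ stays dominated by the $O(\varepsilon^{-2})$ produced by the amplitude-to-frequency map $\xi=\xi(\omega)$ — whereas, by contrast, the small-divisor lower bound is cost-free, being the purely arithmetic statement $3jj'(j'-j)\ne 0$ of Lemma \ref{Lemma8.16} rather than a Diophantine estimate.
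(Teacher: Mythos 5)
Your argument is correct and is essentially the proof of Lemma 8.18 in \cite{KdVAut}, which this paper invokes without reproducing: part (ii) from the support condition $j-j'\in S$ in \eqref{DefdiB1jj}, and part (i) from the identity of Lemma \ref{Lemma8.16} giving the divisor lower bound $\gtrsim |j||j'|$ as in \eqref{8.83}, the bound $|(\mathfrak{B}_1)_j^{j'}(l)|\le C|j|$, and the $\varepsilon^{-2}$ coming from the Lipschitz dependence of $\xi(\omega)=\varepsilon^{-2}\mathbb{M}^{-1}[\omega-\overline{\omega}]$ combined with the quotient rule. The only cosmetic discrepancy is the multiplicative constant left implicit in your final Lipschitz bound, which is harmless and present in the original argument as well.
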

The previous lemma means that $A=O(\partial_x^{-1})$. More precisely, we deduce that
\begin{lem}{(Lemma $8.19$ in \cite{KdVAut})}
$\lvert A_1 \partial_x \rvert_s^{Lip(\gamma)}+\lvert \partial_x A_1 \rvert_s^{Lip(\gamma)}\le C(s)$.
\end{lem}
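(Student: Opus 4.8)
The plan is to read the bound off directly from the pointwise estimates and the band structure of $A_1$ established in the previous lemma (Lemma $8.18$ in \cite{KdVAut}), with no genuine analytic work involved.

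First I would record the matrix entries of the two operators. Since $\partial_x$ acts on $H_S^{\perp}$ diagonally (multiplication by $\mathrm{i}\,j$ on the mode $e^{\mathrm{i} j x}$) and commutes with translations in $\varphi$, the operators $A_1\partial_x$ and $\partial_x A_1$ are again T\"oplitz in time, with
\[
(A_1\partial_x)_j^{j'}(l)=\mathrm{i}\,j'\,(A_1)_j^{j'}(l),\qquad (\partial_x A_1)_j^{j'}(l)=\mathrm{i}\,j\,(A_1)_j^{j'}(l),\qquad j,j'\in S^c,\ l\in\mathbb{Z}^{\nu}.
\]
Since $\lvert j\rvert,\lvert j'\rvert\le\lvert j\rvert+\lvert j'\rvert$, the estimate $\lvert (A_1)_j^{j'}(l)\rvert\le C(\lvert j\rvert+\lvert j'\rvert)^{-1}$ of Lemma $8.18(i)$ gives
\[
\sup_{l,\,j,\,j'}\big(\lvert j'\,(A_1)_j^{j'}(l)\rvert+\lvert j\,(A_1)_j^{j'}(l)\rvert\big)\le C
\]
uniformly in $\omega$, while the Lipschitz bound $\lvert (A_1)_j^{j'}(l)\rvert^{lip}\le \varepsilon^{-2}(\lvert j\rvert+\lvert j'\rvert)^{-1}$ yields the same with $C$ replaced by $C\varepsilon^{-2}$.

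Next I would use that, by \eqref{DefdiB1jj}, \eqref{DefdiA1jj} and Lemma $8.18(ii)$, the entries $(A_1)_j^{j'}(l)$ — hence also those of $A_1\partial_x$ and $\partial_x A_1$ — vanish unless $\lvert l\rvert\le 1$ and $\lvert j-j'\rvert\le C_S$. Recalling that the $s$-decay norm of a T\"oplitz-in-time matrix $M$ is $\lvert M\rvert_s^2=\sum_{j\in\mathbb{Z},\,l\in\mathbb{Z}^{\nu}}\big(\sup_{j_1-j_2=j}\lvert M_{j_1}^{j_2}(l)\rvert\big)^2\langle l,j\rangle^{2s}$, only the finitely many indices $(l,j)$ with $\lvert l\rvert\le 1$ and $\lvert j\rvert\le C_S$ contribute, and for each of them $\langle l,j\rangle\le c_S$ for a constant $c_S$ depending only on the tangential set $S$. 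Summing the (at most) $C(\nu,S)$ nonzero terms, each of size $\le c_S^{2s}\,C^2$ (resp.\ $\le c_S^{2s}\,C^2\varepsilon^{-4}$ for the Lipschitz seminorm), I obtain
\[
\lvert A_1\partial_x\rvert_s^{\sup}+\lvert\partial_x A_1\rvert_s^{\sup}\le C(s),\qquad \lvert A_1\partial_x\rvert_s^{lip}+\lvert\partial_x A_1\rvert_s^{lip}\le C(s)\,\varepsilon^{-2}.
\]

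Finally, since $\gamma=\varepsilon^{2+a}$ with $a>0$ (see \eqref{gamma}), one has $\gamma\,\varepsilon^{-2}=\varepsilon^{a}\le 1$ for $\varepsilon$ small, so
\[
\lvert A_1\partial_x\rvert_s^{Lip(\gamma)}+\lvert\partial_x A_1\rvert_s^{Lip(\gamma)}\le C(s)+\gamma\,C(s)\,\varepsilon^{-2}\le C(s),
\]
which is the claim. The only point deserving a line of care is matching the extra factor $j$ (resp.\ $j'$) produced by $\partial_x$ against the decay $(\lvert j\rvert+\lvert j'\rvert)^{-1}$ of the entries of $A_1$; everything else is a soft consequence of the fact that $A_1=O(\partial_x^{-1})$ is supported on finitely many harmonics $\lvert l\rvert\le 1$ with $\lvert j-j'\rvert\le C_S$.
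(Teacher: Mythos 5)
Your proof is correct and follows essentially the same route as the paper (and the cited Lemma 8.19 of \cite{KdVAut}): the bound is read off from the entrywise estimates and band structure of Lemma 8.18 — the factor $j$ or $j'$ from $\partial_x$ is absorbed by the decay $(\lvert j\rvert+\lvert j'\rvert)^{-1}$, the decay norm sum reduces to the finitely many indices $\lvert l\rvert\le 1$, $\lvert j-j'\rvert\le C_S$, and the Lipschitz part is controlled since $\gamma\,\varepsilon^{-2}=\varepsilon^{a}\le 1$. No gaps.
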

It follows that the symplectic map $\Phi_1$ in \eqref{Fi1} is invertible for $\varepsilon$ small, with inverse
\begin{equation}
\Phi_1^{-1}=\exp(-\varepsilon A_1)=\mathrm{I}_{H_S^{\perp}}+\varepsilon \check{A}_1, \,\, \check{A}_1:=\sum_{n\geq 1} \frac{\varepsilon^{n-1}}{n!} (-A_1)^n,\,\,\lvert \check{A}_1 \partial_x \rvert_s^{Lip(\gamma)}+\lvert \partial_x \check{A}_1 \rvert_s^{Lip(\gamma)}\le C(s).
\end{equation}
Since $A_1$ solves the homological equation \eqref{HomologicalEquation}, the $\varepsilon$-term in \eqref{L3} is zero, and, with a straightforward calculation, the $\varepsilon^2$-term simplifies to $\mathfrak{B}_2+\frac{1}{2}[\mathfrak{B}_1, A_1]$. We obtain the Hamiltonian operator
\begin{align}
&\mathcal{L}_4:=\Phi_1^{-1} \mathcal{L}_3 \Phi_1=\Pi_S^{\perp} (\mathcal{D}_{\omega}+m_3 \partial_{xxx}+\tilde{d}_1 \partial_x+\varepsilon^2\{ \mathfrak{B}_2+\frac{1}{2}[\mathfrak{B}_1, A_1] \}+\tilde{R}_4)\Pi_S^{\perp},\label{L4}\\
&\tilde{R}_4:=(\Phi_1^{-1}-\mathrm{I}) \Pi_S^{\perp} [\varepsilon^2(\mathfrak{B}_2+\frac{1}{2}[\mathfrak{B}_1, A_1])+\tilde{d}_1 \partial_x ]+\Phi_1^{-1}\Pi_S^{\perp} R_3.
\end{align}
We split $A_1$ defined in \eqref{DefdiA1jj}, \eqref{DefdiB1jj} into $A_1=\overline{A}_1+\tilde{A}_1$ where, for all $j, j'\in S^c, l\in\mathbb{Z}^{\nu}$,
\begin{equation}
(\overline{A}_1)_j^{j'}(l):=-\frac{2  j\,c_2\,(j-j')^2\,\sqrt{\lvert j-j'\rvert \xi_{j-j'}}+6 j\,c_3\,\sqrt{\lvert j-j'\rvert \xi_{j-j'}} }{\overline{\omega}\cdot l+j'^3-j^3}
\end{equation}
if $
\overline{\omega}\cdot l+j'^3-j^3\neq 0$, $j-j'\in S,\,\,l=\mathtt{l}(j-j')$,
and $(\overline{A}_1)_j^{j'}(l):=0$ otherwise.\\ By Lemma \ref{Lemma8.16}, for all $j, j'\in S^c, l\in\mathbb{Z}^{\nu}$,
\begin{equation}
(\overline{A}_1)_j^{j'}(l)=\begin{cases}
-\dfrac{2}{3}\,c_2\,\left(\dfrac{j-j'}{j'}\right) \sqrt{\lvert j-j'\rvert \xi_{j-j'}}-2\,c_3\,\dfrac{1}{j' (j'-j)}\,\sqrt{\lvert j-j'\rvert \xi_{j-j'}} \qquad \mbox{if}\,\,j-j'\in S,\\[3mm]
0 \qquad\qquad\qquad\qquad\qquad  \qquad \qquad \qquad \qquad \qquad\qquad \qquad\qquad\qquad\mbox{otherwise},
\end{cases}
\end{equation}
namely
\begin{equation}\label{A1segnato}
\overline{A}_1 h=-\frac{2}{3}c_2 \Pi_S^{\perp}[\overline{v}_x\,(\partial_x^{-1} h)] +2\,c_3 \Pi_S^{\perp}[(\partial_x^{-1} \overline{v}) (\partial_x^{-1} h)], \quad \forall h\in H^s_{S^{\perp}}(\T^{\nu+1}).
\end{equation}
The difference is 
\begin{equation}
(\tilde{A}_1)_j^{j'}(l):=-\frac{(2 c_2\, j\, (j-j')^2+6\,c_3\, j) \sqrt{\lvert j-j'\rvert \xi_{j-j'}}\{ (\omega-\overline{\omega})\cdot l+(m_3-1) (j'^3-j^3) \}}{(\omega\cdot l+m_3 (j'^3-j^3))(\overline{\omega}\cdot l+j'^3-j^3)}
\end{equation}
for $j, j'\in S^c, j-j'\in S, l=\mathtt{l}(j-j')$, and $(\tilde{A}_1)_j^{j'}(l)=0$ otherwise. Then, by \eqref{L4},
\begin{equation}
\mathcal{L}_4=\Pi_S^{\perp} (\mathcal{D}_{\omega}+ m_3 \partial_{xxx}+\tilde{d}_1\,\partial_x+\varepsilon^2 T+R_4)\Pi_S^{\perp},
\end{equation}
where
\begin{equation}\label{T}
T:=\mathfrak{B}_2+\frac{1}{2} [\mathfrak{B}_1, \overline{A}_1], \quad R_4:=\frac{\varepsilon^2}{2} [\mathfrak{B}_1, \tilde{A}_1]+\tilde{R}_4.
\end{equation}
The operator $T$ is Hamiltonian as $\mathfrak{B}_1, \mathfrak{B}_2, \overline{A}_1$, because the commutator of two Hamiltonian vector fields is Hamiltonian.
\begin{lem}\label{PossiblyLarg}
There is $\sigma=\sigma(\nu, \tau)>0$ (possibly larger than in Lemma \ref{LemmaFond2}) such that
\begin{equation}\label{EstimateOnR4}
\lvert R_4 \rvert_s^{Lip(\gamma)}\le_s \varepsilon^7 \gamma^{-2}+\varepsilon \lVert \mathfrak{I}_{\delta} \rVert_{s+\sigma}^{Lip(\gamma)}, \quad \lvert \partial_i R_4 [\hat{\imath}] \rvert_s\le_s \varepsilon (\lVert \hat{\imath} \rVert_{s+\sigma}+\lVert \mathfrak{I}_{\delta} \rVert_{s+\sigma} \lVert \hat{\imath} \rVert_{s_0+\sigma}).
\end{equation}
\end{lem}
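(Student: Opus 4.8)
The strategy is to trace through the definition $R_4 = \tfrac{\varepsilon^2}{2}[\mathfrak{B}_1,\tilde A_1] + \tilde R_4$ in \eqref{T}, bounding each summand separately, using the $s$-decay norm algebra and interpolation inequalities together with the estimates already established for the individual pieces. The governing principle is that $R_4$ collects only ``genuinely small'' remainders: every term is either $o(\varepsilon^2)$ in decay norm or carries a factor $\varepsilon^{2b-1}$, $\varepsilon^3$, $\varepsilon^7\gamma^{-2}$, etc., in front of the $\mathfrak{I}_\delta$-dependent part. Since $\gamma=\varepsilon^{2+a}=\varepsilon^{2b}$ with $a\in(0,1/6)$, one has $\varepsilon^7\gamma^{-2}=\varepsilon^{3-2a}$, which is $o(\varepsilon^2)$, so the target bound \eqref{EstimateOnR4} is consistent with what the individual pieces deliver.

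First I would handle the commutator term $\tfrac{\varepsilon^2}{2}[\mathfrak{B}_1,\tilde A_1]$. The point is that $\tilde A_1$ carries an \emph{extra} smallness factor $(\omega-\overline\omega)\cdot l + (m_3-1)(j'^3-j^3)$ in its numerator, which by Remark \ref{Domegamenoomegasegnato} and \eqref{m3} is $O(\varepsilon^2)$; combined with the nonresonance bound \eqref{8.83} for the two denominators and Lemma \ref{Lemma8.16}, this shows $|\tilde A_1\partial_x|_s^{Lip(\gamma)}\le_s\varepsilon^2(1+\lVert\mathfrak{I}_\delta\rVert_{s+\sigma}^{Lip(\gamma)})$, with the analogous bound for $\partial_i\tilde A_1[\hat\imath]$. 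Since $\mathfrak{B}_1=O(\partial_x)$ with $|\mathfrak{B}_1|$-type estimates of size $\varepsilon\,(1+\lVert\mathfrak{I}_\delta\rVert)$ (as $\mathfrak{B}_1$ is built from $\overline v$ via \eqref{DefdiB1jj}, and $\lVert v_\delta-\overline v\rVert_s\le_s\lVert\mathfrak{I}_\delta\rVert_s$ by \eqref{vdeltamenovsegn}), the algebra lemma for decay norms gives $|\varepsilon^2[\mathfrak{B}_1,\tilde A_1]|_s^{Lip(\gamma)}\le_s\varepsilon^2\cdot\varepsilon\cdot\varepsilon^2(1+\lVert\mathfrak{I}_\delta\rVert_{s+\sigma})=O(\varepsilon^5)$, comfortably within \eqref{EstimateOnR4}; the Lipschitz-in-$i$ estimate is the same computation differentiating one factor at a time and using the tame bilinear structure.

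Next I would bound $\tilde R_4$, which by its definition equals $(\Phi_1^{-1}-\mathrm I)\Pi_S^\perp[\varepsilon^2(\mathfrak{B}_2+\tfrac12[\mathfrak{B}_1,A_1])+\tilde d_1\partial_x] + \Phi_1^{-1}\Pi_S^\perp R_3$. For the first group, $\Phi_1^{-1}-\mathrm I=\varepsilon\check A_1$ with $|\check A_1\partial_x|_s,|\partial_x\check A_1|_s\le C(s)$ (Lemma $8.19$–$8.21$ analogues already quoted), so multiplying a factor $\varepsilon$ against the $\varepsilon^2$-size operators $\mathfrak{B}_2+\tfrac12[\mathfrak{B}_1,A_1]$ yields $O(\varepsilon^3)$, while the $\varepsilon\check A_1\tilde d_1\partial_x$ piece is controlled by \eqref{dK}, $\lVert\tilde d_1\rVert_s^{Lip(\gamma)}\le_s\varepsilon^{3-2a}+\varepsilon\lVert\mathfrak{I}_\delta\rVert_{s+\sigma}^{Lip(\gamma)}$, giving exactly the shape of \eqref{EstimateOnR4} after absorbing the $\varepsilon$ from $\check A_1$. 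For the second group $\Phi_1^{-1}\Pi_S^\perp R_3$, I would expand $R_3$ termwise: $\tilde d_1\partial_x(\Phi_1-\mathrm I)$ and $\tilde d_0\Phi_1$ are controlled by \eqref{dK}; $\tilde{\mathcal R}_*\Phi_1$ is controlled by the decay-norm bound on $\tilde{\mathcal R}_*$ from Lemma \ref{LemmaFond2}, namely $|\tilde{\mathcal R}_*|_s^{Lip(\gamma)}\le_s\varepsilon^3+\varepsilon^2\lVert\mathfrak{I}_\delta\rVert_{s+\sigma}^{Lip(\gamma)}$; and the terms $\varepsilon^2\mathfrak{B}_2(\Phi_1-\mathrm I)$ and $\varepsilon^3\{\mathcal D_\omega\hat A_1+m_3[\partial_{xxx},\hat A_1]+\tfrac12\mathfrak{B}_1 A_1^2+\varepsilon\mathfrak{B}_1\hat A_1\}$ are $O(\varepsilon^3)$ once one notes $A_1=O(\partial_x^{-1})$ so that all the $\partial_{xxx}$-commutators lose at most the derivatives restored by $A_1$, keeping the operators bounded; here one uses again the algebra and interpolation inequalities for $|\cdot|_s$. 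Since $\Phi_1^{-1}=\mathrm I+O(\varepsilon)$ in $|\cdot\,\partial_x^0|_s$ norm, composing with it preserves all these bounds. Collecting, $|\tilde R_4|_s^{Lip(\gamma)}\le_s\varepsilon^{3-2a}+\varepsilon\lVert\mathfrak{I}_\delta\rVert_{s+\sigma}^{Lip(\gamma)}$, and $\varepsilon^{3-2a}=\varepsilon^7\gamma^{-2}$, which is \eqref{EstimateOnR4}.

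\textbf{Main obstacle.} The routine part is the bookkeeping, but the genuine subtlety — and the reason the statement needs care — is the interplay between the denominators $\omega\cdot l + m_3(j'^3-j^3)$ in $A_1$ and the requirement that $A_1$ (hence $\Phi_1-\mathrm I$) be a \emph{bounded} operator, i.e.\ $A_1=O(\partial_x^{-1})$ rather than merely $O(\partial_x^0)$. This rests on Lemma \ref{Lemma8.16}, $\overline\omega\cdot\mathtt l(j-j')+j'^3-j^3=3jj'(j'-j)$, which forces a gain of one power of $|j|+|j'|$ in the divisor; I expect the delicate step is to propagate this gain correctly through the commutator $[\mathfrak{B}_1,A_1]$ and through $\hat A_1=\sum_{k\ge3}\tfrac{\varepsilon^{k-3}}{k!}A_1^k$, verifying that no term secretly behaves like $\partial_x^{+1}$ or worse, so that the decay-norm estimates close. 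The second, related difficulty is ensuring that the Lipschitz-in-$\omega$ loss in $\tilde A_1$ (the $\varepsilon^{-2}$ in $|(A_1)_j^{j'}(l)|^{lip}\le\varepsilon^{-2}(|j|+|j'|)^{-1}$) does not destroy the estimate: one must check that the extra $\varepsilon^2$ in the numerator of $\tilde A_1$ exactly compensates this loss, which is why the bound comes out at $\varepsilon^{3-2a}$ and not something larger — the small divisors are harmless precisely because the relevant frequencies $l$ satisfy $|l|\le1$, a fact used repeatedly via \eqref{8.83}.
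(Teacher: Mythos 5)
Your proposal is correct and follows essentially the same route as the paper, which simply defers to Lemma $8.20$ of \cite{KdVAut} and observes that the only change is the bound \eqref{dK} on $\tilde d_0$ (and $\tilde d_1$), producing the $\varepsilon^7\gamma^{-2}=\varepsilon^{3-2a}$ term that you also identify as the dominant contribution in $\Phi_1^{-1}\Pi_S^{\perp}R_3$. One harmless slip: $\mathfrak{B}_1$ is built only from $\overline v$ and is of size $O(1)$ (the factor $\varepsilon$ is already explicit in \eqref{L3}) and independent of $\mathfrak{I}_{\delta}$, so $\tfrac{\varepsilon^2}{2}[\mathfrak{B}_1,\tilde A_1]$ is $O(\varepsilon^4)$ rather than $O(\varepsilon^5)$ — still well within \eqref{EstimateOnR4}.
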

\begin{proof}
The proof follows the one of Lemma $8. 20$ in \cite{KdVAut}. The only difference is the estimate on the coefficient $\tilde{d}_0$ (see \eqref{dK}), that gives the term of size $\varepsilon^{7} \gamma^{-2}$ in \eqref{EstimateOnR4}, instead of $\varepsilon^5 \gamma^{-1}$ in the inequality $(8. 95)$ in \cite{KdVAut}. 
\end{proof}

\subsection{Linear Birkhoff Normal form (Step two)}
The goal of this section is to normalize the term $\varepsilon^2 T$ from the operator $\mathcal{L}_4$ defined in \eqref{L4}. We cannot eliminate the terms $O(\varepsilon^2)$ at all, because some harmonics of $\varepsilon^2 T$, which correspond to null divisors, are not naught.\\
We conjugate the Hamiltonian operator $\mathcal{L}_4$ via a symplectic map
\begin{equation}
\Phi_2:=\exp (\varepsilon^2 A_2)=\mathrm{I}_{H_S^{\perp}}+\varepsilon^2 A_2+\varepsilon^4 \hat{A}_2, \quad \hat{A}_2:=\sum_{k\geq 2} \frac{\varepsilon^{2(k-2)}}{k!}\,A_2^k
\end{equation}
where $A_2(\varphi)=\sum_{j, j'\in S^c}(A_2)_j^{j'}(\varphi) h_{j'} e^{\mathrm{i} j x} $ is a Hamiltonian vector field. We compute
\begin{align}
&\mathcal{L}_4 \Phi_2-\Phi_2 \Pi_S^{\perp} (\mathcal{D}_{\omega}+m_3 \partial_{xxx})\Pi_S^{\perp}=\Pi_S^{\perp} (\varepsilon^2\{\mathcal{D}_{\omega} A_2+m_3 [\partial_{xxx}, A_2]+T  \}+\tilde{d}_1 \partial_x+\tilde{R}_5)\Pi_S^{\perp},\\
&\tilde{R}_5:=\Pi_S^{\perp} \{ \varepsilon^4 (\mathcal{D}_{\omega} \hat{A}_2+m_3 [\partial_{xxx}, \hat{A}_2])+(\tilde{d}_1 \partial_x+\varepsilon^2 T)(\Phi_2-\mathrm{I})+R_4 \Phi_2\}\Pi_S^{\perp}.
\end{align}
We define
\begin{equation}\label{A2jj}
(A_2)_j^{j'}(l):=\begin{cases}
-\dfrac{T_j^{j'}(l)}{\mathrm{i}(\omega\cdot l+m_3 (j'^3-j^3))} \quad \mbox{if}\,\,\overline{\omega}\cdot l+j'^3-j^3\neq 0,\\[3mm]
0 \qquad\qquad\qquad\qquad\qquad\quad\mbox{otherwise}.
\end{cases}
\end{equation}
The definition is well posed. Indeed the matrix entries $T_j^{j'}(l)=0$ for all $\lvert j-j' \rvert>2 C_S, l\in\mathbb{Z}^{\nu}$, where $C_S:=\max\{ \lvert j \rvert : j\in S\}$. Also $T_j^{j'}(l)=0$ for all $j, j'\in S^c, \lvert l \rvert>2$. Thus, arguing as in \eqref{8.83}, if $\overline{\omega}\cdot l+j'^3-j^3\neq 0$, then $\lvert \omega\cdot l+m_3 (j'^3-j^3)\rvert\geq 1/2$. The operator $A_2$ is a Hamiltonian vector field because $T$ is Hamiltonian.
\subsubsection*{Resonant terms}
Now we compute the terms of $\varepsilon^2 T$ that cannot be removed by the Birkhoff map $\Phi_2$.\\
By \eqref{A1segnato}, \eqref{T} we get, for $h\in H^s_{S^{\perp}}$,
\begin{align*}
 \mathfrak{B}_1\,\overline{A}_1 [h]&=-\frac{4}{3} c_2^2\,\partial_x \Pi_S^{\perp} [\overline{v}_{xx}\,\Pi_S^{\perp}[\overline{v}_x\,(\partial_x^{-1} h)]]+4 c_2 c_3 \partial_x \Pi_S^{\perp}[\overline{v}_{xx}\Pi_S^{\perp}[(\partial_x^{-1}\overline{v})(\partial_x^{-1} h)]]\\
 &+4 c_2 c_3 \partial_x \Pi_S^{\perp}[\overline{v}\Pi_S^{\perp}[\overline{v}_x \,(\partial_x^{-1} h)]]-12 c_3^2 \partial_x\Pi_S^{\perp}[\overline{v}\Pi_S^{\perp}[(\partial_x^{-1} \overline{v})(\partial_x^{-1} h)]]\\
\overline{A}_1 \mathfrak{B}_1 [h]&=-\frac{4}{3} c_2^2\,\Pi_S^{\perp}[\overline{v}_x \,\Pi_S^{\perp}[\overline{v}_{xx} h]]+4 c_2 c_3 \Pi_S^{\perp}[\overline{v}_x \,\Pi_S^{\perp}[\overline{v} \,h]]\\
&+4 c_2 c_3 \Pi_S^{\perp}[(\partial_x^{-1} \overline{v})\Pi_S^{\perp}[\overline{v}_{xx}\,h]]-12 c_3^2 \Pi_S^{\perp}[(\partial_x^{-1}\overline{v})\Pi_S^{\perp}[\overline{v}\,h]]
\end{align*}
whence, for all $j, j'\in S^c, l\in\mathbb{Z}^{\nu}$,
\begin{equation}
\begin{aligned}
([\mathfrak{B}_1, \overline{A}_1])_j^{j'}(l)&=\frac{4}{3} c_2^2\,\mathrm{i} \sum_{\substack{j_1, j_2\in S, j_1+j_2=j-j',\\j'+j_2\in S^c, \mathtt{l}(j_1)+\mathtt{l}(j_2)=l}} \left( \frac{j\,j_1^2\,j_2-j_1\,j_2^2\,j'}{j'} \right)\,\sqrt{\lvert j_1\,j_2\rvert\xi_{j_1} \xi_{j_2}}\\[2mm]
&+4 c_2 c_3 \,\mathrm{i}\,\sum_{\substack{j_1, j_2\in S, j_1+j_2=j-j',\\j'+j_2\in S^c, \mathtt{l}(j_1)+\mathtt{l}(j_2)=l}} \left( \frac{-j\, j_1^3+j\,j_1\,j_2^2-j_1^2\, j_2\,j'-j_2^3\,j'}{j' j_1 j_2} \right)\,\sqrt{\lvert j_1\,j_2\rvert \xi_{j_1} \xi_{j_2}}\\[2mm]
&+12\,c_3^2\,\mathrm{i}\,\sum_{\substack{j_1, j_2\in S, j_1+j_2=j-j',\\j'+j_2\in S^c, \mathtt{l}(j_1)+\mathtt{l}(j_2)=l}} \left( \frac{j j_1-j' j_2}{j' j_1 j_2} \right) \,\sqrt{\lvert j_1 \,j_2\rvert \xi_{j_1} \xi_{j_2}}.
\end{aligned}
\end{equation}
If $([\mathfrak{B}_1, \overline{A}_1])_j^{j'}(l)\neq 0$ there are $j_1, j_2\in S$ such that $j_1+j_2=j-j', j'+j_2\in S^c, \mathtt{l}(j_1)+\mathtt{l}(j_2)=l$. Then
\begin{equation}\label{risonanza}
\overline{\omega}\cdot l+j'^3-j^3=\overline{\omega}\cdot \mathtt{l}(j_1)+\overline{\omega}\cdot \mathtt{l}(j_2)+j'^3-j^3=j_1^3+j_2^3+j'^3-j^3.
\end{equation}
Thus, if $\overline{\omega}\cdot l+j'^3-j^3=0$, Lemma \eqref{AlgebraicLemma} implies that $(j_1+j_2)(j_1+j')(j_2+j')=0$. Now $j_1+j', j_2+j'\neq 0$ because $j_1, j_2\in S, j'\in S^c$ and $S$ is symmetric. Hence $j_1+j_2=0$, which implies $j=j'$ and $l=0$. In conclusion, if $\overline{\omega}\cdot l+j'^3-j^3=0$, the only nonzero matrix entry $([\mathfrak{B}_1, \overline{A}_1])_j^{j'}(l)$ is
\begin{equation}\label{Commutatore}
\begin{aligned}
\frac{1}{2}([\mathfrak{B}_1, \overline{A}_1])_j^j(0)&=\frac{4}{3} c_2^2\,\mathrm{i}\,\sum_{j_2\in S, j_2+j\in S^c} j_2^3\,\lvert j_2 \rvert \xi_{j_2}+8 c_2 c_3\,\mathrm{i} \sum_{j_2\in S, j_2+j\in S^c} j_2\,\lvert j_2 \rvert \xi_{j_2}\\
& +12\,c_3^2 \mathrm{i} \sum_{j_2\in S, j_2+j\in S^c} j_2^{-1}\,\lvert j_2 \rvert\,\xi_{j_2}.
\end{aligned}
\end{equation}
Now consider $\mathfrak{B}_2$ defined in \eqref{B1B2}. We split $\mathfrak{B}_2=B_1+B_2+B_3+B_4+B_5$, where
\begin{equation}\label{iB's}
\begin{aligned}
&B_1 [h]:=\alpha_{1, 2} \,h_x, \quad B_2 [h]:=\alpha_{0, 2}\,h, \quad B_3 [h]:=-(\alpha_{1,1})_x\,\beta_1\,h_x,\\[2mm]
& B_4 [h]:=-(\alpha_{0, 1})_x\,\beta_1, \quad B_5[h]:=-\partial_x \overline{\mathcal{R}}_2 [h].
\end{aligned}
\end{equation}
We denote by $(\alpha)_{j, l}$ the $(j, l)$-th Fourier coefficient of $\alpha(\varphi, x)$ as function of time and space. The Fourier representation of $B_i, i=1, \dots, 4$ in \eqref{iB's} is
\begin{align*}
(B_1)_j^{j'}(l)&=\mathrm{i}\,j'\,(\alpha_{1, 2})_{\substack{j-j', \mathtt{l}(j-j')}}, \qquad (B_2)_j^{j'}(l)=(\alpha_{0, 2})_{\substack{j-j', \mathtt{l}(j-j')}}\\
(B_3)_j^{j'}(l)&=4 c_1 c_2 \mathrm{i}\,j' (\overline{v}_{xxx} \overline{v})_{j-j', \mathtt{l}(j-j')}+\frac{4}{3} c_2^2 \mathrm{i} j' (\overline{v}_{xxx} (\partial_x^{-1} \overline{v}))_{j-j', \mathtt{l}(j-j')}\\
&-12 c_1 c_3 \mathrm{i}\,j'\,(\overline{v} \,\overline{v}_x)_{j-j', \mathtt{l}(j-j')}-4 c_2 c_3 \mathrm{i}\,j' (\overline{v}_x (\partial_x^{-1} \overline{v}))_{j-j', \mathtt{l}(j-j')} ,\\
(B_4)_j^{j'}(l)&=4 c_1 c_2 (\overline{v}_{xxxx} \overline{v})_{j-j', \mathtt{l}(j-j')}+\frac{4}{3} c_2^2 (\overline{v}_{xxxx} (\partial_x^{-1} \overline{v}))_{j-j', \mathtt{l}(j-j')}\\
&-12 c_1 c_3 (\overline{v} \overline{v}_{xx})_{j-j', \mathtt{l}(j-j')}-4 c_2 c_3 (\overline{v}_{xx} (\partial_x^{-1} \overline{v}))_{j-j', \mathtt{l}(j-j')}
\end{align*}
If $(B_k)_j^{j'}(l)\neq 0$, $k=1, \dots, 4$ there are $j_1, j_2\in S$ such that $j_1+j_2=j-j', l=\mathtt{l}(j_1)+\mathtt{l}(j_2)$ and \eqref{risonanza} holds. Thus, if $\overline{\omega}\cdot l+j'^3-j^3=0$, Lemma \eqref{AlgebraicLemma} implies that $(j_1+j_2)(j_1+j')(j_2+j')=0$, and, since $j'\in S^c$ and $S$ is symmetric, the only possibility is $j_1+j_2=0$. Hence $j=j'$, $l=0$. In conclusion, if $\overline{\omega}\cdot l+j'^3-j^3=0$, the only nonzero matrix element $(B_i)_j^{j'}(l), i=1,\dots, 4$, by \eqref{Averages}, is
\begin{equation}
\begin{aligned}
&(B_1)_j^j(0)=\mathrm{i} j\,\sum_{k\in S} (-2 c_6 k^2-12 c_7+\frac{4}{3} c_2^2 k^2+4 c_2 c_3 )\,\lvert k \rvert \xi_k, \,\,\, (B_2)_j^j(0)=\sum_{k\in S} (-4 c_1 c_2\,k^4-12 c_1 c_3\,k^2)\,\lvert k \rvert \xi_k,\\
&(B_3)_j^j(0)=\mathrm{i} j\,\sum_{k\in S}(\frac{4}{3} c_2^2\,k^2+4 c_2 c_3)\lvert k\rvert \xi_k, \quad (B_4)_j^j(0)=\sum_{k\in S} (4 c_1 c_2\,k^4+12 c_1 c_3\,k^2)\,\lvert k \rvert\xi_k
\end{aligned}
\end{equation} 
We note that $c(\xi)$ defined in \eqref{OriginalCdiXi} is equal to $-\mathrm{i}\sum_{i=1}^4j^{-1}\,(B_i)_j^j(0)$ (observe that the term $j^{-1}\,(B_i)_j^j(0)$ is independent of $j$) and we write
\begin{equation}\label{cXi}
\begin{aligned}
c(\xi)&=\sum_{k\in S^+} (- 4 c_6\,k^3-24 c_7 k+\frac{16}{3} c_2^2\,k^3+16 c_2 c_3 k )\,\xi_k\\[2mm]
&=(\frac{16}{3} c_2^2-4 c_6) v_3\cdot \xi+(16 c_2 c_3-24 c_7) v_1\cdot \xi,
\end{aligned}
\end{equation}
where $v_3\cdot \xi=\sum_{j\in S^+} j^3\,\xi_j$ and $v_1\cdot \xi=\sum_{j\in S^+} j\,\xi_j$.\\
As before, the only possibility to get a zero at the denominator of \eqref{A2jj} is $j_1+j_2=0$. Therefore
\begin{equation}\label{LaCosa}
\begin{aligned}
(B_5)_j^j(0)&=\frac{4}{3} c_2^2 \mathrm{i} \sum_{j_2\in S, j_2+j\in S} j_2^3\,\lvert j_2 \rvert \xi_{j_2}+8 c_2 c_3 \mathrm{i} \sum_{j_2\in S, j_2+j\in S} j_2 \,\lvert j_2 \rvert \xi_{j_2}\\
&+12 c_3^2 \mathrm{i}\sum_{j_2\in S, j_2+j\in S} j_2^{-1} \,\lvert j_2 \rvert \xi_{j_2}.
\end{aligned}
\end{equation}
We note that for every odd function $f\colon S\to \mathbb{Z}$, by the simmetry of $S$, we have $\sum_{j_2\in S} f(j_2)\,\xi_{j_2}=0$.
Thus, by \eqref{Commutatore} and \eqref{LaCosa}, we get 
\begin{equation*}
(B_5)_j^j(0)+\frac{1}{2}([\mathfrak{B}_1, \overline{A}_1])_j^j(0)=\frac{4}{3} c_2^2 \mathrm{i} \sum_{j_2\in S} j_2^3\,\lvert j_2 \rvert \xi_{j_2}+8 c_2 c_3 \mathrm{i} \sum_{j_2\in S} j_2\,\lvert j_2 \rvert \xi_{j_2}+12 c_3^2 \mathrm{i} \sum_{j_2\in S} j_2^{-1}\,\lvert j_2 \rvert \xi_{j_2}=0.
\end{equation*}
Finally, we have
\begin{align}
&\mathcal{L}_5:=\Phi_2^{-1} \mathcal{L}_4 \Phi_2=\Pi_S^{\perp} (\mathcal{D}_{\omega}+m_3 \partial_{xxx}+(\tilde{d}_1+\varepsilon^2 c(\xi))\,\partial_x + R_5)\Pi_S^{\perp},\label{L5}\\
&R_5:=(\Phi_2^{-1}-\mathrm{I})\Pi_S^{\perp} (\tilde{d}_1+\varepsilon^2 c(\xi))\partial_x+\Phi_2^{-1} \Pi_S^{\perp} \tilde{R}_5.
\end{align}
\begin{lem}
$R_5$ satisfies the same estimates \eqref{EstimateOnR4} as $R_4$ (with a possibly larger $\sigma$).
\end{lem}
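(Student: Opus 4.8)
The plan is to mimic the proof of Lemma~\ref{PossiblyLarg} (which itself follows Lemma~8.20 in \cite{KdVAut}), keeping track of the extra factors produced by the conjugation with $\Phi_2$.

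First I would establish for the generator $A_2$ in \eqref{A2jj} the analogues of Lemmata~8.16--8.19 of \cite{KdVAut}. The operator $T=\tfrac12[\mathfrak{B}_1,\overline{A}_1]+\mathfrak{B}_2$ is an explicit first order operator whose coefficients are polynomials in $\overline{v}$ — hence in $\sqrt{\xi}$ and $e^{\mathrm{i}\mathtt{l}(j)\cdot\varphi}$ — see \eqref{B1B2}, \eqref{iB's}, \eqref{A1segnato}; in particular $T_j^{j'}(l)=0$ unless $|j-j'|\le 2C_S$ and $|l|\le 2$. Combining this with the lower bound $|\omega\cdot l+m_3(j'^3-j^3)|\ge 1/2$ for the non-resonant divisors (proved as in \eqref{8.83}, using that $m_3-1$ and $\omega-\overline{\omega}$ are $O(\varepsilon^2)$), one gets $A_2=O(\partial_x^{-1})$, more precisely that $A_2$ and each power $A_2^k$ are banded of width $\le 2kC_S$ with $\lvert A_2\partial_x\rvert_s^{Lip(\gamma)}+\lvert\partial_x A_2\rvert_s^{Lip(\gamma)}\le C(s)$. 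Since $A_2$ depends on the embedded torus only through $m_3$ in its denominators, the $\partial_i$-estimate of $A_2$ carries the factor $\partial_i m_3[\hat{\imath}]=O(\varepsilon^2\lVert\hat{\imath}\rVert_{s_0+\sigma})$ from \eqref{m3}. Therefore $\Phi_2^{\pm 1}=\mathrm{I}_{H_S^\perp}+O(\varepsilon^2\partial_x^{-1})$, while $\hat A_2$ begins at $A_2^2=O(\partial_x^{-2})$, so that $\mathcal{D}_\omega\hat A_2=O(\partial_x^{-2})$ and $[\partial_{xxx},\hat A_2]=O(\partial_x^0)$ have bounded decay norm (the two powers of $j$ in $j^3-j'^3=(j-j')(j^2+jj'+j'^2)$ being absorbed by the bandedness of $A_2$ and the two negative orders of $A_2^2$). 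This is precisely what makes $\varepsilon^4(\mathcal{D}_\omega\hat A_2+m_3[\partial_{xxx},\hat A_2])$ a genuine remainder of size $O(\varepsilon^4)$ in decay norm.

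Then I would expand $\tilde R_5$ and $R_5$ into their constituent pieces and bound each one with the algebra and interpolation inequalities for the decay norm, using the estimates already at hand: the explicit $O(1)$ bounds on $\mathfrak{B}_1$, $\mathfrak{B}_2$, $\overline{A}_1$, $\overline{\mathcal{R}}_2$; the bound $\lVert\tilde d_k\rVert_s^{Lip(\gamma)}\le_s\varepsilon^{3-2a}+\varepsilon\lVert\mathfrak{I}_\delta\rVert_{s+\sigma}^{Lip(\gamma)}$ from \eqref{dK}; the bound on $\tilde{\mathcal{R}}_*$ in Lemma~\ref{LemmaFond2}; and the estimate \eqref{EstimateOnR4} for $R_4$. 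Concretely, $(\tilde d_1\partial_x+\varepsilon^2 T)(\Phi_2-\mathrm{I})$ is of size $(\varepsilon^{3-2a}+\varepsilon\lVert\mathfrak{I}_\delta\rVert_{s+\sigma})\varepsilon^2+\varepsilon^4$ in decay norm; $R_4\Phi_2=R_4+R_4(\Phi_2-\mathrm{I})$, the first term obeying \eqref{EstimateOnR4} and the second a factor $\varepsilon^2$ smaller; and $(\Phi_2^{-1}-\mathrm{I})\Pi_S^\perp(\tilde d_1+\varepsilon^2 c(\xi))\partial_x$ is $O(\varepsilon^4)$, while $\Phi_2^{-1}\Pi_S^\perp\tilde R_5=\tilde R_5+O(\varepsilon^2\partial_x^{-1})\tilde R_5$. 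Since $\gamma=\varepsilon^{2+a}$ gives $\varepsilon^7\gamma^{-2}=\varepsilon^{3-2a}$, each contribution is $\le_s\varepsilon^7\gamma^{-2}+\varepsilon\lVert\mathfrak{I}_\delta\rVert_{s+\sigma}^{Lip(\gamma)}$; exactly as in Lemma~\ref{PossiblyLarg} the worst term is the one inherited from $\tilde d_0$ inside $R_3\subset R_4$. The Lipschitz-in-$\omega$ bound and the $\partial_i$-bound are obtained by differentiating each factor and invoking the corresponding $\partial_i$-estimates ($\partial_i m_3$, $\partial_i\tilde d_k$, $\partial_i R_4$, $\partial_i\tilde{\mathcal{R}}_*$, $\partial_i\beta_1$, $\partial_i\overline{A}_1$), at the price of a possibly larger $\sigma$.

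The hard part will be the bookkeeping in the first step: one must check that, in spite of the $\partial_{xxx}$ appearing in the commutators inside $\hat A_2$ and $\tilde R_5$, no term of positive order survives. This rests on the bandedness of $A_2$ and on the fact that $A_2$ solves the homological equation only up to the explicit resonant remainder $\varepsilon^2 c(\xi)\partial_x$ — the content of \eqref{Commutatore}--\eqref{cXi}, where the resonant part of $B_5+\tfrac12[\mathfrak{B}_1,\overline{A}_1]$ vanishes by the oddness and symmetry of $S$ through Lemma~\ref{AlgebraicLemma}. Once these structural facts are established, the size estimates reduce to a routine application of the product and interpolation inequalities for the decay norm.
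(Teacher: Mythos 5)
Your proposal is correct and follows essentially the route the paper intends (and that \cite{KdVAut} carries out for the analogous lemma): establish the analogues of Lemmata $8.16$--$8.19$ for $A_2$, exploiting the band structure and $|l|\le 2$ support of $T$ and the divisor bound as in \eqref{8.83} to get $A_2=O(\partial_x^{-1})$ with $O(1)$ decay norm, and then estimate each term of $\tilde R_5$ and $R_5$ by the decay-norm algebra and interpolation inequalities, the dominant contribution $\varepsilon^7\gamma^{-2}$ being inherited from $R_4$ (i.e.\ from $\tilde d_0$). The bookkeeping you give, including the $\partial_i$-dependence entering only through $m_3$, $\tilde d_k$, $R_4$, $\tilde{\mathcal R}_*$, matches the paper's argument, so no further comment is needed.
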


\subsection{Descent method}
The goal of this section is to transform $\mathcal{L}_5$ in \eqref{L5} in order to make constant the coefficient in front of $\partial_x$. We conjugate $\mathcal{L}_5$ via a symplectic map of the form
\begin{equation}\label{S}
\mathcal{S}:=\exp(\Pi_S^{\perp}(w \partial_x^{-1}))\Pi_S^{\perp}=\Pi_S^{\perp}(\mathrm{I}+w \partial_x^{-1})\Pi_S^{\perp}+\hat{\mathcal{S}}, \quad \hat{\mathcal{S}}:=\sum_{k\geq 2} \frac{1}{k!} [\Pi_S^{\perp} (w\partial_x^{-1})]^k \Pi_S^{\perp},
\end{equation}
where $w\colon \T^{\nu+1}\to \mathbb{R}$ is a function. Note that $\Pi_S^{\perp} (w \partial_x^{-1})\Pi_S^{\perp}$ is the Hamiltonian vector field generated by $-\frac{1}{2} \int_{\T} w (\partial_x^{-1} h)^2\,dx, h\in H_S^{\perp}$. We calculate
\begin{align}
\mathcal{L}_5 \mathcal{S}&-\mathcal{S} \Pi_S^{\perp} (\mathcal{D}_{\omega}+ m_3 \partial_{xxx}+m_1 \partial_x) \Pi_S^{\perp}=\Pi_S^{\perp} (3 m_3 w_x+\tilde{d}_1+\varepsilon^2 c(\xi)-m_1) \partial_x \Pi_S^{\perp}+\tilde{R}_6,\label{L5}\\\notag
\tilde{R}_6:&=\Pi_S^{\perp} \{ (3 m_3 w_{xx}+(\tilde{d}_1+\varepsilon^2 c(\xi))\Pi_S^{\perp} w-m_1 w)\pi_0+(\mathcal{D}_{\omega} w+m_3 w_{xxx}+(\tilde{d}_1+\varepsilon^2 c(\xi))\Pi_S^{\perp} w_x) \partial_x^{-1}\\\notag
&+\mathcal{D}_{\omega} \hat{S}+m_3 [\partial_{xxx}, \hat{S}]+(\tilde{d}_1+\varepsilon^2 c(\xi)) \partial_x \hat{S}-m_1 \hat{S} \partial_x+R_5 \mathcal{S}\} \Pi_S^{\perp}  \notag
\end{align}
where $\tilde{R}_6$ collects all the bounded terms. By \eqref{DefdK}, \eqref{OriginalCdiXi}, we solve 
\[
3 m_3 w_x+\tilde{d}_1+\varepsilon^2 c(\xi)-m_1=0
\]
choosing $w:=-(3 m_3)^{-1} \partial_x^{-1} (\tilde{d}_1+\varepsilon^2 c(\xi)-m_1)$.
 For $\varepsilon$ sufficiently small, the operator $\mathcal{S}$ is invertible and, by \eqref{L5},
\begin{equation}\label{L6}
\mathcal{L}_6:=\mathcal{S}^{-1} \mathcal{L}_4 \mathcal{S}=\Pi_S^{\perp} (\mathcal{D}_{\omega}+m_3 \partial_{xxx}+m_1 \partial_x)\Pi_S^{\perp}+R_6, \quad R_6:=\mathcal{S}^{-1} \tilde{R}_6.
\end{equation}
Since $\mathcal{S}$ is symplectic, $\mathcal{L}_6$ is Hamiltonian.
\begin{lem}\label{LemmaS}
There is $\sigma:=\sigma(\nu, \tau)>0$ (possibly larger than in Lemma \ref{PossiblyLarg}) such that
\[
\lvert \mathcal{S}^{\pm 1}-\mathrm{I} \rvert_s^{Lip(\gamma)}\le_s \varepsilon^7 \gamma^{-2}+\varepsilon \lVert \mathfrak{I}_{\delta} \rVert_{s+\sigma}^{Lip(\gamma)}, \quad \lvert \partial_i \mathcal{S}^{\pm 1} [\hat{\imath}]\rvert_s\le_s \varepsilon (\lVert \hat{\imath} \rVert_{s+\sigma}+\lVert \mathfrak{I}_{\delta} \rVert_{s+\sigma} \lVert \hat{\imath} \rVert_{s_0+\sigma}).
\]
The remainder $R_6$ satisfies the same estimates of $R_4$ (with a possibly larger $\sigma$).
\end{lem}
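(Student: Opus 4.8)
The plan is to follow the same structure as the proof of the analogous Lemma $8.21$ in \cite{KdVAut}, adapting the size bookkeeping to the larger transformations arising from the quadratic nonlinearity. We must prove three things: first, the estimates on $\mathcal{S}^{\pm 1}-\mathrm{I}$; second, that $\mathcal{S}$ is invertible for $\varepsilon$ small; third, that $R_6$ inherits the estimates of $R_4$ from Lemma \ref{PossiblyLarg}.

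First I would estimate the generating function $w:=-(3 m_3)^{-1} \partial_x^{-1}(\tilde{d}_1+\varepsilon^2 c(\xi)-m_1)$. By Lemma \ref{LemmaFond2}, $\tilde{d}_1$ satisfies $\lVert \tilde{d}_1 \rVert_s^{Lip(\gamma)}\le_s \varepsilon^{3-2a}+\varepsilon\lVert \mathfrak{I}_{\delta} \rVert_{s+\sigma}^{Lip(\gamma)}$, while $\varepsilon^2 c(\xi)-m_1=-\mathtt{r}_{m_1}$ is $O(\varepsilon^{3-2a})$ by \eqref{estimateM1}, and $m_3$ is close to $1$ by \eqref{m3}. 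Since $\partial_x^{-1}$ acts on the zero-average part and $\tilde{d}_1+\varepsilon^2 c(\xi)-m_1$ has zero $x$-average by construction (this is precisely why the space-translation step of Section $8.4$ was performed), no loss of derivatives and no $\gamma^{-1}$ enters here; using $\varepsilon^{3-2a}=\varepsilon^7\gamma^{-2}$ (recall $\gamma=\varepsilon^{2+a}$, $b=1+a/2$) we get $\lVert w \rVert_s^{Lip(\gamma)}\le_s \varepsilon^7\gamma^{-2}+\varepsilon\lVert \mathfrak{I}_{\delta} \rVert_{s+\sigma}^{Lip(\gamma)}$, with the corresponding bound on $\partial_i w[\hat\imath]$. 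Because $\mathcal{S}-\mathrm{I}=\Pi_S^{\perp}(w\partial_x^{-1})\Pi_S^{\perp}+\hat{\mathcal{S}}$ and $w\partial_x^{-1}$ is a smoothing operator whose $s$-decay norm is controlled by $\lVert w \rVert_s$ (Lemma \ref{Lemma7.3}-type bound, or directly since $|w\partial_x^{-1}|_s\le_s\lVert w\rVert_s$), the algebra estimates for the $s$-decay norm give $\lvert \mathcal{S}^{\pm 1}-\mathrm{I}\rvert_s^{Lip(\gamma)}\le_s \varepsilon^7\gamma^{-2}+\varepsilon\lVert \mathfrak{I}_{\delta}\rVert_{s+\sigma}^{Lip(\gamma)}$; invertibility of $\mathcal{S}$ follows from the Neumann series since $\lvert \mathcal{S}-\mathrm{I}\rvert_{s_0}^{Lip(\gamma)}=o(1)$, using the smallness \eqref{IpotesiPiccolezzaIdelta} of $\mathfrak{I}_{\delta}$. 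The derivative estimates propagate in the same way.

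Next I would estimate $R_6=\mathcal{S}^{-1}\tilde{R}_6$, going term by term through the expression for $\tilde{R}_6$ in \eqref{L5}. The bounded multiplicative and $\partial_x^{-1}$ pieces $3 m_3 w_{xx}$, $(\tilde{d}_1+\varepsilon^2 c(\xi))\Pi_S^{\perp} w$, $m_1 w$, $\mathcal{D}_{\omega}w$, $m_3 w_{xxx}$ are all of the same size as $w$ up to a fixed loss of derivatives (here $\mathcal{D}_{\omega}w$ costs the $\tau$-loss but no extra $\gamma^{-1}$ since $w$ is already a given function, not obtained by inverting $\mathcal{D}_{\omega}$), hence $O(\varepsilon^7\gamma^{-2})+O(\varepsilon)\lVert\mathfrak{I}_{\delta}\rVert_{s+\sigma}$; the commutator terms $\mathcal{D}_{\omega}\hat{\mathcal{S}}$, $m_3[\partial_{xxx},\hat{\mathcal{S}}]$, $(\tilde{d}_1+\varepsilon^2 c(\xi))\partial_x\hat{\mathcal{S}}$, $m_1\hat{\mathcal{S}}\partial_x$ are quadratic in $w\partial_x^{-1}$ (the $k\ge2$ tail), so they are $O(\varepsilon^{14}\gamma^{-4})$, much smaller; and $R_5\mathcal{S}$ is controlled by $\lvert R_5\rvert_s^{Lip(\gamma)}\le_s \varepsilon^7\gamma^{-2}+\varepsilon\lVert\mathfrak{I}_{\delta}\rVert_{s+\sigma}^{Lip(\gamma)}$ (the lemma preceding Section $8.7$) times $\lvert\mathcal{S}\rvert_{s_0}\le_s 1$. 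Composing with $\mathcal{S}^{-1}$ using the $s$-decay algebra inequalities and interpolation preserves these bounds. Collecting, $R_6$ satisfies $\lvert R_6\rvert_s^{Lip(\gamma)}\le_s \varepsilon^7\gamma^{-2}+\varepsilon\lVert\mathfrak{I}_{\delta}\rVert_{s+\sigma}^{Lip(\gamma)}$ and the analogous estimate on $\partial_i R_6[\hat\imath]$, i.e. exactly \eqref{EstimateOnR4}.

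The main obstacle is the bookkeeping that guarantees $w$ (and hence $\mathcal{S}-\mathrm{I}$) is genuinely of size $\varepsilon^7\gamma^{-2}=\varepsilon^{3-2a}$ and not merely $\varepsilon^5\gamma^{-1}$: this rests entirely on the fact that $\tilde{d}_1$ — rather than the full coefficient $d_1$, which is $O(\varepsilon)$ — appears, which in turn depends on the linear Birkhoff normal form steps of Sections $8.5$–$8.6$ having successfully removed the $O(\varepsilon)$ and the resonant $O(\varepsilon^2)$ contributions, and on the zero-$x$-average normalization of Section $8.4$ so that $\partial_x^{-1}$ introduces no divisor. As in \cite{KdVAut}, one must also check carefully that the quadraticity of the scheme absorbs the $\varepsilon^7\gamma^{-2}=\varepsilon^{3-2a}$ terms, which with $a\in(0,1/6)$ are small enough to feed into the smallness condition \eqref{PiccolezzaperKamred} of the reducibility Theorem \ref{Reducibility}; this is the point where the restriction on $a$ is used.
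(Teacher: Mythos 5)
Your proposal is correct and follows essentially the same route as the paper's (very brief) proof: bound $w$ using \eqref{m3}, \eqref{estimateM1} and \eqref{dK} to get $\lVert w \rVert_s^{Lip(\gamma)}\le_s \varepsilon^7\gamma^{-2}+\varepsilon\lVert \mathfrak{I}_{\delta}\rVert_{s+\sigma}^{Lip(\gamma)}$, deduce the bounds on $\mathcal{S}^{\pm 1}-\mathrm{I}$ directly from the definition \eqref{S}, and control $R_6$ term by term using that $\hat{\mathcal{S}}=O(\partial_x^{-2})$, so that $[\partial_{xxx},\hat{\mathcal{S}}]=O(\partial_x^{0})$ with $\lvert[\partial_{xxx},\hat{\mathcal{S}}]\rvert_s^{Lip(\gamma)}\le_s \lVert w\rVert_{s_0+3}^{Lip(\gamma)}\lVert w\rVert_{s+3}^{Lip(\gamma)}$. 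Your extra remarks (zero $x$-average from the translation step, the identity $\varepsilon^7\gamma^{-2}=\varepsilon^{3-2a}$, and the role of $a\in(0,1/6)$ in \eqref{PiccolezzaperKamred}) are consistent with the paper and do not change the argument.
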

\begin{proof}
By \eqref{m3}, \eqref{estimateM1}, \eqref{dK}, $\lVert w \rVert_s^{Lip(\gamma)}\le_s \varepsilon^7 \gamma^{-2}+\varepsilon \lVert \mathfrak{I}_{\delta} \rVert_{s+\sigma}^{Lip(\gamma)}$, and the lemma follows by the definition of $\mathcal{S}$, see \eqref{S}. Since $\hat{S}=O(\partial_x^{-2})$ the commutator $[\partial_{xxx}, \hat{S}]=O(\partial_x^0)$ and $\lvert [\partial_{xxx}, \hat{S}]  \rvert_s^{Lip(\gamma)}\le_s \lVert w \rVert_{s_0+3}^{Lip(\gamma)}\lVert w\rVert_{s+3}^{Lip(\gamma)}$.
\end{proof}

\subsection{KAM reducibility and inversion of $\mathcal{L}_{\omega}$}
The coefficients $m_3, m_1$ of the operator $\mathcal{L}_6$ in \eqref{L6} are constants, and the remainder $R_6$ is a bounded operator of order $\partial_x^0$ with small matrix decay norm. Then we can diagonalize $\mathcal{L}_6$ by applying the iterative KAM reducibility Theorem $4.2$ in \cite{Airy} along the sequence of scales
\begin{equation}\label{Nn}
N_n:=N_0^{\chi^n}, \quad n=0, 1, 2, \dots, \quad \chi:=3/2, \quad N_0>0.
\end{equation}
In Section $9$, the initial $N_0$ will (slightly) increase to infinity as $\varepsilon\to 0$, see \eqref{SmallnessConditionNM}. The required smallness condition (see $(4.14)$ in \cite{Airy}) is 
\begin{equation}
N_0^{C_0} \lvert R_6 \rvert_{s_0+\beta}^{Lip(\gamma)} \gamma^{-1} \le 1,
\end{equation}
where $\beta=7 \tau+6$ (see $(4.1)$ in \cite{Airy}), $\tau$ is the diophantine exponent in \eqref{0diMelnikov} and \eqref{OmegoneInfinito}, and the constant $C_0:=C_0(\tau, \nu)>0$ is fixed in Theorem $4.2$ in \cite{Airy}. By Lemma \ref{LemmaS}, the remainder $R_6$ satisfies the bound \eqref{EstimateOnR4}, and using \eqref{IpotesiPiccolezzaIdelta} we get
\begin{equation}
\lvert R_6 \rvert_{s_0+\beta}^{Lip(\gamma)}\le C \varepsilon^{7-2 b} \gamma^{-1}=C \varepsilon^{3- 2 a}, \qquad \lvert R_6 \rvert_{s_0+\beta}^{Lip(\gamma)} \gamma^{-1}\le C \varepsilon^{1- 3 a}.
\end{equation}
We use that $\mu$ in \eqref{IpotesiPiccolezzaIdelta} is assumed to satisfy $\mu \geq \sigma+\beta$ where $\sigma:=\sigma(\tau, \nu)$ is given in Lemma \ref{LemmaS}.\\

\begin{teor}{\textbf{(Reducibility)}}\label{Reducibility}
Assume that $\omega \mapsto i_{\delta}(\omega)$ is a Lipschitz function defined on some subset $\Omega_0\subseteq \Omega_{\varepsilon}$ (recall \eqref{OmegaEpsilon}), satisfying \eqref{IpotesiPiccolezzaIdelta} with $\mu \geq \sigma+\beta$ where $\sigma:=\sigma(\tau, \nu)$ is given in Lemma \ref{LemmaS} and $\beta:=7 \tau+6$. Then there exists $\delta_0\in (0, 1)$ such that, if
\begin{equation}\label{PiccolezzaperKamred}
N_0^{C_0} \varepsilon^{7- 2 b} \gamma^{-2}=N_0^{C_0} \varepsilon^{1-3 a}\le \delta_0, \quad \gamma:=\varepsilon^{2+a}, \quad a\in (0, 1/6),
\end{equation}
then
\begin{itemize}
\item[(i)] \textbf{(Eigenvalues)}. For all $\omega\in \Omega_{\varepsilon}$ there exists a sequence
\begin{align}\label{FinalEigenvalues}
&\mu_j^{\infty}(\omega):=\mu_j^{\infty}(\omega, i_{\delta}(\omega)):=-\mathrm{i}\tilde{m}_3(\omega)\,j^3+\mathrm{i}\tilde{m}_1 (\omega) j+r_j^{\infty}(\omega), \quad j\in S^c,
\end{align}
where $\tilde{m}_3, \tilde{m}_1$ coincide with the coefficients of $\mathcal{L}_6$ of \eqref{L6} for all $\omega\in \Omega_0$. Furthermore, for all $j\in S^c$
\begin{equation}\label{stimeautovalfinali}
\begin{aligned}
&\lvert \tilde{m}_3-1 \rvert^{Lip(\gamma)}\le C \varepsilon^2, \quad \lvert \tilde{m}_1-\varepsilon^2 c(\xi) \rvert^{Lip(\gamma)}\le C \varepsilon^{3-2 a},
\end{aligned}
\end{equation}
for some $C>0$. All the eigenvalues $\mu_j^{\infty}$ are purely imaginary. We define, for convenience, $\mu_0^{\infty}(\omega):=0$.
\item[(ii)] \textbf{(Conjugacy)}. For all $\omega$ in the set
\begin{equation}\label{OmegoneInfinito}
\Omega^{2 \gamma}_{\infty}:=\Omega^{2 \gamma}_{\infty}(i_{\delta}):=\left\{ \omega\in\Omega_0 : \lvert \mathrm{i} \omega\cdot l+\mu_j^{\infty}(\omega)-\mu_k^{\infty}(\omega)\rvert\geq \frac{2 \gamma\,\lvert j^3-k^3 \rvert}{\langle l \rangle^{\tau}}, \,\,\forall l\in\mathbb{Z}^{\nu}, \,\,\forall j, k\in S^c \cup \{0\}  \right\}
\end{equation}
there is a real, bounded, invertible, linear operator $\Phi_{\infty}(\omega)\colon H^s_{S^{\perp}}(\T^{\nu+1})\to H_{S^{\perp}}^s(\T^{\nu+1})$, with bounded inverse $\Phi_{\infty}^{-1}(\omega)$, that conjugates $\mathcal{L}_6$ in \eqref{L6} to constant coefficients, namely 
\begin{equation}
\begin{aligned}
&\mathcal{L}_{\infty}(\omega):=\Phi_{\infty}^{-1} (\omega) \circ \mathcal{L}_5 \circ \Phi_{\infty}(\omega)=\omega\cdot \partial_{\varphi}+\mathcal{D}_{\infty}(\omega),\\
& \mathcal{D}_{\infty}(\omega):=\mbox{diag}_{j\in S^c} \{ \mu_j^{\infty}(\omega) \}.
\end{aligned}
\end{equation}
The transformations $\Phi_{\infty}, \Phi_{\infty}^{-1}$ are close to the identity in matrix decay norm, with
\begin{equation}
\lvert \Phi^{\pm 1}_{\infty}-\mathrm{I} \rvert^{Lip(\gamma)}_{s, \Omega_{\infty}^{2 \gamma}}\le_s \varepsilon^7 \gamma^{-3}+\varepsilon \gamma^{-1} \lVert \mathfrak{I}_{\delta} \rVert_{s+\sigma}^{Lip(\gamma)}.
\end{equation}
Moreover $\Phi_{\infty}, \Phi_{\infty}^{-1}$ are symplectic, and $\mathcal{L}_{\infty}$ is a Hamiltonian operator.
\end{itemize}
\end{teor}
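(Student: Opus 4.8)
The plan is to deduce the statement from the abstract iterative KAM reducibility scheme of \cite{Airy} (Theorem $4.2$), applied to the operator $\mathcal{L}_6$ in \eqref{L6}. By construction $\mathcal{L}_6=\Pi_S^{\perp}(\mathcal{D}_{\omega}+m_3\partial_{xxx}+m_1\partial_x)\Pi_S^{\perp}+R_6$ has constant coefficients at the orders $\partial_{xxx}$ and $\partial_x$, while $R_6=O(\partial_x^0)$ is a bounded operator with small matrix decay norm. First I would record, as already done above, that by Lemma \ref{LemmaS} the remainder $R_6$ obeys the bound \eqref{EstimateOnR4}, so that using \eqref{IpotesiPiccolezzaIdelta} and $\gamma=\varepsilon^{2+a}$ one gets $\lvert R_6\rvert_{s_0+\beta}^{Lip(\gamma)}\le C\varepsilon^{3-2a}$ and $\lvert R_6\rvert_{s_0+\beta}^{Lip(\gamma)}\gamma^{-1}\le C\varepsilon^{1-3a}$; hence the smallness hypothesis $N_0^{C_0}\lvert R_6\rvert_{s_0+\beta}^{Lip(\gamma)}\gamma^{-1}\le 1$ of \cite{Airy} is implied by \eqref{PiccolezzaperKamred} once $\delta_0$ is chosen small enough. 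Here it is essential that $a\in(0,1/6)$, so that $1-3a>1/2>0$ and the extra power of $\varepsilon$ in $R_6$ (larger than in \cite{KdVAut}, where the analogous remainder is $O(\varepsilon^5\gamma^{-1})$) is still summable along the scales \eqref{Nn}. The loss of derivatives is absorbed by requiring $\mu\ge\sigma+\beta$ in \eqref{IpotesiPiccolezzaIdelta}, with $\beta=7\tau+6$ and $\sigma$ the constant of Lemma \ref{LemmaS}.

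Next I would run the quadratic iteration: at step $n$ one conjugates the current operator $\mathcal{L}^{(n)}=\omega\cdot\partial_{\varphi}+\mathcal{D}^{(n)}+R^{(n)}$, with $\mathcal{D}^{(n)}$ diagonal and $\lvert R^{(n)}\rvert_{s}^{Lip(\gamma)}$ of size $\varepsilon_n:=\varepsilon_0^{\chi^n}$, by the time-one flow $\Phi_n=\exp(\Psi_n)$ of a Hamiltonian vector field $\Psi_n$ solving the homological equation $\omega\cdot\partial_{\varphi}\Psi_n+[\mathcal{D}^{(n)},\Psi_n]+\Pi_{N_n}R^{(n)}=[R^{(n)}]$, where $[\cdot]$ denotes the diagonal part and $\Pi_{N_n}$ the ultraviolet truncation at scale $N_n$. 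Solving this equation requires the second Melnikov non-resonance conditions $\lvert\mathrm{i}\omega\cdot l+\mu_j^{(n)}-\mu_k^{(n)}\rvert\ge 2\gamma\lvert j^3-k^3\rvert\langle l\rangle^{-\tau}$ on a decreasing chain of parameter sets $\Omega_n$; the new remainder $R^{(n+1)}$ is then $O(\varepsilon_n^{\chi})$ in decay norm, and the corrected eigenvalues $\mu_j^{(n+1)}=\mu_j^{(n)}+O(\varepsilon_n)$ admit Lipschitz extensions in $\omega$ to all of $\Omega_{\varepsilon}$. Since $\mathcal{L}_6$ is Hamiltonian and each $\Phi_n$ is symplectic, every $\mathcal{L}^{(n)}$ stays Hamiltonian, which forces all $\mu_j^{(n)}$, and their limits $\mu_j^{\infty}$, to be purely imaginary; this yields \eqref{FinalEigenvalues} with $\tilde{m}_3,\tilde{m}_1$ the $\varepsilon\to 0$ limit of the top coefficients (equal to $m_3,m_1$ on $\Omega_0$) and $r_j^{\infty}$ the limit of the bounded corrections, with $\lvert r_j^{\infty}\rvert^{Lip(\gamma)}\le_s\lvert R_6\rvert_{s_0}^{Lip(\gamma)}\le C\varepsilon^{3-2a}$. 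The bounds \eqref{stimeautovalfinali} on $\tilde{m}_3-1$ and $\tilde{m}_1-\varepsilon^2 c(\xi)$ then follow directly from \eqref{m3} and \eqref{estimateM1}, while the telescoping product $\Phi_{\infty}:=\lim_{n\to\infty}\Phi_0\circ\cdots\circ\Phi_n$ converges in matrix decay norm with $\lvert\Phi_{\infty}^{\pm1}-\mathrm{I}\rvert_{s}^{Lip(\gamma)}\le_s\gamma^{-1}\lvert R_6\rvert_{s+\beta}^{Lip(\gamma)}\le_s\varepsilon^7\gamma^{-3}+\varepsilon\gamma^{-1}\lVert\mathfrak{I}_{\delta}\rVert_{s+\sigma}^{Lip(\gamma)}$, and on the set $\Omega_{\infty}^{2\gamma}$ defined in \eqref{OmegoneInfinito} (the intersection of all the $\Omega_n$) it conjugates $\mathcal{L}_6$, hence $\mathcal{L}_5$, to the diagonal operator $\omega\cdot\partial_{\varphi}+\mathcal{D}_{\infty}$.

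\textbf{Main obstacle.} The genuinely new difficulty, compared with \cite{KdVAut}, is not in the reducibility machinery itself --- which is applied essentially as a black box --- but in guaranteeing that its input hypothesis can be met: the transformations of Sections $8.1$--$8.8$ are of size $O(\varepsilon)$ rather than $O(\varepsilon^3)$, so the remainder $R_6$ is only $O(\varepsilon^{3-2a})$, and verifying the smallness condition \eqref{PiccolezzaperKamred} pins down the admissible range $a\in(0,1/6)$, which must also be consistent with the later Nash--Moser scheme and the measure estimates of Section $9$. A secondary technical point is the careful propagation of the Lipschitz dependence on both $\omega$ and the approximate torus $i_{\delta}$ through the whole diagonalization, recorded in the $\lvert\partial_i(\cdot)[\hat{\imath}]\rvert_s$ estimates, since this is exactly what controls the variation of the final eigenvalues $\mu_j^{\infty}$ needed to bound the measure of the resonant sets $\mathcal{R}_{ljk}$.
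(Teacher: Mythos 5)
Your proposal follows essentially the same route as the paper: the paper also proves this theorem by applying the abstract KAM reducibility Theorem $4.2$ of \cite{Airy} as a black box, after using Lemma \ref{LemmaS} (i.e. the estimate \eqref{EstimateOnR4} for $R_6$) together with \eqref{IpotesiPiccolezzaIdelta} to verify the smallness hypothesis $N_0^{C_0}\lvert R_6\rvert_{s_0+\beta}^{Lip(\gamma)}\gamma^{-1}\le 1$, which is exactly \eqref{PiccolezzaperKamred}, with the loss of derivatives absorbed by $\mu\geq\sigma+\beta$. Your additional sketch of the internal quadratic iteration, the Lipschitz extension of the eigenvalues, and the convergence of the telescoping product of symplectic flows is precisely the content of the cited theorem, so the argument is correct and not genuinely different.
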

\begin{remark}
Theorem $4. 2$ in \cite{Airy} also provides the Lipschitz dependence of the (approximate) eigenvalues $\mu_j^n$ with respect to the unknown $i_{0}(\varphi)$, which is used for the measure estimate in Lemma \ref{InclusionideiBadSets}.
\end{remark}
Observe that all the parameters $\omega\in \Omega_{\infty}^{2 \gamma}$ satisfy also the first Melnikov condition, namely
\begin{equation}
\lvert \mathrm{i} \omega\cdot l+\mu_j^{\infty}(\omega) \rvert\geq 2 \gamma \lvert j \rvert^3 \langle l \rangle^{-\tau}, \quad \forall l\in \mathbb{Z}^{\nu}, \,\,j\in S^c,
\end{equation}
because, by definition, $\mu_0^{\infty}=0$, and the diagonal operator $\mathcal{L}_{\infty}$ is invertible.\\
In the following theorem we verify the inversion assumption \eqref{InversionAssumption} for $\mathcal{L}_{\omega}$.
\begin{teor}\label{InversionLomega}
Assume the hypotesis of Theorem \ref{Reducibility} and \eqref{PiccolezzaperKamred}. Then there exists $\sigma_1:=\sigma_1(\tau, \nu)>0$ such that, for all $\omega\in \Omega_{\infty}^{2 \gamma}(i_{\delta})$ (see \eqref{OmegoneInfinito}), for any function $g\in H^{s+\sigma_1}_{S^{\perp}}(\T^{\nu+1})$ the equation $\mathcal{L}_{\omega} h=g$ has a solution $h=\mathcal{L}_{\omega}^{-1} g\in H_{S^{\perp}}^s(\T^{\nu+1})$, satisfying
\begin{equation}\label{TameEstimateLomega}
\begin{aligned}
\lVert \mathcal{L}_{\omega}^{-1} g \rVert_s^{Lip(\gamma)} &\le_s \gamma^{-1} (\lVert g \rVert_{s+\sigma_1}^{Lip(\gamma)}+\varepsilon\gamma^{-1}\lVert \mathfrak{I}_{\delta} \rVert_{s+\sigma_1}^{Lip(\gamma)}\lVert g \rVert_{s_0}^{Lip(\gamma)})\\
&\le_s \gamma^{-1} (\lVert g \rVert_{s+\sigma_1}^{Lip(\gamma)}+\varepsilon\gamma^{-1}\{ \lVert \mathfrak{I}_0 \rVert_{s+\sigma_1+\sigma}^{Lip(\gamma)}+\gamma^{-1} \lVert \mathfrak{I}_0 \rVert_{s_0+\sigma}^{Lip(\gamma)}\lVert Z \rVert_{s+\sigma_1+\sigma}^{Lip(\gamma)} \}\lVert g \rVert_{s_0}^{Lip(\gamma)}).
\end{aligned}
\end{equation}
\end{teor}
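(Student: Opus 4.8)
The proof follows verbatim the scheme of Theorem $8.10$ in \cite{KdVAut}. The point is that Sections $8.1$--$8.8$ together with Theorem \ref{Reducibility} have produced a finite chain of (mostly symplectic) conjugations, up to the harmless multiplication operator $\rho$ coming from the reparametrization of time in \eqref{Ro}, which turns $\mathcal{L}_{\omega}$ into the diagonal operator $\mathcal{L}_{\infty}=\omega\cdot\partial_{\varphi}+\mathcal{D}_{\infty}$. The first step is therefore purely formal: combining \eqref{L1}, \eqref{8.45}, \eqref{OriginalL3}, \eqref{L4}, \eqref{L5}, \eqref{L6} with Theorem \ref{Reducibility} one gets, on $H^s_{S^{\perp}}(\T^{\nu+1})$ and for $\omega\in\Omega_{\infty}^{2\gamma}(i_{\delta})$,
\[
\mathcal{L}_{\infty}=\mathcal{W}\,\mathcal{L}_{\omega}\,\mathcal{V},\qquad \mathcal{V}:=\Phi\,B\,\mathcal{T}\,\Phi_1\,\Phi_2\,\mathcal{S}\,\Phi_{\infty},\qquad \mathcal{W}:=\Phi_{\infty}^{-1}\mathcal{S}^{-1}\Phi_2^{-1}\Phi_1^{-1}\mathcal{T}^{-1}\rho^{-1}B^{-1}\Phi^{-1},
\]
so that the equation $\mathcal{L}_{\omega}h=g$ is solved by $h:=\mathcal{L}_{\omega}^{-1}g:=\mathcal{V}\,\mathcal{L}_{\infty}^{-1}\,\mathcal{W}\,g$, provided $\mathcal{L}_{\infty}^{-1}$ is well defined on the relevant space.

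The second step is the inversion of $\mathcal{L}_{\infty}$. Since $\mathcal{D}_{\infty}=\mathrm{diag}_{j\in S^c}\{\mu_j^{\infty}\}$ with $\mu_j^{\infty}\in\mathrm{i}\erre$, and since every $\omega\in\Omega_{\infty}^{2\gamma}(i_{\delta})$ satisfies the first Melnikov condition $\lvert\mathrm{i}\,\omega\cdot l+\mu_j^{\infty}\rvert\ge 2\gamma\lvert j\rvert^3\langle l\rangle^{-\tau}$ (take $k=0$ in \eqref{OmegoneInfinito}, recalling $\mu_0^{\infty}=0$), the operator $\mathcal{L}_{\infty}$ is invertible on functions with zero $\varphi$-average; acting componentwise on Fourier coefficients $(l,j)$ one obtains, exactly as in \eqref{stimaDomega},
\[
\lVert \mathcal{L}_{\infty}^{-1}g'\rVert_s^{Lip(\gamma)}\le_s \gamma^{-1}\,\lVert g'\rVert_{s+2\tau+1}^{Lip(\gamma)},
\]
the Lipschitz part using the Lipschitz dependence of the $\mu_j^{\infty}$ on $\omega$ and on $i_{\delta}$ furnished by Theorem \ref{Reducibility}.

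The third step is bookkeeping. One inserts the tame estimates for the individual maps: $\Phi^{\pm1},B^{\pm1},\mathcal{T}^{\pm1}$ satisfy \eqref{8.39}--\eqref{8.40}, the factor $\rho^{\pm1}-\mathrm{I}$ is controlled by \eqref{rho}, $\Phi_1^{\pm1}=\mathrm{I}+O(\varepsilon\,\partial_x^{-1})$ and $\Phi_2^{\pm1}=\mathrm{I}+O(\varepsilon^2\partial_x^{-1})$ are bounded with the decay bounds of Sections $8.5$ and $8.6$, $\mathcal{S}^{\pm1}$ obeys Lemma \ref{LemmaS}, and $\Phi_{\infty}^{\pm1}-\mathrm{I}$ obeys the bound of Theorem \ref{Reducibility}. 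Composing these operators, using the algebra and interpolation inequalities for the Sobolev and decay norms and the smallness \eqref{IpotesiPiccolezzaIdelta}, all the ``correction'' contributions carry an extra factor $\varepsilon\gamma^{-1}\lVert\mathfrak{I}_{\delta}\rVert_{s+\sigma}$ (dominating the cruder $\varepsilon\gamma^{-1}$ terms as well), so that only the accumulated loss of derivatives survives. Calling $\sigma_1:=\sigma_1(\tau,\nu)$ this total loss (the $\sigma$'s of Lemmas \ref{StimaPasso1}, \ref{LemmaFondamentale}, \ref{LemmaFond2}, \ref{LemmaS} and Theorem \ref{Reducibility}, plus the $2\tau+1$ from $\mathcal{L}_{\infty}^{-1}$) one gets the first inequality in \eqref{TameEstimateLomega}. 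The second inequality then follows from the isotropic-torus estimate $\lVert\mathfrak{I}_{\delta}\rVert_s^{Lip(\gamma)}\le_s\lVert\mathfrak{I}_0\rVert_{s+\sigma}^{Lip(\gamma)}+\gamma^{-1}\lVert\mathfrak{I}_0\rVert_{s_0+\sigma}^{Lip(\gamma)}\lVert Z\rVert_{s+\sigma}^{Lip(\gamma)}$ coming from the definition of $i_{\delta}$ (Lemma $6.3$, i.e. the bound for $y_{\delta}-y_0$) together with $Z=\mathcal{F}(i_0,\zeta_0)$.

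I expect the only genuine difficulty to be this last step of bookkeeping, not any single estimate. Unlike in \cite{KdVAut}, where every transformation was $\mathrm{I}+O(\varepsilon^3)$, here $\Phi$ is $\mathrm{I}+O(\varepsilon)$, $\Phi_1$ is $\mathrm{I}+O(\varepsilon)$ and several others are only $\mathrm{I}+O(\varepsilon^2)$, so one must check that the products of the ``big constants'' $\varepsilon^7\gamma^{-3}=\varepsilon^{1-3a}$ appearing in Lemmas \ref{LemmaFondamentale}, \ref{LemmaFond2}, \ref{LemmaS} and in Theorem \ref{Reducibility} stay $o(1)$ under \eqref{PiccolezzaperKamred}, and that the surviving non-perturbative contribution is precisely the term $\varepsilon\gamma^{-1}\lVert\mathfrak{I}_{\delta}\rVert_{s+\sigma_1}$, which is the one to be absorbed later by the quadratic speed of the Nash--Moser iteration, as already anticipated in the Remark following \eqref{InversionAssumption}.
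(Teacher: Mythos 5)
Your proposal is correct and follows essentially the same route the paper intends (and which \cite{KdVAut} carries out): collect the conjugations of Sections $8.1$--$8.7$ and Theorem \ref{Reducibility} into $\mathcal{L}_{\infty}=\mathcal{W}\mathcal{L}_{\omega}\mathcal{V}$, invert the diagonal operator via the first Melnikov conditions (obtained from \eqref{OmegoneInfinito} with $k=0$, $\mu_0^{\infty}=0$), compose the tame estimates of Lemmas \ref{StimaPasso1}, \ref{LemmaFondamentale}, \ref{LemmaFond2}, \ref{LemmaS} and Theorem \ref{Reducibility}, and pass from $\mathfrak{I}_{\delta}$ to $\mathfrak{I}_0, Z$ via the isotropic-torus lemma. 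The only slip is immaterial: since $j\in S^c$ the divisors $\mathrm{i}\omega\cdot l+\mu_j^{\infty}$ are bounded below for all $l$ (including $l=0$), so $\mathcal{L}_{\infty}$ is invertible on all of $H^s_{S^{\perp}}(\T^{\nu+1})$, with no zero $\varphi$-average restriction needed.
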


\section{The Nash-Moser nonlinear iteration}
In this section we prove Theorem \ref{IlTeorema}. It will be a consequence of the Nash-Moser theorem \ref{NashMoser}.\\
Consider the finite-dimensional subspaces
\[
E_n:=\{ \mathfrak{I}(\varphi)=(\Theta, y, z)(\varphi) :  \Theta=\Pi_n\Theta, y=\Pi_n y, z=\Pi_n z \}
\]
where $N_n:=N_0^{\chi^n}$ are introduced in \eqref{Nn}, and $\Pi_n$ are the projectors (which, with a small abuse of notation, we denote with the same symbol)
\begin{equation}\label{Tagli}
\begin{aligned}
&\Pi_n \Theta(\varphi):=\sum_{\lvert l \rvert<N_n} \Theta_l\, e^{\mathrm{i} l\cdot \varphi}, \,\,\Pi_n y(\varphi):=\sum_{\lvert l \rvert<N_n} y_l \,e^{\mathrm{i} l\cdot \varphi},\,\,\mbox{where}\,\,\Theta(\varphi)=\sum_{l\in\mathbb{Z}^{\nu}} \Theta_l\,e^{\mathrm{i} l\cdot \varphi}, \,\,y(\varphi)=\sum_{l\in \mathbb{Z}^{\nu}} y_l \,e^{\mathrm{i} l\cdot\varphi},\\[2mm]
&\Pi_n z(\varphi, x):=\sum_{\lvert (l, j) \rvert<N_n} z_{l j}\,e^{\mathrm{i}(l\cdot \varphi+j x)}, \,\,\mbox{where}\quad z(\varphi, x)=\sum_{l\in\mathbb{Z}^{\nu}, j\in S^c} z_{l j}\,e^{\mathrm{i} (l\cdot \varphi+j x)}.
\end{aligned}
\end{equation}
We define $\Pi_n^{\perp}=\mathrm{I}-\Pi_n$. The classical smoothing properties hold, namely, for all $\alpha, s\geq 0$,
\begin{equation}\label{Smoothing}
\lVert \Pi_n \mathfrak{I} \rVert_{s+\alpha}^{Lip(\gamma)}\le N_n^{\alpha} \lVert \mathfrak{I}_{\delta} \rVert_s^{Lip(\gamma)}, \quad \forall \mathfrak{I}(\omega)\in H^s, \quad \lVert \Pi_n^{\perp} \mathfrak{I} \rVert_s^{Lip(\gamma)}\le N_n^{-\alpha} \lVert \mathfrak{I} \rVert_{s+\alpha}^{Lip(\gamma)}, \quad \forall \mathfrak{I}(\omega)\in H^{s+\alpha}.
\end{equation}
We define the following constants
\begin{equation}\label{parametriNM}
\begin{aligned}
&\mu_1:=3\mu+9, \qquad \qquad  \qquad\alpha:=3\mu_1+1, \qquad \qquad \qquad \alpha_1:=(\alpha-3 \mu)/2,\\
&k:=3(\mu_1+\rho^{-1})+1, \qquad \beta_1:=6 \mu_1+3 \rho^{-1} +3, \qquad 0<\rho<\frac{1-3 a}{C_1 (1+a)}.
\end{aligned}
\end{equation}
where $\mu:=\mu(\tau, \nu)>0$ is the ``loss of regularity'' given by the Theorem \ref{ApproxInverse} and $C_1$ is fixed below. We note that the constants in \eqref{parametriNM} are the same of the ones defined in \cite{KdVAut}, but with a different (larger) $\mu$. 
\begin{teor}{\textbf{(Nash-Moser)}}\label{NashMoser}
Assume that $f\in C^q$ with $q>S:=s_0+\beta_1+\mu+3$. Let $\tau\geq \nu+2$. Then there exist $C_1>\max\{ \mu_1+\alpha, C_0 \}$ (where $C_0:=C_0(\tau, \nu)$ is the one in Theorem \ref{Reducibility}), $\delta_0:=\delta_0(\tau, \nu)>0$ such that, if
\begin{equation}\label{SmallnessConditionNM}
N_0^{C_1} \varepsilon^{b_*+1} \gamma^{-2}<\delta_0, \quad \gamma:=\varepsilon^{2+a}=\varepsilon^{2 b}, \quad N_0:=(\varepsilon\gamma^{-1})^{\rho}, \quad b_*=6- 2 b,
\end{equation}
then, for all $n\geq 0$:
\begin{itemize}
\item[$(\mathcal{P}1)_n$] there exists a function $(\mathfrak{I}_n, \zeta_n)\colon \mathcal{G}_n \subseteq \Omega_{\varepsilon} \to E_{n-1}\times \mathbb{R}^{\nu}, \omega\mapsto (\mathfrak{I}_n(\omega), \zeta_n(\omega)), (\mathfrak{I}_0, \zeta_0):=0, E_{-1}:=\{0\}$, satisfying $\lvert \zeta_n \rvert^{Lip(\gamma)}\le C \lVert \mathcal{F}(U_n) \rVert_{s_0}^{Lip(\gamma)}$,
\begin{equation}\label{Convergenza}
\lVert \mathfrak{I}_n \rVert_{s_0+\mu}^{Lip(\gamma)}\le C_* \varepsilon^{b_*} \gamma^{-1}, \quad \lVert \mathcal{F}(U_n) \rVert_{s_0+\mu+3}^{Lip(\gamma)}\le C_* \varepsilon^{b_*},
\end{equation}
where $U_n:=(i_n, \zeta_n)$ with $i_n(\varphi)=(\varphi, 0, 0)+\mathfrak{I}_n(\varphi)$. The sets $\mathcal{G}_n$ are defined inductively by:
\begin{equation}\label{Gn}
\begin{aligned}
&\mathcal{G}_0:=\{ \omega\in\Omega_{\varepsilon} : \lvert \omega\cdot l \rvert\geq 2 \gamma \langle l \rangle^{-\tau}, \,\,\forall l\in \mathbb{Z}^{\nu}\setminus\{0\} \},\\
&\mathcal{G}_{n+1}:=\left\{ \omega\in \mathcal{G}_n : \lvert \mathrm{i} \omega\cdot l+\mu_j^{\infty}(i_n)-\mu_k^{\infty}(i_n) \rvert\geq \frac{2\,\gamma_n\,\lvert j^3-k^3 \rvert}{\langle l \rangle^{\tau}}, \,\,\forall j, k\in S^c \cup \{0\}, l\in\mathbb{Z}^{\nu} \right\},
\end{aligned}
\end{equation}
where $\gamma_n:=\gamma (1+2^{-n})$ and $\mu_j^{\infty}(\omega):=\mu_j^{\infty}(\omega, i_n(\omega))$ are defined in \eqref{FinalEigenvalues} (and $\mu_0^{\infty}(\omega)=0$).\\
The differences $\hat{\mathfrak{I}}_n:=\mathfrak{I}_n-\mathfrak{I}_{n-1}$ (where we set $\hat{\mathfrak{I}}_0:=0$) is defined on $\mathcal{G}_n$, and satisfy
\begin{equation}\label{FrakHat}
\lVert \hat{\mathfrak{I}}_1 \rVert_{s_0+\mu}^{Lip(\gamma)}\le C_* \varepsilon^{b_*} \gamma^{-1}, \quad \lVert \hat{\mathfrak{I}}_n \rVert_{s_0+\mu}^{Lip(\gamma)}\le C_* \varepsilon^{b_*} \gamma^{-1} N_{n-1}^{-\alpha}, \quad \forall n>1.
\end{equation}
\item[$(\mathcal{P} 2)_n$] $\lVert \mathcal{F}(U_n) \rVert_{s_0}^{Lip(\gamma)}\le C_* \varepsilon^{b_*} N_{n-1}^{-\alpha}$ where we set $N_{-1}:=1$.
\item[$(\mathcal{P}3)_n$]{(High Norms)}. $\lVert \mathfrak{I}_n \rVert_{s_0+\beta_1}^{Lip(\gamma)}\le C_* \varepsilon^{b_*} \gamma^{-1} N_{n-1}^k$ and $\lVert \mathcal{F}(U_n) \rVert_{s_0+\beta_1}^{Lip(\gamma)}\le C_* \varepsilon^{b*} N_{n-1}^k$.
\item[$(\mathcal{P} 4)_n$]{(Measure)}. The measure of the ``Cantor-like'' sets $\mathcal{G}_n$ satisfies
\begin{equation}\label{Misure}
\lvert \Omega_{\varepsilon}\setminus \mathcal{G}_0 \rvert\le C_* \varepsilon^{2 (\nu-1)} \gamma, \quad \lvert \mathcal{G}_n \setminus \mathcal{G}_{n+1} \rvert\le C_* \varepsilon^{2 (\nu-1)} \gamma N_{n-1}^{-1}.
\end{equation}  
\end{itemize}
All the Lip norms are defined on $\mathcal{G}_n$, namely $\lVert \cdot \rVert_s^{Lip(\gamma)}=\lVert \cdot \rVert_{s, \mathcal{G}_n}^{Lip(\gamma)}$.
\end{teor}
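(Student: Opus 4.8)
The plan is to prove the four statements $(\mathcal{P}1)_n$--$(\mathcal{P}4)_n$ simultaneously by induction on $n$, running the Nash--Moser scheme of \cite{KdVAut} with the ingredients assembled in Sections $6$--$8$. The heart of the induction is a smoothed Newton step: given the approximate solution $U_n=(i_n,\zeta_n)$ on $\mathcal{G}_n$, one sets
\[
U_{n+1}:=U_n+H_{n+1}, \qquad \hat{\mathfrak{I}}_{n+1}:=-\Pi_n\,\mathbf{T}_n\,\Pi_n\,\mathcal{F}(U_n),
\]
where $\mathbf{T}_n$ is the approximate right inverse of $d_{i,\zeta}\mathcal{F}(i_n)$ provided by Theorem \ref{TeoApproxInv} and $\Pi_n$ is the projector on $E_n$. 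The analytic part $(\mathcal{P}1)$--$(\mathcal{P}3)$ will close using the tame bound \eqref{TameEstimateApproxInv}, the approximate–inverse error \eqref{6.41}, the vector field estimates of Lemma \ref{TameEstimatesforVectorfields}, and the smoothing properties \eqref{Smoothing}; the measure part $(\mathcal{P}4)$ is the content of Section $9.1$. The hypothesis $f\in C^q$ with $q>S$ guarantees that all composition and tame estimates of Section $5$ hold up to the highest index $s_0+\beta_1$ used along the iteration. For the base step $n=0$ one has $\mathfrak{I}_0=0$, $\zeta_0=0$, hence $\mathcal{F}(U_0)=\mathcal{D}_\omega(\varphi,0,0)-X_{H_\varepsilon}(\varphi,0,0)$; since $\alpha(\xi)=\omega$, every component reduces to $X_P$ at the trivial torus and Lemma \ref{TameEstimatesforVectorfields} yields $\lVert\mathcal{F}(U_0)\rVert_s^{Lip(\gamma)}\le_s\varepsilon^{b_*}$, which gives \eqref{Convergenza}, $(\mathcal{P}2)_0$, $(\mathcal{P}3)_0$; the bound $\lvert\Omega_\varepsilon\setminus\mathcal{G}_0\rvert\le C_*\varepsilon^{2(\nu-1)}\gamma$ is the classical diophantine measure estimate on the $\nu$–dimensional box $\Omega_\varepsilon=\{\overline{\omega}+\varepsilon^2\mathbb{M}\xi:\xi\in[1,2]^\nu\}$, using $\tau\ge\nu+2$ and $\det\mathbb{M}\neq0$ from Lemma \ref{TwistLemma}.

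\textbf{Inductive step: setting up the linearised inversion.} Assume $(\mathcal{P}1)_n$--$(\mathcal{P}4)_n$. First I would associate to $i_n$ the isotropic torus $i_\delta$ (Isotropic torus lemma); by $(\mathcal{P}1)_n$ and the size of $\mu$ in \eqref{parametriNM} it satisfies \eqref{Assumption} and \eqref{IpotesiPiccolezzaIdelta}, with $\lVert i_n-i_\delta\rVert$ quadratically small in $\mathcal{F}(U_n)$. One then checks the inclusion $\mathcal{G}_{n+1}\subseteq\Omega_\infty^{2\gamma}(i_\delta)$, using the Lipschitz dependence on $i$ of the final eigenvalues $\mu_j^\infty$ (Theorem \ref{Reducibility} and the remark following it), the smallness of $\lVert i_n-i_\delta\rVert$, and the slack $\gamma_n\ge\gamma$ in \eqref{Gn}, \eqref{OmegoneInfinito}. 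Consequently Theorem \ref{Reducibility} diagonalises $\mathcal{L}_\omega=\mathcal{L}_\omega(\omega,i_\delta)$ on $\mathcal{G}_{n+1}$, Theorem \ref{InversionLomega} verifies the Inversion Assumption \eqref{InversionAssumption} there, and hence $\mathbf{T}_n$ from Theorem \ref{TeoApproxInv} is well defined on $\mathcal{G}_{n+1}$ with the bound \eqref{TameEstimateApproxInv}. Estimating $\hat{\mathfrak{I}}_{n+1}=-\Pi_n\mathbf{T}_n\Pi_n\mathcal{F}(U_n)$ through \eqref{TameEstimateApproxInv}, interpolation between $(\mathcal{P}2)_n$ and $(\mathcal{P}3)_n$, and the smoothing bounds \eqref{Smoothing} gives \eqref{FrakHat}, whence the first line of \eqref{Convergenza} for $\mathfrak{I}_{n+1}=\sum_{m\le n+1}\hat{\mathfrak{I}}_m$, provided $C_*$ is large and $N_0$ obeys \eqref{SmallnessConditionNM}; the estimate $\lvert\zeta_{n+1}\rvert^{Lip(\gamma)}\le C\lVert\mathcal{F}(U_{n+1})\rVert_{s_0}^{Lip(\gamma)}$ is Lemma \ref{Lemma6.1} applied at $U_{n+1}$.

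\textbf{The quadratic estimate.} Writing $\mathcal{F}(U_{n+1})=\mathcal{F}(U_n)+d\mathcal{F}(U_n)H_{n+1}+Q_n$ with $Q_n$ the Taylor remainder — which is quadratic in $H_{n+1}$, hence controlled by $d_i^2X_{H_\varepsilon}$ (Lemma \ref{TameEstimatesforVectorfields}) — and using that $\mathcal{D}_\omega$ commutes with $\Pi_n$, so that $\Pi_n^\perp d\mathcal{F}(U_n)\Pi_n=-\Pi_n^\perp d_iX_{H_\varepsilon}(i_n)\Pi_n$, one obtains
\[
\mathcal{F}(U_{n+1})=\Pi_n^\perp\mathcal{F}(U_n)-\Pi_n\big(d\mathcal{F}(U_n)\mathbf{T}_n-\mathrm{I}\big)\Pi_n\mathcal{F}(U_n)+\Pi_n^\perp d_iX_{H_\varepsilon}(i_n)\Pi_n\mathbf{T}_n\Pi_n\mathcal{F}(U_n)+Q_n .
\]
In $\lVert\cdot\rVert_{s_0}$ the four terms are bounded respectively by: the smoothing factor times the high norm of $(\mathcal{P}3)_n$; the error \eqref{6.41}, which is \emph{quadratic} in $\mathcal{F}(U_n)$; the factor $\varepsilon$ coming from $d_iX_{H_\varepsilon}=O(\varepsilon)$ together with a smoothing loss on $\Pi_n^\perp$; and the quadraticity of $Q_n$. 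With the parameters $\mu_1,\alpha,k,\beta_1,\rho$ of \eqref{parametriNM} and the smallness \eqref{SmallnessConditionNM} this closes to $(\mathcal{P}2)_{n+1}$, that is $\lVert\mathcal{F}(U_{n+1})\rVert_{s_0}^{Lip(\gamma)}\le C_*\varepsilon^{b_*}N_n^{-\alpha}$; repeating the computation in $\lVert\cdot\rVert_{s_0+\beta_1}$, now allowing the interpolation loss $N_n^k$, gives $(\mathcal{P}3)_{n+1}$, and $(\mathcal{P}1)_{n+1}$ follows as above (note that \eqref{SmallnessConditionNM} with $N_0=(\varepsilon\gamma^{-1})^\rho$ forces, via the choice of $\rho$ in \eqref{parametriNM}, both the reducibility smallness \eqref{PiccolezzaperKamred} and the closure of the Newton estimates).

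\textbf{Measure estimate and the main obstacle.} It remains to prove $(\mathcal{P}4)_{n+1}$: one decomposes $\mathcal{G}_n\setminus\mathcal{G}_{n+1}=\bigcup_{l,j,k}\mathcal{R}_{ljk}$ into the resonant sets \eqref{BadSets} and bounds $\lvert\mathcal{R}_{ljk}\rvert$ by studying the monotonicity in $\xi$ (equivalently in $\omega=\overline{\omega}+\varepsilon^2\mathbb{M}\xi$) of $\omega\mapsto\mathrm{i}\omega\cdot l+\mu_j^\infty-\mu_k^\infty$, via the expansion $\mu_j^\infty=-\mathrm{i}\tilde m_3\,j^3+\mathrm{i}\tilde m_1\,j+r_j^\infty$ with $\tilde m_3=1+\varepsilon^2 d(\xi)+\dots$ and $\tilde m_1=\varepsilon^2 c(\xi)+\dots$ supplied by Theorem \ref{Reducibility}, \eqref{OriginalDxi}, \eqref{cXi}. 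I expect this to be the main obstacle: because the normal–frequency corrections are only $O(\varepsilon^2)$ (instead of $O(\varepsilon^3)$ as in \cite{KdVAut}) and because $f$ depends on $x$, so that $l,j,k$ are not tied by momentum conservation, several index regimes are \emph{degenerate} — the leading $\xi$–gradient of $\mathrm{i}\omega\cdot l+\mu_j^\infty-\mu_k^\infty$ may vanish identically — and these must be discarded using the non–degeneracy conditions $(\mathtt{C}1)$, $(\mathtt{C}2)$ together with the genericity assumptions $(\mathtt{H}1)$, $(\mathtt{H}2)_{j,k}$ on $(c_1,\dots,c_7)$ and on the tangential sites (Lemmas \ref{Lemmata1}, \ref{Lemmata2}; see Remark \ref{Degeneratecase}). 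Once the degenerate triples are ruled out, the surviving $\mathcal{R}_{ljk}$ are non–empty only when $\lvert j-k\rvert$ is comparable to $\langle l\rangle$, and satisfy $\lvert\mathcal{R}_{ljk}\rvert\lesssim\varepsilon^{2(\nu-1)}\gamma\,\langle l\rangle^{-\tau+1}$; summing over $l$ (using $\tau\ge\nu+2$) and over the finitely many relevant $j,k$ yields \eqref{Misure}. Passing to the limit along $(\mathcal{P}1)$--$(\mathcal{P}4)$ one finds $\mathfrak{I}_n\to\mathfrak{I}_\infty$ on $\mathcal{C}_\varepsilon:=\bigcap_n\mathcal{G}_n$ with $\zeta_n\to0$ by $(\mathcal{P}2)_n$, which is precisely what Theorem \ref{IlTeorema} demands.
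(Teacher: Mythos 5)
Your analytic part $(\mathcal{P}1)$--$(\mathcal{P}3)$ follows essentially the paper's scheme (which is that of \cite{KdVAut}): same smoothed Newton step $H_{n+1}=-\tilde{\Pi}_n\mathbf{T}_n\Pi_n\mathcal{F}(U_n)$, same use of Theorems \ref{Reducibility}, \ref{InversionLomega}, \ref{TeoApproxInv} on $\mathcal{G}_{n+1}$, and the same splitting of $\mathcal{F}(U_{n+1})$ into a smoothing tail, the approximate-inverse error \eqref{6.41}, a projector-commutator term and the Taylor remainder; your regrouping drops the piece $\Pi_n\, d_iX_{H_\varepsilon}(i_n)\,\Pi_n^{\perp}\mathbf{T}_n\Pi_n\mathcal{F}(U_n)$ (the paper's $R_n$ contains both commutator pieces), but this term is of the same type as the one you keep and is absorbed identically, so that is only a bookkeeping slip.

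There is, however, a genuine gap in your measure step. You claim to obtain $(\mathcal{P}4)_{n+1}$, i.e. $\lvert \mathcal{G}_n\setminus\mathcal{G}_{n+1}\rvert\le C_*\varepsilon^{2(\nu-1)}\gamma N_{n-1}^{-1}$, by bounding each $R_{ljk}(i_n)$ by $C\varepsilon^{2(\nu-1)}\gamma\langle l\rangle^{-\tau}$ and summing over $l$ and over $j,k$ with $j^2+k^2\lesssim \lvert l\rvert$. That summation is $n$-independent: it only yields $\lvert\mathcal{G}_n\setminus\mathcal{G}_{n+1}\rvert\le C\varepsilon^{2(\nu-1)}\gamma$, and then $\sum_n\lvert\mathcal{G}_n\setminus\mathcal{G}_{n+1}\rvert$ does not converge to something $o(\lvert\Omega_\varepsilon\rvert)$, so neither \eqref{Misure} nor the asymptotically full measure of $\mathcal{C}_\varepsilon$ follows. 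The missing idea is the inclusion argument of Lemma \ref{InclusionideiBadSets}: using the bound \eqref{FrakHat} on $\hat{\mathfrak{I}}_n$, the Lipschitz dependence of the final eigenvalues $\mu_j^{\infty}(\omega,i)$ on the torus $i$ (provided by the reducibility theorem), and the fact that the constants $\gamma_n=\gamma(1+2^{-n})$ in \eqref{Gn} are strictly decreasing, one shows $R_{ljk}(i_n)\subseteq R_{ljk}(i_{n-1})$ for all $\lvert l\rvert\le N_{n-1}$; since $R_{ljk}(i_{n-1})\cap\mathcal{G}_n=\varnothing$ by the definition \eqref{Gn}, these sets are empty, so only the tail $\lvert l\rvert>N_{n-1}$ contributes (see \eqref{differenzeInsiemiMisura}), and the sum $\sum_{\lvert l\rvert>N_{n-1}}\langle l\rangle^{-\tau+1}$ produces exactly the factor $N_{n-1}^{-1}$ (here $\tau\ge\nu+2$ is used). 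This mechanism — and it is the reason the margins $\gamma_n$ are taken decreasing in $n$ — is absent from your proposal, and without it $(\mathcal{P}4)_n$ fails as stated for $n\ge 1$.
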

\begin{proof}
\begin{itemize}
\item \textit{Proof of $(\mathcal{P}_1)_0, (\mathcal{P}_2)_0, (\mathcal{P}_3)_0$}. Recalling \eqref{NonlinearFunctional}, we have, by the second estimate in \eqref{EstimatesVecField}, 
\[
\lVert \mathcal{F}(U_0)\rVert_s=\lVert \mathcal{F}((\varphi, 0, 0), 0) \rVert_s=\lVert X_P(i_0) \rVert_s\le_s \varepsilon^{6-2 b}.
\]
Hence the smallness conditions in $(\mathcal{P}_1)_0, (\mathcal{P}_2)_0, (\mathcal{P}_3)_0$ hold taking $C_*:=C_*(s_0+\beta_1)$ large enough.
\item \textit{Assume that $(\mathcal{P}_1)_n, (\mathcal{P}_2)_n, (\mathcal{P}_3)_n$ hold for some $n\geq 0$, and prove $(\mathcal{P}_1)_{n+1}, (\mathcal{P}_2)_{n+1}, (\mathcal{P}_3)_{n+1}$}. By \eqref{parametriNM} and \eqref{SmallnessConditionNM}
\[
N_0^{C_1} \varepsilon^{b_*+1} \gamma^{-2}=N_0^{C_1} \varepsilon^{1-3 a}=\varepsilon^{1-3 a-\rho\,C_1 (1+a)}<\delta_0
\]
for $\varepsilon$ small enough. If we take $C_1\geq C_0$ then \eqref{PiccolezzaperKamred} holds. Moreover \eqref{Convergenza} imply \eqref{Assumption}, and so \eqref{IpotesiPiccolezzaIdelta}, and Theorem \ref{InversionLomega} applies. Hence the operator $\mathcal{L}_{\omega}:=\mathcal{L}_{\omega}(\omega, i_n (\omega))$ defined in \eqref{Lomega} is invertible for all $\omega\in \mathcal{G}_{n+1}$ and the last estimate in \eqref{TameEstimateLomega} holds. This means that the assumption \eqref{InversionAssumption} of Theorem  \ref{TeoApproxInv} is verified with $\Omega_{\infty}=\mathcal{G}_{n+1}$. By Theorem \ref{TeoApproxInv} there exists an approximate inverse $\textbf{T}_n(\omega):=\textbf{T}_0(\omega, i_n(\omega))$ of the linearized operator $L_n(\omega):=d_{i, \zeta} \mathcal{F}(\omega, i_n(\omega))$, satisfying \eqref{TameEstimateApproxInv}. By \eqref{SmallnessConditionNM}, \eqref{Convergenza}
\begin{align}
&\lVert \textbf{T}_n g\rVert_s\le_s \gamma^{-1} (\lVert g \rVert_{s+\mu}+\varepsilon\gamma^{-1} \{ \lVert \mathfrak{I}_n \rVert_{s+\mu}+\gamma^{-1} \lVert \mathfrak{I}_n \rVert_{s_0+\mu} \lVert \mathcal{F}(U_n) \rVert_{s+\mu} \} \lVert g \rVert_{s_0+\mu})\label{9.10}\\
&\lVert \textbf{T}_n g \rVert_{s_0}\le_{s_0} \gamma^{-1} \lVert g \rVert_{s_0+\mu}
\end{align}
and, by \eqref{6.41}, using also \eqref{SmallnessConditionNM}, \eqref{Convergenza}, \eqref{Smoothing},
\begin{align}
\lVert (L_n\circ \textbf{T}_n-\mathrm{I}) g \rVert_s \le_s & \gamma^{-1} (\lVert \mathcal{F}(U_n) \rVert_{s_0+\mu}\lVert g \rVert_{s+\mu}+\lVert \mathcal{F}(U_n) \rVert_{s+\mu}\lVert g \rVert_{s_0+\mu}\notag\\
& + \varepsilon\gamma^{-1} \lVert \mathfrak{I}_n \rVert_{s+\mu}\lVert \mathcal{F}(U_n) \rVert_{s_0+\mu}\lVert g \rVert_{s_0+\mu})\\
\lVert (L_n\circ \textbf{T}_n-\mathrm{I}) g \rVert_{s_0} \le_{s_0}& \gamma^{-1} \lVert \mathcal{F}(U_n) \rVert_{s_0+\mu} \lVert g \rVert_{s_0+\mu}\notag\\\notag
\le_{s_0}& \gamma^{-1} (\lVert \Pi_n \mathcal{F}(U_n) \rVert_{s_0+\mu}+\lVert \Pi_n^{\perp} \mathcal{F}(U_n) \rVert_{s_0+\mu})\lVert g \rVert_{s_0+\mu}\\
\le_{s_0}& N_n^{\mu} \gamma^{-1} (\lVert \mathcal{F}(U_n) \rVert_{s_0}+N_n^{-\beta_1}\lVert \mathcal{F}(U_n) \rVert_{s_0+\beta_1}) \lVert g \rVert_{s_0+\mu}.
\end{align}
The index $\beta_1$ in \eqref{parametriNM} is an ultraviolet cut, and it has to be define in order to obtain the convergence of the iteration scheme.\\
Now, for all $\omega\in\mathcal{G}_{n+1}$, we can define, for $n\geq 0$,
\begin{equation}\label{HnDef}
U_{n+1}:=U_n+H_{n+1}, \quad H_{n+1}:=(\hat{\mathfrak{I}}_{n+1}, \hat{\zeta}_{n+1}):=-\tilde{\Pi}_n \textbf{T}_n \Pi_n \mathcal{F}(U_n)\in E_n\times\mathbb{R}^{\nu},
\end{equation}
where $\tilde{\Pi}_n(\mathfrak{I}, \zeta):=(\Pi_n \mathfrak{I}, \zeta)$ with $\Pi_n$ defined in \eqref{Tagli}. Since $L_n:=d_{i, \zeta} \mathcal{F}(i_n)$, we write 
\[
\mathcal{F}(U_{n+1})=\mathcal{F}(U_n)+L_n H_{n+1}+Q_n,
\]
where
\begin{equation}
Q_n:=Q(U_n, H_{n+1}), \quad Q(U_n, H):=\mathcal{F}(U_n+H)-\mathcal{F}(U_n)-L_n H, \quad H\in E_n\times\mathbb{R}^{\nu}.
\end{equation}
Then, by the definition of $H_{n+1}$ in \eqref{HnDef}, using $[L_n, \Pi_n]$ and writing $\tilde{\Pi}_n^{\perp}(\mathfrak{I}, \zeta):=(\Pi_n^{\perp} \mathfrak{I}, 0)$ we have
\begin{equation}
\begin{aligned}
\mathcal{F}(U_{n+1})&=\mathcal{F}(U_n)-L_n \tilde{\Pi}_n \textbf{T}_n \Pi_n \mathcal{F}(U_n)+Q_n=\mathcal{F}(U_n)-L_n \textbf{T}_n \Pi_n \mathcal{F}(U_n)+L_n \tilde{\Pi}_n^{\perp} \textbf{T}_n \Pi_n \mathcal{F}(U_n)+Q_n\\
&=\mathcal{F}(U_n)-\Pi_n L_n \textbf{T}_n \Pi_n \mathcal{F}(U_n)+(L_n \tilde{\Pi}_n^{\perp}-\Pi_n^{\perp}L_n)\textbf{T}_n \Pi_n \mathcal{F}(U_n)+Q_n\\
&=\Pi_n^{\perp} \mathcal{F}(U_n)+R_n+Q_n+Q'_n
\end{aligned}
\end{equation}
where
\begin{equation}
R_n:=(L_n \tilde{\Pi}_n^{\perp}-\Pi_n^{\perp}L_n)\textbf{T}_n \Pi_n \mathcal{F}(U_n), \quad Q'_n:=-\Pi_n (L_n \textbf{T}_n-\mathrm{I})\Pi_n \mathcal{F}(U_n).
\end{equation}
\begin{lem}{(Lemma $9.2$ in \cite{KdVAut})}
Define 
\begin{equation}\label{DefWnBn}
w_n:=\varepsilon \gamma^{-2} \lVert \mathcal{F}(U_n) \rVert_{s_0}, \qquad B_n:=\varepsilon\gamma^{-1} \lVert \mathfrak{I}_n \rVert_{s_0+\beta_1}+\varepsilon\gamma^{-2} \lVert \mathcal{F}(U_n) \rVert_{s_0+\beta_1}.
\end{equation}
Then there exists $K:=K(s_0, \beta_1)>0$ such that, for all $n \geq 0$, setting $\mu_1:=3\mu+9$
\begin{equation}\label{disuguaglianze}
w_{n+1}\le K N_n^{\mu_1+\rho^{-1}-\beta_1} B_n+K N_{n}^{\mu_1} w_n^2, \qquad B_{n+1}\le K N_n^{\mu_1+\rho^{-1}} B_n.
\end{equation}
\end{lem}
\item \textit{Proof of $(\mathcal{P}_3)_{n+1}$}. By \eqref{disuguaglianze} and $(\mathcal{P}_3)_n$
\begin{equation}\label{Bound}
B_{n+1}\le K N_n^{\mu_1+\rho^{-1}} B_n\le 2 C_* K \varepsilon^{b_*+1} \gamma^{-2} N_n^{\mu_1+\rho^{-1}} N_{n-1}^k\le C_* \varepsilon^{b_*+1} \gamma^{-2} N_n^k,
\end{equation}
provided $2 K N_n^{\mu_1+\rho^{-1}-k}N_{n-1}^k\le 1, \forall n\geq 0$. Choosing $k$ as in \eqref{parametriNM} and $N_0$ large enough, i.e. for $\varepsilon$ small enough. By \eqref{DefWnBn} and the bound \eqref{Bound} $(\mathcal{P}_3)_{n+1}$ holds.
\item \textit{Proof of $(\mathcal{P}_2)_{n+1}$}. Using \eqref{DefWnBn}, \eqref{disuguaglianze} and $(\mathcal{P}_2)_n, (\mathcal{P}_3)_n$, we get
\[
w_{n+1}\le K N_n^{\mu_1+\rho^{-1}-\beta_1} B_n+K N_n^{\mu_1} w_n^2\le K N_n^{\mu_1+\rho^{-1}-\beta_1} 2 C_* \varepsilon^{b_*+1} \gamma^{-2} N_{n-1}^k+K N_n^{\mu_1} (C_*\varepsilon^{b_*+1}\gamma^{-2} N_{n-1}^{-\alpha})^2
\]
and $w_{n+1}\le C_* \varepsilon^{b_*+1} \gamma^{-2} N_n^{-\alpha}$ provided that
\begin{equation}\label{9.36}
4 K N_n^{\mu_1+\rho^{-1}-\beta_1+\alpha} N_{n-1}^k\le 1, \quad 2 K C_* \varepsilon^{b_*+1} \gamma^{-2}N_n^{\mu_1+\alpha} N_{n-1}^{-2 \alpha}\le 1, \,\, \forall n\geq 0.
\end{equation}
The inequalities in \eqref{9.36} hold by \eqref{SmallnessConditionNM}, taking $\alpha$ as in \eqref{parametriNM}, $C_1>\mu_1+\alpha$ and $\delta_0$ in \eqref{SmallnessConditionNM} small enough. By \eqref{DefWnBn}, the inequality $w_{n+1}\le C_* \varepsilon^{b_*+1} \gamma^{-2} N_n^{-\alpha}$ implies $(\mathcal{P}_2)_{n+1}$.
\item \textit{Proof of $(\mathcal{P}_1)_{n+1}$}. The bound \eqref{FrakHat} for $\hat{\mathfrak{I}}_1$ follows by \eqref{HnDef}, \eqref{9.10} (for $s=s_0+\mu$) and $\lVert \mathcal{F}(U_0) \rVert_{s_0+2 \mu}=\lVert \mathcal{F}((\varphi, 0, 0), 0) \rVert_{s_0+2\mu}\le_{s_0+2\mu} \varepsilon^{b_*}$. The bound \eqref{FrakHat} for $\hat{\mathfrak{I}}_{n+1}$ follows by \eqref{Tagli}, $(\mathcal{P}_2)_n$ and \eqref{parametriNM}. It remains to prove that \eqref{Convergenza} holds at the step $n+1$. We have
\begin{equation}
\lVert \mathfrak{I}_{n+1} \rVert_{s_0+\mu}\le \sum_{k=1}^{n+1} \lVert \hat{\mathfrak{I}}_k \rVert_{s_0+\mu}\le C_* \varepsilon^{b_*} \gamma^{-1} \sum_{k \geq 1} N_{k-1}^{-\alpha_1}\le C_* \varepsilon^{b_*} \gamma^{-1}
\end{equation}
taking $\alpha_1$ as in \eqref{parametriNM} and $N_0$ large enough, i.e. $\varepsilon$ small enough. Moreover, using \eqref{Tagli}, $(\mathcal{P}_2)_{n+1}, (\mathcal{P}_3)_{n+1}$, \eqref{parametriNM} we get
\begin{align*}
\lVert \mathcal{F}(U_{n+1}) \rVert_{s_0+\mu+3} &\le N_n^{\mu+3} \lVert \mathcal{F}(U_{n+1}) \rVert_{s_0}+N_n^{\mu+3-\beta_1}\lVert \mathcal{F}(U_{n+1}) \rVert_{s_0+\beta_1}\\
&\le C_* \varepsilon^{b_*} N_n^{\mu+3-\alpha} +C_* \varepsilon^{b_*}N_n^{\mu+3-\beta_1+k}\le C_* \varepsilon^{b_*},
\end{align*}
which is the second inequality in \eqref{Convergenza} at the step $n+1$. The bound $\lvert \zeta_{n+1} \rvert^{Lip(\gamma)}\le C \lVert \mathcal{F}(U_{n+1}) \rVert_{s_0}^{Lip(\gamma)}$ is a consequence of Lemma \eqref{Lemma6.1}.
\subsection{Measure estimates}
In this section we prove $(\mathcal{P}_4)_n$ for all $n\geq 0$. 
Fixed $n\in\mathbb{N}$, we have 
\begin{equation}\label{Union}
\mathcal{G}_n \setminus \mathcal{G}_{n+1}=\bigcup_{l\in\mathbb{Z}^{\nu}, j, k\in S^c\cup\{0\}} R_{l j k}(i_n)
\end{equation}
where 
\begin{equation}\label{BadSets}
R_{l j k}(i_n):=\{ \omega\in\mathcal{G}_n : \lvert \mathrm{i} \omega\cdot l+\mu_j^{\infty}(i_n)-\mu_k^{\infty}(i_n) \rvert< 2\,\gamma_n\, \lvert j^3-k^3 \rvert \langle l \rangle^{-\tau} \}.
\end{equation}
Since, by \eqref{0diMelnikov}, $R_{l j k}(i_n)=\varnothing$ for $j=k$, in the sequel we assume that $j\neq k$.
\begin{lem}{(Lemma $9.3$ in \cite{KdVAut})}\label{InclusionideiBadSets}
For $n\geq 1, \lvert l \rvert\le N_{n-1}$, one has the inclusion $R_{l j k}(i_n)\subseteq R_{l j k}(i_{n-1})$.
\end{lem}
By definition, $R_{l j k}(i_n)\subseteq \mathcal{G}_n$ (see \eqref{BadSets}). By Lemma \ref{InclusionideiBadSets}, for $n\geq 1$ and $\lvert l \rvert\le N_{n-1}$ we also have $R_{l j k}(i_n)\subseteq R_{l j k}(i_{n-1})$. On the other hand, $R_{l j k}(i_n)\cap \mathcal{G}_n=\varnothing$ (see \eqref{Gn}). As a consequence, $R_{l j k}(i_n)=\varnothing$ for all $\lvert l \rvert\le N_{n-1}$, and
\begin{equation}\label{differenzeInsiemiMisura}
\mathcal{G}_n \setminus \mathcal{G}_{n+1}\subseteq \bigcup_{\substack{j, k\in S^c\cup\{0\}\\ \lvert l \rvert> N_{n-1}}} R_{l j k}(i_n) \quad \forall n\geq 1.
\end{equation}
\begin{lem}\label{Brexit}
Let $n \geq 0$. If $R_{l j k}(i_n)\neq \varnothing$, then $\lvert l \rvert\geq C_1 \lvert j^3-k^3 \rvert\geq \frac{C_1}{2}\,(j^2+k^2)$ for some constant $C_1>0$ (independent of $l, j, k, n, i_n, \omega$).
\end{lem}

By Lemma \ref{Brexit} it is sufficient to study the measure of the resonant sets $R_{l j k}(i_n)$ defined in \eqref{BadSets} for $(l, j, k)\neq (0, j, j)$. In particular we will prove the following Lemma.
\begin{lem}\label{LemmaMisura}
For all $n\geq 0$ and for a generic choice of the tangential sites, the measure $\lvert R_{l j k}(i_n) \rvert\le C \varepsilon^{2(\nu-1)} \gamma \langle l \rangle^{-\tau}$.
\end{lem}
By \eqref{BadSets}, we have to bound the measure of the sublevels of the function $\omega\mapsto\phi(\omega)$ defined by
\begin{equation}\label{phi(omega)}
\begin{aligned}
\phi(\omega):&=\mathrm{i} \omega\cdot l+\mu_j^{\infty}(\omega)-\mu_k^{\infty}(\omega)=\mathrm{i} \omega\cdot l-\mathrm{i} m_3(\omega) (j^3-k^3)+\mathrm{i} m_1 (j-k)+(r_j^{\infty}-r_k^{\infty})(\omega)
\end{aligned}
\end{equation}
Note that $\phi$ also depends on $l, j, k, i_n$. We recall that
\begin{equation}
m_3=1+\varepsilon^2 d(\xi)+\mathtt{r}_{m_3}(\omega), \qquad m_1=\varepsilon^2 c(\xi)+\mathtt{r}_{m_1}(\omega)
\end{equation}
where 
\begin{equation}\label{rm3}
\lvert\mathtt{r}_{m_3}\rvert^{Lip(\gamma)}\le C \varepsilon^3 \qquad \lvert \mathtt{r}_{m_1}\rvert^{Lip(\gamma)}\le C\varepsilon^{3-2 a}
\end{equation}
and $d(\xi)$, $c(\xi)$ are defined in \eqref{OriginalDxi} and \eqref{cXi} respectively.


It will be useful to consider $\phi(\omega)$ in \eqref{phi(omega)} as a small perturbation of an affine function in $\omega$. We write it as
\begin{equation}
\phi(\omega):=a_{j k}+b_{l j k}\cdot \omega+q_{j k}(\omega), \qquad l\in\mathbb{Z}^{\nu},\, j, k\in S^c,
\end{equation}
where, by \eqref{Frequency-AmplitudeMAP}, \eqref{OriginalDxi}, \eqref{cXi},
\begin{align}
a_{j k}:=&-\mathrm{i} \{ (j^3-k^3) [1-d(\mathbb{M}^{-1}\overline{\omega})]+ (j-k) c(\mathbb{M}^{-1}\overline{\omega})\},\label{Ajk}\\
b_{l j k}:=&\mathrm{i} \{ l-(j^3-k^3)[(24 c_4-48 c_1^2) \mathbb{M}^{-T} v_3+(4 c_6-\frac{16}{3} c_2^2) \mathbb{M}^{-T} v_1], \label{Bljk}\\\notag
&+ (j-k) [(-4 c_6+\frac{16}{3} c_2^2) \mathbb{M}^{-1} v_3-(24 c_7-16 c_2 c_3) \mathbb{M}^{-1} v_1]\}\\
q_{j k}(\omega):=&-\mathrm{i}\,\mathtt{r}_{m_3}(\omega)\,(j^3-k^3)+\mathrm{i}\,\mathtt{r}_{m_1}(\omega)\,(j-k)+r_j^{\infty}(\omega)-r_k^{\infty}(\omega)\label{Qjk}
\end{align}
and by \eqref{estimateM1}, \eqref{rm3}, \eqref{Qjk},
\begin{equation}\label{LipQljk}
\begin{aligned}
\lvert q_{j k}(\omega) \rvert^{sup}&\le \varepsilon^{3}\lvert j^3-k^3\rvert+\varepsilon^{3-2 a}\lvert j-k\rvert+\varepsilon^{3-2a},\\[2mm]
\lvert q_{j k}(\omega) \rvert^{lip}&\le \lvert \mathtt{r}_{m_3}(\omega) \rvert^{lip}\lvert j^3-k^3\rvert+\lvert \mathtt{r}_{m_1}(\omega)\rvert^{lip}\lvert j-k \rvert+\lvert r_j^{\infty}-r_k^{\infty}\rvert^{lip}\\
&\le \varepsilon^{3}\gamma^{-1}\lvert j^3-k^3\rvert+\varepsilon^{3-2 a}\gamma^{-1}\lvert j-k\rvert+\varepsilon^{1- 3a}.
\end{aligned}
\end{equation}
\begin{remark}\label{Degeneratecase}
The idea of the proof of Lemma \ref{LemmaMisura} is that \textit{generically} (see Definition \ref{Generic}) $a_{j k}$ has to be sufficiently far from zero or the modulus of the ``derivative'' $b_{l j k}$ has to be big enough. 
\end{remark}
We shall use the following \textit{non-degeneracy} assumptions 
\begin{align}
&(\mathtt{H} 1) \qquad d(\xi)-1\neq 0\quad\mbox{at} \quad \xi=\mathbb{M}^{-1}\overline{\omega},\label{H1}\\[2mm] 
&(\mathtt{H} 2)_{j, k} \quad \mbox{Fixed} \,\, j, k\in S^c,\, j\neq k,\quad\det(\mathbb{M}+B(j, k))\neq 0,\label{H2}
\end{align}
where
\begin{equation}\label{Bjk}
\begin{aligned}
B(j, k):=&-(24 c_4-48 c_1^2+\frac{12 c_6-16 c_2^2}{3(j^2+k^2+j k)}) D_S^3 U D_S^3\\[2mm]
&+(\frac{16 c_2^2}{3}-4 c_6+\frac{(16 c_2 c_3-24 c_7)}{j^2+k^2+j k}) D_S U D_S^3.
\end{aligned}
\end{equation}
In the next lemmata we prove that if the coefficients $c_1, \dots, c_7$ are non-resonant and conditions $(\mathtt{C}1)$-$(\mathtt{C}2)$ hold, then there exist a generic choice of the tangential sites for which Lemma \ref{FuoriPalla} and Lemma \ref{LemmaKuksinPoeschel} hold true.

\begin{lem}\label{Lemmata1}
Fix $\nu\in\mathbb{N}$. If the coefficients $c_1, \dots, c_7$ are non-resonant and
\begin{equation}\label{Luomochesapevatroppo}
\left(7-16\nu \right) c_2^2\neq 6\,(1- 2\nu) c_6
\end{equation}
then the polynomial $P(\overline{\jmath}_1, \dots, \overline{\jmath}_{\nu}):=d(\mathbb{M}^{-1}\overline{\omega})-1$ is not identically zero. As a consequence, the assumption $(\mathtt{H} 1)$ is verified for a generic choice of the tangential sites.
\end{lem}
\begin{proof}
Suppose that $d(\mathbb{M}^{-1} \overline{\omega})=1$, namely
\begin{equation}\label{P}
P(\overline{\jmath}_1, \dots, \overline{\jmath}_{\nu}):=\{(24 c_4-48\,c_1^2) v_3+(4 c_6-\frac{16}{3} c_2^2) v_1\}\cdot \mathbb{M}^{-1}\overline{\omega}-1=0.
\end{equation}
We evaluate the polynomial P at the point $(\overline{\jmath}_1, \dots, \overline{\jmath}_{\nu})=\lambda (1, \dots, 1)=\lambda \vec{1}$, for some $\lambda$ to be determined, and we claim that this is not a zero. This implies that the polinomial $P$ in \eqref{P} cannot be identically zero. We have
\begin{align*}
P(\lambda \vec{1})= \{ \lambda^5\,(24 c_4-48\,c_1^2) + \lambda^3\,(4 c_6-\frac{16}{3} c_2^2) \} \,(\vec{1}\cdot \mathbb{M}(\lambda \vec{1})^{-1}\vec{1})-1
\end{align*}
and
$
\mathbb{M}(\lambda \vec{1})=a(\lambda) \mathrm{I}+b(\lambda) \,U,
$
where
\begin{align}
&a(\lambda):=(24 c_1^2-12 c_4) \lambda^6+(\frac{14}{3} c_2^2-4 c_6) \lambda^4+(12 c_2 c_3 -12 c_7)\,\lambda^2-6\,c_3^2,\label{Alambda}\\ 
&b(\lambda):= (-48 c_1^2+24 c_4) \lambda^6+(-\frac{32}{3} c_2^2+8 c_6) \lambda^4+(-16 c_2 c_3 +24 c_7)\,\lambda^2.\label{Blambda}
\end{align}
We note that $a(\lambda)\neq 0$, because the coefficients are non-resonant. Moreover, by assumption \eqref{Luomochesapevatroppo} $a(\lambda)+ \nu b(\lambda)\neq 0$ and we have
\[
(\mathbb{M}(\lambda \vec{1}))^{-1}=\frac{\mathrm{I}}{a(\lambda)}-\frac{b(\lambda)}{a(\lambda)\,(a(\lambda)+b(\lambda)\,\nu)}\, U
\]
and, by $\vec{1}\cdot \vec{1}=\nu,\,\, \vec{1}\cdot U \vec{1}=\nu^2$, we get
\begin{equation}
\vec{1}\cdot \mathbb{M}(\lambda \vec{1})^{-1} \vec{1}=\frac{\nu}{a(\lambda)+b(\lambda) \nu}.
\end{equation}
Then $P(\lambda \vec{1})=0$ is equivalent to $p(\lambda)=0$, where
\begin{align*}
p(\lambda):&=\lambda^6 \{ 24 c_1^2-12 c_4  \}+\lambda^4 \{ (\frac{14}{3}-\frac{16\,\nu}{3}) c_2^2-4 (1-\nu) c_6 \}\\
&+\lambda^2 \{ (12-16 \nu) c_2 c_3-12 (1- 2 \nu) c_7 \}-6 c_3^2.
\end{align*}
Suppose that $c_3\neq 0$, then $p(\lambda)$ is not trivial. If $c_3= 0$ and $2 c_1^2\neq c_4$ then we conclude the same, because the monomial of degree six is not naught. If $c_3=0, 2 c_1^2=c_4$ then the monomial of minimum degree, namely three, it is not zero if $c_7\neq 0$, indeed $\nu\in\mathbb{N}$. Suppose now that $c_3=c_7=0, 2 c_1^2=c_4$. Eventually, by assumption \eqref{Luomochesapevatroppo} the monomial of maximum degree, namely four, is not naught and we conclude.
\end{proof}

\begin{lem}\label{Lemmata2}
Fix $\nu\in\mathbb{N}$. If $c_1, \dots, c_7$ are non-resonant and 
\begin{equation}\label{GliUccelli}
\nu\,\,\frac{ 3 c_6-4 c_2^2}{9 c_4-18 c_1^2}\notin \{ j^2+k^2+j k \,:\, j, k\in \mathbb{Z}\setminus\{0\}, \,j\neq k\},
\end{equation}
then the polynomials $P_{j k}(\overline{\jmath}_1, \dots, \overline{\jmath}_{\nu}):=\det (\mathbb{M}+B(j, k))$ are not identically zero, for all $j, k\in S^c$, $j\neq k$.
\end{lem}
\begin{proof}
By \eqref{Bjk} we have
\begin{align*}
\mathbb{M}+B(j, k)&=(24 c_1^2-12 c_4) D_S^6+(\frac{14}{3} c_2^2-4 c_6) D_S^4+(4 c_6-\frac{16}{3} c_2^2)D_S^3 U D_S (\mathrm{I}-\frac{1}{j^2+k^2+j k} D_S^2)\\
&+12 (c_2 c_3-c_7) D_S^2-6 c_3^2 \mathrm{I} +(16 c_2 c_3-24 c_7) D_S U D_S (\mathrm{I}-\frac{1}{j^2+k^2+j k} D_S^2).
\end{align*}
If $c_3\neq 0$ then the lowest order monomial of $\det(\mathbb{M}+B(j, k))$ is not zero and the same holds if $c_3=0$ and $c_7\neq 0$. If $c_3=c_7=0$ then the monomial of maximal degree is
\[
D_S^3 \left( (24 c_1^2-12 c_4)\mathrm{I}+\frac{12 c_6-16 c_2^2}{3(j^2+k^2+j k)} U  \right) D_S^3
\]
and this is invertible if \eqref{GliUccelli} holds.
\end{proof}
\begin{remark}\label{ancheLem2gen}
By Lemma \ref{Lemmata2}, if $(\mathtt{C}2)$ holds, then the assumptions $(\mathtt{H}2)_{j, k}$ are satisfied by a generic choice of the tangential sites when $j, k$ vary in a finite set of integers. 
\end{remark}
The rest of the section is devoted to the proof of Lemma \ref{LemmaMisura}.

\begin{lem}\label{FuoriPalla}
Assume $(\mathtt{H1})$. Then, for a generic choice of the tangential sites, there exists $C_0>0$ such that for all $j\neq k$, $j, k\in S^c$, with $j^2+k^2>C_0$ and $l\in\mathbb{Z}^{\nu}$, we have $\lvert R_{l j k} \rvert\le C \varepsilon^{2 (\nu-1)} \gamma \langle l \rangle^{-\tau}$.
\end{lem}
\begin{proof}
If $j^2+k^2>C_0$ for some constant $C_0$, then $\lvert j-k\rvert/\lvert j^3-k^3 \rvert\le 2 C_0^{-1}$ and
\[
\lvert a_{j k} \rvert\geq \lvert j^3-k^3 \rvert\,\left\{ \lvert 1-d(\mathbb{M}^{-1}\overline{\omega})\rvert-\frac{2}{C_0}\, \lvert c(\mathbb{M}^{-1}\overline{\omega})\rvert  \right\}.
\] 
If $d(\mathbb{M}^{-1} \overline{\omega})\neq 1$ then, by taking $C_0$ large enough, we get $\lvert a_{j k} \rvert\geq \delta_0 \lvert j^3-k^3 \rvert$, for some $\delta_0>0$. This implies that for $\delta:=\delta_0/2$ we have $\lvert b_{l j k}\cdot \omega\rvert\geq \delta \lvert j^3-k^3 \rvert$. Indeed, by \eqref{BadSets}, \eqref{LipQljk}
\[
\lvert b_{l j k} \cdot \omega \rvert\geq \lvert a_{j k} \rvert -\lvert \phi(\omega) \rvert-\lvert q_{j k}(\omega) \rvert\geq (\delta_0-2 \gamma_n-\lvert q_{j k}(\omega)\rvert^{sup})\lvert j^3-k^3 \rvert\geq \frac{\delta_0}{2} \lvert j^3-k^3 \rvert,
\]
for $\varepsilon$ small enough (recall that $\gamma_n=o(\varepsilon^2)$).\\
If $b:=b_{l j k}$ we have $\lvert b\cdot \omega \rvert\le 2\lvert b \rvert\lvert \overline{\omega} \rvert$, because $\lvert \omega \rvert\le 2 \lvert \overline{\omega} \rvert$. Hence $\lvert b \rvert\geq \delta_1\,\lvert j^3-k^3 \rvert$ where $\delta_1:=\delta/(2 \lvert \overline{\omega} \rvert)$. Split $\omega=s \hat{b}+v$ where $\hat{b}:=b/\lvert b \rvert$ and $v\cdot b=0$. Let $\Psi(s):=\phi(s \hat{b}+v)$. For $\varepsilon$ small enough, by \eqref{LipQljk}, we get
\begin{align*}
\lvert \Psi(s_1)-\Psi(s_2) \rvert&\geq (\lvert b \rvert-\lvert q_{j k} \rvert^{lip}) \lvert s_1-s_2 \rvert\geq \left(\delta_1-\frac{\lvert q_{j k} \rvert^{lip}}{\lvert j^3-k^3 \rvert}\right)\,\lvert j^3-k^3 \rvert\, \lvert s_1-s_2 \rvert\\
&\geq \frac{\delta_1}{2} \lvert j^3-k^3 \rvert \,\lvert s_1-s_2 \rvert.
\end{align*}
As a consequence, the set $\Delta_{l j k}(i_n):=\{ s: s \hat{l}+v\in R_{l j k}(i_n)\}$ has Lebesgue measure
\[
\lvert \Delta_{l j k}(i_n) \rvert\le \frac{2}{\delta_1\,\lvert j^3-k^3 \rvert} \,\frac{4\,\gamma_n\,\lvert j^3-k^3 \rvert}{\langle l \rangle^{\tau}}\le \frac{C\,\gamma}{\langle l \rangle^{\tau}}
\]
for some $C>0$. The Lemma follows by Fubini's theorem.
\end{proof}

\begin{lem}\label{Lgrande}
There exists $M>0$ such that for all $j\neq k$, $j, k\in S^c$, with $j^2+k^2\le C_0$ (see Lemma \ref{FuoriPalla}) and $\lvert l \rvert\geq M$, we have $\lvert R_{l j k} \rvert\le C \varepsilon^{2 (\nu-1)} \gamma \langle l \rangle^{-\tau}$.
\end{lem}
\begin{proof}
For $l\neq 0$, we decompose $\omega=s \hat{l}+v$, where $\hat{l}:=l/\lvert l \rvert, s\in\mathbb{R}$, and $l\cdot v=0$. Let $\psi(s):=\phi(s \hat{l}+v)$. We remark that $c(\xi)$ and $d(\xi)$ are affine functions of the unperturbed actions $\xi$, hence 
$$\varepsilon^2\lvert c(\xi) \rvert^{lip}, \varepsilon^2\lvert d(\xi) \rvert^{lip}\le K$$
for some constant $K$ depending only on the tangential sites and on the real coefficients $c_1,\dots, c_7$. 
Then
\begin{align*}
&\lvert \tilde{m}_3(s_1)-\tilde{m}_3(s_2)\rvert\le K \lvert s_1-s_2 \rvert,\\
&\lvert \tilde{m}_1(s_1)-\tilde{m}_1(s_2) \rvert\le (K+\varepsilon^{3-2 a} \gamma^{-1})\lvert s_1-s_2 \rvert\le 2 K\,\lvert s_1-s_2 \rvert,\\
&\lvert r_j^{\infty}(s_1)-r_j^{\infty}(s_2) \rvert\le \varepsilon^{3- 2 a}\gamma^{-1} \lvert s_1-s_2 \rvert.
\end{align*}
Then, if we take $M$ large enough and $\varepsilon$ small, we have
\begin{align*}
\lvert \psi(s_1)-\psi(s_2) \rvert&\geq \lvert j^3-k^3 \rvert \left(\frac{\lvert l \rvert}{\lvert j^3-k^3 \rvert}-K- \frac{2\,K}{\lvert j^2+k^2+ j k \rvert}-\frac{\varepsilon^{3-2a}\gamma^{-1}}{\lvert j^3-k^3\rvert}\right)\lvert s_1-s_2 \rvert\\
&\geq \frac{\delta}{4}\lvert j^3-k^3 \rvert\,\lvert s_1-s_2\rvert,
\end{align*}
where $\delta$ is a positive constant. Indeed, $C_0$ and $K$ are fixed and it is sufficient to choose $\lvert l \rvert$ such that
\[
\inf_{j\neq k, j^2+k^2\le C_0}\frac{\lvert l \rvert}{\lvert j^3-k^3 \rvert}-K-\frac{2 K}{C_0}\geq \delta>0.
\]
As a consequence, the set $\Delta_{l j k}(i_n):=\{ s: s \hat{l}+v\in R_{l j k}(i_n)\}$ has Lebesgue measure
\[
\lvert \Delta_{l j k}(i_n) \rvert\le \frac{\delta}{\lvert j^3-k^3 \rvert} \,\frac{\gamma_n\,\lvert j^3-k^3 \rvert}{\langle l \rangle^{\tau}}\le \frac{C\,\gamma}{\langle l \rangle^{\tau}}
\]
for some $C>0$. The Lemma follows by Fubini's theorem.
\end{proof}
It remains to investigate $R_{l j k}$ for a finite set of indeces $(l, j, k)$. We need the following Lemma.
\begin{lem}\label{ParteLineareNulla}
Suppose that $l\in\mathbb{Z}^{\nu}$ and $j, k\in S^c$ are such that $\overline{\omega}\cdot l\neq j^3-k^3$ and 
\begin{equation}
\lvert l \rvert\leq M, \qquad \lvert j^2+k^2\rvert\leq C_0
\end{equation}
for some positive constants $M$ and $C_0$. Then $R_{l j k}$ is empty.
\end{lem}
\begin{proof}
We have
\begin{align*}
\lvert \omega\cdot l +m_3 (k^3-j^3) \rvert&=\lvert m_3 (\overline{\omega}\cdot l+k^3-j^3)+(\omega-m_3 \overline{\omega})\cdot l \rvert\geq \lvert m_3 \rvert \lvert \overline{\omega}\cdot l +k^3-j^3 \rvert\\
&-\lvert \omega-m_3 \overline{\omega} \rvert \lvert l \rvert\geq 1-\lvert \omega-\overline{\omega}\rvert M-\lvert m_3-1 \rvert \lvert \overline{\omega}\rvert M\geq 1/2
\end{align*}
for $\varepsilon$ small enough, because $\lvert \omega-\overline{\omega}\rvert, \lvert m_3-1 \rvert\le C \varepsilon^2$. Thus, by \eqref{phi(omega)} we have
\begin{align*}
\lvert \phi(\omega) \rvert&\geq 1/2-\varepsilon^2 \lvert c(\xi) \rvert \lvert j-k \rvert-\lvert m_1-\varepsilon^2 c(\xi) \rvert \lvert j-k \rvert-\lvert r_j-r_k\rvert\\
&\geq 1/2-\varepsilon^2\sup_{\xi\in [1, 2]^{\nu}} (\lvert c(\xi) \rvert) -2 C\,\varepsilon^{3-2 a}\geq 1/4.
\end{align*}
\end{proof}

\begin{lem}\label{LemmaKuksinPoeschel}
If $j^2+k^2\le C_0$, $j, k\in S^c$, $\lvert l \rvert\le M$ (see Lemma \ref{FuoriPalla} and \ref{Lgrande}) and $(\mathtt{H}2)_{j, k}$ hold,
then, for a generic choice of the tangential sites, $\lvert R_{l j k} \rvert\le C \varepsilon^{2 (\nu-1)} \gamma \langle l \rangle^{-\tau}$.
\end{lem}
\begin{proof}
We can write \eqref{phi(omega)} as an affine function respect to the parameter $\xi$ as
\begin{equation}\label{phi(xi)}
\begin{aligned}
\phi(\xi)&=\mathrm{i}\,\overline{\omega}\cdot l-\mathrm{i}(j^3-k^3)+\mathrm{i}\varepsilon^2 \{ \mathbb{M}\xi\cdot l-d(\xi)(j^3-k^3)+c(\xi) (j-k) \}+q_{j k}(\alpha(\xi)),\\
q_{j k}(\alpha(\xi))&=-\mathrm{i}\mathtt{r}_{m_3}(\alpha(\xi))(j^3-k^3)+\mathrm{i} \mathtt{r}_{m_1}(\alpha(\xi)) (j-k)+r_j^{\infty}(\alpha(\xi))-r_k^{\infty}(\alpha(\xi)).
\end{aligned}
\end{equation}
By the relation \eqref{Frequency-AmplitudeMAP}, we can estimate the Lipschitz constant of $\phi(\omega)$ with the derivative respect to $\xi$ of the expression \eqref{phi(xi)}.\\
By Lemma \ref{ParteLineareNulla}, we consider the case $\overline{\omega}\cdot l=j^3-k^3$. 
Thus
\begin{equation}\label{fidiXi}
\begin{aligned}
\phi(\xi)=&\mathrm{i}\,\varepsilon^2 [\mathbb{M}\xi\cdot l-d(\xi)\,\overline{\omega}\cdot l+c(\xi)(j-k)]+ q_{j k}(\alpha(\xi))\\[2mm]
=&\mathrm{i}\,\varepsilon^2 [\mathbb{M}\xi\cdot l-d(\xi)\overline{\omega}\cdot l+\frac{c(\xi)}{j^2+k^2+ j k}\overline{\omega}\cdot l]+ q_{j k}(\alpha(\xi))\\
=&\mathrm{i}\,\varepsilon^2 [\mathbb{M} +B(j, k) ]\,l\cdot \xi+ q_{j k}(\alpha(\xi)).
\end{aligned}
\end{equation}
where $B(j, k)$ is defined in \eqref{Bjk}.
By assumption $(\mathtt{H}2)$, if $l\neq 0$, then
\begin{equation}\label{KuksinPoschel}
\delta_{l j k}:=(\mathbb{M} +B(j, k)) l\neq 0. 
\end{equation}
Hence, by \eqref{LipQljk}, \eqref{fidiXi} and \eqref{KuksinPoschel}, for $\varepsilon$ small enough, there exist a constant $C>0$ such that 
\[
\lvert \phi\rvert^{lip}\geq \delta_{l j k}-\lvert q_{j k}\rvert^{lip}\geq C \lvert j^3-k^3\rvert.
\]
Then we conclude as in Lemma \ref{Lgrande}.
\end{proof}
We have that Lemmata \ref{FuoriPalla}, \ref{Lgrande}, \ref{LemmaKuksinPoeschel} implies Lemma \ref{LemmaMisura}.
By \eqref{Union} and Lemma \ref{LemmaMisura} we get
\[
\lvert \mathcal{G}_0\setminus \mathcal{G}_1 \rvert\le \sum_{l\in\mathbb{Z}^{\nu}, \lvert j \rvert, \lvert k \rvert\le C\lvert l \rvert^{1/2}} \lvert R_{l j k}(i_0) \rvert\le \sum_{l\in\mathbb{Z}^{\nu}} \frac{C\,\varepsilon^{2(\nu-1)}\gamma}{\langle l \rangle^{\tau-1}}\le C' \varepsilon^{2(\nu-1)}\gamma.
\]
For $n\geq 1$, by \eqref{differenzeInsiemiMisura},
\[
\lvert \mathcal{G}_n \setminus \mathcal{G}_{n+1} \rvert\le \sum_{\substack{\lvert l \rvert>N_{n-1},\\ \lvert j \rvert, \lvert k \rvert\le C \lvert l \rvert^{1/2}}} \lvert R_{l j k}(i_n) \rvert\le \sum_{\lvert l \rvert> N_{n-1}} \frac{C\,\varepsilon^{2(\nu-1)}\gamma}{\langle l \rangle^{\tau-1}}\le C' \varepsilon^{2(\nu-1)}\gamma\,N_{n-1}^{-1}
\]
because $\tau\geq \nu+2$. The estimate $\lvert \Omega_{\varepsilon}\setminus \mathcal{G}_0 \rvert\le C\,\varepsilon^{2(\nu-1)}\gamma$ is elementary.
\end{itemize}

\end{proof}

\textbf{Conclusion of the Proof of Theorem \ref{IlTeorema}}. Theorem \ref{NashMoser} implies that the sequence $(\mathfrak{I}_n, \zeta_n)$ is well defined for $\omega\in \mathcal{G}_{\infty}:=\cap_{n\geq 0} \mathcal{G}_n$, and $\mathfrak{I}_n$ is a Cauchy sequence in $\lVert \cdot \rVert_{s_0+\mu, \mathcal{G}_{\infty}}^{Lip(\gamma)}$, see \eqref{FrakHat}, and $\lvert \zeta_n \rvert^{Lip(\gamma)}\to 0$. Therefore $\mathfrak{I}_n$ converges to a limit $\mathfrak{I}_{\infty}$ in norm $\lVert \cdot \rVert_{s_0+\mu, \mathcal{G}_{\infty}}^{Lip(\gamma)}$ and, by $(\mathcal{P} 2)_n$, for all $\omega\in\mathcal{G}_{\infty}, i_{\infty}(\varphi):=(\varphi, 0, 0)+\mathfrak{I}_{\infty}(\varphi)$, is a solution of
\[
\mathcal{F}(i_{\infty}, 0)=0 \qquad \mbox{with} \qquad \lVert \mathfrak{I}_{\infty} \rVert_{s_0+\mu, \mathcal{G}_{\infty}}^{Lip(\gamma)}\le C\,\varepsilon^{6- 2 b} \gamma^{-1}
\]
by \eqref{Convergenza}. Therefore $\varphi \mapsto i_{\infty}(\varphi)$ is an invariant torus for the Hamiltonian vector field $X_{H_{\varepsilon}}$ (recall \eqref{Hepsilon}). By \eqref{Misure},
\[
\lvert \Omega_{\varepsilon}\setminus \mathcal{G}_{\infty} \rvert \le \lvert \Omega_{\varepsilon} \setminus \mathcal{G}_0 \rvert+\sum_{n\geq 0} \lvert \mathcal{G}_n \setminus \mathcal{G}_{n+1} \rvert\le 2\,C_* \varepsilon^{2 (\nu-1)} \gamma+C_* \varepsilon^{2(\nu-1)}\gamma \sum_{n\geq 1} N_{n-1}^{-1}\le C \varepsilon^{2(\nu-1)} \gamma.
\]
The set $\Omega_{\varepsilon}$ in \eqref{OmegaEpsilon} has measure $\lvert \Omega_{\varepsilon} \rvert=O(\varepsilon^{2 \nu})$. Hence $\lvert \Omega_{\varepsilon}\setminus \mathcal{G}_{\infty} \rvert/\lvert \Omega_{\varepsilon} \rvert\to 0$ as $\varepsilon\to 0$ because $\gamma=o(\varepsilon^2)$, and therefore the measure of $\mathcal{C}_{\varepsilon}:=\mathcal{G}_{\infty}$ satisfies \eqref{frazionemisure}.

\subsection{Linear stability}
We show that the solution $i_{\infty}(\omega t)$ is linearly stable, in the sense that the norm of the solutions of the Hamiltonian system associated to \eqref{Hepsilon} linearized on the quasi-periodic solution $i_{\infty}$ does not increase in time.\\
By Section $6$, in particular by the Remark \ref{isotropic}, the system related to \eqref{Hepsilon} is conjugated to the linear system
\begin{equation}\label{sistemalinearizzato}
\begin{cases}
\dot{\psi}=K_{2 0}(\omega t) \eta+K_{11}^T (\omega t) w\\
\dot{\eta}=0\\
\dot{w}=\partial_x K_{0 2} (\omega t) w+\partial_x K_{11} (\omega t) \eta.
\end{cases}
\end{equation}
Thus the actions $\eta(t)$ do not evolve in time and the third equation of \eqref{sistemalinearizzato} reduces to the forced PDE
\begin{equation}\label{linearsystem}
\dot{w}=\partial_x K_{0 2} (\omega t) w+\partial_x K_{11}(\omega t) \eta (0).
\end{equation}
In Section $8$ we proved the reducibility of the linear system \eqref{linearsystem}, ignoring the quasi-periodic function $\partial_x K_{11}(\omega t) \eta (0)$. More precisely, we conjugated it to the diagonal system
\begin{equation}
\dot{v}_j+\mu_j^{\infty}\, v_j=0, \qquad j\in S^c, \quad \mu_j^{\infty}\in\mathrm{i} \mathbb{R},
\end{equation}
where
\begin{equation}
\mu_j^{\infty}:=\mathrm{i} (-m_3 j^3+m_1 j)+r_j^{\infty}
\end{equation}
with $m_3=1+O(\varepsilon^2), m_1=O(\varepsilon^2), r_j^{\infty}=O(\varepsilon^{3-2 a})$.
The eigenvalues $\mu_j^{\infty}$ are the \textit{Floquet exponents} of the linear, quasi-periodically depending on time, system \eqref{linearsystem}.
Then equation \eqref{linearsystem} is reduced to
\begin{equation}\label{forced}
\dot{v}_j+\mu_j^{\infty} v_j=f_j(\omega t), \qquad j\in S^c
\end{equation} 
for some quasi-periodic function $f_j$. The solutions of the scalar non-homogeneous equation \eqref{forced} are
\[
v_j(t)=c_j e^{\mu_j^{\infty} t}+\tilde{v}_j(t), \quad \tilde{v}_j(t):=\sum_{l\in\mathbb{Z}^{\nu}} \frac{f_{j l}}{\mathrm{i} \omega\cdot l+\mu_j^{\infty}} e^{\mathrm{i} \omega\cdot l t}.
\]
We note that $\tilde{v}_j$ is well defined, indeed the first Melnikov conditions hold at a solution. As a consequence, if $v$ is a solution of the system \eqref{linearsystem}, then there exist a constant $C>0$ such that
\[
\lVert v(t) \rVert_{H_x^s}\le C \lVert v(0) \rVert_{H_x^s},\quad \forall t\in\mathbb{R},
\]
hence its Sobolev norm does not increase in time.



\end{document}